\documentclass[11pt]{amsart}
\usepackage{amsmath, amssymb, amscd, mathrsfs, slashed, url,color,extsizes,xcolor,multicol,indentfirst,latexsym,bm,graphicx,subfigure,esint,float,verbatim,comment,epsfig,dsfont,amsthm,amsfonts,amsbsy,tikz-cd,amsthm,thmtools,bbm,adjustbox,enumerate,calligra,fancyhdr,accents,enumitem,etoolbox}

\usepackage[all,cmtip]{xy}

\usepackage[top=1in, bottom=1in, left=1.25in, right=1.25in]{geometry}
\usepackage[colorlinks]{hyperref}
\usepackage[toc,page]{appendix}

\DeclareMathAlphabet{\mathcalligra}{T1}{calligra}{m}{n}

\declaretheoremstyle[
headfont=\color{blue}\normalfont\bfseries,
bodyfont=\color{blue}\normalfont\itshape,
]{colored}

\usepackage[utf8]{inputenc}
\usepackage[english]{babel}

\setlist[enumerate]{
    label={(\roman*)},
}

\AtBeginEnvironment{definition}{%
    \setlist[enumerate]{
        label={(\roman*)}, 
        font=\normalfont,
        before=\normalfont
    }%
}

\newtheorem{theorem}{Theorem}[section]

\newtheorem{corollary}[theorem]{Corollary}

\newtheorem{definition}[theorem]{Definition}
\newtheorem{question}[theorem]{Question}

\newtheorem{lemma}[theorem]{Lemma}

\newtheorem{proposition}[theorem]{Proposition}

\theoremstyle{definition}
\newtheorem{example}[theorem]{Example}

\newtheorem{remark}{Remark}



\newcommand{\RR}{\mathbb{R}}

\newcommand{\ZZ}{\mathbb{Z}}

\newcommand{\CC}{\mathbb{C}}

\newcommand{\Int}{\mathrm{Int}}

\newcommand{\ZT}{\ZZ/2}

\newcommand{\Zl}{\mathcal{Z}}
\newcommand{\I}{\mathcal{I}}
\newcommand{\Sl}{\mathcal{S}}
\newcommand{\vp}{\mathfrak{p}}

\newcommand{\p}{\partial}
\title{Calabi surgery for $\ZT$ harmonic $1$-forms}

\begin{document}

\author{Jiahuang Chen, Siqi He, Dashen Yan} 

\address{AMSS}
\email{chenjiahuang@amss.ac.cn, sqhe@amss.ac.cn}	
\address{Department of Mathematics, Stanford University}
\email{yandashen19@gmail.com}

\maketitle
\begin{abstract}
We prove a $2$-valued analogue of Calabi's intrinsic harmonicity theorem and use it to introduce the Calabi surgery method, a surgery theory for $\ZT$ harmonic $1$-forms. Once the ambient metric is allowed to vary, the construction of new $\ZT$ harmonic forms can be reduced to cutting and pasting closed $2$-valued $1$-forms, matching local harmonic models, and controlling the transitivity of the resulting singular foliation. For these constructions, the Nash--Moser-type analytic deformation problem that arises in singular gluing is replaced by local model matching and a global dynamical condition on the foliation. The resulting procedure gives a flexible way to construct and modify $\ZT$ harmonic $1$-forms under weak regularity assumptions. As applications, we obtain connected-sum and local replacement theorems, blow up isolated ordinary zeros by prescribed Euclidean models, split smooth $\vec{k}$-nondegenerate branching components, and desingularize graphic singular sets in dimensions $3$ and $4$ with suitable resolution models. 
\end{abstract}

\section{Introduction}
A $\ZT$ harmonic $1$-form on a Riemannian manifold $(M,g)$ is an
$\I$-valued harmonic $1$-form $\alpha$ on $M\setminus\Sl$, where
$\I\to M\setminus\Sl$ is a flat real line bundle and
$\Sl\subset M$ is a closed subset of Hausdorff codimension at least
two.  
Locally, $\alpha$ can be viewed as a harmonic
$1$-form defined only up to sign, and this sign ambiguity is recorded
by the monodromy of $\I$.

The $\ZT$ harmonic $1$-forms were introduced by Taubes in his compactness
theory for gauge-theoretic equations, where they arise as
limiting objects
\cite{TaubesPSL,TaubesASD,TaubesZero}.  They have since appeared in a
range of problems in differential geometry and gauge theory, including
compactness and enumerative questions, generalized Seiberg--Witten
equations, calibrated geometry, and the study of singular foliations
\cite{DoanWalpuski2021,WalpuskiZhang2021,HeWentworthZhang2026,He2023Branched,Chen2025Perturbation,HeChenSalmZ2Rn}.

Much of the difficulty comes from the behavior near the singular set $\Sl$.
Suppose that $\Sl$ is a smooth codimension-two submanifold.  Since
$\I$ has nontrivial monodromy around a small meridian of $\Sl$, the
transverse expansion has half-integral powers.  In a normal complex
coordinate $z$ transverse to $\Sl$, $\alpha$ has the following expansion
\begin{equation}\label{eqn}
\alpha \sim \Re(Bz^{1/2}dz)+\mathcal{O}(|z|^{\frac32}).
\end{equation}
Following \cite{donaldsondeformation2019}, see also
\cite{Takahashi23Z2,Parker2023}, $\alpha$ is called \emph{nondegenerate} if
$B$ is nowhere vanishing along $\Sl$.

This local asymptotic model is used in existing deformation and gluing results.
The usual method is to build an approximate solution by cutting and
pasting local models, and then to deform this approximate solution to
a genuine one.  Various local models have been constructed in
\cite{TWI,TaubesWuTopological,chenHe2024,haydys2025new,yan2025construction,donaldson2025twistor},
and some of them have been used in gluing constructions
\cite{HeParker2024,salm2024construction,yan2025nondegenerate}.

From the analytic point of view, gluing and deformation for $\mathbb Z/2$ harmonic $1$-forms are already subtle in the smooth nondegenerate setting. 
Existing gluing constructions usually proceed by cutting and pasting local models to obtain a candidate object and then solving a deformation problem for the harmonic equation. 
If the singular set is kept fixed, the resulting linearized problem is not Fredholm on the natural function spaces; it has an infinite-dimensional obstruction along $\mathcal S$. 
A central point in the deformation theories of Donaldson \cite{donaldsondeformation2019} and Parker \cite{Parker2023}, see also \cite{HeWalpuskiParkerUniversal}, is that this obstruction is removed by allowing the singular set to move.  One is then led to solve simultaneously for the form and for the singular set, and the analysis is carried out by a Nash--Moser argument. 
Related analytic results for Dirac operators twisted by ramified Euclidean line bundles appear in \cite{BeraWalpuski2025}. 
This analytic framework typically requires the singular set to be smooth and the leading term in \eqref{eqn} to be nondegenerate.

In this paper we take a different route. 
We first perform the surgery at the level of $2$-valued closed $1$-forms and then use intrinsic harmonicity to choose an ambient metric for which the resulting form is harmonic. 
Thus the global analytic correction problem is replaced, in our construction, by two geometric tasks: matching the local models and controlling the transitivity of the induced singular foliation.

\subsection{Intrinsic harmonicity and Calabi surgery}
The mechanism underlying \emph{Calabi surgery} is \emph{intrinsic
harmonicity}. A $\ZT$ closed $1$-form $(v,\Sl,\I)$ is called
\emph{intrinsically harmonic} if there exists a Riemannian metric
$g'$ on $M$ for which it is $\ZT$ harmonic.  This leads to the
following basic question.

\begin{question}[Calabi's intrinsic harmonicity question for
$\ZT$ closed $1$-forms]
Let $(v,\Sl,\I)$ be a $\ZT$ closed $1$-form on a closed manifold
$M$.  Under what conditions does there exist a
Riemannian metric $g'$ on $M$ such that $(v,\Sl,\I)$ is
$\ZT$ harmonic with respect to $g'$?
\end{question}

This is the $2$-valued counterpart of a classical problem studied by
Calabi \cite{intrinsic-Calabi}.  For an ordinary closed $1$-form
$\alpha$, Calabi established an intrinsic harmonicity criterion under
a Morse assumption on its zero set, and Volkov later extended this
characterization to a more general setting
\cite{intrinsic-Calabi,intrinsic-Volkov}.  The Calabi--Volkov theorem
states that a closed $1$-form on a closed manifold is intrinsically
harmonic if and only if it is locally intrinsically harmonic and
transitive.  Here transitivity means that every point outside the
zero set lies on a loop positively transverse to the foliation
$\ker\alpha$.  Thus the criterion separates a local condition near
the zero set from a global dynamical condition on the induced
foliation. We also refer \cite{Liu2024FiniteGraphSingularSet,Zhang2013GluingTechniques} for related ideas in calibrated geometry.

This local--global structure persists in the $2$-valued setting,
although the branching set and the twisting by $\I$ introduce
additional complications.  We prove that every $\ZT$ harmonic
$1$-form on a closed manifold is transitive.  Conversely, local
intrinsic harmonicity and local $\star$-exactness near the total zero
set, together with transitivity, allow one to reconstruct a global
metric making the form harmonic.

\begin{theorem}
\label{thm:intro_intrinsic}
Let $M$ be a closed oriented manifold, and let $(v,\Sl,\I)$ be a
$\ZT$ closed $1$-form on $M$.  Suppose that $(v,\Sl,\I)$ is locally
intrinsically harmonic, locally $\star$-exact, and transitive.  Then
there exists a Riemannian metric $g'$ on $M$ such that $(v,\Sl,\I)$
is $\ZT$ harmonic with respect to $g'$.
\end{theorem}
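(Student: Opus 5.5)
We follow Calabi's original strategy, adapted to the twisted setting. The key observation is the following. If $g'$ is any metric, $\star_{g'} v$ is an $\I$-valued $(n-1)$-form. Harmonicity of $v$ means $d\star_{g'}v=0$ on $M\setminus\Sl$, together with the appropriate integrability near $\Sl$. Conversely, suppose we are given a closed $\I$-valued $(n-1)$-form $\omega$ on $M\setminus\Sl$ such that $v\wedge\omega>0$ away from the total zero set $Z$ (zeros of $v$ together with $\Sl$). Then there is a metric $g'$ on $M\setminus Z$ with $\star_{g'}v=\omega$. Indeed, the set of such metrics at each point is nonempty and convex, and the sign twist by $\I$ cancels in the product $v\wedge\omega$, so this is a pointwise linear-algebra statement. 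The plan is therefore to construct such a positive closed twisted $(n-1)$-form $\omega$, agreeing near $Z$ with $\star_{g_0}v$ for the local harmonic metric $g_0$. We then patch the metrics by a partition of unity inside the convex set of metrics $g$ satisfying $\star_g v=\omega$.

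\textbf{Step 1 (local model).} By local intrinsic harmonicity, there is a neighbourhood $U$ of $Z$ and a metric $g_U$ on $U$ for which $v$ is $\ZT$ harmonic. Local $\star$-exactness means we may shrink $U$ so that $\star_{g_U}v=d\eta_U$ for an $\I$-valued $(n-2)$-form $\eta_U$ on $U\setminus\Sl$.

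\textbf{Step 2 (away from $Z$, using transitivity).} For each point $p\notin Z$, transitivity gives a closed loop $\gamma_p$ through $p$ that is positively transverse to $\ker v$. The form $v$ is only defined up to sign, so the correct notion is transversality for the lift: $\gamma_p$ lies in $M\setminus Z$ and $\I$ has trivial monodromy along it, or we take its double along the orientation cover. We thicken $\gamma_p$ to a tube $S^1\times D^{n-1}$ and take the Poincaré-dual closed form $\omega_p=\rho(x)\,dx_1\wedge\dots\wedge dx_{n-1}$, with $\rho\ge0$ a bump in the disc directions, tensored with the local section of $\I$ that makes $v(\dot\gamma)>0$. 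Then $\omega_p$ is closed, and $v\wedge\omega_p\ge0$, with strict inequality near $p$. By compactness of $M\setminus U'$, where $U'\Subset U$ is a smaller neighbourhood of $Z$, finitely many loops suffice. The sum $\omega_1=\sum_i\omega_{p_i}$ is then closed with $v\wedge\omega_1>0$ on $M\setminus U'$ and $\ge0$ everywhere. We arrange the tubes to avoid a neighbourhood of $Z$, which is possible since $\gamma_p$ avoids $Z$.

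\textbf{Step 3 (gluing).} Let $\chi$ be a cutoff equal to $1$ near $\overline{U'}$ and supported in $U$. Set
\[\omega=d(\chi\eta_U)+C\,\omega_1 .\]
This is closed, and it equals $\star_{g_U}v$ near $Z$ provided the tubes avoid the support of $\chi$ near $Z$. On the transition region, $d(\chi\eta_U)=\chi\star_{g_U}v+d\chi\wedge\eta_U$. The first term pairs positively with $v$. The error term $d\chi\wedge\eta_U$ is bounded and supported in a compact set away from $Z$, where $v\wedge\omega_1>0$. Hence for $C$ large we get $v\wedge\omega>0$ on $M\setminus Z$, while $\omega=\star_{g_U}v$ near $Z$.

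\textbf{Step 4 (the metric).} Construct $g'$ with $\star_{g'}v=\omega$ on $M\setminus Z$, and take $g'=g_U$ near $Z$. Then $d\star_{g'}v=0$, and the local behaviour near $\Sl$ (the regularity and integrability conditions in the definition of $\ZT$ harmonic) is inherited from $g_U$.

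\textbf{Main obstacle.} The delicate point is Step 3 in the twisted setting. We must make sure the cutoff region and the transverse tubes can be chosen compatibly with the monodromy of $\I$, and that the error $d\chi\wedge\eta_U$ is controlled. The latter requires $\eta_U$ to be smooth on the annular transition region, which holds because that region stays away from $Z$. I would first check carefully the definition of transitivity for $2$-valued forms, namely loops along which $\I$ is trivialized with $v$ positive. This is what makes the Poincaré-dual forms well-defined $\I$-valued closed forms.
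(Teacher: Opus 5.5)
Your argument is correct and is essentially the paper's route. Theorem~\ref{thm:characterize} is deduced from Theorem~\ref{thm:gluing}, whose proof likewise builds $\omega=\omega_1+\Lambda\sum_i\omega_{x_i}$ from $d(\chi\alpha_i)$, with $\alpha_i$ a local primitive of $\star_{g_i}v$ on collars around the pieces containing $\Sl\cup\Zl_v$, together with Poincar\'e-dual forms on thin tubes around $v$-positive loops; it then realizes $\omega$ as $\star_{g_M}v$ by citing Calabi and Volkov.

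The one inaccuracy is your claim in Step~4 that the metrics with $\star_g v=\omega$ at a point form a convex set. For $n\ge 3$ they do not: with $v=dx$ and $\omega=dy\wedge dz$ on $\RR^3$ they are exactly $g=\det(h)\,dx^2+h$, where $h$ is an inner product on $\mathrm{span}(\partial_y,\partial_z)$. They are, however, smoothly parametrized by the convex cone of inner products on $\ker v$. The $g$-normal to $\ker v$ must span $\ker\omega$, and its length is then forced by $h$ and $\omega$. So the partition-of-unity patching should be done in that parametrization rather than directly on $g$.
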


This gives the main technique of the paper.  Starting from existing
harmonic forms and local models, one performs a cut-and-paste
construction at the level of $\ZT$ closed $1$-forms, thereby modifying
the ambient manifold or the zero or singular set.  One then verifies
that the resulting form satisfies the criterion in
Theorem~\ref{thm:intro_intrinsic}, which yields a new ambient metric
with respect to which it is harmonic.  We call this procedure
\emph{Calabi surgery}.

Thus the construction separates into two tasks: constructing and
matching suitable local models, and controlling the global behavior
of the kernel foliation.  Once these are achieved, one need not solve
a global singular non-Fredholm correction equation.

\subsection{Applications of Calabi surgery}
We now turn to several applications of Calabi surgery. Since the construction does not require solving a global singular
linearized equation, singular components left unchanged by the surgery
usually need not satisfy additional smoothness or nondegeneracy
assumptions. Stronger hypotheses enter only through the local models
being inserted.

The first application is a connected-sum theorem, which generalizes
the construction of \cite{HeParker2024} while removing any
nondegeneracy and regularity assumptions on the original singular sets.

\begin{theorem}
\label{thm:intro_connected_sum}
Let $(M_i^n,g_i)$, $i=1,2$, be closed oriented Riemannian manifolds,
and let $(v_i,\Sl_i,\I_i)$ be nonzero $\ZT$ harmonic $1$-forms on
$(M_i,g_i)$.  Then the connected sum $M=M_1\#M_2$
admits a Riemannian metric $g_M$ and a $\ZT$ harmonic $1$-form
$(v_M,\Sl_M,\I_M)$ on $(M,g_M)$.  Moreover, $(v_M,\Sl_M,\I_M)$ is
obtained by gluing the two original forms, and $\Sl_M=\Sl_1\sqcup \Sl_2 .$
\end{theorem}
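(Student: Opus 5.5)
The plan is to reduce the connected-sum statement to Theorem~\ref{thm:intro_intrinsic}. We build a $\ZT$ closed $1$-form on $M_1\#M_2$ by cut and paste, and then check the three hypotheses: local intrinsic harmonicity, local $\star$-exactness, and transitivity.

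\textbf{Choice of gluing points.} Since $v_i$ is nonzero and harmonic, unique continuation shows that its zero set together with $\Sl_i$ has empty interior. So we can choose $p_i\in M_i\setminus \Sl_i$ with $v_i(p_i)\neq 0$. Near $p_i$ the bundle $\I_i$ is trivial, and after fixing a local sign, $v_i=df_i$ with $df_i(p_i)\ne0$. Scaling $v_2$ by a positive constant preserves harmonicity, so we may normalize $|v_1(p_1)|=|v_2(p_2)|$. By the flow-box theorem there are coordinates $x=(x^1,\dots,x^n)$ on a ball $B_i$ around $p_i$ in which $f_i=x^1$ up to an additive constant.

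\textbf{The model on the neck.} We form the connected sum using an orientation-reversing identification of annuli that is compatible with $x^1$. In coordinates, the neck $S^{n-1}\times[-1,1]$ replaces $B_1\setminus\{0\}$ and $B_2\setminus\{0\}$, and we need a closed nonvanishing $1$-form on it that equals $dx^1$ near both ends. This is the step I expect to be the main obstacle. The form $dx^1$ on $\RR^n\setminus\{0\}$ is exact and nonvanishing. The connected sum of $(\RR^n,dx^1)$ with itself near infinity is realized by the standard handle: take $\RR^n$ with two balls removed, glued via inversion $x\mapsto x/|x|^2$ composed with the reflection $x^1\mapsto -x^1$ where needed. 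The form then has to be interpolated on a collar. Concretely, use the function $f=x^1$ near the boundary of $B_1$ and the transported function $x^1$ from the $B_2$ side, which after inversion is $x^1/|x|^2$ up to sign and reflection. We interpolate these through a function $F$ on the neck $S^{n-1}\times[0,1]$ with no critical points. Such an $F$ exists because both ends restrict to functions on $S^{n-1}$ with exactly two nondegenerate critical points of matching type, so they are isotopic through such functions. Alternatively, one can take the product-type model $F(\theta,t)=\theta^1$ (height on the sphere) plus a term in $t$; since the neck is a cylinder, $F=h(\theta)+ct$ with $c>0$ chosen appropriately has no critical points. Thus $v_M$ is nonvanishing on the neck. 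Since $\I_i$ is trivial on $B_i$, the bundles glue to $\I_M$, $\Sl_M=\Sl_1\sqcup\Sl_2$, and the zero set is unchanged.

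\textbf{Verification.} Away from the neck, $v_M$ coincides with $v_i$. Local intrinsic harmonicity and local $\star$-exactness therefore hold near the total zero set, since they are local properties of the original harmonic forms: use $g_i$, with $\star_{g_i}v_i$ locally exact because $v_i$ is coclosed. For transitivity, the paper's result that $\ZT$ harmonic forms are transitive gives, for every point of $M_i\setminus(\text{zeros}\cup\Sl_i)$, a positively transverse loop. By genericity (transversality of loops to a point in dimension $n\ge2$), this loop can be perturbed to avoid the small ball $B_i$ while staying transverse. Points on the neck lie on flow lines of a transverse vector field there. These flow lines exit the neck into $M_1$ or $M_2$, where we connect them to a transverse loop through a nearby regular point. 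For this we use the fact that, by transitivity of $v_i$, the set of points reachable by positively transverse paths from any regular point is open and dense, and transverse paths can be concatenated with corners smoothed. Theorem~\ref{thm:intro_intrinsic} then supplies $g_M$.
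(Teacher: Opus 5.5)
The gap is in the neck. You need a closed nonvanishing $1$-form on $S^{n-1}\times[-1,1]$ that equals $dx^1$ near one end and a constant multiple of $dy^1$ near the other. No such form exists in any dimension $n\ge2$.

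Near each end the neck lies inside the sphere $\{|x|=1\}$, resp.\ $\{|y|=1\}$, and the form is exact (for $n=2$ its period around the neck is $\int_{|x|=1}dx^1=0$), say $dF$. Extend $F$ by $x^1$ over $\{|x|\ge1\}\subset\RR^n$ and by the corresponding linear function over $\{|y|\ge1\}$. This gives a function without critical points on $N\cong S^{n-1}\times\RR$. Take a metric that is Euclidean on both ends. Then $\nabla F/|\nabla F|^2$ is constant on the ends and raises $F$ at unit speed, so its flow is complete and $N\cong F^{-1}(0)\times\RR$. But $F^{-1}(0)$ is then a connected $(n-1)$-manifold containing the noncompact set $\{x^1=0,\ |x|\ge1\}$. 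Hence $H_{n-1}(F^{-1}(0);\ZZ)=0$, contradicting $F^{-1}(0)\simeq S^{n-1}$.

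Both of your constructions fail at this point. An isotopy of the boundary restrictions says nothing about the normal derivative. At each end $\nabla F=e_1$ points out of the neck at the top of the boundary sphere and into it at the bottom, so the normal derivative at the top of the slices must change sign across the neck. The product form $F=h(\theta)+ct$ has $\partial_tF=c$ of constant sign, so it cannot agree with $x^1=e^{s}\theta^1$ (with $s=\log|x|$) near the ends. Morse's boundary version of Poincar\'e--Hopf confirms this numerically: the interior index is $\chi(S^{n-1})-2$, which is $-2$ for even $n$. In particular, your claim that the zero set is unchanged is false, since new zeros on the neck are forced.

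The missing idea is to accept these zeros and make them harmless, which is what the paper does. Realize the neck as a catenoid-type hypersurface of revolution in $\RR^{n+1}$ joining two horizontal unit balls, and take $\Psi^\ast dX_n$ for a horizontal coordinate $X_n$. This form has exactly two Morse zeros on the neck, of index $1$ and $n-1$. Since their indices lie strictly between $0$ and $n$, they are locally intrinsically harmonic and $\star$-exact (Euclidean metric on a ball). Their Morse neighborhoods become two additional harmonic pieces.

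Transitivity on the neck must then be argued through leaves rather than flow lines, since flow lines may run into the new zeros. Every regular leaf of the neck form reaches the boundary of the neck, hence enters some $M_i$. Lemma~\ref{lem:modify_positive_loop_in_one_leaf} then moves a positive loop there onto the leaf. Avoiding a ball of fixed size is also not a genericity statement. The paper instead places $B^i$ in a flow chart of $\ker v_i$ and pushes loops off it.

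Finally, use Theorem~\ref{thm:gluing} with the $M_i$ minus small balls as harmonic pieces, rather than Theorem~\ref{thm:intro_intrinsic}. The latter requires $\star$-exactness on a whole neighborhood of $\Sl_i\cup\Zl_{v_i}$. Coclosedness only gives exactness on small balls, and for singular sets with no regularity assumptions the stronger property is not automatic. The gluing theorem needs exactness only on an annular collar inside $B_i$. That holds because the flux of $\star_{g_i}v_i$ through a sphere bounding a ball in the regular region vanishes.
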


The connected-sum neck is inserted in regions where the original
forms are nonvanishing, leaving the singular sets
$\Sl_1$ and $\Sl_2$ unchanged.  The main point is therefore to
preserve transitivity across the neck.  Once this is verified,
Theorem~\ref{thm:intro_intrinsic} yields the required metric, with no
regularity or nondegeneracy condition imposed on the original
singular sets.

The next two applications are based on a common local replacement
principle for components of the total zero set. Let $Y$ be such a
component for a given $\ZT$ harmonic $1$-form. A new harmonic model
near $Y$ can be substituted for the original one, provided that it
satisfies the required compatibility and exactness conditions and is
sufficiently close to the original form on the gluing region. Positive
loops there can be chosen uniformly away from $Y$ and remain positive
under a sufficiently small modification of form. The resulting closed form is
therefore transitive, and Theorem~\ref{thm:intro_intrinsic} supplies a
metric for which it is harmonic. See
Section~\ref{sec:changing_zero_sets} for details.

Applied to an isolated ordinary zero, this principle gives the
following blow-up theorem, generalizing related constructions in
\cite{yan2025construction,yan2025nondegenerate}.

\begin{theorem}
\label{thm:intro_blowup}
Let $(v,\Sl,\I)$ be a $\ZT$ harmonic $1$-form on a closed oriented
Riemannian manifold $(M^n,g)$, and let $p\in M\setminus\Sl$ be an
isolated zero of $v$.  Suppose that, in geodesic coordinates centered
at $p$,
\[
v=d\bigl(P_p(x)+\mathcal O(|x|^{d+1})\bigr),
\]
where $P_p$ is a homogeneous harmonic polynomial of degree $d\ge2$
on $\RR^n$ with $dP_p(x)\ne0$ for $x\ne0$.  Suppose also that there
is a $\ZT$ harmonic local model $(du,\Sl',\I')$ on $(\RR^n,g')$ such
that $\Sl'$ is compact, $g'$ is Euclidean outside a compact set,
\[
u(y)=P_p(y)+\mathcal O(|y|^{d-1})\quad\text{as }|y|\to\infty,
\]
and $\star_{g'}du$ is exact near infinity.  Then replacing a small
neighborhood of $p$ by a suitably scaled copy of this model produces
a Riemannian metric $g_M$ on $M$ and a $\ZT$ harmonic $1$-form
$(v_M,\Sl_M,\I_M)$ on $(M,g_M)$, with
$\Sl_M\cong\Sl\sqcup\Sl'$.
\end{theorem}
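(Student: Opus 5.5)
We will prove Theorem~\ref{thm:intro_blowup} by Calabi surgery: build a $\ZT$ closed $1$-form on $M$ by grafting a rescaled copy of the model into a small ball around $p$, and then obtain the metric from Theorem~\ref{thm:intro_intrinsic}. Work in geodesic coordinates $x$ on a ball $B_{r_0}(p)$ disjoint from $\Sl$. On this ball $\I$ is trivial, and $v=dF$ with $F=P_p+\mathcal O(|x|^{d+1})$. For small $\epsilon>0$, rescale the model by $u_\epsilon(x)=\epsilon^{d}u(x/\epsilon)$. This gives $u_\epsilon=P_p(x)+\epsilon^{2}\mathcal O(|x|^{d-1})$ for $|x|\gg\epsilon$, and the singular set becomes $\epsilon\Sl'$.

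\textbf{Construction of the glued form.} Fix an annulus $A_\epsilon=\{\rho\le|x|\le2\rho\}$ with $\epsilon\ll\rho\ll r_0$, for example $\rho=\epsilon^{1/2}$. Using a cutoff $\chi$ equal to $1$ on $|x|\le\rho$ and $0$ on $|x|\ge2\rho$, set
\[
v_M=d\bigl(\chi u_\epsilon+(1-\chi)F\bigr)
\]
inside $B_{2\rho}$ and $v_M=v$ outside. This is closed and has the correct twisting: $\I_M$ is $\I$ outside, $\I'$ rescaled inside, and the two are trivial and identified on $A_\epsilon$. The difference $u_\epsilon-F$ is $\mathcal O(\epsilon^2\rho^{d-1}+\rho^{d+1})$, and its derivative is $\mathcal O(\epsilon^2\rho^{d-2}+\rho^d)$. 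Since $|dP_p|\ge c|x|^{d-1}$ on $A_\epsilon$ by homogeneity and nonvanishing, we get $v_M=dP_p+o(\rho^{d-1})$ there. Hence $v_M$ has no zeros on $A_\epsilon$, and $\Sl_M=\Sl\sqcup\epsilon\Sl'$. The zero set of $v_M$ is $\epsilon\cdot(\text{zeros of }du)$ together with the old zero set minus $p$.

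\textbf{Verifying the hypotheses of Theorem~\ref{thm:intro_intrinsic}.} Local intrinsic harmonicity and local $\star$-exactness are local conditions near the total zero set.
\begin{itemize}
\item Outside $B_{2\rho}$ these hold because $v_M=v$ is harmonic for $g$. Here we use that Section~\ref{sec:changing_zero_sets}'s notion of local $\star$-exactness is satisfied by genuinely harmonic forms near their zero sets.
\item Inside $B_\rho$, $v_M$ is the pullback of $du$ by a homothety. It is therefore harmonic for the rescaled metric $\epsilon^2(\cdot/\epsilon)^*g'$, which equals $\epsilon^2$ times a Euclidean metric outside a compact set. Conformal scaling preserves harmonicity of $1$-forms only up to the factor $\lambda^{n-2}$ on $\star$, which still keeps $\star v$ closed.
\item The hypothesis that $\star_{g'}du$ is exact near infinity supplies local $\star$-exactness on the whole neck region. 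Near the new zero set no condition is needed because everything comes from a harmonic model.
\end{itemize}

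\textbf{Transitivity, the main obstacle.} We must show that every point outside the total zero set lies on a loop positively transverse to $\ker v_M$. Since $v$ is $g$-harmonic it is transitive, and we choose positive loops uniformly away from $p$. For a point $q$ outside $B_{r_0/2}$, take a positive loop for $v$ through $q$. Where it enters $B_{2\rho}$, perturb it radially outward, staying in $\{|x|>2\rho\}$, which is possible because $p$ is an isolated zero and loops avoid it after small perturbation. On the shell $\rho\le|x|\le r_0$, $v_M$ is $C^0$-close to $dP_p$ in relative norm, so transversality persists, and we connect to the outer region along gradient-like paths of $P_p$.

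For points inside $B_{2\rho}$, the plan is as follows. First apply the theorem that harmonic forms are transitive, but run it locally on the model: flow lines of $u$ for $g'$ leave every compact set in both the increasing and decreasing directions. This follows from the maximum principle applied to $u$ lifted to the branched double cover, since $u$ has no interior extrema. Since $P_p$ is harmonic and nonconstant of degree $d\ge2$, it takes both signs on every sphere. Follow an ascending flow line out to $|x|=r_0$, then follow a positive $v$-loop from there back to a point of the sphere where $P_p<0$. Finally descend along a reversed, i.e.\ $-\nabla$, path; that path traversed in the descending direction is negative, so instead we close up by entering a region from which an ascending line leads back to $q$. This is ensured by choosing the return point on a descending flow line from $q$ and traversing it backwards, which is positive. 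The delicate points, which we expect to take the most care, are concatenating these pieces with smoothing at the junctions and handling flow lines hitting the new zeros; both are resolved by generic perturbation. Theorem~\ref{thm:intro_intrinsic} then gives $g_M$.
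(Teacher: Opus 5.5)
Your glued form is fine (small slip: $\epsilon^{d}u(x/\epsilon)-P_p=\epsilon\,\mathcal O(|x|^{d-1})$, not $\epsilon^{2}\,\mathcal O(|x|^{d-1})$; this is harmless since $\epsilon\ll\rho$). The real gap is that you route everything through the global criterion, Theorem~\ref{thm:intro_intrinsic}. That obliges you to build positive loops through every regular point \emph{inside} the rescaled model, and your argument there breaks down.

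\textbf{Transitivity inside the model.} The maximum principle does not make flow lines of $u$ leave compact sets: they can converge to saddle-type zeros of $du$ or to $\Sl'$.

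More seriously, on the double cover of $\RR^n\setminus\Sl'$ the ascending and descending lines through a point can exit through \emph{opposite} sheets at infinity. Take the even-degree models $\tilde h_k$ of Appendix~\ref{section:k-nondeg_on_Rn}, which are combinations of $f_l=T_l(s)P_l(t)$ with $l$ even, so $T_l$ is odd. Their potential vanishes on the lift $\{s=0\}$ of the branch disk, and this hypersurface separates the sheets $\{s>0\}$ and $\{s<0\}$. The potential is strictly monotone along flow lines, so every flow line through the disk crosses $\{s=0\}$ exactly once and joins the two sheets.

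Now suppose $\I$ is trivial on $M$, e.g.\ an ordinary harmonic form as in Corollary~\ref{cor:prescribed-knondegenerate-unknots}. Then the two sheets over $M\setminus B_\rho$ are disjoint, so no positive path outside the core can close your loop. This happens for an open set of points $q$, so generic perturbation does not help. Separately, ``follow a positive $v$-loop back to a point where $P_p<0$'' needs reachability between two distinct points, which is not what transitivity gives you.

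\textbf{Local $\star$-exactness.} The global criterion also needs local $\star$-exactness near every component of the old zero set. This is not automatic near components of $\Sl$: harmonicity alone does not kill the twisted flux of $\star_g v$ through a collar. For instance, the Appendix's $dh_k$ for even $k$ is $\ZT$ harmonic but not co-exact outside a large ball. The theorem places no regularity on $\Sl$, and even for graphic $\Sl$ the paper needs the Mayer--Vietoris argument of Proposition~\ref{prop:local_exact_graphic}.

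\textbf{The missing idea.} Use the piecewise Theorem~\ref{thm:gluing} instead. Take $M\setminus B_{3\epsilon/2}(p)$ with the original $g$, and the rescaled core with the rescaled model metric, as \emph{harmonic pieces}. Then:
\begin{itemize}
\item nothing is required near $\Sl$ or inside the model;
\item $\star$-exactness is needed only on two sphere collars, where it is automatic around $p$ (since $v$ is smooth harmonic on the ball) and hypothesized at the model's infinity;
\item transitivity is needed only on the annulus.
\end{itemize}
On the annulus, first fix finitely many $v$-positive loops through thin flat charts, satisfying $|\langle v,\dot\gamma\rangle|\ge\frac12|v|\,|\dot\gamma|$ and staying at distance at least $r_0$ from $p$ (Lemma~\ref{lem:positive_curve_on_gluing_region}). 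Only then shrink the model scale so that $|v_M-v|<\frac14|v|$ and the potentials are close on $B_{2\epsilon}\setminus B_{r_0}$. The same loops are then $v_M$-positive (Lemmas~\ref{lem:changing_zero_sets} and~\ref{lem:verify_shrinking_blowup}).
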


Thus, whenever an appropriate Euclidean model is available, an isolated ordinary
zero can be replaced by a small branching set whose local geometry is prescribed
by the model.

We next recall the higher-order nondegeneracy condition for smooth branching
sets. Let \((v,\Sl,\I)\) be a smooth \(\ZT\) harmonic \(1\)-form and write
\(\Sl=\Sl_1\sqcup\cdots\sqcup\Sl_m\). Near a connected component
\(\Sl_i\subset\Sl\), the local expansion has the form
\[
        v
        =
        d\Re\left(A_i(t)z^{k_i+1/2}
        + \mathcal{O}(|z|^{k_i+3/2})\right),
\]
where \(z\) is a normal complex coordinate and \(k_i\geq 1\). We say that \(v\)
is \(k_i\)-nondegenerate along \(\Sl_i\) if \(A_i\) is nowhere vanishing. If this
holds for every component, then \(v\) is called \(\vec k\)-nondegenerate, where
\(\vec k=(k_1,\ldots,k_m)\). This notion was studied in
\cite{HeSalm2026Metric}.

The isolated zero gluing theorem, combined with zonal harmonic polynomials and
the Euclidean models in Appendix~\ref{section:k-nondeg_on_Rn}, gives the following existence result for
\(\vec k\)-nondegenerate \(\ZT\) harmonic \(1\)-forms, generalizing
\cite{yan2025construction}.

\begin{corollary}\label{cor:prescribed-knondegenerate-unknots}
Let $M^3$ be a closed oriented three-manifold with $b_1(M)>0$.
For any $m\ge1$ and $\vec k=(k_1,\ldots,k_m)\in\mathbb N^m$,
there exist pairwise disjoint embedded balls
$B_1,\ldots,B_m\subset M$, a Riemannian metric $g'$, and a smooth
$\vec k$-nondegenerate $\ZT$ harmonic $1$-form
$(v',\Sl,\I')$ on $(M,g')$ such that
$\Sl=C_1\sqcup\cdots\sqcup C_m$,
where each $C_i\subset B_i$ is an unknot and $v'$ is
$k_i$-nondegenerate along $C_i$.
\end{corollary}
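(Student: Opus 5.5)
Since $b_1(M)>0$, Hodge theory gives a nonzero harmonic $1$-form $\omega$ on $(M,g_0)$ for any metric $g_0$, with trivial bundle $\I$ and empty $\Sl$. The plan is to turn $\omega$ into a form with $m$ isolated zeros whose local models are zonal harmonic polynomials, and then apply Theorem~\ref{thm:intro_blowup} once at each zero.

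The first step is to produce the required zeros, working at the level of closed $1$-forms and using Theorem~\ref{thm:intro_intrinsic} with $\Sl=\emptyset$, i.e.\ the classical Calabi--Volkov theorem. Choose disjoint small balls $B_i$ on which $\omega=df_i$ with $df_i\neq0$. Inside a slightly smaller ball, replace $f_i$ by a function $\tilde f_i$ which:
\begin{itemize}
\item agrees with $f_i$ near $\partial B_i$;
\item in suitable coordinates near a point $p_i$ equals the degree-$d_i$ zonal harmonic polynomial $P_{d_i}(x)$, where $d_i=k_i+1$;
\item has no other critical points in $B_i$.
\end{itemize}
The zonal polynomial is $r^{d}P_d(\cos\theta)$, with $P_d$ the Legendre polynomial. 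Two points need to be checked here:
\begin{itemize}
\item Its gradient vanishes only at $0$. This holds because $P_d$ and $P_d'$ have no common zero, so $dP\neq0$ for $x\neq0$, and $p_i$ is an isolated zero as required.
\item One can interpolate from $P_{d}$ on a small sphere to a linear function on a larger sphere without creating new critical points. I would construct this via a cobordism of Morse-free functions. In dimension $3$ the level sets of $P_d$ near $0$ are those of a degenerate critical point whose local Morse-theoretic contribution (index sum) is zero for even index when $d$ is chosen appropriately. If this fails for some parity, I would instead first create a cancelling pair of zeros and merge them, or allow auxiliary Morse zeros outside $B_i$, which is harmless since the corollary only constrains $\Sl$.
\end{itemize}
Locally near $p_i$ the new form $df$ with $f=P_{d_i}$ is harmonic for the flat metric, which gives local intrinsic harmonicity and local $\star$-exactness. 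Transitivity survives because positive transversal loops for $\omega$ can be chosen away from the balls, and points inside the balls can be joined to such loops along gradient-like paths. Theorem~\ref{thm:intro_intrinsic} then gives a metric $g_1$ for which the modified form $v$ is harmonic, with $v=d(P_{d_i}+\mathcal O(|x|^{d_i+1}))$ in geodesic coordinates; the metric is Euclidean near $p_i$ after adjusting coordinates.

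The second step is the blow-up at each $p_i$. Appendix~\ref{section:k-nondeg_on_Rn} supplies a Euclidean model $(du_i,\Sl_i',\I_i')$ on $\RR^3$ with:
\begin{itemize}
\item $\Sl_i'$ an unknotted circle along which $u_i$ is $k_i$-nondegenerate;
\item $u_i=P_{d_i}+\mathcal O(|y|^{d_i-1})$ at infinity;
\item $\star du_i$ exact near infinity.
\end{itemize}
Theorem~\ref{thm:intro_blowup} can be applied successively at the disjoint points $p_i$; alternatively the local replacement principle applies simultaneously. The scaled copies land inside $B_i$, giving $\Sl=\bigsqcup C_i$ with $C_i\subset B_i$ unknots. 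Smoothness and $k_i$-nondegeneracy are local properties of the inserted models and persist, since the final metric agrees with the model metric near $C_i$, or at least the form is unchanged there.

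The main obstacle is the first step: realizing $\omega$ with a prescribed degenerate zonal zero $P_{d_i}$ while keeping the form transitive and introducing no uncontrolled extra zeros. The Poincaré--Hopf index of $\nabla P_d$ at $0$ must be balanced. My fallback is to place a compensating Morse zero nearby, still inside $B_i$ but away from $C_i$; this affects neither the conclusion nor the hypotheses of Theorem~\ref{thm:intro_blowup}.
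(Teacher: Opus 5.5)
Your overall architecture is the paper's: a harmonic form from $b_1>0$, isolated zonal zeros created by Calabi surgery, then Theorem~\ref{thm:intro_blowup} at each zero with an appendix model. Two steps fail as written, however.

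\textbf{The degree is wrong.} With $d_i=k_i+1$ the appendix supplies no $k_i$-nondegenerate model. Its $SO(2)$-invariant solutions of growth at most $|x|^d$ are combinations of $f_0,\dots,f_d$. The even and odd parts must vanish separately along the circle, so the attainable vanishing order is only about $d/2$. Concretely, Proposition~\ref{prop:k_degenerate} gives $\lfloor d/2\rfloor$. Once the $f_0$-component is removed (which is exactly what makes $\star du$ exact near infinity, as Theorem~\ref{thm:blowup_isolated_zero} requires), Proposition~\ref{prop:k_degeneratecoexact} gives $\lfloor (d-1)/2\rfloor$, and only for $d\ge3$. For $d=k_i+1$ this is $\lfloor k_i/2\rfloor<k_i$. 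You need $d_i=2k_i+1$ (or $2k_i+2$); the paper uses $d_i=2k_i+1$ in Proposition~\ref{prop:construct-k-nondegenerate}.

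\textbf{The zero-creation step rests on a false claim.} This is the more serious gap, and the obstruction is not Poincar\'e--Hopf: for odd $d$ the index of $\nabla Z_d$ at $0$ is already $0$. What matters are the local critical groups.
\begin{enumerate}
\item Near $0$, $\{Z_d\le0\}$ is the cone over $\{\omega\in S^2:P_d(\omega_3)\le0\}$. For odd $d$ this is one polar cap plus $(d-1)/2$ annuli, so $C_1(Z_d,0)\cong C_2(Z_d,0)\cong\RR^{(d-1)/2}$.
\item These two groups sit at a single critical value and cannot cancel each other. On the other hand, if a function equals a linear coordinate near the boundary of a cylindrical chart, the chart has zero homology relative to a sublevel set below all critical values.
\item Hence a function equal to $Z_d$ near $p_i$ and to $f_i$ near $\partial B_i$ must have further critical points, at least $d-1$ of them if they are Morse.
\item All of them must have index $1$ or $2$. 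A local extremum is not locally intrinsically harmonic, and the small spherical leaves around it meet no positive loop, so transitivity would fail.
\end{enumerate}
So neither a ``Morse-free cobordism'' nor one compensating zero can work.

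The missing ingredient is the explicit model of Appendix~\ref{sec:perturb_zonal_harmonics}. The function $G_k=Z_k+ar^{m-1}x$, with $m>k$ odd, has exactly $k-1$ nonzero critical points. They correspond to the simple roots of $E_{k,m}(t)=ktP_k(t)+m(1-t^2)P_k'(t)$ in $(-1,1)$, and all are nondegenerate of index $1$ or $2$ (Lemmas~\ref{lemma:critcor}, \ref{lem:simpleness}, \ref{lem:morse_index}). The paper interpolates $G_k$ to $x$, then cuts off the perturbation near $0$ so that $v'=dZ_k$ exactly there (Theorem~\ref{thm:adding_degenerate_zero}).

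Your transitivity sketch via gradient-like paths also needs replacing. The paper shows that every leaf of $\ker v'$ meeting the modified ball reaches its boundary. Otherwise the leaf's closure would bound a region on which $\tilde f_k$ has an interior extremum, which is impossible without index-$0$ or index-$3$ zeros. It then moves an exterior positive loop onto the point using Lemma~\ref{lem:modify_positive_loop_in_one_leaf}.
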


The same replacement principle also yields a splitting theorem for smooth higher-order branching components, provided that a certain obstruction vanishes. Given a $\vec{k}$-nondegenerate $\ZT$ harmonic $1$-form, for each connected component $S_{i}$ we define the \textit{winding class} $\mathfrak{m}(A_{i})\in H^{1}(S_{i},\mathbb{Z}/(2k_i+1))$ of the leading coefficient to be the \v{C}ech cocycle determined by local choices of the $(2k_i+1)$-th root of the section $A_{i}$. The class $\mathfrak{m}(A_{i})$ vanishes if and only if the section $A_{i}$ admits a global $(2k_i+1)$-th root. In particular, the examples constructed in Appendix \ref{section:k-nondeg_on_Rn} all have vanishing winding class; see Remark \ref{rem:winding}.

\begin{theorem}
\label{thm:intro_k-nondeg}
Let $(v,\Sl,\I)$ be a smooth $\vec k$-nondegenerate $\ZT$ harmonic
$1$-form on a closed oriented Riemannian manifold $(M^n,g)$, $n\ge3$,
with vanishing winding classes. Suppose further that $\Sl$ has trivial normal bundle in $M$.

Then there exist a Riemannian metric
$g'$ on $M$ and a smooth nondegenerate $\ZT$ harmonic $1$-form
$(v',\Sl',\I')$ on $(M,g')$. The splitting can be performed
arbitrarily close to $\Sl$, and near each component $S_i\subset\Sl$,
the new singular set consists of $2k_i-1$ copies
of $S_i$.
\end{theorem}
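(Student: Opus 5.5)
The plan is to run the local replacement principle of Section~\ref{sec:changing_zero_sets} once for each component $S_i$, and then invoke Theorem~\ref{thm:intro_intrinsic}. Since $\Sl$ has trivial normal bundle, fix a tubular neighborhood $U_i\cong S_i\times D^2_{2\epsilon}$ with a global normal complex coordinate $z$. In this coordinate
\[
v=d\Re\bigl(A_i(t)z^{k_i+1/2}+\mathcal O(|z|^{k_i+3/2})\bigr).
\]
The winding class $\mathfrak m(A_i)$ vanishes, so $A_i=a_i^{2k_i+1}$ for a smooth nowhere-vanishing $a_i\colon S_i\to\CC^*$. After the fibrewise rotation $w=a_i(t)^2z$, the leading term becomes $\Re(w^{k_i+1/2})$, with a coefficient that is globally constant along $S_i$. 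This rotation is a fibrewise diffeomorphism. It is not an isometry, but that does not matter, because the metric will be discarded anyway.

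The local model is a planar one, taken from Appendix~\ref{section:k-nondeg_on_Rn} or built directly. I need a $2$-valued harmonic function $h_\delta$ on $\CC$ of the form
\[
h_\delta(w)=\Re\Bigl(\sqrt{p_\delta(w)}\,q_\delta(w)\Bigr),
\]
where $p_\delta$ has $2k_i-1$ simple roots near $0$ and $\sqrt{p_\delta}\,q_\delta\sim w^{k_i+1/2}$. The degrees work out: $\deg p_\delta=2k_i-1$ and $\deg q_\delta=1$, so the total degree is $(2k_i-1)/2+1=k_i+1/2$. The requirement is $h_\delta=\Re(w^{k_i+1/2})+\mathcal O(\delta|w|^{k_i-1/2})$ at infinity. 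For example, take $p_\delta(w)=w\prod_j(w^2-\delta^2c_j^2)$ with distinct $c_j$ and $q_\delta=w$, so that $\sqrt{p_\delta}=w^{k_i-1/2}(1+\mathcal O(\delta^2/|w|^2))$. Near each root of $p_\delta$ the form $dh_\delta$ is nondegenerate, since it looks like $\Re(c\,\zeta^{1/2}d\zeta)$ with $c\neq0$. Its zeros away from the branch points are those of $\frac{d}{dw}(\sqrt{p}\,q)$, and these are ordinary, isolated, and locally harmonic.

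Take the product model $d h_\delta(w)$ on $S_i\times D^2$ with the product of a metric on $S_i$ and the flat metric on $D^2$. This model is harmonic, and its branching set is $2k_i-1$ copies of $S_i$. It is locally $\star$-exact because the form is pulled back from the disc factor. Now interpolate with a cutoff on the annulus $\epsilon<|w|<2\epsilon$:
\[
\tilde v=d\bigl(\chi\,\Re(\sqrt{p_\delta}q_\delta)+(1-\chi)\,\Re(w^{k_i+1/2}+R)\bigr),
\]
where $R$ is the remainder term. For this to make sense, both terms must be $2$-valued with the same monodromy $\I$ on the annulus, and they are, since $p_\delta$ has odd degree. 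Taking $\delta\ll\epsilon$, $\tilde v$ is $C^1$-close to $v$ on the annulus relative to $|v|\sim\epsilon^{k_i-1/2}$, so it has no zeros there. It is therefore locally intrinsically harmonic everywhere: on the inner region by the model, outside $U_i$ by the original metric, and on the annulus because a nonvanishing closed form is locally harmonic. Local $\star$-exactness near the total zero set holds on the inner region by the product structure.

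The main obstacle is transitivity. It should follow from the replacement principle. By the statement in the introduction, $v$ is transitive. Moreover, positive loops through points outside $\bigcup_i U_i$ can be chosen to avoid the $\epsilon$-neighborhoods, and these loops stay positive under the small perturbation. The delicate part is points inside $U_i$, in particular the new bounded regions between the branch copies. For these I would argue in the planar model: every regular leaf of $dh_\delta$ in the disc exits through the annulus. This holds because $h_\delta$ is harmonic and so has no interior extremum on a lift. A point inside can therefore be joined by a positively transverse path, namely a gradient flow line of the $2$-valued potential, to the annulus, and then connected to the global positive loops; at the end one closes up, using that the loops on the annulus can be entered and exited in both the positive and negative $h$-directions. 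Once transitivity is established, Theorem~\ref{thm:intro_intrinsic} gives $g'$, and the singular set is the disjoint union of the $2k_i-1$ copies of each $S_i$, all of them nondegenerate.
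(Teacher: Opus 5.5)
Your local model breaks down at the new branch components.

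With the ansatz $h_\delta=\Re\bigl(\sqrt{p_\delta}\,q_\delta\bigr)$, take a simple root $\xi$ of $p_\delta$ with $q_\delta(\xi)\neq0$. Near it, $h_\delta=\Re\bigl(c\,(w-\xi)^{1/2}\bigr)+\mathcal O(|w-\xi|^{3/2})$ with $c=\sqrt{p_\delta'(\xi)}\,q_\delta(\xi)\neq0$. Hence $dh_\delta\approx\Re\bigl(\tfrac c2(w-\xi)^{-1/2}dw\bigr)$. In your example this happens at all $2k_i-2$ roots $\pm\delta c_j$; only $w=0$ gets the $w^{3/2}$ behaviour.

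That is exactly the excluded $k=0$ model $d\Re(z^{1/2})$. Near $\{w=\pm\delta c_j\}\times S_i$, $|dh_\delta|$ is unbounded and $|\nabla dh_\delta|\notin L^2$. So the glued form is not even a $\ZT$ closed $1$-form, let alone nondegenerate. Your check that ``it looks like $\Re(c\,\zeta^{1/2}d\zeta)$'' applies the expansion of the potential to the $1$-form.

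No other choice of $q_\delta$ rescues the ansatz. Matching $w^{k_i+1/2}$ on the gluing annulus forces $q_\delta$ to be close to $w$ there. By Rouch\'e, $q_\delta$ then has a single zero in the disc. So whenever $k_i\ge2$, at least $2k_i-2$ of the new components carry the $(w-\xi)^{1/2}$ singularity.

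The missing idea is to prescribe the holomorphic derivative instead of the potential, as the paper does. Take $v_{(i)}=\Re\bigl(\tfrac{2k_i+1}{2}\prod_{j=1}^{2k_i-1}(w-\xi_j)^{1/2}\,dw\bigr)$ with the product metric $g_{S_i}+dw\,d\bar w$. Near each $\xi_j$ it integrates to $\Re\bigl(C_j(w-\xi_j)^{3/2}\bigr)+\mathcal O(|w-\xi_j|^{5/2})$ with $C_j\neq0$, and it has no ordinary zeros. However, it is a hyperelliptic differential that in general has nonzero real periods inside the disc. Two of your steps must therefore change.

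First, your cutoff interpolation needs an $\I$-valued primitive on the annulus, and this is no longer automatic. The paper pulls back by $w=\zeta^2$ and observes that the Laurent expansion contains only terms $\zeta^{2k_i-2\ell}\,d\zeta$. These exponents are even, so there is no residue. It then normalizes the anti-invariant primitive so that it converges to $\Re(w^{k_i+1/2})$ as the $\xi_j\to0$. The imaginary part of the same primitive gives $\star$-exactness on the collar.

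Second, your transitivity argument inside the disc (leaves as level sets of $h_\delta$, no interior extremum) is unavailable. It is also unnecessary. Apply Theorem~\ref{thm:gluing} via Lemma~\ref{lem:changing_zero_sets}, with the product tube as a harmonic piece. Then positive loops are needed only on the gluing collar. There Lemma~\ref{lem:positive_curve_on_gluing_region} supplies them at distance $\ge r_0$ from $S_i$, with the angle bound.

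With these changes, your single-annulus interpolation that removes the remainder and splits at once is a harmless variant of the paper's two-step procedure. That procedure is Lemma~\ref{lem:kill_remainder_terms} followed by the splitting.
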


This resolves each higher-order branching component into ordinary
nondegenerate components.  Locally, the model
$d\Re(z^{k+1/2})$ is replaced by a product of half-power factors with
$2k-1$ nearby branch components.  The result complements the
deformation theory for $\vec k$-nondegenerate $\ZT$ harmonic $1$-forms
developed in \cite{HeSalm2026Metric}.

Finally, we consider $\ZT$ harmonic $1$-forms with graphic singular
sets in dimensions three and four.  Away from finitely many vertices,
such singular sets are smooth codimension-two submanifolds; near each
vertex, they are asymptotic to a cone.  Correspondingly, the form has
at each vertex a tangent cone given by a homogeneous
$\ZT$ harmonic model on Euclidean space.  The local structure and
deformation theory of these objects are closely related to the index
theory for graph-like singularities
\cite{HMT2025index,HeMazzeoGraphicL2}.

If the tangent cone at each vertex admits a resolution model in the
sense of Definition~\ref{def:resolution_model}, then the graphic
singular set can be desingularized at all vertices.

\begin{theorem}
\label{thm:intro_graphic_desing}
Let $(M^n,g)$ be a closed oriented Riemannian manifold with
$n\in{3,4}$, and let $(v,\Sl,\I)$ be a strongly nondegenerate
graphic $\ZT$ harmonic $1$-form on $(M,g)$. Suppose that the tangent
cone at every vertex of $\Sl$ admits a resolution model. Then there
exist a Riemannian metric $g'$ on $M$ and a nondegenerate $\ZT$
harmonic $1$-form $(v',\Sl',\I')$ on $(M,g')$ whose singular set
$\Sl'$ is a smooth codimension-two submanifold. Moreover, for every
neighborhood $\mathcal U$ of $\Sl$, the construction can be arranged
so that $\Sl'\subset\mathcal U$ and, after identifying the flat line
bundles, $v'=v$ on $M\setminus\mathcal U$.
\end{theorem}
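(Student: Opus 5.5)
The plan is to apply the local replacement principle (the mechanism behind Theorems~\ref{thm:intro_blowup} and \ref{thm:intro_k-nondeg}) at each vertex, and then invoke Theorem~\ref{thm:intro_intrinsic}. Let $p_1,\dots,p_N$ be the vertices of $\Sl$. Strong nondegeneracy gives two things near each $p_j$. First, in geodesic coordinates $v$ is asymptotic to its homogeneous tangent cone $(dP_j,C_j,\I_j)$ on $\RR^n$, with a decay rate $\mathcal O(r^{d_j+\epsilon})$ for the error. Second, away from the vertices $\Sl$ is a smooth codimension-two submanifold along which $v$ is nondegenerate in the sense of \eqref{eqn}. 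A resolution model for $(dP_j,C_j,\I_j)$ (Definition~\ref{def:resolution_model}) is a $\ZT$ harmonic form $(du_j,\Sl_j',\I_j')$ on $(\RR^n,g_j')$ with the following properties: $\Sl_j'$ is smooth, nondegenerate, and asymptotic to the cone $C_j$ at infinity; $g_j'$ is Euclidean outside a compact set; $u_j-P_j$ decays relative to $|y|^{d_j}$; and $\star du_j$ is exact near infinity.

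\textbf{Step 1 (cut and paste).} Fix $\mathcal U$. Choose $\rho$ small so that the balls $B_{2\rho}(p_j)$ lie in $\mathcal U$ and are disjoint. On the annulus $A_j=B_{2\rho}\setminus B_{\rho}$, compare $v$ with the rescaled model $\lambda^{-d_j}\,d\bigl(u_j(\lambda\,\cdot)\bigr)$, where we write $y=x/\lambda$ with $\lambda\ll\rho$. Both forms are close to $dP_j$ in $C^1$ relative to $r^{d_j-1}$. The singular sets in $A_j$ are both close to $C_j$. Because the nondegenerate branch locus is stable, they can be matched by an isotopy of $A_j$ supported near $C_j$ and $C^1$-small in the rescaled sense, which brings $\Sl_j'$ onto $\Sl\cap A_j$.

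After this isotopy, both forms are sections of the same flat bundle over $A_j\setminus\Sl$. We then check that the cohomology classes agree, which is where the exactness hypotheses enter, and interpolate with a cutoff: $v'=d(\chi f+(1-\chi)\lambda^{-d_j}u_j(\lambda\cdot))$, where $v=df$ locally on $A_j$. Near $\Sl\cap A_j$ the interpolation has to preserve the $\Re(Bz^{1/2}dz)$ structure. We handle this by writing both local primitives in the form $\Re(B z^{3/2}+\dots)$ in adapted coordinates and interpolating the coefficients, which keeps $B\neq0$ because both leading coefficients are close to that of the cone. The result is a $\ZT$ closed form equal to $v$ outside $\mathcal U$, with smooth nondegenerate singular set $\Sl'$.

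\textbf{Step 2 (local hypotheses).} Local intrinsic harmonicity and local $\star$-exactness near the total zero set hold on three kinds of regions. Inside $B_\rho(p_j)$ we have the model with its metric $g_j'$. Outside the balls we have the original $g$. On the annuli, the total zero set is just $\Sl'\cap A_j$. There nondegeneracy gives a local harmonic normal form, so a local metric can be built by the same nondegenerate model argument used in Theorem~\ref{thm:intro_k-nondeg}. Local $\star$-exactness on the annuli follows from exactness of $\star$ at infinity for the model and for the cone, together with closeness.

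\textbf{Step 3 (transitivity).} Since $v$ is $g$-harmonic it is transitive. Positive loops through points outside $\bigcup_j B_{2\rho}(p_j)$ can be chosen away from the balls and are unaffected by the surgery. For points inside the balls, we use the fact that the model is harmonic on $\RR^n$ and asymptotic to the cone. The idea is to flow along $\star$-dual directions (the gradient of $u_j$ for $g_j'$) out to the annulus, join a positive loop of $v$ there, and come back. This requires a uniform statement: every point of $A_j$ off the zero set lies on a $v$-positive loop, and positivity is open under the small $C^0$ change of the form.

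I expect the main obstacle to be this last point, together with showing that $\nabla u_j$-trajectories starting in the ball reach the annulus in both time directions. A harmonic function has no interior extrema, so trajectories cannot stay trapped, but the singular set complicates this near the branch locus. With these three steps in place, Theorem~\ref{thm:intro_intrinsic} yields $g'$.
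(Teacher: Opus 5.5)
There is a genuine gap in Step~2, and it carries into Step~3.

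\textbf{The patching problem.} Definition~\ref{def:locally_intrinsically_harmonic} asks for a \emph{single} metric on a neighborhood of all of $\Sl'\cup\Zl_{v'}$ for which $v'$ is harmonic. But every edge of $\Sl$ incident to $p_j$ runs continuously from the exterior, through $A_j$, into a branch of the rescaled $\Sl_j'$. So your three metrics ($g$ outside, the rescaled $g_j'$ inside, something on $A_j$) would have to agree where $\Sl'$ crosses $\partial B_\rho$ and $\partial B_{2\rho}$. Harmonicity of a fixed form is not preserved when metrics are interpolated. Hence the three metrics do not assemble into one $g_U$ unless the metric on $A_j$ already matches both neighbors, and that is the whole problem. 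In the language of Theorem~\ref{thm:gluing}: no boundary collar of a harmonic piece may meet $\Sl'$, and a ball/annulus/exterior decomposition violates this.

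\textbf{The annulus itself.} On $A_j$ your cut-off produces $d\Re(Bz^{3/2}+E)$ with a genuine remainder $E$. You give no way to find a smooth metric making this form, remainder included, harmonic. The argument behind Theorem~\ref{thm:intro_k-nondeg} does not do this either. Lemma~\ref{lem:kill_remainder_terms} changes the \emph{form} to exactly $d\Re(Az^{k+1/2})$ on a tube around the \emph{whole} component (via Lemma~\ref{lem:changing_zero_sets}, with its own transitivity check). Only then does it use the product metric $g_S+dw\,d\bar w$. Separately, $\star$-exactness does not follow from ``closeness''. It is cohomological, coming from the vanishing of $\I$-twisted cohomology of punctured neighborhoods, as in Proposition~\ref{prop:local_exact_graphic}.

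\textbf{The missing idea.} This is the normalization of Subsection~\ref{subsec:5_normalize}. After the vertex replacement, take each component $S$ of the new singular set that passes through a vertex collar. Put $v'=d\Re\bigl(B(t)\zeta^{3/2}+\eta_SE_2\bigr)$ on a tube $U_S$ around all of $S$, so that $v'=d\Re(B\zeta^{3/2})$ exactly on a thinner tube $U_S'$. This is harmonic for a product metric built from local cube roots of $B$, which patch with no winding assumption.

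The harmonic pieces are then:
\begin{enumerate}
\item the exterior, with metric $g$;
\item the resolution cores minus the tubes, with the rescaled model metric;
\item the tubes $U_S'$.
\end{enumerate}
The transition pieces are the radial collars minus the tubes, together with the collars $U_S\setminus U_S'$. They contain no zeros or singular points, by strong nondegeneracy and the error estimates (Lemma~\ref{lem:transition_nonvanishing}).

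\textbf{Consequence for Step~3.} If you work with Theorem~\ref{thm:gluing} instead of Theorem~\ref{thm:intro_intrinsic}, positive loops are needed only through transition pieces. Your ``main obstacle'' about gradient trajectories inside the resolution core then disappears. The new collars $U_S\setminus U_S'$ do need their own argument. The paper uses Moser's trick to conjugate $v'$ to $d\Re(B\zeta^{3/2})$ on the tube. It then reads off from the level sets of $\Re(w^{3/2})$ that every leaf meeting the collar reaches a fixed compact set outside the vertex balls (Lemma~\ref{lem:leaf_escaping}). There it uses the uniform, angle-controlled positive loops of Lemma~\ref{lem:positive_curve_on_gluing_region}, which survive the later choices of parameters.
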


These applications illustrate the flexibility of Calabi surgery.
Once compatible local models have been constructed, the passage from
local data to a global $\ZT$ harmonic $1$-form requires only control
of transitivity and metric reconstruction.  It therefore avoids
introducing additional smoothness or nondegeneracy hypotheses
for the purpose of solving a global analytic correction problem.

\subsection*{Acknowledgments}
We would like to thank Rafe Mazzeo, Clifford Taubes and Thomas Walpuski for helpful comments and disucssions. 

\section{Background on $\ZT$ harmonic 1-forms}\label{sec:preliminary}
In this section we collect the background and notation for $\mathbb Z/2$ harmonic
$1$-forms used throughout the paper. We first review $2$-valued differential
forms and basic conventions, and then recall the local asymptotic
models near smooth and graphic singular sets.
\subsection{Basic definitions and conventions}

Let $(M^n,g)$ be an oriented, connected, compact Riemannian manifold, possibly with nonempty boundary $\partial M$. Let $\Sl\subset \Int M$ be a closed subset of Hausdorff codimension at least two, and let $\I\to M\setminus \Sl$ be a flat real line bundle. A section of $\wedge^kT^*M\otimes \I$ is called an $\I$-valued, or $2$-valued, $k$-form.

The exterior differential, Hodge star, and covariant derivative are defined using the flat structure of $\I$. Thus, after choosing a local unit flat section $s$ and writing an $\I$-valued form as $w\otimes s$, one has $d(w\otimes s)=dw\otimes s$ and $\star_g(w\otimes s)=(\star_gw)\otimes s$; changing $s$ to $-s$ changes $w$ to $-w$.

\begin{definition}\label{def:z2-closed harmonic}
A $2$-valued closed $1$-form on $(M,g)$ is a triple $(v,\Sl,\I)$, where $v\in \Gamma(T^*M\otimes \I)$ satisfies $dv=0$ and $\int_{M\setminus\Sl}|v|^2<+\infty$. The closed set $\Sl$ is called the singular set of $v$. A $2$-valued closed $1$-form $(v,\Sl,\I)$ is called a \emph{$\mathbb{Z}/2$ closed $1$-form} if, in addition, $\int_{M\setminus\Sl}|\nabla v|^2<+\infty$. A $2$-valued, respectively $\mathbb{Z}/2$, closed $1$-form is called harmonic if $d\star_g v=0$.
\end{definition}

In the rest of the paper, all $2$-valued closed $1$-forms are assumed to have minimal singular set unless explicitly stated otherwise. By this we mean that there is no proper closed subset $\Sl'\subsetneq \Sl$, no flat real line bundle $\I'\to M\setminus\Sl'$, and no $2$-valued closed $1$-form $(v',\Sl',\I')$ such that $(\I',v')$ agrees with $(\I,v)$ over $M\setminus\Sl$ and such that, for every $x\in \Sl\setminus\Sl'$, there exists a ball $B\subset M\setminus\Sl'$ centered at $x$ with $\int_{B\setminus \Sl} |\nabla v'|^2<+\infty .$

In particular, if $(v,\Sl,\I)$ is a minimal $\mathbb{Z}/2$ closed $1$-form, then near every point of $\Sl$ either the line bundle $\I$ has nontrivial local monodromy, or no extension of the same form across that point satisfies the above local $L^2$ bound for its covariant derivative. Thus, for a minimal $\mathbb{Z}/2$ closed $1$-form, the singular set is part of the actual branching or analytic singularity.
\begin{definition}
The zero set of $v$ on $M\setminus\Sl$ is denoted by $\Zl_v$ and is called the ordinary zero set of $v$. 
\end{definition}

When no confusion can arise, we simply write $v$ instead of $(v,\Sl,\I)$.

\subsubsection{Boundedness of $|v|$}
Our definition of a $\ZT$ harmonic $1$-form is weaker than the one
used in \cite{TaubesZero}; see also \cite{Boyurecify}. In those works,
$|v|$ is assumed to extend continuously to $M$ and to vanish on the
singular set. For our purposes, the following weaker boundedness result
is sufficient. Here $\mathrm{Cap}_2$ denotes the Sobolev $2$-capacity, see \cite[Chapter 4]{evans}.

\begin{lemma}\label{lem:|v|bd}
    Let $(v,\Sl,\I)$ be a $\ZT$ harmonic $1$-form on a closed Riemannian manifold $(M,g)$. Suppose the $2$-capacity $\mathrm{Cap}_2(\Sl)=0$, then $|v|$ is bounded on $M$.
\end{lemma}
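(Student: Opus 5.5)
The plan is to show that $|v|^2$ (or $|v|$) is a weak subsolution of an elliptic inequality on all of $M$, and then apply a local maximum principle (Moser iteration / mean value inequality).

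\emph{Step 1: the inequality away from $\Sl$.} On $M\setminus\Sl$, locally $v=w\otimes s$ with $w$ an ordinary harmonic $1$-form. The Bochner formula gives
\[
\tfrac12\Delta|v|^2=|\nabla v|^2+\mathrm{Ric}(v,v).
\]
Since $M$ is closed, $\mathrm{Ric}\ge -K$, so
\[
\Delta|v|^2\ge -2K|v|^2 \quad\text{on } M\setminus\Sl.
\]
(Sign convention: $\Delta$ is the nonnegative-on-subharmonic Laplacian.) This expression is independent of the sign of $s$, so it is globally defined off $\Sl$.

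\emph{Step 2: extension across $\Sl$.} Set $f=|v|$. Kato's inequality gives $|\nabla f|\le|\nabla v|$ off $\Sl$, so $f\in W^{1,2}(M\setminus\Sl)$ by the finiteness of $\int|v|^2$ and $\int|\nabla v|^2$. Because $\mathrm{Cap}_2(\Sl)=0$, there are cutoffs $\eta_j\in C^\infty(M)$ with $0\le\eta_j\le1$, $\eta_j=1$ near $\Sl$, and $\|\eta_j\|_{W^{1,2}}\to0$. Using these, I will show:
\begin{enumerate}
\item $f$ is in $W^{1,2}(M)$, with $\Sl$ removable for Sobolev functions of zero capacity.
\item The inequality $\Delta f\ge -Kf$ holds weakly on all of $M$: for $\varphi\ge0$ smooth,
\[
\int\langle\nabla f,\nabla\varphi\rangle\le K\int f\varphi .
\]
\end{enumerate}
To get (ii), first establish it on $M\setminus\Sl$ using the test function $(1-\eta_j)\varphi$. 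Combining the Bochner formula with Kato gives $f\Delta f\ge -Kf^2$ off the zero set; one can instead work with $\sqrt{f^2+\epsilon}$ and let $\epsilon\to0$. The resulting error term is bounded by
\[
\int \varphi\,|\nabla f|\,|\nabla\eta_j|\le C\|\nabla f\|_{L^2}\|\nabla\eta_j\|_{L^2}\to0,
\]
and $\int f\varphi\,\eta_j\to0$ by dominated convergence. Passing to the limit $j\to\infty$ yields the global weak inequality.

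\emph{Step 3: conclusion.} With $f\ge0$ in $W^{1,2}(M)$ a weak subsolution of $\Delta f+Kf\ge0$, the De Giorgi--Nash--Moser local maximum principle gives
\[
\sup_{B_r(x)}f\le C\|f\|_{L^2(B_{2r}(x))}
\]
with $C$ uniform, by compactness of $M$. Since $\|f\|_{L^2}<\infty$, $f$ is bounded.

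\emph{Main obstacle.} The delicate step is Step 2: justifying the weak subsolution inequality across $\Sl$ and handling the nonsmoothness of $|v|$ at its zeros. The first thing I would try is to work with $f_\epsilon=\sqrt{|v|^2+\epsilon}$, which is smooth off $\Sl$ and satisfies $\Delta f_\epsilon\ge -Kf_\epsilon$ there by the refined Kato inequality. Then I would use the capacity cutoffs only to control the boundary term near $\Sl$, which requires only $\nabla f_\epsilon\in L^2$, guaranteed by the $\ZT$ closed hypothesis.
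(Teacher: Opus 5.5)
Your proposal is correct and follows essentially the same route as the paper: Kato plus the Bochner formula (with the regularization $\sqrt{|v|^2+\epsilon}$) give $\Delta|v|\ge -K|v|$ weakly on $M\setminus\Sl$. The zero-capacity cutoffs (your $1-\eta_j$ is the paper's $\chi_j$) then extend $|v|$ to $W^{1,2}(M)$ and carry the subsolution inequality across $\Sl$, and Moser iteration on the closed manifold gives the $L^\infty$ bound; the ordinary Kato inequality already suffices for $f_\epsilon$, so the refined version is not needed.
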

\begin{proof}
Set $u=|v|$. By Kato's inequality, $|du|\leq|\nabla v|$,
so $u\in W^{1,2}(M\setminus\Sl)$. Choose $\Lambda>0$ such that
$\mathrm{Ric}_g\geq-\Lambda g$. The Bochner formula, Kato's inequality, and the standard
regularization $u_\epsilon=(|v|^2+\epsilon^2)^{1/2}$
give $\Delta u\ge-\Lambda u$ weakly on $M\setminus\Sl$.

Since $\mathrm{Cap}_2(\Sl)=0$, the function $u$ extends to an
element of $W^{1,2}(M)$, and there exist cut-off functions
$0\leq\chi_j\leq1$, vanishing near $\Sl$, such that
$\chi_j\to1$ a.e. on $M$ and $\|d\chi_j\|_{L^2}\to0$.

For every nonnegative $\phi\in C^\infty(M)$, applying the weak
inequality to $\chi_j\phi$ and letting $j\to\infty$ gives
\[
\int_M\langle du,d\phi\rangle
\leq
\Lambda\int_Mu\phi.
\]
Thus $\Delta u\geq-\Lambda u$ weakly on $M$. Standard Moser iteration on the closed manifold $M$
then yields
$\|v\|_{L^\infty(M)}
=
\|u\|_{L^\infty(M)}
\leq
C_{M,g}\|u\|_{L^2(M)}$.
\end{proof}

The lemma applies, in particular, to the two classes of singular sets
considered in the following subsections: smooth codimension-two
submanifolds and graphic singular sets on closed manifolds. Both have
zero $2$-capacity.

\subsection{Smooth singular sets and local asymptotics}
We recall the local asymptotic expansion of a $\mathbb Z/2$ harmonic $1$-form
near a smooth singular set, following \cite{donaldsondeformation2019}.
\begin{definition}
A $2$-valued harmonic $1$-form $(v,\Sl,\I)$ is called smooth if $\Sl$ is a smooth codimension-two submanifold of $M$.
\end{definition}

Assume that $(v,\Sl,\I)$ is smooth, and write $\Sl=S_1\sqcup\cdots\sqcup S_m$ as a disjoint union of connected components. Let $U_i$ be a tubular neighborhood of $S_i$. Locally on $U_i$, choose coordinates $(t,z)$, where $t$ denotes coordinates along $S_i$ and $z=re^{i\theta}$ is a complex coordinate on the normal plane, so that $S_i=\{z=0\}$. Since the monodromy of $\I$ around the meridian $\{r=\epsilon\}$ is $-1$, the local expressions $z^{\ell+1/2}$ are naturally $\I$-valued.

By \cite{donaldsondeformation2019}, locally near $S_i$, we have the expansion
\begin{align}\label{eq:asymptotic_expansion}
v=d\Re\bigl(A_{i}(t)z^{k_i+\frac12}\bigr)+\mathcal{O}(|z|^{m_i+\frac12}).
\end{align}
The case $k_i=0$ gives the model $d\Re(z^{1/2})$, for which the norm of its covariant derivative is not $L^2$-integrable near $S_i$. Hence, if $v$ is a $\mathbb{Z}/2$ harmonic $1$-form in the sense of the preceding subsection, then $k_i\ge1$. Define $\vec k=(k_1,\cdots,k_m)$, we next introduce the concept of $\vec k$-nondegeneracy.

\begin{definition}\label{def:smooth_k_nondeg}
Let $(v,\Sl,\I)$ be a smooth $\mathbb{Z}/2$ harmonic $1$-form. We say that $v$ is $k$-nondegenerate near $S_i$ if $m_i=k$ and $A_{i}$ is nowhere vanishing on $S_i$. We say that $v$ is nondegenerate if it is $1$-nondegenerate near every component of $\Sl$. 

Let $\vec{k}=(k_i,\ldots,k_m)$ be a $m$-tuple of positive integers. $(v, \Sl, \I)$ is called $\vec{k}$-nondegenerate if for each $i$, $v$ is $k_i$-nondegenerate near $S_i$.
\end{definition}
This generalized nondegeneracy was introduced and studied in \cite{HeSalm2026Metric}. When $k_i=1$, this is the usual nondegeneracy condition introduced in \cite{donaldsondeformation2019}.

\subsection{Graphic \texorpdfstring{$\ZT$}{Z/2} harmonic $1$-forms}
\label{subsec:graphic_Z2}

In this subsection we collect the local structure of $\ZT$ harmonic $1$-forms on manifolds of dimension $n\in\{3,4\}$, follows from \cite{HeMazzeoGraphicL2,HMT2025index}. The results of this subsection are used in Section~\ref{section:desingularize_graphical}.

\subsubsection{Graphic singular sets}
\label{subsubsec:graphic_singular_sets}

\begin{definition}\label{def:graphic_singular_set}
A closed subset $\Sl$ of an oriented Riemannian manifold $(M^n,g)$, $n\in\{3,4\}$, is called a \emph{graphic submanifold} if there is a finite set $V=\{x_1,\dots,x_m\}\subset\Sl$, called the \emph{vertex set}, such that $\Sl\setminus V$ is a smooth oriented submanifold of $M$ of codimension $2$, and the following asymptotically conical condition holds at each vertex.

For each $x_i$, choose geodesic normal coordinates centered at $x_i$ and identify a neighborhood of $x_i$ with a ball $B_\epsilon(0)\subset T_{x_i}M$. There exists a closed smooth codimension $2$ submanifold $\vp_i$ of the unit sphere $S_{x_i}M\subset T_{x_i}M$ such that $\exp_{x_i}^{-1}\big((\Sl\cap B_\epsilon(x_i))\setminus\{x_i\}\big)$ is parametrized by an embedding $\Gamma_i:(0,\epsilon)\times \vp_i\longrightarrow T_{x_i}M,
$ satisfying
\[
\bigl|\tilde\nabla^k\bigl(\Gamma_i(\rho,p)-\rho p\bigr)\bigr|
=\mathcal{O}(\rho^{1+\alpha-k}),
\qquad k\ge0,\ \alpha>0,
\]
where the covariant derivatives and the norm are computed with respect to the conical metric $g_{C_i}=d\rho^2+\rho^2 g_{\vp_i}$ on $(0,\epsilon)\times\vp_i$.
\end{definition}

The embedding $\Gamma_i$ can be extended to $(0,\epsilon)\times S_{x_i}M$ and also satisfies the estimates
\[
\bigl|\tilde\nabla^k\bigl(\Gamma_i(\rho,\omega)-\rho \omega\bigr)\bigr|
=\mathcal{O}(\rho^{1+\alpha-k}),
\qquad k\ge0,\ \alpha>0,
\]
for any $\omega\in S_{x_i}M$. $\Gamma_i$ is called a \emph{straightening map}.

Each connected component of $\Sl\setminus V$ is called an \emph{edge} of $\Sl$. The cone $C(\vp_i):=\{\rho p:\rho>0,\,p\in\vp_i\}\subset T_{x_i}M$ is called the \emph{tangent cone} of $\Sl$ at $x_i$. When $n=3$, the link $\vp_i$ is an even number of points in $S^2$, and $\Sl$ near $x_i$ is a union of arcs meeting at $x_i$; when $n=4$, $\vp_i$ is a smooth closed $1$-dimensional submanifold of $S^3$.

\begin{definition}\label{def:graphic_Z2_closed_1-form}
A $\ZT$ closed $1$-form $(v,\Sl,\I)$ is called \emph{graphic} if $\Sl$ is a graphic submanifold and $\I$ is nontrivial around the edges of $\Sl$. 
\end{definition}

\subsubsection{\texorpdfstring{$\ZT$}{Z/2} eigensections}
\label{subsubsec:Z2_eigensections}

We now recall the eigensection theory on the links of graphic singularities developed in \cite{TWI,TaubesWuTopological,TaubesWuPolytopes}. Let $\vp\subset S^{n-1}$ be a closed submanifold of codimension $2$, and let $\I_\vp\to S^{n-1}\setminus\vp$ be a flat real line bundle with a Euclidean metric and monodromy $-1$ around each small meridian of $\vp$. When $n=3$ this forces $\vp$ to be a finite set of even cardinality in $S^2$; when $n=4$, $\vp$ can be any smooth link in $S^3$.

Let $\mathcal H_\vp$ be the completion of $C_c^\infty(S^{n-1}\setminus\vp;\I_\vp)$ with respect to the norm $\|f\|_{\mathcal H_\vp}^2=\int_{S^{n-1}\setminus\vp}|df|^2$ and let $\mathcal L_\vp$ be the corresponding $L^2$-completion. The flat connection on $\I_\vp$ together with the Levi-Civita connection of the round metric on $S^{n-1}$ defines the twisted Laplacian $\Delta_\vp$ acting on sections of $\I_\vp$.

\begin{definition}\label{def:Z2_eigensection}
A nonzero section $f\in\mathcal H_\vp$ is called a $\ZT$ eigensection if there is a real number $\lambda$ such that
\[
\int_{S^{n-1}}\langle df,dh\rangle=\lambda\int_{S^{n-1}}\langle f,h\rangle
\qquad\text{for every }h\in C_c^{\infty}(S^{n-1}\setminus\vp;\I_\vp).
\]
The number $\lambda$ is called a $\ZT$ eigenvalue, and the set of all such eigenvalues is denoted $\mathrm{Spec}_{\ZT}(\vp)$.
\end{definition}

By standard elliptic theory, a $\ZT$ eigensection $f$ is smooth on $S^{n-1}\setminus\vp$ and satisfies $\Delta_\vp f+\lambda f=0$ there. The eigenvalues form a discrete sequence
\(0<\lambda_1(\vp)\le \lambda_2(\vp)\le\cdots\) tending to infinity, and the
corresponding eigensections form an \(L^2\)-orthonormal basis of \({\mathcal L}_{\vp}\). Moreover, the norm of every eigensection extends H\"older continuously across $\vp$ and vanishes on $\vp$.

Let $C(\vp)=\{\rho p:\rho\ge0,\ p\in\vp\}\subset\RR^n$ be the cone over $\vp$, and let $\I(\vp)$ be the pullback of $\I_\vp$ along the projection $\RR^n\setminus C(\vp)\to S^{n-1}\setminus\vp$. If $f$ is a $\ZT$ eigensection with eigenvalue $\lambda$, define the positive homogeneous exponent
\begin{equation}\label{eq:homogeneous_exponent}
\mu_n(\lambda):=-\frac{n-2}{2}+\sqrt{\lambda+\frac{(n-2)^2}{4}}.
\end{equation}
Then $\rho^{\mu_n(\lambda)}f(\omega)$ is an $\I(\vp)$-valued homogeneous harmonic function on $\RR^n\setminus C(\vp)$. The \emph{positive indicial set} of $(\vp,\I_\vp)$ is defined to be
\begin{equation}\label{eq:indicial_set}
\mathcal D_\vp:=\{\mu_n(\lambda):\lambda\in\mathrm{Spec}_{\ZT}(\vp)\}.
\end{equation}

We also need the local form of eigensections near $\vp$. Let $P_i$ be a connected component of $\vp$. Choose a parametrization of $P_i$ by $t$, a framing of its normal bundle, and a complex normal coordinate $z=re^{i\theta}$ with $P_i=\{z=0\}$. Then each $\ZT$ eigensection has an asymptotic expansion
\begin{align}
f(t,z)=\Re\big(a_{m_i}(t)z^{m_i+\frac12}\big)+\mathcal{O}(r^{m_i+\frac{3}{2}}),\label{eq:eigensection_expansion}
\end{align}
where $m_i\ge0$ is an integer; when we need to emphasize the dependence on $f$ we write $m_i(f)$. For $n=3$, $P_i$ is a point, and $a_{m_i}$ is a complex constant; for $n=4$, $a_{m_i}(t)$ is a smooth function on the knot $P_i$, whose smoothness follows from \cite{donaldsondeformation2019}.

\begin{definition}\label{def:critical_Z2_eigensection}
A $\ZT$ eigensection $f$ and the corresponding eigenvalue are called \emph{critical} if in the expansion \eqref{eq:eigensection_expansion}, $m_i(f)\ge1$ along every connected component of $\vp$. 
\end{definition}

\subsubsection{Asymptotic expansions and nondegeneracy}
\label{subsubsec:expansions_and_nondegeneracy}
We now briefly introduce the local asymptotic and    structure of a graphic $\ZT$ harmonic $1$-form \cite{HMT2025index,HeMazzeoGraphicL2}. 

A graphic $\ZT$ harmonic 1-form $(v,\Sl,\I)$ is locally exact and coexact near the singular set.
\begin{proposition}\label{prop:local_exact_graphic}
Let $(v,\Sl,\I)$ be a graphic $\ZT$ harmonic $1$-form on a closed
Riemannian manifold $(M^n,g)$ with $n=3,4$. Then there exists a sufficiently small
neighborhood $U_{\Sl}$ of $\Sl$, and writing $U_{\Sl}^{\circ}:=U_{\Sl}\setminus\Sl$,
there exist $f\in \Gamma(U_{\Sl}^{\circ};\I)$ and
$\beta\in \Omega^{n-2}(U_{\Sl}^{\circ};\I)$ such that $v=df$ and $\star_g v=d\beta$
on $U_{\Sl}^{\circ}$.
\end{proposition}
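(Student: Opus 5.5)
The plan is to reduce the statement to the vanishing of twisted periods, and then show that every period can be computed on arbitrarily small cycles near $\Sl$, where it is forced to vanish. Choose $U_{\Sl}$ to be the union of small conical balls $B_\epsilon(x_i)$ around the vertices, as in Definition~\ref{def:graphic_singular_set}, together with thin tubular neighborhoods of the edges outside these balls. Using the straightening maps $\Gamma_i$ and the tubular structure, $U_{\Sl}^\circ$ admits a family of maps
\[
\Phi_s\colon U_{\Sl}^\circ\to U_{\Sl}^\circ,\qquad s\in(0,1],
\]
homotopic to the identity through maps that respect the flat bundle $\I$. In cone coordinates near a vertex these maps are radial scaling $\rho\mapsto s\rho$. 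Along edges they are fibrewise scaling $z\mapsto sz$ in the normal direction. The two are interpolated on the overlaps, so that each $\Phi_s$ maps into the $s$-neighborhood of $\Sl$. Since $v$ and $\star_g v$ are closed, the twisted de Rham theorem shows that $v=df$ and $\star_g v=d\beta$ on $U_{\Sl}^\circ$ hold exactly when
\[
\int_c v=0 \ \text{ for every twisted $1$-cycle } c, \qquad \int_{\Sigma}\star_g v=0 \ \text{ for every twisted $(n-1)$-cycle } \Sigma .
\]
Here the cycles have coefficients in $\I$, i.e.\ they are closed loops or hypersurfaces in $U_{\Sl}^\circ$ carrying a flat section of $\I$.

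By homotopy invariance, $\int_c v=\int_{\Phi_s(c)}v$ and $\int_\Sigma\star_g v=\int_{\Phi_s(\Sigma)}\star_g v$ for all $s$. It therefore suffices to show that these integrals tend to $0$ as $s\to0$. The key input is pointwise decay of $|v|$ at $\Sl$:
\begin{itemize}
\item along edges, the expansion \eqref{eq:asymptotic_expansion} with $k\ge 0$ gives $|v|=\mathcal O(r^{1/2})$;
\item near vertices, Lemma~\ref{lem:|v|bd} gives $|v|\le C$, since graphic sets have zero $2$-capacity;
\item the homogeneous tangent-cone expansion with exponents in $\mathcal D_{\vp}$ gives more precisely $|v|=\mathcal O(\rho^{\mu-1})$, with $\mu\ge1$ because $v$ is $\ZT$.
\end{itemize}

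Split $\Phi_s(c)$ into the part inside the vertex balls and the part along edges. The vertex part has length $\mathcal O(s)$, so its integral is $\mathcal O(s)$ by boundedness. The edge part has bounded length but sits at distance $\mathcal O(s)$ from $\Sl$, so its integral is $\mathcal O(s^{1/2})$. The same splitting applies to $\Phi_s(\Sigma)$. The portion near vertices has area $\mathcal O(s^{n-1})$. The portion along edges lies on tube-type hypersurfaces whose area is $\mathcal O(s)$ times the length, or area, of the edge. With $|v|$ bounded, both contributions vanish as $s\to0$. Hence all periods vanish, and integration along paths starting from a base point in each component defines $f$, resp.\ $\beta$. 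The twisting by $\I$ is harmless here, since the integrals are taken against flat sections and the maps $\Phi_s$ preserve the flat structure.

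The main obstacle is constructing the contraction family $\Phi_s$ compatibly near the vertices. It must be uniform, interpolating between conical scaling and edge-normal scaling with control on lengths and areas, and this requires the $\mathcal O(\rho^{1+\alpha})$ closeness of $\Gamma_i$ to the cone. A related difficulty is the uniformity of the edge decay $|v|=\mathcal O(r^{1/2})$ as the edge approaches a vertex. For this I would use the scale-invariant form of the estimate from the tangent-cone expansion: $|v|\lesssim \rho^{\mu-1}(r/\rho)^{1/2}$. This is bounded and tends to $0$ as $r/\rho\to0$, which suffices for the above estimates.
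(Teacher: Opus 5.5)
As written, the $1$-form part of your argument has a gap. The edge pieces of $\Phi_s(c)$ have length of order one, and because $\Phi_s$ must interpolate between radial and normal scaling, they can run along an edge from $\rho\sim s$ all the way out to $\rho\sim\epsilon$. To make their contribution vanish you need $|v|\to0$ at normal distance $\mathcal O(s)$ \emph{uniformly up to the vertices}.

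The estimate you invoke for this, $|v|\lesssim\rho^{\mu-1}(r/\rho)^{1/2}$, does not follow from what is available:
\begin{itemize}
\item the tangent-cone bound \eqref{eq:relative_estimate_error_vertex} is uniform only on compact sets $K\Subset S^{n-1}\setminus\vp_i$, that is, away from the edges;
\item the joint vertex--edge expansion \eqref{eq:vertex_edge_expansion} is recorded only for strongly nondegenerate forms, and the proposition assumes no nondegeneracy.
\end{itemize}
You also leave the controlled contraction $\Phi_s$ unbuilt. So the vanishing of the $1$-periods is not established. There are two smaller slips. The edge decay $|v|=\mathcal O(r^{1/2})$ needs $k\ge1$ in \eqref{eq:asymptotic_expansion}, since $k=0$ gives $|v|\sim r^{-1/2}$. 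And $\mu\ge1$ comes from the boundedness of $|v|$, not from the $W^{1,2}$ condition alone.

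The missing idea is that the edges carry no twisted topology, so no analysis is needed there. On an edge tube $U_e^\circ$, the deck involution of the meridional double cover is a fibrewise rotation by $\pi$. It is therefore homotopic to the identity, which gives $H^*(U_e^\circ;\I)=0$. The same holds on the overlaps with the vertex balls.

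By Mayer--Vietoris, every twisted $1$-cycle is then homologous to one inside a punctured vertex ball $(0,\epsilon)\times L_i$. There radial scaling is a genuine self-homotopy, and boundedness of $|v|$ alone makes the periods $\mathcal O(\rho)\to0$. The local primitives then glue because $H^0$ of the overlaps with coefficients in $\I$ vanishes. This is your scaling argument, restricted to where it actually works, and it is the paper's proof.

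For $\star_g v$ no estimate is needed at all. Mayer--Vietoris reduces $H^{n-1}(U_{\Sl}^\circ;\I)$ to $H^{n-1}$ of the punctured vertex balls. These retract onto $K_i=S^{n-1}\setminus\mathrm{Int}\,N(\vp_i)$, whose double cover is a compact $(n-1)$-manifold with nonempty boundary. Hence $H^{n-1}(U_{\Sl}^\circ;\I)=0$, and $\star_g v$ is exact for purely topological reasons. Your area-shrinking argument for $\star_g v$ uses only boundedness, so it is plausible, but it still depends on the unbuilt $\Phi_s$ and is unnecessary.
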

\begin{proof}
By Lemma~\ref{lem:|v|bd}, the norm $|v|$ is bounded near $\Sl$.
Choose pairwise disjoint sufficiently small vertex balls $B^i\ni x_i$ and
pairwise disjoint thin tubular neighborhoods $U_e$ of the portions of
the edges outside the smaller vertex balls. We choose them so that
$U_{\Sl}^{\circ}$ is covered by the sets $(B^i)^\circ:=B^i\setminus\Sl$ and
$U_e^\circ:=U_e\setminus\Sl$, and every nonempty intersection $(B^i)^\circ\cap U_e^\circ$ is a punctured disk bundle over a
collar in the corresponding edge $e$ near $x_i$.

We first prove that $v$ is exact. Fix a vertex $x_i$, and let
$L_i=S_{x_i}M\setminus\vp_i$. The straightening map identifies
$(B^i)^\circ$ with $(0,\epsilon)\times L_i$. Let
$\hat{L}_i\to L_i$ be the double cover determined by $\I$, and
pull $v$ back to an ordinary closed $1$-form $\hat{v}$ on
$(0,\epsilon)\times\hat L_i$.

For every smooth $1$-cycle $\gamma\subset\hat{L}_i$, let
$\gamma_\rho=\{\rho\}\times\gamma$. Since $d\hat{v}=0$, the period
$\int_{\gamma_\rho}\hat{v}$ is independent of $\rho$. On the
other hand, the boundedness of $|v|$ and the conic form of the metric
give
$|\int_{\gamma_\rho}\hat{v}|
\leq C\mathrm{Length}(\gamma_\rho)=\mathcal O(\rho)$.
Letting $\rho\to0$ shows that every period of $\hat{v}$ vanishes.
Consequently, $\hat{v}=dF_i$ on
$(0,\epsilon)\times\hat{L}_i$.

Let $\sigma$ denote the deck involution on $\hat{L}_i\to L_i$. Replacing $F_i$ by
$(F_i-\sigma^*F_i)/2$, we may assume that $\sigma^*F_i=-F_i$.
Thus $F_i$ descends to an $\I$-valued function $f_i$ on
$(B^i)^\circ$ satisfying $v=df_i$.

Now fix an edge tube $U_e$, and let
$\pi:\hat{U}_e^\circ\to U_e^\circ$ be the double cover determined
by $\I$, with deck involution $\sigma$. After retracting radially onto
the normal circle bundle, $\sigma$ acts by a half-rotation on each
circle fiber and is therefore homotopic to the identity through
fiberwise rotations. Hence $\sigma^*=\mathrm{id}$ on
$H^*(\hat{U}_e^\circ;\RR)$. Since
$H^*(U_e^\circ;\I)$ identifies with the $(-1)$-eigenspace of
$\sigma^*$, it follows that $H^k(U_e^\circ;\I)=0$ for every $k$.
In particular, $v=df_e$ for some
$f_e\in\Gamma(U_e^\circ;\I)$.

The same argument gives
$H^\ast((B_i\cap U_e)\setminus\Sl;\I)=0$.
Since $d(f_i-f_e)=0$ on the overlap, $f_i-f_e$ is a flat
$\I$-valued section. Since $H^0((B_i\cap U_e)\setminus\Sl;\I)=0$, it must vanish. Thus the local primitives glue
to a section $f\in\Gamma(U_{\Sl}^\circ;\I)$ satisfying $v=df$.

It remains to prove that $\star_gv$ is exact. Set
$\omega=\star_gv$, so $d\omega=0$ and
$\omega\in\Omega^{n-1}(U_{\Sl}^{\circ};\I)$.

Let $A=\cup_{x_i\in V}(B^i)^\circ$ and $B=\cup_e U_e^\circ$. The Mayer--Vietoris sequence contains the following exact sequence:
\[
H^{n-2}(A\cap B;\I)
\longrightarrow
H^{n-1}(U_{\Sl}^{\circ};\I)
\longrightarrow
H^{n-1}(A;\I)\oplus H^{n-1}(B;\I).
\]
We next show that both outer groups vanish.

For the left-hand group, it suffices to note that
$H^{n-2}((B^i\cap U_e)\setminus\Sl;\I)=0$, as proved above.
For the right-hand group, first observe that
$H^{n-1}(B;\I)\cong\oplus_e H^{n-1}(U_e^\circ;\I)=0$.
It remains to show that
$H^{n-1}(A;\I)\cong\oplus_{x_i\in V}
H^{n-1}((B^i)^\circ;\I)=0$.

Let $N(\vp_i)$ be a sufficiently small tubular neighborhood of
$\vp_i$ in $S_{x_i}M$, and set
$K_i=S_{x_i}M\setminus(\mathrm{Int}N(\vp_i))$. Then the punctured
vertex neighborhood $(B^i)^\circ$ deformation retracts onto $K_i$.
Let $\hat K_i\to K_i$ be the double cover determined by $\I$.
The manifold $\hat K_i$ is a compact orientable $(n-1)$-manifold
with boundary, and hence $H^{n-1}(\hat K_i;\RR)=0$.

Since $H^{n-1}(K_i;\I)$ identifies with the $(-1)$-eigenspace of the
deck involution on $H^{n-1}(\hat K_i;\RR)$, it follows that
$H^{n-1}(K_i;\I)=0$. Consequently,
$H^{n-1}((B^i)^\circ;\I)=0$, and therefore
$H^{n-1}(A;\I)=0$.

It follows that
$H^{n-1}(U_{\Sl}^{\circ};\I)=0$.
Since $\omega=\star_gv$ is a closed $\I$-valued $(n-1)$-form, its
cohomology class vanishes. Thus there exists
$\beta\in\Omega^{n-2}(U_{\Sl}^{\circ};\I)$ such that
$\star_gv=d\beta$.
\end{proof}

We call the function $u$ of Lemma~\ref{prop:local_exact_graphic} the \emph{local potential} of $v$ near $q$. The asymptotic expansions of a graphic $\ZT$ harmonic 1-form near $\Sl$, as proved in \cite{HMT2025index,HeMazzeoGraphicL2}, are described below.

Away from the vertices, each edge is a smooth codimension $2$ submanifold, and the leading local asymptotics coincide with the submanifold case. In coordinates $(t,r,\theta)$, where $t$ is a coordinate along the edge, and $z=re^{i\theta}$ is the normal coordinate, then similar to \eqref{eq:asymptotic_expansion}, we have
\begin{align}\label{eq:edge_rough_expansion}
v=d\Re\bigl(A_mz^{m+\frac12}\bigr)+\mathcal{O}(|z|^{m+\frac12}).    
\end{align}

As the edge approaches a vertex, the coefficients $A$ themselves admit asymptotic expansions in the vertex defining function $\rho$.

Near a vertex $x_i$, use geodesic normal coordinates and pull back by
the straightening map $\Gamma_i$. We then use polar coordinates
$(\rho,\omega)\in(0,\epsilon)_\rho\times S_i^{n-1}$. Let
$\I_{\vp_i}\to S_i^{n-1}\setminus\vp_i$ be the flat real line bundle
induced by $\I$, and set
$\mathcal D_i:=\mathcal D_{\vp_i}$; see \eqref{eq:indicial_set}.
Then $\Gamma_i^\ast v=dU$, and there exist $\mu_i\in\mathcal D_i$, a critical eigensection $f_i$,
and $\epsilon_i>0$ such that
\begin{align}\label{eq:vertex_rough_expansion}
U(\rho,\omega)
=
\rho^{\mu_i}f_i(\omega)+E(\rho,\omega),
\end{align}
where
\begin{align}\label{eq:relative_estimate_error_vertex}
\sup_{\omega\in K}
\left|
(\rho\partial_\rho)^a\nabla_\omega^\beta E(\rho,\omega)
\right|
\le
C_{K,a,\beta}\rho^{\mu_i+\epsilon_i},
\qquad a+|\beta|\le2,
\end{align}
for every $K\Subset S_i^{n-1}\setminus\vp_i$. The homogeneous
$\ZT$ harmonic $1$-form $\bigl(d(\rho^{\mu_i}f_i),C(\vp_i),\I(\vp_i)\bigr)$
on $\RR^n$ is called the \emph{tangent cone} of $(v,\Sl,\I)$ at
the vertex $x_i$.

\begin{definition}\label{def:nondegenerate_tangent_cone}
The tangent cone $(d(\rho^{\mu_i}f_i),C(\vp_i),\I(\vp_i))$ at $x_i$ is called nondegenerate if, for every connected component $P_j\subset\vp_i$, the leading exponent in the expansion \eqref{eq:eigensection_expansion} of $f_i$ near $P_j$ satisfies $m_{P_j}(f_i)=1$ with nonvanishing coefficient. Equivalently,
\[
f_i(t,z_j)=\Re\bigl(a_{i,P_j}(t)z_j^{3/2}\bigr)+\mathcal{O}(|z_j|^{5/2}),
\qquad t\in P_j,\quad a_{i,P_j}(t)\ne 0.
\]
It is called \textbf{strongly nondegenerate} if, on $S_i^{n-1}$, one has $f_i(\omega)=0$ and $df_i(\omega)=0$ if and only if $\omega\in\vp_i$.
\end{definition}
Strong nondegeneracy rules out ordinary zeros of the tangent cone away from $C(\vp_i)$.

\begin{definition}\label{def:strong_nondeg_graphic}
A graphic $\ZT$ harmonic $1$-form $(v,\Sl,\I)$ is called \textbf{strongly nondegenerate} if it is nondegenerate along every edge of $\Sl$ and, at every vertex $x_i$, its tangent cone $d(\rho^{\mu_i}f_i)$ is strongly nondegenerate.
\end{definition}

Suppose that $(v,\Sl,\I)$ is strongly nondegenerate. For later use, we record a refinement of the vertex expansion near the
incident edges in the strongly nondegenerate case. Fix a vertex $x_i$
and a connected component $P\subset \vp_i$. Choose local coordinates
$(t,z)$ on $S_i^{n-1}$ near $P$, where $z$ is a complex normal
coordinate and $P=\{z=0\}$. Using the notation in
Definition~\ref{def:nondegenerate_tangent_cone}, write
\[
f_i(t,z)
=
\Re\bigl(a_{i,P}(t)z^{3/2}\bigr)
+
\mathcal O(|z|^{5/2}).
\]
Let $\zeta=\rho z$ be the corresponding physical transverse coordinate
in the straightened conic coordinates. The joint vertex--edge expansion
takes the form
\begin{equation}\label{eq:vertex_edge_expansion}
U
=
\Re\left(
\bigl(a_{i,P}(t)\rho^{\mu_i-\frac32}
+b_{i,P}(\rho,t)\bigr)\zeta^{3/2}
\right)
+
E_{i,P}(\rho,t,\zeta),
\end{equation}
where, for every $0<\epsilon<\epsilon_i$,
\begin{equation}\label{eq:vertex_edge_error_estimate}
\bigl|\widetilde\nabla^k b_{i,P}(\rho,t)\bigr|
\le
C_{k,\epsilon}
\rho^{\mu_i-\frac32+\epsilon_i-\epsilon-k},
\qquad k=0,1,2.
\end{equation}
Moreover,
$
|E_{i,P}|
\le
C\rho^{\mu_i-\frac52}|\zeta|^{5/2}$ and $
|dE_{i,P}|
\le
C\left(
\rho^{\mu_i-\frac52}|\zeta|^{3/2}
+
\rho^{\mu_i-\frac72}|\zeta|^{5/2}
\right)$.

Here $\widetilde\nabla$ denotes the covariant derivative associated with
the conic metric $d\rho^2+\rho^2g_P$ on $C(P)$, while
$dE_{i,P}$ is the full differential in the variables
$(\rho,t,\zeta)$, with norm computed in the straightened conic metric.

\section{Transitivity and gluing}\label{section:gluing}
In this section we prove the main gluing result of the paper. Starting from compatible local models for $2$-valued closed $1$-forms, we construct a global closed form and then choose a metric for which it becomes $\ZT$ harmonic. Following the strategy of \cite{intrinsic-Calabi,intrinsic-Volkov}, the proof proceeds by constructing a closed $2$-valued $(n-1)$-form as a candidate for the Hodge dual and then realizing it by a suitable choice of metric. Along the way we introduce the intrinsic and dynamical conditions needed for the construction, and prove that every $\ZT$ harmonic $1$-form on a closed manifold satisfies the required transitivity property.

\subsection{Positive loops and intrinsic harmonicity}
Let $(v,\Sl,\I)$ be a $2$-valued closed $1$-form on $(M,g)$.
A smooth loop $\gamma$ in $M$ is called \emph{$v$-positive} if $\langle v,\dot{\gamma}\rangle$ is a nonvanishing section of $\I\big|_{\gamma}$. Along such a loop, $\I\big|_\gamma$ must be a trivial bundle.

\begin{definition}\label{def:transitive}
The $\ZT$ 1-form $(v, \Sl, \I)$ is said to be \emph{transitive} if for every
$x \in M $ with $|v(x)|\ne 0$, there exists a smooth, simple, $v$-positive loop
$\gamma: S^1 \to M$ passing through $x$.
\end{definition}

As $dv=0$, $\ker v$ induces a codimensional $1$ singular foliation $\ker v$.
A leaf of $\ker v\big|_{M\setminus(\Sl\cup\Zl_v)}$ is called a \emph{regular leaf} of $\ker v$.
Note that a loop $\gamma$ is $v$-positive if and only if it is transverse to the regular leaves of $\ker v$.

\begin{lemma}\label{lem:modify_positive_loop_in_one_leaf}
    Let $\gamma$ be a simple $v$-positive loop in $M$ passing through a point $x\in M$, and let $\ell_x$ be the leaf of $\ker v$ containing $x$. Then, for every $y\in \ell_x\setminus\gamma$, there exists a simple $v$-positive loop $\gamma'$, obtained from $\gamma$ by a modification, that passes through $y$. 
\end{lemma}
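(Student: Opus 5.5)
The plan is the classical Calabi--Volkov detour argument. We replace a short arc of $\gamma$ near $x$ by a thin ``tongue'' that runs along the leaf $\ell_x$ out to $y$ and back. The tongue climbs the local primitive of $v$ on the way out and keeps climbing on the way back, so it stays positive.

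\emph{Setup.} Since $\gamma$ is $v$-positive, $v(x)\neq0$. Hence $\ell_x$ is a regular leaf, i.e.\ a connected immersed hypersurface in $M\setminus(\Sl\cup\Zl_v)$ on which $v$ is nowhere zero. Choose an embedded smooth arc $\sigma:[0,1]\to\ell_x$ with $\sigma(0)=x$ and $\sigma(1)=y$. We may take $\sigma$ to meet $\gamma$ only at $x$, and to leave $x$ transversally to $\gamma$ inside the leaf.

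This is a general position step in $\ell_x$, which has dimension $n-1\ge 2$. By compactness of $\gamma$ and transversality, the set $\gamma\cap\ell_x\cap K$ is finite for any compact piece $K$ of the leaf. An arc in a manifold of dimension at least $2$ can avoid finitely many points other than its endpoints, and $y\notin\gamma$.

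\emph{Local primitive along $\sigma$.} Take a thin tubular neighborhood $T$ of $\sigma$ in $M$. It is contractible and disjoint from $\Sl\cup\Zl_v$, so $\I|_T$ is trivial. Fix the trivialization in which $\langle v,\dot\gamma\rangle>0$ near $x$. Then $v=dh$ on $T$ for a function $h$ with $dh\neq0$, $h(x)=0$ and $h|_{\sigma}\equiv0$.

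Shrinking $T$, we get coordinates $T\cong(-\epsilon,\epsilon)_h\times\sigma\times D^{n-2}$ in which the level sets $\{h=c\}$ are the plaques of $\ker v$. This is a foliation chart along an arc, obtained by flowing $\sigma$ along a vector field $X$ with $dh(X)=1$. Since $\gamma\setminus(\text{a small arc around }x)$ is compact and disjoint from $\sigma$, after shrinking $T$ further, $\gamma\cap T$ is a single short arc through $x$ on which $h$ increases.

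\emph{The detour.} Pick $0<\delta<\epsilon$, and let $x_\pm$ be the points of $\gamma$ near $x$ with $h(x_\pm)=\pm\delta$. Define two arcs in $T$.
\begin{itemize}
\item $\gamma_1$ goes from $x_-$ to $y$. Its $\sigma$-coordinate moves from $0$ to $1$ while $h$ increases strictly from $-\delta$ to $0$.
\item $\gamma_2$ goes from $y$ back to $x_+$. It uses a slightly shifted $D^{n-2}$-coordinate, its $\sigma$-coordinate moves from $1$ to $0$, and $h$ increases strictly from $0$ to $\delta$.
\end{itemize}
Both arcs satisfy $dh(\dot\gamma_i)>0$, so they are $v$-positive.

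The two arcs meet only at $y$, because $h<0$ on $\gamma_1\setminus\{y\}$ and $h>0$ on $\gamma_2\setminus\{y\}$. Near $x_\pm$ the tongue starts and ends off the arc $\gamma\cap T$: we displace it in the $D^{n-2}$ directions, or along $\sigma$ since $\sigma$ is transverse to $\gamma$ at $x$. Hence it does not meet $\gamma$ elsewhere.

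Let $\gamma'$ be $\gamma$ with the arc $[x_-,x_+]$ removed and replaced by $\gamma_1\cup\gamma_2$. We round the corners at $x_\pm$ and $y$ inside the chart. Rounding keeps $dh(\dot\gamma')>0$, because near each corner the incoming and outgoing velocities both lie in the open half-space $\{dh>0\}$, and a convex combination of them stays in it. Then $\gamma'$ is a smooth, simple, $v$-positive loop through $y$, and $\I|_{\gamma'}$ is trivial.

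\emph{Main obstacle.} The delicate step is the global one: making $h$ and the product chart exist along the whole arc $\sigma$, and keeping the tongue disjoint from the rest of $\gamma$. The leaf $\ell_x$ may accumulate on itself and may come arbitrarily close to $\Sl$ or to $\Zl_v$. I would handle this using only the compactness of $\sigma$:
\begin{itemize}
\item $\sigma$ is compact and lies in the open set $M\setminus(\Sl\cup\Zl_v)$, so it has a positive distance from $\Sl\cup\Zl_v$.
\item Near $\sigma$, $\gamma$ consists only of the arc through $x$.
\end{itemize}
Then a sufficiently thin embedded tube works, and we never need to pass to the holonomy of the leaf.
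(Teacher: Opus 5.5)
This is essentially the paper's argument. In a foliation chart $V\times(-\epsilon,\epsilon)$ around a leafwise arc, the paper replaces the arc of $\gamma$ through a base point by a tongue that runs out to $y$ while the transverse coordinate rises from $-\epsilon$ to $0$, then returns while it rises from $0$ to $\epsilon$. That is exactly your $\gamma_1\cup\gamma_2$.

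The only real difference is the base point. You anchor the tongue at $x$ itself, which is why you need your general-position step. That step requires $\gamma\cap\ell_x$ to be closed and discrete in the leaf topology, which your finiteness on compact pieces of the leaf does give. It also requires $\dim\ell_x=n-1\ge 2$, so that a leafwise arc from $x$ to $y$ can dodge those points.

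The paper instead takes the component $C$ of $\ell_x\setminus\gamma$ containing $y$, a boundary point $z\in\gamma$ of $C$, and an arc from $z$ to $y$ with interior in $C$. That arc meets $\gamma$ only at $z$ by construction, with no general-position count. So the paper's version also covers $n=2$. There your step can genuinely fail, because the leaf is a curve and the leafwise segment from $x$ to $y$ may cross $\gamma$ several times; the fix is exactly to start the tongue at the last crossing before $y$. For $n\ge 3$ the two arguments coincide, since then $\ell_x\setminus\gamma$ is connected and one may take $z=x$.

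One minor point, which the paper shares: cutting off the corner at $y$ when you round produces a loop that no longer passes through $y$. Build the tip smoothly instead, e.g.\ take $h=\theta$ and $\sigma$-coordinate $1-\theta^2$ near $\theta=0$. Then the loop passes through $y$ with velocity $\partial_h$ and there is no corner to round there.
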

\begin{proof}
    Let $C$ be the connected component of $\ell_x\setminus\gamma$ that contains $y$. Let $z$ be a boundary point of $C$; then $z\in\gamma$.

   Choose a smooth arc $\eta:[0,1]\to\ell_x$ such that $\eta(0)=z$ and $\eta(1)=y$, and whose interior is contained in $C$. In particular, the interior of $\eta$ is disjoint from $\gamma$.

    Cover $\eta([0,1])$ by finitely many flat charts for the foliation $\ker v$, and let $U$ be a neighborhood of $\eta$ contained in their union. We may choose $U$ to be homeomorphic to a product $V\times(-\epsilon,\epsilon)$, where $V=U\cap \ell_x$ and $\epsilon>0$ is sufficiently small. Each slice $V\times{t}$, with $t\in(-\epsilon,\epsilon)$, is contained in a regular leaf of $\ker v$.

    Over $U$, the line bundle $\I\big|_U$ is trivial, and we can write $v\big|_U=dt\otimes s$, for $t$ the coordinate on $(-\epsilon,\epsilon)$ and $s$ a unit length section of $\I\big|_U$.

    Identify $z$ with $(z,0)\in V\times\{0\}$. Let $(-\delta,\delta)\subset S^1$ be an interval such that $\gamma\big|_{(-\delta,\delta)}$ is the connected component of $\gamma\cap U$ containing $z$.
    
    By definition we can assume $\langle dt,\dot{\gamma}(\theta)\rangle>0$ for $\theta\in(-\delta,\delta)$. Define a loop $\gamma'$ by 
    \[\gamma'(\theta)=\begin{cases}\gamma(\theta),&\theta\notin (-\delta,\delta)\\
        (\eta(1+\frac{\theta}{\delta}),\frac{\theta\epsilon}{\delta}),&-\delta<\theta\le 0\\
        (\eta(1-\frac{\theta}{\delta}),\frac{\theta\epsilon}{\delta}),&0<\theta<\delta
    \end{cases}\]
    After a small smoothing near $y$, we obtain a smooth loop, still denoted by $\gamma'$, such that $\langle dt,\dot{\gamma}'(\theta)\rangle>0$ for $-\delta<\theta<\delta$. Hence $\gamma'$ is the desired loop.
\end{proof}

\begin{definition}\label{def:locally_intrinsically_harmonic}
Let $(M,g)$ be an oriented, connected Riemannian manifold. Let $(v, \Sl, \I)$ be a $2$-valued closed $1$-form on $M$ with $\Zl_v$ the ordinary zero set. We say $v$ is \emph{locally intrinsically harmonic} if there exists a neighborhood $U$ of the closed subset $\Sl \cup \Zl_v$, and a Riemannian metric $g_U$ on $U$, such that $v|_U$ is harmonic with respect to $g_U$. If, in addition, $\star_{g_U} v|_U$ is exact for such $U$ and $g_U$, then $v$ is called \emph{locally $\star$-exact}.
\end{definition}

\begin{definition}\label{def:intrinsic_harmonic}
    A $2$-valued closed $1$-form $(v,\Sl,\I)$ on $(M,g)$ is called \emph{intrinsically harmonic} if there exists a metric $g'$ on $M$ such that $(v,\Sl,\I)$ is harmonic with respect to $g'$.
\end{definition}

\begin{remark}
A single-valued closed $1$-form $(v, \varnothing, \underline{\mathbb{R}})$ with Morse zeros of index between $1$ and $n-1$ is always \emph{locally} intrinsically harmonic, but not necessarily intrinsically harmonic. To see this, let $p$ be a zero of $v$. By the Morse lemma, there exists a local coordinate system $(x_1, \dots, x_n)$ centered at $p$ such that
\[
v = d\bigl( \lambda_1 x_1^2 + \cdots + \lambda_k x_k^2 - \lambda_{k+1} x_{k+1}^2 - \cdots - \lambda_n x_n^2 \bigr),
\]
for some $1 \le k \le n-1$, where the coefficients $\lambda_i > 0$ are chosen so that $\lambda_{1}+\cdots+\lambda_{k}=\lambda_{k+1}+\cdots+\lambda_{n}$. Then in these coordinates, $v$ is harmonic with respect to the Euclidean metric.
\end{remark}

\subsection{Transitivity of $\ZT$ harmonic 1-forms}
In this subsection we prove that every \(\ZT\) harmonic 1-form on a closed manifold satisfies the transitivity condition needed in the gluing theorem.

\begin{theorem}\label{thm:harmonic_implies_transitivity}
Let $(v,\Sl,\I)$ be a $\ZT$ harmonic $1$-form on a closed Riemannian manifold $(M,g)$. Then $v$ is transitive on $M$.
\end{theorem}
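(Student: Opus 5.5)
The approach follows Calabi's flux argument, run on the double cover so that the sign ambiguity of $v$ disappears. Let $\pi:\hat M^\circ\to M\setminus\Sl$ be the connected double cover determined by $\I$. If $\I$ is trivial, take $\hat M^\circ=M\setminus\Sl$ and disregard the deck involution below. On $\hat M^\circ$, $v$ pulls back to an ordinary closed and coclosed $1$-form $\hat v$, and the deck involution $\sigma$ satisfies $\sigma^*\hat v=-\hat v$. A smooth loop in $\hat M^\circ$ on which $\langle\hat v,\dot\gamma\rangle>0$ projects to a $v$-positive loop in $M$. So it suffices, for a fixed $x$ with $v(x)\neq0$ and a lift $\hat x$, to produce a $\hat v$-positive closed path through $\hat x$, and then make its projection simple.

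Let $R\subset\hat M^\circ$ be the regular set, where $\hat v\ne0$. Define $A\subset R$ as the set of points $y$ for which there is a piecewise smooth path from $\hat x$ to $y$, inside $R$, along which $\hat v(\dot\gamma)>0$.
\begin{enumerate}
\item \emph{$A$ is open and forward-saturated.} In a foliated chart $\hat v=dt$, $A$ is open. Moreover, if $y\in A$, then every point of the chart with larger $t$ lies in $A$.
\item \emph{The boundary of $A$ in $R$ is a union of leaves.} Let $y\in\partial A\cap R$. In a chart around $y$, the set $A$ is exactly $\{t>t(y)\}$: if some point with $t<t(y)$ were in $A$, then by forward saturation $y$ would be in $A$. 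So $\partial A\cap R$ is a union of pieces of leaves of $\ker\hat v$. Along it, $\hat v$ evaluated on the outward normal is negative.
\item \emph{Flux contradiction.} Suppose $\partial A\cap R\neq\emptyset$. Then $\int_{\partial A\cap R}\star\hat v$ has a strict fixed sign, since the integrand is $-|\hat v|$ times the area form on those leaves. On the other hand, $d\star\hat v=0$, so formally Stokes on $A$ gives zero.
\end{enumerate}

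To make step 3 rigorous I would cut off near the bad set $\pi^{-1}(\Zl_v)\cup\{\text{ends over }\Sl\}$. First, $\Zl_v$ has codimension $\ge2$: locally $v=df$ with $f$ an $\I$-valued harmonic function, and the critical set of a nonconstant harmonic function has dimension $\le n-2$; the case $v\equiv0$ on an open set is excluded by unique continuation, since $v\neq0$. Second, $\Sl$ has Hausdorff codimension $\ge2$. I would then choose cutoffs $\chi_j$ vanishing near this set with $\chi_j\to1$ and $\int|d\chi_j|\,|v|\to0$; this uses $v\in L^2$, and $|v|\in L^\infty$ via Lemma~\ref{lem:|v|bd} when $\mathrm{Cap}_2(\Sl)=0$. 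A further technical point is that $\partial A$ need not be compact or smooth globally. I would therefore replace $\mathbf 1_A$ by a mollification in the $t$-direction and apply Stokes to $\chi_j\,\mathbf 1_A\,\star\hat v$ in distributional form. This yields $0=\lim_j\int d(\chi_j\mathbf 1_A)\wedge\star\hat v$, whose boundary part is a nonzero definite quantity, a contradiction.

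\textbf{Main obstacle.} This capacity/cutoff step is what I expect to be hardest. For a general closed $\Sl$ of codimension $\ge2$ one needs $\mathrm{Cap}_2(\Sl)=0$, or some substitute: finiteness of $\int|\nabla v|^2$ and Kato's inequality should give that $|v|\in W^{1,2}$ across $\Sl$, and that is the route I would try first.

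\textbf{Conclusion.} Step 3 shows $\partial A\cap R=\emptyset$. The set $R$ is the complement in $\hat M$ of a closed set of codimension $\ge2$, so it is connected; hence $A=R$ and in particular $\hat x\in A$. To close up, take a point $y$ slightly below $\hat x$ in a chart. Since $y\in A$, there is a positive path from $\hat x$ to $y$. Concatenate it with the vertical segment from $y$ up to $\hat x$ and smooth the corners; this gives a positive loop through $\hat x$. Project it to $M$. Remove self-intersections other than at $x$ by loop erasure, keeping a subloop through $x$; positivity is preserved since subpaths of positive paths are positive. Finally, perturb to an embedded smooth loop, using general position in dimension $\ge3$ and local surgery at transverse crossings in dimension $2$. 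This gives the simple $v$-positive loop through $x$ required by Definition~\ref{def:transitive}.
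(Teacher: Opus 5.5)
Your argument is correct in outline and takes a genuinely different route from the paper's.

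\textbf{Comparison of routes.} The paper argues dynamically. It shows $|\hat V|\in L^q$ for some $q>2$ (Lemma~\ref{lem:hatV_Lq}). It then uses $\mathrm{Cap}_{q'}(\hat A)=0$ and H\"older to prove that the measure-preserving flow of the divergence-free field $\hat V$ is complete off a null set (Lemma~\ref{lem:measure_zero}). Finally it closes up a Poincar\'e-recurrent orbit inside a flat chart around $x_0^+$. You instead run Calabi's flux argument on the forward-reachable set, which avoids the local flow, the ceiling functions and recurrence entirely.

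Your steps 1--2 in fact show more than you use: $\partial A\cap R$ is a smooth hypersurface, closed in $R$, with $A$ locally on one side. So no mollification is needed. Stokes applies directly to $\chi_j\star\hat v$, which has compact support in $R$, and gives $\int_A d\chi_j\wedge\star\hat v=\int_{\partial A\cap R}\chi_j\star\hat v\le -c<0$ once $\chi_j\equiv1$ on a fixed compact piece of $\partial A\cap R$.

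In exchange, the paper's route produces loops that are integral curves of $\hat V$ outside a small chart. This property is reused in Lemma~\ref{lem:positive_curve_on_gluing_region} to obtain the uniform angle bound \eqref{eq:angle_condition}. Your reachable-set argument gives bare existence and does not provide that bound.

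\textbf{The analytic step you flagged.} Both proofs rest on the same estimate, $\int_M|d\chi_j|\,|v|\to0$ for cutoffs vanishing near $\Sl\cup\Zl_v$. The justifications you list first do not give it in general:
\begin{itemize}
\item pairing $v\in L^2$ with $\|d\chi_j\|_{L^2}\to0$ needs $\mathrm{Cap}_2(\Sl\cup\Zl_v)=0$;
\item Lemma~\ref{lem:|v|bd} assumes $\mathrm{Cap}_2(\Sl)=0$.
\end{itemize}
Neither condition follows from Hausdorff codimension $\ge2$.

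The paper's fix is Lemma~\ref{lem:hatV_Lq}. Since $|v|\in W^{1,\sigma}(M)$ for $\sigma<2$, Sobolev embedding gives $|v|\in L^q$ with $q>2$. Then take cutoffs with $\|d\chi_j\|_{L^{q'}}\to0$, where $q'=q/(q-1)<2$. These exist because $\mathrm{Cap}_{q'}$ vanishes on closed sets of Hausdorff dimension $\le n-2$. H\"older then gives the estimate.

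Your $W^{1,2}$ idea also works; truncating $|v|$ shows it only needs $\mathcal H^{n-1}(\Sl)=0$. But it still has to be followed by the same Sobolev-plus-$\mathrm{Cap}_{q'}$ step, since membership in $W^{1,2}$ is not by itself the estimate.

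\textbf{Loop erasure at the end is wrong as stated.} It does not preserve positivity when $\I$ is nontrivial. The projected loop can pass through a point $p$ once via each lift of $p$. At such a self-intersection, the two strands have $\langle v,\dot\gamma\rangle$ of opposite signs in $\I_p$. Splicing there creates a corner across which positivity fails, and the resulting subloop has nontrivial $\I$-monodromy.

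For $n\ge3$, drop loop erasure and use only that positivity is $C^1$-open: a generic small perturbation is embedded and still positive. This is what the paper does. For $n=2$, the surgery at each crossing must be chosen according to whether the two strands lie on the same sheet.
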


We begin with some preparatory notation and lemmas for the proof of Theorem~\ref{thm:harmonic_implies_transitivity}.

First, note that the set of unit-length vectors of $\I\to M\setminus\Sl$, denoted by $\hat{M}_c$, is a $2$-fold cover of $M\setminus\Sl$. We pullback the metric $g$ to this open manifold and denote it by $\hat{g}$. $(\hat{M}_c,\hat{g})$ is incomplete. If $M$ is closed, this incomplete manifold has finite volume. $v$ lifts to a single-valued harmonic $1$-form $\hat{v}$ on $(\hat{M}_c,\hat{g})$.

There is a natural topology on the space $\hat{M}_c\sqcup \Sl=:\hat{M}$, under which the projection $\pi:\hat{M}_c\sqcup \Sl\to M$ is continuous. We can also define a natural metric $\mathrm{dist}$ and measure $\mathrm{vol}_{\hat{g}}$ on $\hat{M}$ in a clear way. Thus $\hat{M}$ is a metric measure space. Locally, the closed set $\Zl_v$ is the zero set of an ordinary harmonic $1$-form, hence it has Hausdorff codimension at least $2$ (See \cite[Lemma 3]{intrinsic-Volkov}). So as for $\Sl$ by definition. Therefore, let $A:=\Sl\cup\Zl_v$ and $\hat{A}:=\Sl\cup\pi^{-1}(\Zl_v)\subset \hat{M}$, we have $\dim_{\mathcal{H}}A,~\dim_{\mathcal{H}}\hat{A}\le n-2$.

Consider the dual vector field $\hat{V}$ of $\hat{v}$ on $\hat{M}_c$, with respect to $\hat{g}$. By definition of $\ZT$ harmonic $1$-forms, $\hat{V}$ satisfies $|\nabla \hat{V}|\in L^2(\hat{M}_c)$ and $\mathrm{div}_{\hat{g}}\hat{V}=0$ on $\hat{M}_c$. 

\begin{lemma}\label{lem:hatV_Lq}
    There exists $q>2$ such that $|\hat{V}|$ extends to an $L^q$-integrable function on $\hat{M}$.
\end{lemma}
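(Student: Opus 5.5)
The idea is to work downstairs on $M$ with the scalar function $u:=|v|$. Since $\hat V$ is $\hat g$-dual to $\hat v=\pi^*v$ and $\pi:\hat M_c\to M\setminus\Sl$ is a local isometry, we have $|\hat V|=u\circ\pi$ on $\hat M_c$. Because the $2$-fold cover $\pi$ is measure-preserving up to the factor $2$, and $\Sl$ (in $M$ and in $\hat M$) has measure zero,
\[
\int_{\hat M}|\hat V|^q\,d\mathrm{vol}_{\hat g}=2\int_M u^q\,d\mathrm{vol}_g .
\]
So it suffices to find $q>2$ with $u\in L^q(M)$. Any extension of $|\hat V|$ across $\Sl$ (say by $0$) then gives the required function on $\hat M$.

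\textbf{Step 1: $u\in W^{1,2}(M\setminus\Sl)$.} By Kato's inequality, exactly as in the proof of Lemma~\ref{lem:|v|bd}, we have $|du|\le|\nabla v|$ a.e. on $M\setminus\Sl$. The definition of a $\ZT$ harmonic $1$-form gives $u\in L^2(M\setminus\Sl)$ and $|\nabla v|\in L^2(M\setminus\Sl)$. Hence $u\in W^{1,2}(M\setminus\Sl)$.

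\textbf{Step 2: removability of $\Sl$.} This is the key step. Lemma~\ref{lem:|v|bd} assumed $\mathrm{Cap}_2(\Sl)=0$, but here we only know $\dim_{\mathcal H}\Sl\le n-2$, so we argue differently. We use the fact that a closed set $\Sl$ with $\mathcal H^{n-1}(\Sl)=0$ is removable for $W^{1,p}$: if $u\in W^{1,p}(\Omega\setminus\Sl)$, then $u\in W^{1,p}(\Omega)$ with the same weak gradient. The plan is to verify this in a coordinate chart via the ACL characterization of Sobolev functions:
\begin{enumerate}
\item For each coordinate direction $e_j$, the orthogonal projection of $\Sl\cap\Omega$ to $e_j^\perp$ is Lipschitz, so it has $\mathcal H^{n-1}$-measure zero.
\item Hence almost every line parallel to $e_j$ misses $\Sl$ entirely.
\item On almost every such line, $u$ (after choosing a good representative on $\Omega\setminus\Sl$) is locally absolutely continuous, and its partial derivative is integrable. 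This is because $u\in W^{1,2}(\Omega\setminus\Sl)$ and the line lies in $\Omega\setminus\Sl$.
\item $\Sl$ has Lebesgue measure zero, so these line-wise derivatives coincide a.e. with the weak gradient computed on $\Omega\setminus\Sl$.
\end{enumerate}
The ACL characterization then gives $u\in W^{1,2}(\Omega)$. Patching over a finite atlas of the closed manifold $M$ yields $u\in W^{1,2}(M)$. The main points to check here are the choice of representative and that the integrability on lines is uniform enough for Fubini; I expect this to be routine.

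\textbf{Step 3: Sobolev embedding.} Since $M$ is closed, the Sobolev embedding gives $u\in L^{q}(M)$ with $q=2n/(n-2)>2$ when $n\ge3$, and $u\in L^q(M)$ for every $q<\infty$ when $n=2$. Combined with the first paragraph, this proves the lemma.

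The main obstacle is Step 2, namely removability under a Hausdorff-dimension hypothesis rather than a capacity hypothesis. The projection and ACL argument above is the route I would take, because it uses only $\mathcal H^{n-1}(\Sl)=0$, and this follows from $\dim_{\mathcal H}\Sl\le n-2$.
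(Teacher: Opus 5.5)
Your proof is correct, but it handles the removability step differently from the paper.

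\emph{The paper's route.} It never places $|v|$ in $W^{1,2}(M)$. It only uses $|v|\in W^{1,\sigma}(M\setminus\Sl)$ for $\sigma<2$. From $\dim_{\mathcal H}\le n-2$ it gets $\mathcal H^{n-\sigma}=0$, hence $\mathrm{Cap}_\sigma=0$ (Evans--Gariepy). It then uses that $\sigma$-capacity-null sets are removable for $W^{1,\sigma}$, and chooses $\frac{2n}{n+2}<\sigma<2$ so that $q=\frac{n\sigma}{n-\sigma}>2$. The restriction $\sigma<2$ is forced there, because $\mathrm{Cap}_2(\Sl)$ need not vanish without $\mathcal H^{n-2}(\Sl)<\infty$.

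\emph{Your route.} You use that a closed $\mathcal H^{n-1}$-null set is removable for $W^{1,p}$ for every $p$, via projections and the ACL characterization. This is a correct and standard fact. It buys you the stronger conclusion $|v|\in W^{1,2}(M)$ and the endpoint exponent $q=2n/(n-2)$. The cost is a short Fubini argument in place of a citation.

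\emph{Comparison.} The paper's version is a one-line appeal to capacity theory, which it reuses immediately afterwards with $\mathrm{Cap}_{q'}(\hat A)=0$ in the flow argument. That argument only needs some $q>2$, so the sharper exponent gains nothing downstream.

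\emph{Your worry about representatives.} It does not arise. $u=|v|$ is locally Lipschitz on $M\setminus\Sl$. You also rightly remove only $\Sl$, whereas the paper removes $\Sl\cup\Zl_v$ unnecessarily. So on any line avoiding $\Sl$, $u$ is locally Lipschitz on every compact subinterval inside the chart. Its classical partials exist a.e. by Rademacher and are bounded by $|\nabla v|$. The ACL verification is therefore immediate.
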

\begin{proof}
    By definition, $|\hat{V}|=|\hat{v}|$. Note that $|v|$ is a well-defined function on $M\setminus\Sl$. It suffices to show that $|v|$ extends to an $L^q$-integrable function on $M$.

    For any $\sigma<2$, we have $|v|\in W^{1,\sigma}(M\setminus\Sl)$.
    Since $\mathrm{codim}_{\mathcal{H}}A\ge 2$, $\mathrm{Cap}_{\sigma}(A)=0$ (see \cite[Theorem 4.16]{evans}).  
    Therefore, $|v|\in W^{1,\sigma}(M)$.

    Choose $\sigma$ such that $\frac{2n}{n+2}<\sigma<2$. The Sobolev embedding then gives  $|v|\in L^q(M)$, with $q=\frac{n\sigma}{n-\sigma}>2$. This proves the lemma. 
\end{proof}

We define a measure preserving \emph{local flow} (\cite{Avoidance}) $\Phi_t$ on $\hat{M}\setminus \hat{A}$ with \emph{ceiling functions $-\infty\le S_-<0<S_+\le+\infty$}, generated by the vector field $\hat{V}$, satisfying:
\begin{enumerate}
    \item For $x\in\hat{M}\setminus \hat{A}$, $\Phi_t(x)\in \hat{M}\setminus \hat{A}$ is defined for $t\in (S_-(x),S_+(x))$;
    \item $S_\pm\big(\Phi_t(x)\big)=S_\pm(x)-t$;
    \item $\Phi_0(x)=x$, and $\Phi_{t_1}\circ\Phi_{t_2}(x)=\Phi_{t_1+t_2}(x)$, whenever $t_1,t_1+t_2\in (S_-(x),S_+(x))$;
    \item $\frac{d}{dt}\big|_{t=0}\Phi_t(x)=\hat{V}(x)$;
    \item if $x\in\hat{M}\setminus \hat{A}$ and $S_+(x)<\infty$, then $\lim_{t\to S_+(x)}\mathrm{dist}(\Phi_t(x),\hat{A})=0$, similar for $S_-(x)$.
\end{enumerate}

We now prove an analogue of \cite[Theorem 1]{Avoidance}. In the following proof, we replace the Minkowski codimension condition there with our Hausdorff codimension condition.
\begin{lemma}\label{lem:measure_zero}
    Both ceiling functions $S_+$ and $S_-$ are infinite almost everywhere on $\hat{M}$. That is, there is a measure zero set $K\subset \hat{M}\setminus \hat{A}$, such that $\Phi_t:\hat{M}\setminus(K\cup \hat{A})\to \hat{M}\setminus(K\cup \hat{A})$ is defined for any $t\in\RR$.
\end{lemma}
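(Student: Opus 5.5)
We adapt the argument of \cite[Theorem 1]{Avoidance}, replacing its Minkowski codimension hypothesis by the Hausdorff codimension bound $\dim_{\mathcal H}\hat A\le n-2$ together with the integrability from Lemma~\ref{lem:hatV_Lq}. By the symmetry $\hat V\mapsto-\hat V$ it suffices to treat $S_+$. Since $\hat A$ is compact and has Hausdorff dimension at most $n-2$, we can cover it by finitely many balls $B(x_j,r_j)$ with $r_j<\delta$ and $\sum_j r_j^{n-2}<\epsilon$, for any prescribed $\epsilon,\delta>0$. Let $N_\epsilon$ be the union of the doubled balls $B(x_j,2r_j)$. We then pick cutoff functions $\chi_\epsilon$ that equal $1$ near $\hat A$, are supported in $N_\epsilon$, and satisfy $|d\chi_\epsilon|\lesssim r_j^{-1}$ on the $j$th ball. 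Their supports have volume $\lesssim\sum_j r_j^{n}\to0$.

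Fix a time $T>0$ and set $E_T=\{x: S_+(x)<T\}$. The goal is to show $\mathrm{vol}(E_T)=0$. A point of $E_T$ reaches $\hat A$ within time $T$ (property (v)), so its trajectory must enter $\{\chi_\epsilon=1\}$ after first crossing the transition shell where $\chi_\epsilon$ goes from $0$ to $1$. Measure preservation (since $\mathrm{div}_{\hat g}\hat V=0$) then lets us bound the mass of trajectories that hit the shell. Set $f=\chi_\epsilon$. Counting entries into the shell, integrating along the flow for times in $[0,T]$, and applying Fubini together with invariance of $\mathrm{vol}_{\hat g}$ gives an estimate of the form
\[
\mathrm{vol}(E_T\setminus N'_\epsilon)\;\le\; C\,T\int_{\hat M}|d\chi_\epsilon|\,|\hat V|,
\]
where $N'_\epsilon$ is a set of volume tending to $0$. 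This is the standard flux estimate: the rate at which measure enters a set is bounded by the flux of $\hat V$ through its boundary.

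The main step is to show that the flux term $\int|d\chi_\epsilon||\hat V|$ tends to $0$. By H\"older with the exponent $q>2$ from Lemma~\ref{lem:hatV_Lq},
\[
\int |d\chi_\epsilon||\hat V|\le \|\hat V\|_{L^q(N_\epsilon)}\,\|d\chi_\epsilon\|_{L^{q'}},\qquad q'<2 .
\]
The first factor tends to $0$ by absolute continuity of the integral, since $\mathrm{vol}(N_\epsilon)\to0$. For the second factor, $\|d\chi_\epsilon\|_{L^{q'}}^{q'}\lesssim\sum_j r_j^{n-q'}$, which is at most $\sum_j r_j^{n-2}<\epsilon$ once $\delta<1$, because $q'<2$. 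So the product tends to $0$.

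A more careful version uses the capacity statement $\mathrm{Cap}_{q'}(\hat A)=0$, which holds because $q'<2$ and the codimension is at least $2$, as in the proof of Lemma~\ref{lem:hatV_Lq}. This is where the Hausdorff hypothesis replaces the Minkowski one. The covering is local in charts of $M$, and since $\hat M\to M$ is at most two-to-one, we can work on $M$ with $|v|$ and then lift.

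The obstacle I expect is making the flux estimate rigorous for a flow that is only locally defined and for a vector field that may be unbounded near $\hat A$. To handle this, I would apply the divergence theorem on the sublevel sets $\{\chi_\epsilon<1\}$, which avoid $\hat A$ and on which $\hat V$ is smooth. There I would use the coarea/flux identity
\[
\frac{d}{dt}\mathrm{vol}\{x:\Phi_s(x)\in\{\chi_\epsilon<1\},\ 0\le s\le t\}
\]
$\ge -\int|d\chi_\epsilon||\hat V|$, following \cite{Avoidance}. Letting $\epsilon\to0$ gives $\mathrm{vol}(E_T)=0$ for every $T$. Taking the union over $T\in\mathbb N$, and doing the same for $S_-$, produces a null set $K$. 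Saturating $K$ under the flow (a countable union of images under the measure-preserving maps $\Phi_t$, $t\in\mathbb Q$, together with continuity) keeps it null and makes $\Phi_t$ globally defined on the complement of $K\cup\hat A$.
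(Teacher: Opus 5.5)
Your argument is essentially the paper's. Both use a cutoff $\varphi$ equal to $1$ near $\hat A$ with small $\|\nabla\varphi\|_{L^{q'}}$, integrate along trajectories that start where $\varphi=0$ and must reach $\{\varphi=1\}$ before time $T$, apply Fubini with measure preservation of each $\Phi_s$, and finish with H\"older against $\hat V\in L^q$.

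One step, however, is false as written. You cannot in general cover $\hat A$ by balls with $\sum_j r_j^{n-2}<\epsilon$, because that would force $\mathcal H^{n-2}(\hat A)=0$. This already fails when $\Sl$ is a smooth codimension-two submanifold, which is the main case of interest. The repair is to cover directly with $\sum_j r_j^{n-q'}<\epsilon$. This is possible because $n-q'>n-2\ge\dim_{\mathcal H}\hat A$ gives $\mathcal H^{n-q'}(\hat A)=0$. It is exactly the statement $\mathrm{Cap}_{q'}(\hat A)=0$ that the paper invokes, and that you mention as the ``more careful version''. With that change, your bound $\|d\chi_\epsilon\|_{L^{q'}}^{q'}\lesssim\sum_j r_j^{n-q'}$ and the volume bound on the supports both go through.

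Two parts of your plan can be dropped.

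First, the divergence-theorem/coarea flux identity you propose for rigor is unnecessary. Take $x$ with $\mathrm{dist}(x,\hat A)\ge r$ and $S_+(x)\le T$, and $\varphi$ supported in the $r/2$-neighbourhood of $\hat A$. Then the fundamental theorem of calculus gives $1\le\int_0^{\tau(x)}|\nabla\varphi|\,|\hat V|(\Phi_s(x))\,ds$ directly. Integrating in $x$ and swapping the integrals uses only that $\Phi_s$ is injective and measure preserving on its domain $\{S_+>s\}$. So neither the local definition of the flow nor the unboundedness of $\hat V$ near $\hat A$ causes trouble, and no boundary terms appear.

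Second, the final saturation of $K$ under $\Phi_t$ for rational $t$ is not needed. By property (ii) of the local flow, $S_\pm(\Phi_t(x))=S_\pm(x)-t$, so the set where $S_+=+\infty$ and $S_-=-\infty$ is already invariant. Hence $K=\{S_+<\infty\}\cup\{S_->-\infty\}$ works as it stands.
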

\begin{proof}
    We prove the statement for $S_+$. The proof for $S_-$ follows by reversing time. 
    
    By Lemma~\ref{lem:hatV_Lq}, choose $q>2$ such that $\hat{V}\in L^q(\hat{M})$, and set $q'=\frac{q}{q-1}$. Then $1<q'<2$. 
    
    Since $\mathrm{codim}_{\mathcal{H}}(\hat{A})\ge 2$, $\mathrm{Cap}_{q'}(\hat{A})=0$. For $r>0$ and $T>0$, define 
    \[ E_{r,T} = \bigl\{ x\in\hat{M}\setminus \hat{A}: \mathrm{dist}(x,\hat{A})\ge r,\; S_+(x)\le T \bigr\}. \] 
    
    It is enough to prove $|E_{r,T}|_{\hat{g}}=0$ for all $r,T>0$, since 
    $\{x\in\hat{M}\setminus \hat{A}:S_+(x)<+\infty\} = \cup_{m,N\ge1}E_{1/m,N}.$
    Let $U_r=\{y\in\hat{M}:\mathrm{dist}(y,\hat{A})<r/2\}$.  
    Since $\mathrm{Cap}_{q'}(\hat{A})=0$, for every $\varepsilon>0$ we can choose a cutoff function 
    $\varphi\in C_c^0(U_r)\cap C^\infty(\hat{M}_c)\cap W^{1,q'}(\hat{M}_c)$  
    such that 
     $0\le\varphi\le1$, $\varphi\equiv1$ in a neighborhood of $\hat{A}$, 
    and $\|\nabla\varphi\|_{L^{q'}(\hat{M}_c)}<\varepsilon$. 
     
    In particular, $\varphi(x)=0$ for every $x\in E_{r,T}$. For each $x\in E_{r,T}$, property (v) of the local flow implies that $\Phi_t(x)$ approaches $\hat{A}$ as $t\to S_+(x)$. Since $\varphi\equiv1$ near $\hat{A}$, there exists a time $\tau(x)\in(0,S_+(x))$ 
    such that $\varphi(\Phi_{\tau(x)}(x))=1$.  
   
    Therefore 
    $1 = |\varphi(\Phi_{\tau(x)}(x))-\varphi(x)| \le \int_0^{\tau(x)} |\nabla\varphi|\,|\hat{V}|(\Phi_s(x))\,ds$.  
    Integrating over $E_{r,T}$ gives 
    \begin{align*}
        |E_{r,T}|_{\hat{g}} &\le \int_{E_{r,T}}\int_0^{\tau(x)} |\nabla\varphi|\,|\hat{V}|(\Phi_s(x))\,ds\,d\mathrm{vol}_{\hat{g}}(x) \\ 
    &= \int_0^T \int_{E_{r,T}\cap\{\tau>s\}} |\nabla\varphi|\,|\hat{V}|(\Phi_s(x))\,d\mathrm{vol}_{\hat{g}}(x)\,ds . 
    \end{align*} 
    For each fixed $s$, the map $\Phi_s$ is measure preserving on its domain. Hence 
    \[ |E_{r,T}|_{\hat{g}} \le T\int_{\hat{M}_c} |\nabla\varphi|\,|\hat{V}|\,d\mathrm{vol}_{\hat{g}}. \] 
    By H\"older's inequality, 
     $\int_{\hat{M}} |\nabla\varphi|\,|\hat{V}|\,d\mathrm{vol}_{\hat{g}} \le \|\hat{V}\|_{L^{q}(\hat{M}_c)} \|\nabla\varphi\|_{L^{q'}(\hat{M}_c)} \le \|\hat{V}\|_{L^{q}(\hat{M})}\,\varepsilon$.  
    Letting $\varepsilon\to0$ yields $|E_{r,T}|_{\hat{g}}=0$. Thus $S_+=+\infty$ almost everywhere. 
\end{proof}

Recall the following classical result:
\begin{lemma}[Poincar\'e recurrence theorem]\label{lem:Poincare_recurrence}
    Let $(\Omega,\Sigma,\mu)$ be a probability space with finite measure, and $\{\Phi_t\}_{t\ge 0}$ be a measure preserving flow on it. Assume $U\in\Sigma$ has finite measure, then for almost all $x\in U$, there are integers $0<n_1<n_2<\dots$ such that $\Phi_{n_i}(x)\in U$.
\end{lemma}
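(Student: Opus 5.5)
The statement to be proved is the Poincaré recurrence theorem (Lemma~\ref{lem:Poincare_recurrence}). Since the flow enters only through its integer times, I will reduce to the time-one map. Set $T=\Phi_1$. For every integer $k\ge 0$ we have $T^k=\Phi_k$, and each $T^k$ is measure preserving, i.e.\ $\mu(T^{-k}B)=\mu(B)$ for all $B\in\Sigma$. So it suffices to prove the claim for a single measure-preserving transformation $T$ of a finite measure space, with returns at integer times $n_i$.

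First I show that almost every point of $U$ returns at least once. Let
\[
B=\{x\in U:\ T^k x\notin U \text{ for all } k\ge 1\}=U\setminus\bigcup_{k\ge1}T^{-k}U,
\]
which is measurable. The sets $T^{-j}B$, $j\ge0$, are pairwise disjoint. Indeed, if $x\in T^{-i}B\cap T^{-j}B$ with $i<j$, then $y=T^ix\in B\subset U$. Also $T^{j-i}y=T^jx\in B\subset U$, which contradicts $y\in B$. Since $\mu(T^{-j}B)=\mu(B)$ for every $j$, we get
\[
\sum_{j\ge0}\mu(B)=\mu\Bigl(\bigsqcup_j T^{-j}B\Bigr)\le\mu(\Omega)<\infty,
\]
which forces $\mu(B)=0$.

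Next I upgrade this to infinitely many returns. For $m\ge1$, apply the same argument to the measure-preserving map $T^m$. The set
\[
B_m=\{x\in U: T^{mk}x\notin U \text{ for all } k\ge1\}
\]
has measure zero, but that alone is not quite what is needed. Instead consider
\[
G_N=\{x\in U: T^kx\notin U \text{ for all } k\ge N\}.
\]
This set satisfies
\[
G_N\subset\bigcup_{j=0}^{N-1}T^{-j}\bigl(B\cup (\Omega\setminus U)\bigr)\cap\ldots
\]
More cleanly: write $D=\bigcup_{k\ge1}T^{-k}B$, which has measure zero because each $T^{-k}B$ does. Let $x\in U\setminus(B\cup D)$. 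Then $x$ returns to $U$ at some time $n_1\ge1$, so $T^{n_1}x\in U$. Since $x\notin T^{-n_1}B$, we have $T^{n_1}x\notin B$, so $T^{n_1}x$ returns again at some time $n_2>n_1$. Inductively, $T^{n_i}x\in U$ and $T^{n_i}x\notin B$ for all $i$, which produces an infinite increasing sequence $n_1<n_2<\cdots$ of return times.

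The exceptional set $B\cup D$ is a countable union of null sets, so this holds for almost every $x\in U$, which proves the lemma. The only point requiring care is that measure preservation $\mu(T^{-k}E)=\mu(E)$ is used for preimages and not images. When $\Phi_t$ is only defined off a null set $K$, as in Lemma~\ref{lem:measure_zero}, one first discards the null set $\bigcup_{k\ge0}T^{-k}K$. This is still null by measure preservation, and after removing it the argument runs unchanged.
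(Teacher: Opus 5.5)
Your argument is correct and is the standard wandering-set proof of Poincar\'e recurrence: reduce to $T=\Phi_1$, show $B=U\setminus\bigcup_{k\ge1}T^{-k}U$ is null, then discard $D=\bigcup_{k\ge1}T^{-k}B$ to get infinitely many returns. The paper gives no proof of its own and simply recalls the result as classical, so yours supplies the cited argument; the only fix is to delete the abandoned lines about $B_m$ and $G_N$ (including the unfinished inclusion ending in $\cap\ldots$), since the argument starting ``More cleanly'' stands on its own.
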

Now we can apply Lemma~\ref{lem:Poincare_recurrence} to $\Phi_t$ and $\hat{M}\setminus(K\cup \hat{A})$, to prove Theorem \ref{thm:harmonic_implies_transitivity}.

\begin{proof}[Proof of Theorem \ref{thm:harmonic_implies_transitivity}]
Let $x_0\in M\setminus(\Sl\cup\Zl_v)$ and let $x_0^+\in \pi^{-1}(x_0)\subset \hat{M}\setminus \hat{A}$.  Choose a flat chart $V_\epsilon$ of $\ker\hat{v}$ centered at $x^+_0$, and a diffeomorphism \[\psi_\epsilon:V_\epsilon\to\{(x_1,\dots,x_n)\in\RR^n:|x_i|\le \epsilon\},\]
    such that $\psi_\epsilon(x^+_0)=0$ and $\pm\psi_\epsilon^\ast dx_n=\hat{v}\big|_{V_\epsilon}$. Since $K$ is measure zero in $\hat{M}$, $V_\epsilon\setminus K$ has positive measure.

    Then by Lemma \ref{lem:Poincare_recurrence}, there exists $y\in V_\epsilon\setminus K$, such that $\psi_\epsilon(y)$ has positive $n$-th coordinate, and for some large $T\gg0$, we have $\Phi_T(y)\in V_\epsilon$ with $\psi_\epsilon\circ\Phi_T(y)$ having negative $n$-th coordinate. We can connect $\Phi_T(y)$, $x^+_0$ and $y$ by a $\hat{v}$-positive arc, and extend the arc $\Phi_t(y)\,(0\le t\le T)$ to form a $\hat{v}$-positive simple loop $\hat{\gamma}$ passing through $x^+_0$. Then $\pi\circ \hat{\gamma}=:\gamma$ is a $v$-positive loop passing through $x_0$. By a small perturbation, it can be made simple. 
\end{proof}

\subsection{The gluing theorem}
We now formulate the gluing result that will be used in the surgery constructions below.
\begin{definition}\label{def:Gluing_compatible_pair}
Let $M_1$ and $M_2$ be oriented Riemannian manifolds, each with a boundary component $W_i\subset \partial M_i$ diffeomorphic to a closed, oriented and connected manifold $X$.  Let $(v_i, \Sl_i, \I_i)$ be $\ZT$ closed $1$-forms on $M_i~(i=1,2)$.

We say this pair of $\ZT$ closed $1$-forms are \emph{gluing compatible along $X$} if there exist collar neighborhoods $N_i$ of $W_i$ and embeddings $\varphi_i : N_i \to X \times (-1,1)$ such that:
\begin{enumerate}
      \item $\varphi_1(N_1)=X\times[\frac{1}{2},1)$ and $\varphi_2(N_2)=X\times(-1,-\frac{1}{2}]$;
      \item there exists a $2$-valued closed $1$-form $(v_X,\Sl_X,\I_X)$ on $X\times(-1,1)$, such that \[\varphi_i^\ast(v_X,\Sl_X, \I_X)=(v_i,\Sl_i,\I_i)\big|_{N_i}\text{ for }i=1,2.\]
\end{enumerate}
\end{definition}

Let $M$ be the oriented manifold obtained by gluing $M_1$, $X\times(-1,1)$ and $M_2$ along $N_1$ and $N_2$, via $\varphi_1$ and $\varphi_2$, respectively. Then there is a uniquely determined $2$-valued closed $1$-form $(v_M, \Sl_M, \I_M)$ on $M$ such that  
$(v_M, \Sl_M, \I_M)|_{M_i} = (v_i, \Sl_i, \I_i)$ for $i = 1, 2$.

Now let $M_i~(1\le i\le k)$ be a finite collection of oriented manifolds, and let $(v_i,\Sl_i,\I_i)$ be a $\ZT$ closed $1$-form on $M_i$. Suppose that for every component $W$ of $\partial M_i$, there exists a $M_j$ such that $(v_i,\Sl_i,\I_i)$ and $(v_j,\Sl_j,\I_j)$ are gluing compatible along $W$. 

Assume further that this collection of manifolds  can be glued along all such pairs to yield a \emph{closed} manifold $M$ equipped with a globally defined $\ZT$ $1$-form $(v_M,\Sl_M,\I_M)$ on it. For brevity, we continue to denote the gluing region by $X\times(-1,1)$, where $X\times\{-1,1\}$ corresponds to the disjoint union of $\partial M_i~(1\le i\le k)$. Let $N_i\subset M_i$ be the collar neighborhood of $\partial M_i$. 

Before stating the gluing theorem, we recall that we could pair the $\I$-factors in wedge products. This gives the natural pairing
\[
\wedge:\Gamma(\wedge^kT^*M\otimes\I)\otimes
\Gamma(\wedge^jT^*M\otimes\I)
\longrightarrow
\Gamma(\wedge^{k+j}T^*M|_{M\setminus\Sl}).
\]
In particular, for an \(\I_M\)-valued $1$-form \(v_M\) and an \(\I_M\)-valued \((n-1)\)-form \(\omega\), the expression \(v_M\wedge\omega>0\) means positivity as an ordinary top-degree form with respect to the given orientation of \(M\).

Now we can prove the following gluing theorem:
\begin{theorem}\label{thm:gluing}
Let $M_i$, $(v_i,\Sl_i,\I_i)$ $(1\le i\le k)$ and $X$ be as above. Assume there exists an integer $m$ with $1 \le m \le k$ such that, for each $1 \le i \le m$, there is a Riemannian metric $g_i$ on $M_i$ for which $v_i$ is harmonic, and for each $m < j \le k$, we have $\Sl_j = \varnothing$ and $v_j$ is nowhere vanishing.

Suppose further that this collection of $\ZT$ $1$-forms $(v_i, \Sl_i, \I_i)~(1\le i\le k)$ satisfy:
\begin{enumerate}
    \item $(\Sl_i \cup \Zl_{v_i}) \cap N_i = \varnothing$ and $\Sl_X=\Zl_{v_X}=\varnothing$;
    \item For $1\le i\le m$, $v_i$ is $\star$-exact on $N_i$, i.e., $\star_{g_i}v_i\big|_{N_i}=d\alpha_i$ for some $\I_i$-valued $(n-2)$-form $\alpha_i$ on $N_i$;
    \item For any point $x\in \big(X\times(-1,1)\big)\cup\big( \cup_{m<j\le k}M_j\big)$, there exists a $v_M$-positive loop $\gamma$ passing through $x$.
\end{enumerate}
Then there exists a metric $g_M$ on $M$, such that $v_M$ is harmonic with respect to $g_M$.
\end{theorem}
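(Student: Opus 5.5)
We follow the Calabi--Volkov strategy. The aim is to construct a closed $\I_M$-valued $(n-1)$-form $\omega$ on $M\setminus\Sl_M$ with two properties. First, $\omega=\star_{g_i}v_i$ on $M_i\setminus N_i'$ for $1\le i\le m$, where $N_i'\supset N_i$ is a slightly larger collar. Second, $v_M\wedge\omega>0$ on $M\setminus(\Sl_M\cup\Zl_{v_M})$. Given such an $\omega$, we use the standard linear-algebra lemma: if $v\wedge\omega>0$ pointwise, there is a metric $g$ with $\star_g v=\omega$, and it can be chosen smoothly and locally. Where $\omega=\star_{g_i}v_i$ already holds we keep $g_i$. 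In the remaining region $v_M\neq0$, and we produce a metric for which $V$ is orthogonal to $\ker v_M$ and the volume form restricted to $\ker v_M$ is fixed by $\omega$. These local choices patch with a partition of unity, because the set of metrics with $\star_g v=\omega$ is convex, as in Calabi. Then $d\star_{g_M}v_M=d\omega=0$, so $v_M$ is harmonic.

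\textbf{Construction of $\omega$.} Let $R=(X\times(-1,1))\cup\bigcup_{j>m}M_j$. On each collar $N_i$ with $i\le m$, write $\star_{g_i}v_i=d\alpha_i$ using hypothesis (ii). Set $\omega_0=\sum_{i\le m}d(\chi_i\alpha_i)$ plus the original $\star_{g_i}v_i$ away from the collars. Here $\chi_i$ is a cutoff equal to $1$ near the inner edge of $N_i$ and $0$ near $X\times\{\pm1\}$, so $\omega_0$ is closed and supported outside $R$ except near the transitions. The form $\omega_0$ may fail positivity in the cutoff region, since there $d\chi_i\wedge\alpha_i$ appears. We correct this by adding a closed form $\eta$, compactly supported in $R\cup\bigcup N_i$ (where $v_M\ne0$), with $v_M\wedge\eta>0$ there and large enough to dominate the error.

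\textbf{Building $\eta$ from positive loops (main step).} For each $x$ in the compact set $K$ where positivity must be enforced, hypothesis (iii) gives a $v_M$-positive loop $\gamma_x$. A positive loop forces $\I_M|_{\gamma_x}$ to be trivial. We thicken $\gamma_x$ to a tube $D^{n-1}\times S^1$ in $M\setminus(\Sl_M\cup\Zl_{v_M})$ and define $\eta_x=\rho(y)\,dy_1\wedge\cdots\wedge dy_{n-1}\otimes s$, where $\rho$ is a bump function on the disk fiber and $s$ is a flat section trivializing $\I_M$ along the tube. This form is closed. Moreover $v_M\wedge\eta_x\ge0$ everywhere, and it is strictly positive near $\gamma_x$, because $v_M(\dot\gamma)$ has a fixed sign relative to $s$ and the fibers are chosen nearly tangent to $\ker v_M$. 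By compactness, finitely many such $\eta_{x_l}$ cover $K$. Setting $\eta=C\sum_l\eta_{x_l}$ with $C$ large, we get $v_M\wedge(\omega_0+\eta)>0$ on $K$. On the original harmonic pieces we have $v_i\wedge\star v_i=|v_i|^2>0$ off the zero set, and adding $\eta$ only increases this.

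The main obstacle is the interface between the region where $\omega$ must equal $\star_{g_i}v_i$ exactly and the region where the positive tubes live. The tubes may enter $M_i$ beyond $N_i$ and pass near $\Sl_i\cup\Zl_{v_i}$, where we cannot modify the form. We handle this in two ways. First, $\eta\ge0$ against $v_M$ wherever it is supported, so positivity is never destroyed. Second, we keep each tube a fixed distance away from $\Sl_i\cup\Zl_{v_i}$, perturbing loops slightly if necessary, since positivity is an open condition. Then $\omega=\star_{g_i}v_i+\eta$ near $\Sl_i\cup\Zl_{v_i}$ reduces to $\star_{g_i}v_i$, and the metric there is left unchanged. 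Elsewhere, $v_M\ne0$ and positivity holds, so the metric reconstruction lemma applies and completes the proof.
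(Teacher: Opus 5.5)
Your proposal is correct and follows essentially the same route as the paper: glue the primitives $\alpha_i$ with cutoffs on the collars, add a large multiple of closed tube forms $\eta_x$ built from $v_M$-positive loops (supported away from $\Sl_M\cup\Zl_{v_M}$, so $\omega=\star_{g_i}v_i$ there), and then invoke the Calabi--Volkov reconstruction of a metric with $\star_{g_M}v_M=\omega$. One small correction: the set of metrics with $\star_g v=\omega$ is not literally convex. The partition-of-unity patching should instead be done on the metric restricted to $\ker v_M$; the kernel line of $\omega$ is then declared orthogonal to $\ker v_M$, and its length is fixed pointwise, as in Calabi's construction, which still leaves $g_i$ unchanged near $\Sl_M\cup\Zl_{v_M}$.
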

\begin{proof}
    We first construct an $\I_M$-valued closed $(n-1)$-form $\omega$ on $M$, such that $v_M\wedge\omega>0$ on the open set $M\setminus\big(\Sl_M\cup\Zl_{v_M}\big)$. $\omega$ serves as a candidate for the Hodge star dual of $v_M$. We then use $\omega$ to construct a Riemannian metric $g_M$ on $M$ such that $\star_{g_M} v_M = \omega$. 

    Since $\star_{g_i}v_i~(1\le i\le m)$ are exact on $N_i~(1\le i\le m)$, we can glue the local primitives using a cut-off function $\chi$ on $(\frac{1}{2},1)$, with $\chi(t)\equiv 1$ for $\frac{1}{2}<\frac{5}{8}<t<1$. The resulting $\I_M$-valued closed $(n-1)$-form $\omega_1$ is:
    \[\omega_1=\begin{cases}
        \star_{g_i}v_i,\quad x\in M_i\setminus N_i;\\
        d\big(\chi\alpha_i\big),\quad x\in N_i,
    \end{cases}\]
    where $1 \le i \le m$. Here, each $N_i$ is a collar neighborhood of $\partial M_i$ diffeomorphic to $\partial M_i \times [\tfrac{1}{2}, 1)$, and for $y=(y',t) \in \partial M_i \times [\frac{1}{2}, 1)$, we set $\chi(y):=\chi(t)$. 

    Next, by the transitivity assumption (iii) on $v_M$, for any point $x\in \big(X\times(-1,1)\big)\cup\big( \cup_{m<j\le k}M_j\big)$, there exists a $v_M$-positive loop $\gamma$ passing through $x$. Choose a sufficiently small tubular neighborhood $U_x$ of $\gamma$, such that $\I_M\big|_{U_x}$ is trivial. We can write $v_M\big|_{U_x}=w\otimes s$ on $U_x$, where $w$ is a single-valued closed $1$-form on $U_x$ and $s$ is a unit-length section of $\I_M\big|_{U_x}$. We then have $\langle v_M,\dot{\gamma}\rangle=\langle w,\dot{\gamma}\rangle s$ on $\gamma$, and $\langle w,\dot{\gamma}\rangle\ne 0$ everywhere on $\gamma$. Without loss of generality, we may assume $\langle w,\dot{\gamma}\rangle> 0$.

    For sufficiently small $U_x$, there exists a diffeomorphism $h : U_x \to S^1 \times D^{n-1}$, where $D^{n-1}$ is the $(n{-}1)$-dimensional unit disk, such that $h^{-1}(\{\theta\} \times D^{n-1})$ lies in a leaf of $\ker v_M$ for every $\theta \in S^1$. Let $\mathrm{vol}_{D^{n-1}}$ be the Euclidean volume form on $D^{n-1}$, 
    and let $\rho$ be a smooth cutoff function on $\RR^{n-1}$ with compact support in $D^{n-1}$. Then $\eta_x = h^* (\rho \cdot \mathrm{vol}_{D^{n-1}})$ is a smooth, closed $(n-1)$-form compactly supported in $U_x$, satisfying $w \wedge \eta_x \ge 0$ with $\rho \ge 0$ and $\rho > 0$ near $0 \in D^{n-1}$.
    
    Thus $\omega_x = \eta_x \otimes s$ is an $\I_M$-valued closed $(n{-}1)$-form on $U_x$, which extends by zero to $M$.  We then have $v_M \wedge \omega_x \geq 0$ on $M$, and $v_M \wedge \omega_x > 0$ in a neighborhood $V_x\subset U_x$ of $x$.

     We can now choose finitely many points $x_1,\dots, x_\ell$ in $\big(X\times(-1,1)\big)\cup\big( \cup_{m<j\le k}M_j\big)$, such that the open sets $V_{x_i}$ $(1 \le i \le \ell)$ form an open cover of this region.  
    Consequently, $v_M\wedge \sum_{1\le i\le \ell}\omega_{x_i}>0$ on $\big(X\times(-1,1)\big)\cup\big( \cup_{m<j\le k}M_j\big)$. Let $V=\cup_{1\le i\le \ell}V_{x_i}$. 
    
    Set $\omega = \omega_1 + \Lambda\omega_2$, where $\omega_2=\sum_{i=1}^\ell \omega_{x_i}$. Then $\omega$ is an $\I_M$-valued closed $(n-1)$-form on $M \setminus \Sl_M$. By definition, outside the region $\big(X\times(-1,1)\big)\cup\big( \cup_{m<j\le k}M_j\big)$, we have $\omega_1 = \star_{g_i} v_i$.
    It follows that $v_M \wedge \omega_1 > 0$ on the complement of $\Sl_M \cup \Zl_{v_M} \cup \big(X\times(-1,1)\big)\cup\big( \cup_{m<j\le k}M_j\big)$.
    On the region $\big(X\times(-1,1)\big)\cup\big( \cup_{m<j\le k}M_j\big)$, which is contained in $V$, we have $v_M \wedge \omega_2 > 0$ on $V$.
    Thus, choosing $\Lambda > 0$ sufficiently large, we then have $v_M \wedge \omega > 0$ on $M\setminus (\Sl_M\cup \Zl_{v_M})$.
    
    Using the $(n{-}1)$-form $\omega$, we can construct a Riemannian metric $g_M$ on $M$ such that $\star_{g_M} v_M = \omega$. For details of this construction, see \cite{intrinsic-Calabi,intrinsic-Volkov}.
\end{proof}

We call $M_i~(1\le i\le m)$ in Theorem~\ref{thm:gluing} the \emph{harmonic pieces} and $M_j~(m<j\le k)$ the \emph{transition pieces}. 

The proof of Theorem \ref{thm:gluing} follows closely the argument in \cite{intrinsic-Calabi,intrinsic-Volkov}, and also applies to the following intrinsic characterization of $2$-valued harmonic $1$-forms.

\begin{theorem}\label{thm:characterize}
    Let $M$ be a closed oriented manifold, and $(v,\Sl,\I)$ be a $2$-valued closed $1$-form on $M$. Suppose $(v,\Sl,\I)$ is locally intrinsically harmonic, locally $\star$-exact and transitive, then it is intrinsically harmonic.
\end{theorem}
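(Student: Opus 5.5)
The plan is to follow the proof of Theorem~\ref{thm:gluing} with the local harmonic data near $A:=\Sl\cup\Zl_v$ playing the role of the harmonic pieces. The goal is a closed $\I$-valued $(n-1)$-form $\omega$ on $M\setminus\Sl$ with two properties: $v\wedge\omega>0$ on $M\setminus A$, and $\omega=\star_{g_U}v$ near $A$. From such an $\omega$ we would build the metric as Calabi and Volkov do.

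\textbf{Step 1: the local piece.} By local intrinsic harmonicity and local $\star$-exactness there are a neighborhood $U$ of $A$ and a metric $g_U$ on $U$ with $d\star_{g_U}v=0$ and $\star_{g_U}v=d\alpha$ for some $\I$-valued $(n-2)$-form $\alpha$ on $U\setminus\Sl$. Since $A$ is closed in the compact manifold $M$, we choose a smaller neighborhood $U'\Subset U$ of $A$ and a cutoff $\chi\in C^\infty(M)$ with $\chi\equiv1$ on $U'$ and $\operatorname{supp}\chi\subset U$. Set $\omega_1:=d(\chi\alpha)$, extended by zero. This is closed, equals $\star_{g_U}v$ on $U'$, and is supported in $U$. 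On $U'\setminus A$ we have $v\wedge\omega_1=|v|^2_{g_U}\mathrm{vol}_{g_U}>0$. On the transition region $\overline{U\setminus U'}$, however, $v\wedge\omega_1$ may have either sign. This is exactly where exactness is needed: without it, cutting off $\star_{g_U}v$ would destroy closedness.

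\textbf{Step 2: the positive correction.} The set $K:=M\setminus U'$ is compact and disjoint from $A$. By transitivity, each $x\in K$ lies on a simple $v$-positive loop $\gamma_x$. As in the proof of Theorem~\ref{thm:gluing}, we take a thin tubular neighborhood $U_x\cong S^1\times D^{n-1}$ of $\gamma_x$ on which $\I$ is trivial and whose disk slices lie in leaves. Pulling back a bump $\rho\,\mathrm{vol}_{D^{n-1}}$ gives a closed form $\omega_x$ with $v\wedge\omega_x\ge0$ everywhere and $v\wedge\omega_x>0$ on a neighborhood $V_x$ of $x$.

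One point needs care: $\gamma_x$ may pass through $U'$, but it must avoid $A$. This holds automatically, since a $v$-positive loop has $v\neq0$ along it and so avoids $\Zl_v$. For $\Sl$, the loop is by definition smooth in $M$ with $\langle v,\dot\gamma\rangle$ a nonvanishing section of $\I|_\gamma$, so it lies in $M\setminus\Sl$. Shrinking $U_x$ keeps it in $M\setminus A$. By compactness, finitely many $V_{x_i}$ cover $K$; set $\omega_2=\sum_i\omega_{x_i}$.

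\textbf{Step 3: combining and building the metric.} Let $\omega=\omega_1+\Lambda\omega_2$. On $U'\setminus A$, both $v\wedge\omega_1>0$ and $v\wedge\omega_2\ge0$. On $K$, $v\wedge\omega_2$ is bounded below by a positive constant, and $|v\wedge\omega_1|$ is bounded because $\omega_1$ is smooth on the compact set $K\cap\operatorname{supp}\chi$. Hence $v\wedge\omega>0$ on $M\setminus A$ for $\Lambda$ large, and $\omega=\star_{g_U}v$ on $U''\setminus\Sl$, where $U''\subset U'$ is a neighborhood of $A$ missing all the $U_{x_i}$. Such a $U''$ exists because the closures of the $U_{x_i}$ are compact in $M\setminus A$.

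Finally we construct $g'$. On $U''$ we take $g'=g_U$. On $M\setminus A$, where $v\ne0$, we use the Calabi construction: choose $g'$ so that the $g'$-dual of $v$ is transverse to $\ker v$, $\ker v$ is $g'$-orthogonal to it, and the induced volume satisfies $\star_{g'}v=\omega$. Concretely, pick a vector field $X$ with $v(X)=1$ (locally, up to the $\I$-sign) and $\iota_X(v\wedge\omega)=\omega$ modulo $v$. Then choose the metric on $\ker v$ with volume $\omega|_{\ker v}$ and set $|v|$ accordingly.

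These local choices are convex in the appropriate sense, and the Calabi construction makes them compatible with $g_U$ where $\omega=\star_{g_U}v$. They are then patched with a partition of unity subordinate to $\{U'',M\setminus\overline{U'''}\}$ for a smaller neighborhood $U'''$ of $A$. I expect this metric-realization step to be the main technical obstacle: the set of metrics with $\star_g v=\omega$ must be shown to be convex, or at least contractible, pointwise, so that patching is legitimate. Here I would follow \cite{intrinsic-Calabi,intrinsic-Volkov} verbatim, working locally with a sign choice of $\I$, which does not matter since both $v$ and $\omega$ flip sign together.
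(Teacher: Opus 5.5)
Your proposal is correct and is essentially the paper's argument. The paper takes smoothly bounded neighborhoods of the components of $\Sl\cup\Zl_v$ (chosen via Sard) as harmonic pieces and the complement as the single transition piece, then invokes Theorem~\ref{thm:gluing}, whose proof is exactly your $\omega_1+\Lambda\omega_2$ construction followed by the Calabi--Volkov metric realization; inlining it with one neighborhood $U$ of all of $\Sl\cup\Zl_v$ only removes the need for smooth boundaries.

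On the step you flag: at a point, the metrics with $\star_g v=\omega$ do not form a convex set. They are, however, exactly the metrics $g=h\circ\pi+c\,v\otimes v$, where $\pi$ is the projection onto $\ker v$ along $\ker\omega$, $h$ is an arbitrary inner product on $\ker v$, and $c$ is determined by $h$ as the determinant of $h$ relative to $\omega|_{\ker v}$. So you should patch the $h$'s with the partition of unity and then recompute $c$.
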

\begin{proof}
    By smooth Urysohn lemma and Sard's lemma, for each connected component of $\Sl\cup\Zl_v$, one can choose a neighborhood $U$ with smooth boundary, admitting a metric $g$ for which $v\big|_U$ is harmonic, and such that $\star_gv$ is exact on $U$.

    Let $\{U_1, \dots, U_m\}$ be the resulting finite collection of such neighborhoods, and set $U_{m+1} = M \setminus \cup_{j=1}^m\overline{U_j}$. Define $M_i = \overline{U_i}$ for $1 \le i \le m+1$, each equipped with the restricted triple $(v_i, \Sl_i, \I_i) := (v, \Sl, \I)|_{M_i}$. Then the collection $\{M_i\}$ satisfies the hypotheses of Theorem~\ref{thm:gluing}, and the intrinsic harmonicity of $(v, \Sl, \I)$ follows directly from that result.
\end{proof}

\section{Applications}
In this section we apply the gluing theorem \ref{thm:gluing} to several surgery constructions for $\ZT$ harmonic $1$-forms. We first prove a connected-sum result and then formulate a local replacement principle for modifying a chosen component of the total zero set. This replacement principle is then used to blow up isolated ordinary zeros via Euclidean models, introduce prescribed degenerate isolated zeros in dimension three, and split smooth $\vec k$-nondegenerate singular components into nondegenerate ones.

\subsection{Connected sum}\label{subsection:connected_sum}
In this subsection, we prove Theorem \ref{thm:intro_connected_sum}.
Let $(M_i,g_i)~(i=1,2)$ be two closed Riemannian manifolds with $\ZT$ harmonic $1$-forms $(v_i,\Sl_i,\I_i)$. Choose a point $x_i\in M_i$ and an open ball $B^i$ centered at $x_i$, such that $v_i\big|_{B^i}$ is nonvanishing. Let $M:=M_1\#M_2$ denote the connected sum formed by gluing along $B^i\setminus (B^i)'$, where $(B^i)'$ is a smaller open ball centered at $x_i$.

As an application of Theorem \ref{thm:gluing}, we construct a $\ZT$ closed $1$-form
$(v_M,\Sl_M,\I_M)$ and a metric $g_M$ on $M$ such that $v_M$ is harmonic with respect to $g_M$.
Moreover, $v_M$ can be chosen to coincide with $v_i$ on $M_i\setminus B_i$.

Our strategy is to first construct a closed $1$-form on the neck $S^{n-1}\times(-1,1)$ that agrees with
$v_i$ near the boundary. Then, we show that the restrictions of
$(v_i, \Sl_i, \I_i)$ to $M_i \setminus B^i$ are \emph{gluing-compatible} with this closed $1$-form
on the neck, thereby obtaining the $2$-valued closed $1$-form $v_M$.
Finally, we verify the transitivity condition (iii) in Theorem~\ref{thm:gluing}.

\begin{proof}[Proof of Theorem~\ref{thm:intro_connected_sum}]
We begin by constructing a closed $1$-form on $S^{n-1}\times(-1,1)$.
To this end, we embed $S^{n-1}\times(-1,1)$ into $\RR^{n+1}$ as follows.

Let $(x_1,\dots, x_n,x_{n+1})$ be coordinates of $\RR^{n+1}$. In $\RR^{n}$, consider polar coordinates $(r,\sigma)$, where $r$ is the radial coordinate and $\sigma \in S^{n-1}$ represents spherical coordinates on the unit sphere.

Consider the following hypersurface in $\RR^{n+1}$: 
\[W=\{(r,\sigma,x_{n+1})\in\RR^{n+1}:-1/2\le x_{n+1}\le 1/2, r=f(x_{n+1})\},\]
where $f:[-1/2,1/2]\to\RR$ is a smooth function, satisfying:
\begin{enumerate}
    \item $f(\pm 1/2)=1/2$, $f(0)=1/4$;
    \item $f'(t)=0$ if and only if $t=0$; and $f''(0)>0$;
    \item $f^{(k)}(\pm 1/2)=\pm\infty$ for any $k\ge 1$.
\end{enumerate}

We define an embedding $\Psi:S^{n-1}\times (-1,1)\to \RR^{n+1}$, such that
\[\Psi(\sigma,t)=\begin{cases}
    (f(t),\sigma,t), & -\frac{1}{2}\le t\le \frac{1}{2};\\
    (t,\sigma,\mathrm{sign}(t) \frac{1}{2}), &\frac{1}{2}<|t|<1.
\end{cases}\]
We obtain a neck connecting unit balls in $\RR^n\times\{-1/2\}$ and $\RR^n\times\{1/2\}$. Now the restriction of the coordinate function $X_n$ on $\mathrm{im}\Psi$ is a Morse function, with exactly two critical points $(0,\dots,\pm1/4,0)\in\RR^{n+1}$. 

The pullback $\Psi^\ast dX_n$ is the desired closed $1$-form. We can choose a diffeomorphism $\phi_1:B^1\to B_1(0)\times\{1/2\}\subset \RR^n\times\{1/2\}$, sending $(B^1)'$ to the $1/2$-ball. And choose a trivialization of $\I_1$ on $B_1$, so that $(\phi_1)_\ast v_1=dX_n$. Similar for $\I_2\big|_{B_2}$ and $v_2\big|_{B_2}$, we have a diffeomorphism $\phi_2$ from $B^2$ to unit ball on $\RR^{n}\times\{-1/2\}$, pulling back $dX_n$ to $v_2$.

Now we can glue $S^{n-1}\times(-1,1)$ with $M_i\setminus (B^i)'$ by $\Psi$ and $\phi_i$ along the annular region $B_i\setminus (B^i)'~(i=1,2)$, and obtain a global $2$-valued closed $1$-form $(v_M,\Sl_M,\I_M)$ on $M=M_1\# M_2$, where $(v_M,\Sl_M,\I_M)\big|_{S^n\times(-1,1)}=(\Psi^\ast dX_n,\varnothing,\underline{\RR})$.

\begin{figure}[!h]
    \centering

\tikzset{every picture/.style={line width=0.75pt}} 

\begin{tikzpicture}[x=0.55pt,y=0.55pt,yscale=-1,xscale=1]

\draw [color={rgb, 255:red, 0; green, 0; blue, 0 }  ,draw opacity=1 ][fill={rgb, 255:red, 255; green, 255; blue, 255 }  ,fill opacity=1 ][line width=0.75]    (465,120) .. controls (467.41,120.59) and (468.44,122.01) .. (468.08,124.26) .. controls (467.92,126.58) and (469.09,127.8) .. (471.58,127.91) .. controls (473.78,127.58) and (475.07,128.59) .. (475.44,130.93) .. controls (476.09,133.29) and (477.47,134.09) .. (479.6,133.32) .. controls (481.76,132.45) and (483.33,133.06) .. (484.32,135.17) .. controls (485.59,137.22) and (487.23,137.59) .. (489.24,136.26) .. controls (491.02,134.77) and (492.7,134.87) .. (494.28,136.57) .. controls (496.12,138.14) and (497.81,137.98) .. (499.34,136.1) .. controls (500.61,134.13) and (502.17,133.74) .. (504.01,134.94) .. controls (506.3,135.87) and (507.82,135.25) .. (508.57,133.08) .. controls (509.32,130.78) and (510.78,129.92) .. (512.97,130.5) .. controls (515.3,130.86) and (516.59,129.86) .. (516.84,127.49) .. controls (516.89,125.15) and (518.09,123.94) .. (520.46,123.85) .. controls (522.91,123.5) and (523.93,122.2) .. (523.52,119.95) -- (524.05,119.19) -- (527.91,112.42) ;
\draw [shift={(529,110)}, rotate = 112.99] [fill={rgb, 255:red, 0; green, 0; blue, 0 }  ,fill opacity=1 ][line width=0.08]  [draw opacity=0] (8.93,-4.29) -- (0,0) -- (8.93,4.29) -- cycle    ;
\draw [color={rgb, 255:red, 0; green, 0; blue, 0 }  ,draw opacity=1 ][fill={rgb, 255:red, 255; green, 255; blue, 255 }  ,fill opacity=1 ][line width=0.75]    (183,139) .. controls (181.49,140.95) and (179.74,141.21) .. (177.75,139.8) .. controls (175.91,138.31) and (174.21,138.44) .. (172.66,140.21) .. controls (171.02,141.92) and (169.38,141.93) .. (167.75,140.24) .. controls (166.28,138.5) and (164.7,138.39) .. (163.02,139.92) .. controls (160.94,141.33) and (159.28,141.08) .. (158.05,139.17) .. controls (156.71,137.17) and (155,136.75) .. (152.92,137.92) .. controls (151.02,139.07) and (149.55,138.56) .. (148.5,136.4) .. controls (147.65,134.24) and (146.03,133.5) .. (143.64,134.19) .. controls (141.39,134.84) and (140.02,134.04) .. (139.53,131.77) .. controls (139.24,129.54) and (137.9,128.54) .. (135.51,128.78) .. controls (133.05,128.83) and (131.86,127.72) .. (131.95,125.46) .. controls (132.09,123.12) and (131,121.82) .. (128.68,121.56) .. controls (126.33,121.08) and (125.44,119.68) .. (126.01,117.37) .. controls (126.8,115.22) and (126.08,113.63) .. (123.86,112.62) -- (122.94,109.66) -- (121.89,101.87) ;
\draw [shift={(122,99)}, rotate = 94.76] [fill={rgb, 255:red, 0; green, 0; blue, 0 }  ,fill opacity=1 ][line width=0.08]  [draw opacity=0] (8.93,-4.29) -- (0,0) -- (8.93,4.29) -- cycle    ;
\draw [color={rgb, 255:red, 208; green, 2; blue, 27 }  ,draw opacity=1 ] [dash pattern={on 4.5pt off 4.5pt}]  (322.67,96) .. controls (342.67,88.5) and (367.17,91) .. (373.17,91) .. controls (379.17,91) and (405.17,85.5) .. (380.17,85.5) ;
\draw [color={rgb, 255:red, 74; green, 144; blue, 226 }  ,draw opacity=1 ]   (554.67,84.39) -- (517.24,84.52) ;
\draw [color={rgb, 255:red, 74; green, 144; blue, 226 }  ,draw opacity=1 ]   (254.67,58.5) .. controls (268.67,69) and (296.67,70) .. (308.67,74.5) ;
\draw [color={rgb, 255:red, 74; green, 144; blue, 226 }  ,draw opacity=1 ]   (343.67,76.5) .. controls (352.17,74.5) and (366.17,73) .. (378.67,69) .. controls (391.17,65) and (410.17,53) .. (387.67,51.5) ;
\draw [color={rgb, 255:red, 208; green, 2; blue, 27 }  ,draw opacity=1 ] [dash pattern={on 4.5pt off 4.5pt}]  (260.67,62) .. controls (274.67,72.5) and (285.67,74.5) .. (308.17,71.5) ;
\draw [color={rgb, 255:red, 74; green, 144; blue, 226 }  ,draw opacity=1 ]   (96.31,82.39) -- (133.74,82.52) ;
\draw   (32,29.5) .. controls (46.33,14.33) and (97.83,7.33) .. (122,29.5) .. controls (146.17,51.67) and (157.67,99.17) .. (137.67,118.67) .. controls (117.67,138.17) and (89.17,127.67) .. (65.67,115.67) .. controls (42.17,103.67) and (26.67,87.17) .. (22.67,72.17) .. controls (18.67,57.17) and (17.67,44.67) .. (32,29.5) -- cycle ;
\draw    (41.55,35.05) .. controls (49.12,44.1) and (68,38.39) .. (68.79,28.81) ;
\draw    (44.96,37.94) .. controls (47.32,28.22) and (61.02,26.92) .. (66.72,33.79) ;
\draw  [fill={rgb, 255:red, 0; green, 0; blue, 0 }  ,fill opacity=1 ] (113.02,82.33) .. controls (113.02,81.38) and (113.79,80.6) .. (114.75,80.6) .. controls (115.71,80.6) and (116.48,81.38) .. (116.48,82.33) .. controls (116.48,83.29) and (115.71,84.07) .. (114.75,84.07) .. controls (113.79,84.07) and (113.02,83.29) .. (113.02,82.33) -- cycle ;
\draw [color={rgb, 255:red, 74; green, 144; blue, 226 }  ,draw opacity=1 ]   (96,67.5) -- (133.44,67.63) ;
\draw [color={rgb, 255:red, 74; green, 144; blue, 226 }  ,draw opacity=1 ]   (96.31,74.75) -- (133.74,74.88) ;
\draw [color={rgb, 255:red, 74; green, 144; blue, 226 }  ,draw opacity=1 ]   (96.31,90.02) -- (133.74,90.15) ;
\draw [color={rgb, 255:red, 74; green, 144; blue, 226 }  ,draw opacity=1 ]   (96,98.04) -- (133.44,98.17) ;
\draw  [dash pattern={on 0.84pt off 2.51pt}] (89.75,82.33) .. controls (89.75,68.53) and (100.94,57.33) .. (114.75,57.33) .. controls (128.56,57.33) and (139.75,68.53) .. (139.75,82.33) .. controls (139.75,96.14) and (128.56,107.33) .. (114.75,107.33) .. controls (100.94,107.33) and (89.75,96.14) .. (89.75,82.33) -- cycle ;
\draw   (618.48,31.5) .. controls (604.15,16.33) and (552.65,9.33) .. (528.48,31.5) .. controls (504.32,53.67) and (492.82,101.17) .. (512.82,120.67) .. controls (532.82,140.17) and (561.32,129.67) .. (584.82,117.67) .. controls (608.32,105.67) and (623.82,89.17) .. (627.82,74.17) .. controls (631.82,59.17) and (632.82,46.67) .. (618.48,31.5) -- cycle ;
\draw    (591.09,31.02) .. controls (586.49,41.89) and (566.77,41.97) .. (563.21,33.03) ;
\draw    (588.67,34.79) .. controls (583.57,26.19) and (570.09,28.95) .. (566.65,37.2) ;
\draw  [fill={rgb, 255:red, 0; green, 0; blue, 0 }  ,fill opacity=1 ] (537.47,84.33) .. controls (537.47,83.38) and (536.69,82.6) .. (535.73,82.6) .. controls (534.78,82.6) and (534,83.38) .. (534,84.33) .. controls (534,85.29) and (534.78,86.07) .. (535.73,86.07) .. controls (536.69,86.07) and (537.47,85.29) .. (537.47,84.33) -- cycle ;
\draw [color={rgb, 255:red, 74; green, 144; blue, 226 }  ,draw opacity=1 ]   (554.98,69.5) -- (517.55,69.63) ;
\draw [color={rgb, 255:red, 74; green, 144; blue, 226 }  ,draw opacity=1 ]   (554.67,76.75) -- (517.24,76.88) ;
\draw [color={rgb, 255:red, 74; green, 144; blue, 226 }  ,draw opacity=1 ]   (554.67,92.02) -- (517.24,92.15) ;
\draw [color={rgb, 255:red, 74; green, 144; blue, 226 }  ,draw opacity=1 ]   (554.98,100.04) -- (517.55,100.17) ;
\draw  [dash pattern={on 0.84pt off 2.51pt}] (560.73,84.33) .. controls (560.73,70.53) and (549.54,59.33) .. (535.73,59.33) .. controls (521.93,59.33) and (510.73,70.53) .. (510.73,84.33) .. controls (510.73,98.14) and (521.93,109.33) .. (535.73,109.33) .. controls (549.54,109.33) and (560.73,98.14) .. (560.73,84.33) -- cycle ;
\draw    (239.67,39.67) .. controls (239.67,79.67) and (387.17,83) .. (407.17,49) ;
\draw    (240.17,130.17) .. controls (239.17,90.17) and (404.17,98) .. (409.17,123) ;
\draw    (268.67,106.67) .. controls (267.67,180.67) and (206.67,182) .. (187.67,152) .. controls (168.67,122) and (161.17,56.5) .. (194.17,23) .. controls (227.17,-10.5) and (260.67,18.17) .. (266.67,62.67) ;
\draw    (375.67,107.17) .. controls (392.67,176.17) and (449.17,153) .. (461.67,134) .. controls (474.17,115) and (478.17,43.5) .. (455.67,19) .. controls (433.17,-5.5) and (381.67,27.67) .. (372.67,66.17) ;
\draw [color={rgb, 255:red, 74; green, 144; blue, 226 }  ,draw opacity=1 ]   (204.17,26) -- (245.67,26.5) ;
\draw [color={rgb, 255:red, 74; green, 144; blue, 226 }  ,draw opacity=1 ]   (212,144.04) -- (249.44,144.17) ;
\draw [color={rgb, 255:red, 74; green, 144; blue, 226 }  ,draw opacity=1 ]   (196.67,49) .. controls (228.67,49.5) and (243.17,56.5) .. (247.17,55) .. controls (251.17,53.5) and (238.67,47.5) .. (255.17,48.5) ;
\draw [color={rgb, 255:red, 74; green, 144; blue, 226 }  ,draw opacity=1 ]   (190.67,63.5) .. controls (234.17,64) and (253.67,70) .. (267.67,73) .. controls (281.67,76) and (306.92,81.25) .. (308.67,74.5) ;
\draw [color={rgb, 255:red, 74; green, 144; blue, 226 }  ,draw opacity=1 ]   (254.67,58.5) .. controls (253.67,55.5) and (250.67,54) .. (263.17,55) ;
\draw [color={rgb, 255:red, 208; green, 2; blue, 27 }  ,draw opacity=1 ]   (187.67,77) .. controls (229.17,76.5) and (256.67,79) .. (272.67,82) .. controls (288.67,85) and (305.67,84.5) .. (322.17,76.5) ;
\draw  [color={rgb, 255:red, 208; green, 2; blue, 27 }  ,draw opacity=1 ][fill={rgb, 255:red, 208; green, 2; blue, 27 }  ,fill opacity=1 ] (320.43,76.5) .. controls (320.43,75.54) and (321.21,74.77) .. (322.17,74.77) .. controls (323.12,74.77) and (323.9,75.54) .. (323.9,76.5) .. controls (323.9,77.46) and (323.12,78.23) .. (322.17,78.23) .. controls (321.21,78.23) and (320.43,77.46) .. (320.43,76.5) -- cycle ;
\draw [color={rgb, 255:red, 208; green, 2; blue, 27 }  ,draw opacity=1 ]   (322.17,76.5) .. controls (348.17,83) and (355.17,83) .. (368.17,82) .. controls (381.17,81) and (417.67,69) .. (454.67,69.5) ;
\draw [color={rgb, 255:red, 208; green, 2; blue, 27 }  ,draw opacity=1 ]   (322.17,76.5) .. controls (333.67,74.23) and (346.67,74.5) .. (365.67,69) ;
\draw [color={rgb, 255:red, 208; green, 2; blue, 27 }  ,draw opacity=1 ]   (376.17,66.5) .. controls (383.67,64) and (403.67,57.5) .. (382.17,54) ;
\draw [color={rgb, 255:red, 208; green, 2; blue, 27 }  ,draw opacity=1 ]   (322.17,76.5) .. controls (313.67,75) and (318.17,75) .. (308.17,71.5) ;
\draw [color={rgb, 255:red, 74; green, 144; blue, 226 }  ,draw opacity=1 ]   (451.17,44) .. controls (442.92,44) and (425.89,44.65) .. (421.38,45.53) .. controls (416.87,46.41) and (409.99,50.06) .. (407.17,49) .. controls (404.34,47.94) and (414.17,42.5) .. (397.67,43.5) ;
\draw [color={rgb, 255:red, 74; green, 144; blue, 226 }  ,draw opacity=1 ]   (452.67,55.5) .. controls (417.67,56.5) and (391.17,72.5) .. (380.67,74) .. controls (370.17,75.5) and (347.67,81) .. (343.67,76.5) ;
\draw [color={rgb, 255:red, 74; green, 144; blue, 226 }  ,draw opacity=1 ]   (199.67,37) .. controls (227.67,36.5) and (236.67,40.17) .. (239.67,39.67) .. controls (242.67,39.17) and (236.67,35.5) .. (253.17,36.5) ;
\draw [color={rgb, 255:red, 74; green, 144; blue, 226 }  ,draw opacity=1 ]   (450.67,34) .. controls (435.17,34) and (422.17,36) .. (416.17,36) .. controls (410.17,36) and (419.67,33) .. (404.17,33.5) ;
\draw [color={rgb, 255:red, 74; green, 144; blue, 226 }  ,draw opacity=1 ]   (188.67,91) .. controls (238.67,90) and (252.67,88.5) .. (272.67,90) .. controls (292.67,91.5) and (309.67,83) .. (322.67,83.5) .. controls (335.67,84) and (371.67,91.5) .. (394.17,88.5) .. controls (416.67,85.5) and (420.17,83.5) .. (457.17,84) ;
\draw [color={rgb, 255:red, 74; green, 144; blue, 226 }  ,draw opacity=1 ]   (411.23,24.04) -- (447.67,24) ;
\draw  [color={rgb, 255:red, 208; green, 2; blue, 27 }  ,draw opacity=1 ][fill={rgb, 255:red, 208; green, 2; blue, 27 }  ,fill opacity=1 ] (324.4,97.73) .. controls (324.4,96.78) and (323.62,96) .. (322.67,96) .. controls (321.71,96) and (320.93,96.78) .. (320.93,97.73) .. controls (320.93,98.69) and (321.71,99.47) .. (322.67,99.47) .. controls (323.62,99.47) and (324.4,98.69) .. (324.4,97.73) -- cycle ;
\draw [color={rgb, 255:red, 208; green, 2; blue, 27 }  ,draw opacity=1 ] [dash pattern={on 4.5pt off 4.5pt}]  (322.67,97.73) .. controls (312.67,90) and (291.17,93.5) .. (278.67,92.5) .. controls (266.17,91.5) and (243.17,91) .. (250.67,87) .. controls (258.17,83) and (251.17,87.5) .. (267.17,85.5) ;
\draw [color={rgb, 255:red, 208; green, 2; blue, 27 }  ,draw opacity=1 ] [dash pattern={on 4.5pt off 4.5pt}]  (322.67,97.73) .. controls (335.17,101) and (346.67,101) .. (364.17,105) ;
\draw [color={rgb, 255:red, 208; green, 2; blue, 27 }  ,draw opacity=1 ]   (364.17,105) .. controls (396.67,108) and (396.67,107) .. (410.17,110.5) .. controls (423.67,114) and (424.17,116) .. (451.67,117) ;
\draw [color={rgb, 255:red, 208; green, 2; blue, 27 }  ,draw opacity=1 ] [dash pattern={on 4.5pt off 4.5pt}]  (275.67,106.5) .. controls (285.67,102.5) and (306.67,102) .. (322.67,97.73) ;
\draw [color={rgb, 255:red, 208; green, 2; blue, 27 }  ,draw opacity=1 ]   (195.67,118.5) .. controls (231.17,118) and (238.67,111.5) .. (244.17,110) .. controls (249.67,108.5) and (251.67,107.5) .. (275.67,106.5) ;
\draw [color={rgb, 255:red, 74; green, 144; blue, 226 }  ,draw opacity=1 ]   (191.17,105.5) .. controls (236.67,103.5) and (252.17,98.5) .. (272.17,100) .. controls (292.17,101.5) and (306.67,102) .. (322.17,102) .. controls (337.67,102) and (358.67,100.5) .. (380.67,99) .. controls (402.67,97.5) and (423.17,98.5) .. (453.67,98) ;
\draw [color={rgb, 255:red, 74; green, 144; blue, 226 }  ,draw opacity=1 ]   (202.67,131) .. controls (227.67,130.5) and (233.67,125) .. (240.67,125) .. controls (247.67,125) and (237.17,129) .. (253.67,130) ;
\draw [color={rgb, 255:red, 74; green, 144; blue, 226 }  ,draw opacity=1 ]   (392.67,125) .. controls (407.67,125) and (403.67,120) .. (408.17,120) .. controls (412.67,120) and (426.17,126) .. (448.17,126) ;
\draw [color={rgb, 255:red, 74; green, 144; blue, 226 }  ,draw opacity=1 ]   (395,135.54) -- (446.17,136.5) ;

\draw (66,130) node [anchor=north west][inner sep=0.75pt]    {$M_{2}$};
\draw (66,56) node [anchor=north west][inner sep=0.75pt]    {$x_{2}$};
\draw (560,130) node [anchor=north west][inner sep=0.75pt]    {$M_{1}$};
\draw (560,56) node [anchor=north west][inner sep=0.75pt]    {$x_{1}$};
\draw (265,10) node [anchor=north west][inner sep=0.75pt]    {$S^{n-1} \times ( -1,1)$};

\end{tikzpicture}

    \caption{Connected sum of $\ZT$ harmonic $1$-forms.}
    \label{fig:connected_sum}
\end{figure}

Next we verify that $v_M$ is transitive on $M$. 

First, since each $v_i$ is a $\ZT$ harmonic $1$-form on $(M_i, g_i)$, Theorem~\ref{thm:harmonic_implies_transitivity} implies that they are transitive.
We may choose $B^i$ to be contained in a flat chart $U_i$ of $\ker v_i$.  
By Theorem~\ref{thm:harmonic_implies_transitivity}, every point  
$y \in M_i \setminus \Sl_i$ is passed by a $v_i$-positive loop.  
In particular, any $y \in U_i \setminus B^i$ lies on such a loop $\gamma$,  
which can be further modified to avoid $B^i$.

\begin{figure}[!h]
    \centering

\tikzset{every picture/.style={line width=0.75pt}} 

\begin{tikzpicture}[x=0.5pt,y=0.5pt,yscale=-1,xscale=1]

\draw   (100.83,80.5) -- (200.33,80.5) -- (200.33,180) -- (100.83,180) -- cycle ;
\draw [color={rgb, 255:red, 74; green, 144; blue, 226 }  ,draw opacity=1 ]   (200,130) -- (100,130) ;
\draw  [fill={rgb, 255:red, 0; green, 0; blue, 0 }  ,fill opacity=1 ] (152.47,130.33) .. controls (152.47,129.38) and (151.69,128.6) .. (150.73,128.6) .. controls (149.78,128.6) and (149,129.38) .. (149,130.33) .. controls (149,131.29) and (149.78,132.07) .. (150.73,132.07) .. controls (151.69,132.07) and (152.47,131.29) .. (152.47,130.33) -- cycle ;
\draw [color={rgb, 255:red, 74; green, 144; blue, 226 }  ,draw opacity=1 ]   (200,120) -- (100,120) ;
\draw [color={rgb, 255:red, 74; green, 144; blue, 226 }  ,draw opacity=1 ]   (200,140) -- (100,140) ;
\draw [color={rgb, 255:red, 74; green, 144; blue, 226 }  ,draw opacity=1 ]   (200,150) -- (100,150) ;
\draw  [dash pattern={on 0.84pt off 2.51pt}] (175.73,130.33) .. controls (175.73,116.53) and (164.54,105.33) .. (150.73,105.33) .. controls (136.93,105.33) and (125.73,116.53) .. (125.73,130.33) .. controls (125.73,144.14) and (136.93,155.33) .. (150.73,155.33) .. controls (164.54,155.33) and (175.73,144.14) .. (175.73,130.33) -- cycle ;
\draw [color={rgb, 255:red, 74; green, 144; blue, 226 }  ,draw opacity=1 ]   (200,160) -- (100,160) ;
\draw [color={rgb, 255:red, 74; green, 144; blue, 226 }  ,draw opacity=1 ]   (200,170) -- (100,170) ;
\draw [color={rgb, 255:red, 74; green, 144; blue, 226 }  ,draw opacity=1 ]   (200,110) -- (100,110) ;
\draw [color={rgb, 255:red, 74; green, 144; blue, 226 }  ,draw opacity=1 ]   (200,100) -- (100,100) ;
\draw [color={rgb, 255:red, 74; green, 144; blue, 226 }  ,draw opacity=1 ]   (200,90) -- (100,90) ;
\draw  [fill={rgb, 255:red, 0; green, 0; blue, 0 }  ,fill opacity=1 ] (150,91.73) .. controls (150,90.78) and (149.22,90) .. (148.27,90) .. controls (147.31,90) and (146.53,90.78) .. (146.53,91.73) .. controls (146.53,92.69) and (147.31,93.47) .. (148.27,93.47) .. controls (149.22,93.47) and (150,92.69) .. (150,91.73) -- cycle ;
\draw    (100,210) .. controls (149.09,178.82) and (104.29,138.08) .. (169.02,61.16) ;
\draw [shift={(170,60)}, rotate = 130.43] [color={rgb, 255:red, 0; green, 0; blue, 0 }  ][line width=0.75]    (10.93,-3.29) .. controls (6.95,-1.4) and (3.31,-0.3) .. (0,0) .. controls (3.31,0.3) and (6.95,1.4) .. (10.93,3.29)   ;
\draw   (320.5,80.5) -- (420,80.5) -- (420,180) -- (320.5,180) -- cycle ;
\draw [color={rgb, 255:red, 74; green, 144; blue, 226 }  ,draw opacity=1 ]   (419.67,130) -- (319.67,130) ;
\draw  [fill={rgb, 255:red, 0; green, 0; blue, 0 }  ,fill opacity=1 ] (372.13,130.33) .. controls (372.13,129.38) and (371.36,128.6) .. (370.4,128.6) .. controls (369.44,128.6) and (368.67,129.38) .. (368.67,130.33) .. controls (368.67,131.29) and (369.44,132.07) .. (370.4,132.07) .. controls (371.36,132.07) and (372.13,131.29) .. (372.13,130.33) -- cycle ;
\draw [color={rgb, 255:red, 74; green, 144; blue, 226 }  ,draw opacity=1 ]   (419.67,120) -- (319.67,120) ;
\draw [color={rgb, 255:red, 74; green, 144; blue, 226 }  ,draw opacity=1 ]   (419.67,140) -- (319.67,140) ;
\draw [color={rgb, 255:red, 74; green, 144; blue, 226 }  ,draw opacity=1 ]   (419.67,150) -- (319.67,150) ;
\draw  [dash pattern={on 0.84pt off 2.51pt}] (395.4,130.33) .. controls (395.4,116.53) and (384.21,105.33) .. (370.4,105.33) .. controls (356.59,105.33) and (345.4,116.53) .. (345.4,130.33) .. controls (345.4,144.14) and (356.59,155.33) .. (370.4,155.33) .. controls (384.21,155.33) and (395.4,144.14) .. (395.4,130.33) -- cycle ;
\draw [color={rgb, 255:red, 74; green, 144; blue, 226 }  ,draw opacity=1 ]   (419.67,160) -- (319.67,160) ;
\draw [color={rgb, 255:red, 74; green, 144; blue, 226 }  ,draw opacity=1 ]   (419.67,170) -- (319.67,170) ;
\draw [color={rgb, 255:red, 74; green, 144; blue, 226 }  ,draw opacity=1 ]   (419.67,110) -- (319.67,110) ;
\draw [color={rgb, 255:red, 74; green, 144; blue, 226 }  ,draw opacity=1 ]   (419.67,100) -- (319.67,100) ;
\draw [color={rgb, 255:red, 74; green, 144; blue, 226 }  ,draw opacity=1 ]   (419.67,90) -- (319.67,90) ;
\draw  [fill={rgb, 255:red, 0; green, 0; blue, 0 }  ,fill opacity=1 ] (369.67,91.73) .. controls (369.67,90.78) and (368.89,90) .. (367.93,90) .. controls (366.98,90) and (366.2,90.78) .. (366.2,91.73) .. controls (366.2,92.69) and (366.98,93.47) .. (367.93,93.47) .. controls (368.89,93.47) and (369.67,92.69) .. (369.67,91.73) -- cycle ;
\draw    (319.67,210) .. controls (412.33,159.17) and (325.17,149.83) .. (330,130) .. controls (334.81,110.27) and (354.14,122.54) .. (389.14,60.93) ;
\draw [shift={(389.67,60)}, rotate = 119.42] [color={rgb, 255:red, 0; green, 0; blue, 0 }  ][line width=0.75]    (10.93,-3.29) .. controls (6.95,-1.4) and (3.31,-0.3) .. (0,0) .. controls (3.31,0.3) and (6.95,1.4) .. (10.93,3.29)   ;
\draw [color={rgb, 255:red, 74; green, 144; blue, 226 }  ,draw opacity=1 ]   (110,120) -- (110,95) ;
\draw [shift={(110,93)}, rotate = 90] [color={rgb, 255:red, 74; green, 144; blue, 226 }  ,draw opacity=1 ][line width=0.75]    (10.93,-3.29) .. controls (6.95,-1.4) and (3.31,-0.3) .. (0,0) .. controls (3.31,0.3) and (6.95,1.4) .. (10.93,3.29)   ;
\draw [color={rgb, 255:red, 74; green, 144; blue, 226 }  ,draw opacity=1 ]   (330,120) -- (330,95) ;
\draw [shift={(330,93)}, rotate = 90] [color={rgb, 255:red, 74; green, 144; blue, 226 }  ,draw opacity=1 ][line width=0.75]    (10.93,-3.29) .. controls (6.95,-1.4) and (3.31,-0.3) .. (0,0) .. controls (3.31,0.3) and (6.95,1.4) .. (10.93,3.29)   ;
\draw   (222.5,111) -- (264.5,111) -- (264.5,101) -- (292.5,121) -- (264.5,141) -- (264.5,131) -- (222.5,131) -- cycle ;

\draw (157,82.4) node [anchor=north west][inner sep=0.75pt]    {${\textstyle y}$};
\draw (137,52.4) node [anchor=north west][inner sep=0.75pt]    {$\gamma $};
\draw (161,112.4) node [anchor=north west][inner sep=0.75pt]    {$B^{i}$};
\draw (376.67,82.4) node [anchor=north west][inner sep=0.75pt]    {${\textstyle y}$};
\draw (356.67,52.4) node [anchor=north west][inner sep=0.75pt]    {$\gamma '$};
\draw (380.67,112.4) node [anchor=north west][inner sep=0.75pt]    {$B^{i}$};
\draw (80,92.4) node [anchor=north west][inner sep=0.75pt]    {$\textcolor[rgb]{0.29,0.56,0.89}{v_{i}}$};
\draw (300,92.4) node [anchor=north west][inner sep=0.75pt]    {$\textcolor[rgb]{0.29,0.56,0.89}{v}\textcolor[rgb]{0.29,0.56,0.89}{_{i}}$};
\draw (231,114) node [anchor=north west][inner sep=0.75pt]   [align=left] {\tiny{modify}};

\end{tikzpicture}

    \caption{Modify $\gamma$ to avoid $B^i$ while preserving positivity.}
    \label{fig:modify}
\end{figure}

For any $x = (\sigma, t) \in S^{n-1} \times (-1, 1)$ that is not a critical point of $X_n \circ \Psi$,  
the leaf of $\Psi^* dX_n$ through $x$ intersects the boundary $S^{n-1}\times\{\pm1\}$.
Therefore, when $x$ is considered as a point of $M$, it lies in a leaf of $\ker v_M$, that extends into $U_i \setminus B^i \subset M_i$ for some $i=1,2$. There is some point $y \in U_i \setminus B^i \subset M_i$ that lies in the same leaf as $x$.  
By Lemma \ref{lem:modify_positive_loop_in_one_leaf}, we can modify the $v_i$-positive loop $\gamma \subset M_i \setminus B^i$ through $y$ to obtain a $v_M$-positive loop $\gamma'$ that passes through $x$.

Now, let $M_3,M_4$ be Morse neighborhoods of the two critical points of $X_n\circ\Psi$ on $S^{n-1}\times(-1,1)$. And let $M_5$ be $M\setminus \cup_{i=1}^4 M_i$. Thus $M_i~(1\le i\le 4)$ are the harmonic pieces and $M_5$ is the transition piece. 
Then the collection $\{M_i\}_{1\le i\le 5}$ and $v_M\big|_{M_i}$ satisfies the hypotheses of Theorem~\ref{thm:gluing}. And the existence of $g_M$ follows directly from that result.
\end{proof}

\subsection{A local replacement principle}\label{sec:changing_zero_sets}
This subsection proves a local replacement principle for a chosen connected component \(Y\) of the total zero set \(S\cup Z_v\). The main point is that positive loops in the gluing collar can be chosen uniformly away from \(Y\), so a sufficiently small replacement near \(Y\) preserves the transitivity condition in Theorem~\ref{thm:gluing}, see also \cite{Chen2025Perturbation} We will use this principle below to blow up isolated ordinary zeros and to split higher-order smooth branching components.

Let $(v,\Sl,\I)$ be a $\ZT$ harmonic $1$-form on a closed manifold $(M,g)$.
Let $Y$ be a connected component of the zero set $\Sl\cup\Zl_v$.
Let $U_Y$ be a precompact neighborhood of $Y$ such that
$v$ is nonvanishing on $U_Y\setminus Y$, $\p U_Y$ is a connected hypersurface
in $M$, and $M\setminus U_Y$ is connected.
Suppose $v$ is exact and $\star$-exact on $\p U_Y\times(-1,1)$.

Fix a tubular neighborhood of $\p U_Y$ and identify it with $\p U_Y\times(-1,1)$, so that $\p U_Y\times\{-1\}$ is the boundary component nearer to $Y$.
Define
$M_1 = M \setminus \Big(U_Y \cup \big(\p U_Y\times(-1,\tfrac{1}{2}]\big)\Big)$,
$M_2 = U_Y \setminus \big(\p U_Y\times[-\tfrac{1}{2},1)\big)$,
and $M_2'= M_2\cup\big(\p U_Y\times (-1,1)\big)$.
The boundary $\p U_Y$ plays the role of $X$ in Definition \ref{def:Gluing_compatible_pair}.

In this subsection, we prove a technical lemma that modifies $(v,\Sl,\I)$
on $M_2$ to produce a new $\ZT$ closed
$1$-form $(v',\Sl',\I')$ and a metric $g'$ making $v'$ harmonic.
The resulting $1$-form coincides with $v$ on $M_1$.
The main step is to verify the transitivity
condition (iii) of Theorem \ref{thm:gluing} on the gluing region
$\p U_Y\times(-1,1)$.

\begin{lemma}\label{lem:positive_curve_on_gluing_region}
    For each $x \in \p U_Y \times (-1,1)$, there exists a $v$-positive loop $\gamma_x$ in $M$ passing through $x$. These loops can be chosen so that, for all $t$ in the domain of $\gamma_x$ and all $x\in\p U_Y\times(-1,1)$, \begin{align}
        |\langle v(\gamma_x(t)),\dot{\gamma}_x(t)\rangle|\ge \frac12|v(\gamma_x(t))|\cdot|\dot{\gamma}_x(t)|.\label{eq:angle_condition}
    \end{align}
     Moreover, $\mathrm{dist}_g(\gamma_x, Y) \ge r_0 > 0$ for some $r_0>0$ independent of $x$.
\end{lemma}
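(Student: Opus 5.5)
Since $v$ is $\ZT$ harmonic on the closed manifold $M$, Theorem~\ref{thm:harmonic_implies_transitivity} shows that $v$ is transitive. The plan is to use compactness of the closed collar $K:=\p U_Y\times[-1,1]$ (with a slightly larger collar on which $v$ is still nonvanishing) to pass from pointwise loops to finitely many loops with uniform control. Then we propagate these loops to every point of the collar by modifications that stay in a region of controlled geometry. Note that $v$ has no zeros on $K$, because $K\subset \overline{U_Y}\setminus Y$ up to shrinking, and $v$ is nonvanishing on $U_Y\setminus Y$ and near $\p U_Y$.

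\textbf{Step 1: local loops with an angle bound.} Fix $x\in K$ and take a simple $v$-positive loop $\gamma$ through $x$. Its image is compact and disjoint from $\Sl\cup\Zl_v$, because positivity forces $v\neq0$ along it. It therefore has positive distance $r_\gamma$ from $Y$. A positive loop satisfies $|\langle v,\dot\gamma\rangle|\ge c|v||\dot\gamma|$ for some $c>0$, but not necessarily with $c=\tfrac12$. To improve the constant, we cover $\gamma$ by finitely many flat charts in which $v=dt\otimes s$. In each chart we straighten the loop: we replace each piece by a curve whose tangent is close to the gradient direction $v^\sharp$, keeping the endpoints on the same leaves as before. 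This is possible because leaf-tangent displacements can be absorbed while the $t$-coordinate increases monotonically, and we may make the leaf-direction speed as small as desired relative to $\dot t$, using a reparametrization in $t$ together with a short leafwise correction spread over a long $t$-interval. Concretely, we use the construction of Lemma~\ref{lem:modify_positive_loop_in_one_leaf}, which already produces curves with $\dot t>0$, and we arrange the leafwise component to have small speed. The resulting loop satisfies \eqref{eq:angle_condition} and stays within a small neighborhood of $\gamma$, hence at distance at least $r_\gamma/2$ from $Y$.

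\textbf{Step 2: openness and compactness.} Given such a loop $\gamma$ through $x$, we build a flow-box neighborhood $W_x$ of $x$, of the form (leaf patch)$\times(-\epsilon,\epsilon)$. For every $x'\in W_x$ we obtain a loop $\gamma_{x'}$ by modifying $\gamma$ only inside $W_x$: the loop now runs through $x'$ via a short arc in the chart whose tangent is nearly parallel to $\partial_t$. This is again the construction of Lemma~\ref{lem:modify_positive_loop_in_one_leaf}, applied in a uniformly sized chart. Because the modification is local and the geometry of $W_x$ is uniform, the angle bound \eqref{eq:angle_condition} persists and the distance to $Y$ remains at least $r_\gamma/4$. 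Compactness of $K$ then gives finitely many $W_{x_1},\dots,W_{x_N}$ covering the collar, and we set $r_0=\min_i r_{\gamma_{x_i}}/4>0$.

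\textbf{Main obstacle.} The delicate point is Step 1, namely achieving the specific constant $\tfrac12$ globally along the loop. Near the endpoints of the flat charts one must ensure that the straightened pieces still glue smoothly while keeping the angle bound. I would handle this by choosing overlapping flat charts along $\gamma$ with compatible coordinates $t$, which are consistent since $v=dt\otimes s$ determines $t$ up to constants locally. I would then perform the transition interpolations on the leafwise components only, where smallness is preserved under convex combination, so the bound $\dot t\ge \tfrac12|v||\dot\gamma|/|v|$ survives. A lesser point is the uniformity in Step 2. It follows because the charts $W_{x_i}$ have fixed size and the metric $g$ is smooth on the compact region avoiding $Y$.
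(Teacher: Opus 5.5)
There is a genuine gap in Step 1. You cannot get the angle bound \eqref{eq:angle_condition} by locally straightening an \emph{arbitrary} $v$-positive loop while staying near it. The leafwise displacement of a closed loop is not a local quantity: the loop has to close up, so the displacement is fixed by its homotopy class in the neighborhood you allow yourself. Reparametrizing in $t$ does not change the angle at all. Spreading a leafwise correction over a longer $t$-interval only helps if enough $t$-increase is available, and it need not be. Concretely, take the flat torus $T^3=\RR^3/\ZZ^3$ with $v=dz$ and the simple $v$-positive loop $s\mapsto(Ns,0,s)$. Any positive loop in a thin tubular neighborhood of it is homotopic there to its $j$-th power for some $j\ge1$. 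Such a loop has $x$-displacement $jN$ and $z$-displacement $j$. But \eqref{eq:angle_condition} forces $|\dot x|\le\sqrt3\,\dot z$, which is impossible once $N\ge2$. Theorem~\ref{thm:harmonic_implies_transitivity}, as stated, only gives you \emph{some} positive loop through each point. Step 1 therefore cannot be carried out from that statement.

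The missing idea is to use the specific loops built in the proof of Theorem~\ref{thm:harmonic_implies_transitivity}. By Poincar\'e recurrence, such a loop is an actual flowline of the vector field dual to $v$ (on the double cover), except for a closing arc inside a small flat chart. Along the flowline the angle is zero, so \eqref{eq:angle_condition} holds with room to spare, and angle control is needed only inside the chart.

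This is what the paper does. It covers the collar by finitely many \emph{thin} flat charts $\{\sum_{i<n}x_i^2<\epsilon^2,\ |x_n|<\sqrt\epsilon\}$, whose leaf slices have diameter much smaller than their transverse height. For each chart it takes a recurrence loop meeting every local leaf. It then makes the modification of Lemma~\ref{lem:modify_positive_loop_in_one_leaf} inside a slightly larger chart, where the aspect ratio guarantees the angle bound.

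Your Step 2 needs the same aspect-ratio condition. A flow box (leaf patch)$\times(-\epsilon,\epsilon)$ whose leaf patch is not small compared to $\epsilon$ does not preserve \eqref{eq:angle_condition} under the modification. Also, a loop that only satisfies the bound with constant exactly $\tfrac12$ leaves no slack for further modification, whereas flowlines do. Your compactness argument for $r_0$ (cover the closed collar, take a minimum over finitely many loops) is fine and matches the paper.
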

\begin{proof}
   For each $x\in \p U_Y\times (-1,1)$, choose a flat chart $U_x$ of $\ker v$
centered at $x$. After shrinking $U_x$, we may
assume it takes the form
$\{(x_1,\dots,x_n):\sum_{i=1}^{n-1}|x_i|^2<\epsilon^2,\;|x_n|<\sqrt{\epsilon}\}$
for some sufficiently small $\epsilon>0$. In particular, each leaf-slice
$\{x_n=c\}$ has diameter at most $2\epsilon$, while the $x_n$-interval has length $2\sqrt{\epsilon}\gg 2\epsilon$. Let $v=\pm dx_n$ in these coordinates.

We cover the neck region $\p U_Y\times (-1,1)$ by finitely many such thin charts
$\{U_j\}_{1\le j\le J}$, where $U_j=U_{x_j}$ for some $x_j$.
For each $U_j$, choose a slightly larger flat chart $U_j'\Supset U_j$ that is identified with $\{(x_1,\dots,x_n):\sum_{i=1}^{n-1}|x_i|^2<2\epsilon^2,\;|x_n|<4\sqrt{\epsilon}\}$.
We can choose $\epsilon>0$ sufficiently small so that $\cup_j\overline{U_j'}$ is disjoint from $Y$.

    By the proof of Theorem \ref{thm:harmonic_implies_transitivity}, for each $U'_j$, there exists a $v$-positive loop $\gamma_j$ intersecting every local leaf in $U_j'$. In fact, outside $U'_j$, $\gamma_j$ is a flowline of the vector field dual to $v$. Hence
    $|\langle v(\gamma_j(t)),\dot{\gamma}_j(t)\rangle|=|v(\gamma_j(t))|\cdot|\dot{\gamma}_j(t)|$
    whenever $\gamma_j(t)\notin U_j$.

    Consequently, for any point $x\in U_j$, we modify $\gamma_j$ within $U_j'$ to obtain a $v$-positive loop $\gamma_x$ passing through $x$, as in Lemma \ref{lem:modify_positive_loop_in_one_leaf}. We may choose $\gamma_x$ so that for $\gamma_x(t)\in U_j'$,
    \[|\langle v(\gamma_x(t)),\dot{\gamma}_x(t)\rangle|\ge \frac12|v(\gamma_x(t))|\cdot|\dot{\gamma}_x(t)|,\]
    by our assumption that the diameter of each slice $\{x_n=c\}$ in $U'_j$ is much smaller than the distance between $\{x_n=4\sqrt{\epsilon}\}$ and $\{x_n=-4\sqrt{\epsilon}\}$.

\begin{figure}[!h]
    \centering

\tikzset{every picture/.style={line width=0.75pt}} 

\begin{tikzpicture}[x=0.5pt,y=0.5pt,yscale=-1,xscale=1]

\draw [color={rgb, 255:red, 74; green, 144; blue, 226 }  ,draw opacity=1 ]   (400,60) -- (400,110) ;
\draw [color={rgb, 255:red, 74; green, 144; blue, 226 }  ,draw opacity=1 ]   (130,60) -- (130,110) ;
\draw   (130,65) -- (400,65) -- (400,105) -- (130,105) -- cycle ;
\draw [color={rgb, 255:red, 74; green, 144; blue, 226 }  ,draw opacity=1 ]   (220,60) -- (220,110) ;
\draw    (86.5,95) -- (148,95) ;
\draw [shift={(150,95)}, rotate = 180] [color={rgb, 255:red, 0; green, 0; blue, 0 }  ][line width=0.75]    (10.93,-3.29) .. controls (6.95,-1.4) and (3.31,-0.3) .. (0,0) .. controls (3.31,0.3) and (6.95,1.4) .. (10.93,3.29)   ;
\draw    (380,75) -- (453,75) ;
\draw [shift={(455,75)}, rotate = 180] [color={rgb, 255:red, 0; green, 0; blue, 0 }  ][line width=0.75]    (10.93,-3.29) .. controls (6.95,-1.4) and (3.31,-0.3) .. (0,0) .. controls (3.31,0.3) and (6.95,1.4) .. (10.93,3.29)   ;
\draw  [dash pattern={on 4.5pt off 4.5pt}]  (56.5,95) -- (64.55,95) -- (86.5,95) ;
\draw  [dash pattern={on 4.5pt off 4.5pt}]  (455,75) -- (485,75) ;
\draw    (150,95) .. controls (188.67,95) and (317.17,75) .. (380,75) ;
\draw   (90.05,60) -- (440.55,60) -- (440.55,110) -- (90.05,110) -- cycle ;
\draw [color={rgb, 255:red, 74; green, 144; blue, 226 }  ,draw opacity=1 ]   (190,60) -- (190,110) ;
\draw [color={rgb, 255:red, 74; green, 144; blue, 226 }  ,draw opacity=1 ]   (160,60) -- (160,110) ;
\draw [color={rgb, 255:red, 74; green, 144; blue, 226 }  ,draw opacity=1 ]   (250,60) -- (250,110) ;
\draw [color={rgb, 255:red, 74; green, 144; blue, 226 }  ,draw opacity=1 ]   (280,60) -- (280,110) ;
\draw [color={rgb, 255:red, 74; green, 144; blue, 226 }  ,draw opacity=1 ]   (310,60) -- (310,110) ;
\draw [color={rgb, 255:red, 74; green, 144; blue, 226 }  ,draw opacity=1 ]   (340,60) -- (340,110) ;
\draw [color={rgb, 255:red, 74; green, 144; blue, 226 }  ,draw opacity=1 ]   (370,60) -- (370,110) ;
\draw [color={rgb, 255:red, 74; green, 144; blue, 226 }  ,draw opacity=1 ]   (370.5,99.5) -- (388.67,99.65) ;
\draw [shift={(390.67,99.67)}, rotate = 180.47] [color={rgb, 255:red, 74; green, 144; blue, 226 }  ,draw opacity=1 ][line width=0.75]    (10.93,-3.29) .. controls (6.95,-1.4) and (3.31,-0.3) .. (0,0) .. controls (3.31,0.3) and (6.95,1.4) .. (10.93,3.29)   ;
\draw  [fill={rgb, 255:red, 0; green, 0; blue, 0 }  ,fill opacity=1 ] (171.13,76.33) .. controls (171.13,75.38) and (170.36,74.6) .. (169.4,74.6) .. controls (168.44,74.6) and (167.67,75.38) .. (167.67,76.33) .. controls (167.67,77.29) and (168.44,78.07) .. (169.4,78.07) .. controls (170.36,78.07) and (171.13,77.29) .. (171.13,76.33) -- cycle ;
\draw [color={rgb, 255:red, 74; green, 144; blue, 226 }  ,draw opacity=1 ]   (100.5,60) -- (100.5,110) ;
\draw [color={rgb, 255:red, 74; green, 144; blue, 226 }  ,draw opacity=1 ]   (430.5,60) -- (430.5,110) ;
\draw [color={rgb, 255:red, 74; green, 144; blue, 226 }  ,draw opacity=1 ]   (400,150.5) -- (400,200.5) ;
\draw [color={rgb, 255:red, 74; green, 144; blue, 226 }  ,draw opacity=1 ]   (130,150.5) -- (130,200.5) ;
\draw   (130,155.5) -- (400,155.5) -- (400,195.5) -- (130,195.5) -- cycle ;
\draw [color={rgb, 255:red, 74; green, 144; blue, 226 }  ,draw opacity=1 ]   (220,150.5) -- (220,200.5) ;
\draw    (86.5,185.5) -- (106.55,185.56) ;
\draw [shift={(108.55,185.56)}, rotate = 180.17] [color={rgb, 255:red, 0; green, 0; blue, 0 }  ][line width=0.75]    (10.93,-3.29) .. controls (6.95,-1.4) and (3.31,-0.3) .. (0,0) .. controls (3.31,0.3) and (6.95,1.4) .. (10.93,3.29)   ;
\draw    (380,165.5) -- (453,165.5) ;
\draw [shift={(455,165.5)}, rotate = 180] [color={rgb, 255:red, 0; green, 0; blue, 0 }  ][line width=0.75]    (10.93,-3.29) .. controls (6.95,-1.4) and (3.31,-0.3) .. (0,0) .. controls (3.31,0.3) and (6.95,1.4) .. (10.93,3.29)   ;
\draw  [dash pattern={on 4.5pt off 4.5pt}]  (56.5,185.5) -- (64.55,185.5) -- (86.5,185.5) ;
\draw  [dash pattern={on 4.5pt off 4.5pt}]  (455,165.5) -- (485,165.5) ;
\draw    (108.55,185.56) .. controls (148.05,178.56) and (138.05,166.56) .. (199.05,163.56) .. controls (260.05,160.56) and (348.58,165.5) .. (380,165.5) ;
\draw   (90.05,150.5) -- (440.55,150.5) -- (440.55,200.5) -- (90.05,200.5) -- cycle ;
\draw [color={rgb, 255:red, 74; green, 144; blue, 226 }  ,draw opacity=1 ]   (190,150.5) -- (190,200.5) ;
\draw [color={rgb, 255:red, 74; green, 144; blue, 226 }  ,draw opacity=1 ]   (160,150.5) -- (160,200.5) ;
\draw [color={rgb, 255:red, 74; green, 144; blue, 226 }  ,draw opacity=1 ]   (250,150.5) -- (250,200.5) ;
\draw [color={rgb, 255:red, 74; green, 144; blue, 226 }  ,draw opacity=1 ]   (280,150.5) -- (280,200.5) ;
\draw [color={rgb, 255:red, 74; green, 144; blue, 226 }  ,draw opacity=1 ]   (310,150.5) -- (310,200.5) ;
\draw [color={rgb, 255:red, 74; green, 144; blue, 226 }  ,draw opacity=1 ]   (340,150.5) -- (340,200.5) ;
\draw [color={rgb, 255:red, 74; green, 144; blue, 226 }  ,draw opacity=1 ]   (370,150.5) -- (370,200.5) ;
\draw [color={rgb, 255:red, 74; green, 144; blue, 226 }  ,draw opacity=1 ]   (370.5,190) -- (388.67,190.15) ;
\draw [shift={(390.67,190.17)}, rotate = 180.47] [color={rgb, 255:red, 74; green, 144; blue, 226 }  ,draw opacity=1 ][line width=0.75]    (10.93,-3.29) .. controls (6.95,-1.4) and (3.31,-0.3) .. (0,0) .. controls (3.31,0.3) and (6.95,1.4) .. (10.93,3.29)   ;
\draw  [fill={rgb, 255:red, 0; green, 0; blue, 0 }  ,fill opacity=1 ] (171.13,166.83) .. controls (171.13,165.88) and (170.36,165.1) .. (169.4,165.1) .. controls (168.44,165.1) and (167.67,165.88) .. (167.67,166.83) .. controls (167.67,167.79) and (168.44,168.57) .. (169.4,168.57) .. controls (170.36,168.57) and (171.13,167.79) .. (171.13,166.83) -- cycle ;
\draw [color={rgb, 255:red, 74; green, 144; blue, 226 }  ,draw opacity=1 ]   (100.5,150.5) -- (100.5,200.5) ;
\draw [color={rgb, 255:red, 74; green, 144; blue, 226 }  ,draw opacity=1 ]   (430.5,150.5) -- (430.5,200.5) ;
\draw   (254.5,131.96) -- (259.64,131.96) -- (259.64,113.06) -- (269.91,113.06) -- (269.91,131.96) -- (275.05,131.96) -- (264.77,144.56) -- cycle ;

\draw (132,68.4) node [anchor=north west][inner sep=0.75pt]    {$U_{j}$};
\draw (99,62.4) node [anchor=north west][inner sep=0.75pt]    {$U'_{j}$};
\draw (378.82,78.9) node [anchor=north west][inner sep=0.75pt]    {$\textcolor[rgb]{0.29,0.56,0.89}{v}$};
\draw (171.82,72.9) node [anchor=north west][inner sep=0.75pt]    {$x$};
\draw (132,162) node [anchor=north west][inner sep=0.75pt]    {$U_{j}$};
\draw (99,152.9) node [anchor=north west][inner sep=0.75pt]    {$U'_{j}$};
\draw (378.82,169.4) node [anchor=north west][inner sep=0.75pt]    {$\textcolor[rgb]{0.29,0.56,0.89}{v}$};
\draw (171.82,165) node [anchor=north west][inner sep=0.75pt]    {$x$};
\draw (281.5,116.5) node [anchor=north west][inner sep=0.75pt]   [align=left] {\tiny{modify}};
\draw (451.5,77.9) node [anchor=north west][inner sep=0.75pt]    {$\gamma _{j}$};
\draw (453,168.4) node [anchor=north west][inner sep=0.75pt]    {$\gamma _{x}$};
\end{tikzpicture}

    \caption{Modify $\gamma_j$ to pass through $x$ while keeping the angle condition \eqref{eq:angle_condition}.}
    \label{fig:angle_keeping_modify}
\end{figure}
    
    Finally, by construction
    $\mathrm{dist}_g(\gamma_x,Y)\ge \min\{\mathrm{dist}_g(\gamma_j,Y),\mathrm{dist}_g\big(\cup_j\overline{U_j'},\,Y\big)\}$.
    Since $\{U_j\}_j$ is finite and covers $\p U_Y\times (-1,1)$, we can choose $r_0>0$ to be this minimum;
    then $\mathrm{dist}_g(\gamma_x,Y)\ge r_0$ for all $x$, which completes the proof.
\end{proof}

Let $B_{r_0}(Y)$ be the $r_0$-neighborhood of $Y$ under the metric $g$ and let $\mathfrak{d}=\mathrm{dist}_g(\p U_Y\times\{-1\},\p U_Y\times\{1\})$. Denote $M_2'\setminus B_{r_0}(Y)$ by $N_Y$.

\begin{lemma}\label{lem:changing_zero_sets}
    Let $g_2$ be a metric on $M_2$ and let $(v_2,\Sl_2,\I_2)$ be a $\ZT$ closed $1$-form on $M_2'$ such that
    $\Sl_2\Subset B_{r_0}(Y)$, $\I_2\cong \I$ on $N_Y$, and $v_2|_{M_2}$ is harmonic with respect to $g_2$.
    Suppose further that:
    \begin{enumerate}
        \item $v_2$ is exact on $N_Y$ and $v_2=df_2$ for some section of $\I$ on $N_Y$;
        \item $\star_{g_2}v_2$ is exact on a collar neighborhood of $\p M_2$;
        \item $v=df$ on $N_Y$ for a section $f$ of $\I$ and \[
    |f_2-f|<\frac{\mathfrak{d}}{10}|v|,\qquad
    |v_2-v|<\frac14|v|.
    \]
    \end{enumerate}
    Then there exists a $\ZT$ closed $1$-form $(v_3,\Sl_3,\I_3)$, harmonic with respect to a metric $g_3$ on $M$,
    such that $\Sl_3=(\Sl\setminus Y)\cup \Sl_2$.
\end{lemma}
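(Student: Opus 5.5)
We would build $v_3$ by interpolating between the primitives $f$ and $f_2$ across the collar $\p U_Y\times(-1,1)$. We then show that every $v$-positive loop supplied by Lemma~\ref{lem:positive_curve_on_gluing_region} remains $v_3$-positive, and conclude with Theorem~\ref{thm:gluing}.

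\emph{Step 1: the glued form.} Using $\I_2\cong\I$ on $N_Y$, regard $f_2$ and $f$ as sections of the same bundle on $N_Y\supset \p U_Y\times(-1,1)$. Choose a cutoff $\chi$ depending only on the collar coordinate $s\in(-1,1)$, with $\chi\equiv1$ near $s=-1$ (the side of $Y$) and $\chi\equiv0$ near $s=1$. Arrange $|d\chi|_g\le 2/\mathfrak d$; this is possible because $\mathfrak d$ is the $g$-distance between the two boundary components of the collar. Define
\[
v_3=\begin{cases} v & \text{on } M_1,\\ d\bigl(f+\chi(f_2-f)\bigr) & \text{on } \p U_Y\times(-1,1),\\ v_2 & \text{on } M_2,\end{cases}
\qquad \Sl_3=(\Sl\setminus Y)\cup\Sl_2 ,
\]
with $\I_3$ obtained by gluing $\I|_{M\setminus U_Y}$ and $\I_2$ along the identification. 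This form is closed, and it agrees with $v$ and $v_2$ near the two ends of the collar. On the collar we have
\[
v_3-v=(f_2-f)\,d\chi+\chi(v_2-v),
\]
so hypothesis (iii) gives
\[
|v_3-v|<\tfrac{2}{\mathfrak d}\cdot\tfrac{\mathfrak d}{10}|v|+\tfrac14|v|=\tfrac{9}{20}|v|.
\]
The same bound, in the weaker form $|v_3-v|<\tfrac14|v|$, holds on $M_2\cap N_Y$, and $v_3=v$ on $M_1$. In particular $v_3$ has no zeros and no singular points on the collar, so condition (i) of Theorem~\ref{thm:gluing} holds with $\Sl_X=\Zl_{v_X}=\varnothing$.

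\emph{Step 2: transitivity on the collar (the main point).} For $x$ in the collar, take the loop $\gamma_x$ from Lemma~\ref{lem:positive_curve_on_gluing_region}. It stays at distance at least $r_0$ from $Y$, so it lies in $M_1\cup N_Y$, where $|v_3-v|<\tfrac{9}{20}|v|$ pointwise. The angle condition \eqref{eq:angle_condition} then gives
\[
|\langle v_3,\dot\gamma_x\rangle|\ \ge\ |\langle v,\dot\gamma_x\rangle|-|v_3-v||\dot\gamma_x|\ \ge\ \bigl(\tfrac12-\tfrac{9}{20}\bigr)|v||\dot\gamma_x|\ >0 .
\]
The sign cannot flip: $\langle v_3,\dot\gamma_x\rangle$ differs from $\langle v,\dot\gamma_x\rangle$ by strictly less than the latter's absolute value, with both read in the same trivialization of $\I$ along the loop. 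Hence $\gamma_x$ is $v_3$-positive.

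This is exactly where the constants $\tfrac{\mathfrak d}{10}$ and $\tfrac14$ in hypothesis (iii), together with the factor $\tfrac12$ in \eqref{eq:angle_condition}, are needed. The delicate point is the uniformity: the loops must avoid $B_{r_0}(Y)$, which is where $v_2$ is uncontrolled. Lemma~\ref{lem:positive_curve_on_gluing_region} supplies exactly this, so nothing further should be required.

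\emph{Step 3: apply Theorem~\ref{thm:gluing}.} Use two harmonic pieces and one transition piece. The first harmonic piece is $M_1$ with the metric $g$; here $\star_g v$ is exact on the collar by the standing assumption on $\p U_Y\times(-1,1)$, and it contains $\Sl\setminus Y$ and the remaining zeros of $v$. The second is $M_2$ with the metric $g_2$; $\star_{g_2}v_2$ is exact on its collar by hypothesis (ii). The transition piece is the neck $\p U_Y\times(-1,1)$ carrying $v_3$, which has no singular points or zeros there by Step 1. Condition (iii) of Theorem~\ref{thm:gluing} is required only on the neck, and Step 2 provides it. The theorem then yields a metric $g_3$ on $M$ for which $v_3$ is harmonic.

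Finally, $v_3$ is a $\ZT$ closed form: the $L^2$ bounds on $v_3$ and $\nabla v_3$ are inherited from $v$ and $v_2$ away from the compact neck, where $v_3$ is smooth. The singular set is $\Sl_3=(\Sl\setminus Y)\cup\Sl_2$, as required.
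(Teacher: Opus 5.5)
Your proposal is correct and follows the paper's proof essentially verbatim. It uses the same interpolation $v_3=d\bigl(f+\chi(f_2-f)\bigr)$ with $|d\chi|\le 2/\mathfrak d$, the same reuse of the loops $\gamma_x$ from Lemma~\ref{lem:positive_curve_on_gluing_region} with the margin $\tfrac12-\tfrac15-\tfrac14=\tfrac1{20}>0$, and the same appeal to Theorem~\ref{thm:gluing}. You also check hypotheses (i) and (ii) of that theorem explicitly, which the paper leaves implicit. The one slip is calling $|v_2-v|<\tfrac14|v|$ the ``weaker'' form of the $\tfrac{9}{20}$ bound; it is in fact the stronger one.
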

\begin{proof}
Let $\chi(t)$ be a cut-off function such that $\chi(t)\equiv 1$ when $t<-\frac{1}{2}$ and $\chi(t)\equiv 0$ when $t>\frac{1}{2}$.  We can choose $\chi$ so that $|d\chi|\le 2/\mathfrak{d}$. Define $v_3 = d(\chi(t)f_2) + d((1-\chi(t))f)$ on $\partial U_Y \times (-1,1)$. We can extend $v_3$ to all of $M$ by setting $v_3=v$ on $M_1$ and $v_3=v_2$ on $M_2$.

To apply Theorem \ref{thm:gluing}, it suffices to verify the transitivity of $v_3$ on $\p U_Y\times (-1,1)$. By Lemma \ref{lem:positive_curve_on_gluing_region}, for each $x$ in this region $\p U_Y\times (-1,1)$, there exists a $v$-positive loop $\gamma_x$ based at $x$ lies in $M\setminus B_{r_0}(Y)$. We now verify that for each $x$, $\gamma_x$ is $v_3$-positive. When $\gamma_x(t)\notin N_Y$, $v_3(\gamma_x(t))=v(\gamma_x(t))$, hence $|\langle v_3(\gamma_x(t)),\dot{\gamma}_x(t) \rangle|>0$.

When $\gamma_x(t)\in N_Y$, we have\begin{align*}
    |\langle v_3(\gamma_x(t)),\dot{\gamma}_x(t) \rangle|\ge& |\langle v(\gamma_x(t)),\dot{\gamma}_x(t) \rangle|-|\langle (v-v_3)(\gamma_x(t)),\dot{\gamma}_x(t) \rangle|\\
            \ge &   \frac12|v|\cdot |\dot{\gamma}_x|-(|d\chi|\cdot |f_2-f|+|\chi|\cdot |v_2-v|)\cdot |\dot{\gamma}_x|\ge   \frac{1}{20}|v|\cdot |\dot{\gamma}_x|>0.
\end{align*}
Therefore, $\gamma_x$ is $v_3$-positive. Theorem~\ref{thm:gluing} now yields a metric $g_3$ on $M$ such that $v_3$ is harmonic with respect to $g_3$. By definition we have $\Sl_3 = (\Sl\setminus Y)\cup\Sl_2$.
\end{proof}


\subsection{Blowing up isolated ordinary zeros}\label{sec:polynomial}
We first apply the local replacement principle to an isolated ordinary zero. The result says that such a zero can be replaced by a compact \(\mathbb Z/2\) harmonic model on \(\mathbb{R}^n\), provided the model has the same leading homogeneous term at infinity and satisfies the required \(\star\)-exactness condition. We then show, in dimension three, how to create isolated zeros with prescribed zonal harmonic leading term at regular points. Combining these two steps with the models from Appendix~\ref{section:k-nondeg_on_Rn} produces \(k\)-nondegenerate branching circles.

Let $(v,\Sl,\I)$ be a $\ZT$ harmonic $1$-form on a closed manifold $(M,g)$. Suppose there is an isolated zero $p\in M\setminus \Sl$ of $v$. In geodesic coordinates $x_i$ near $p$, we could write
\begin{align}\label{eq:polynomial_v1}v(x)= d\left(P_p(x) + \mathcal{O}(|x|^{k+1})\right),\end{align}
where $P_p$ is a homogeneous harmonic polynomial of degree $k\ge 2$ in the variables $x_i$. The error term $v-dP_p$ is smooth. We assume further that $dP_p(x)\neq 0$ for $x\ne 0$, in particular, $P_{p}(x)$ only have one isolated critical point. Then we prove the following gluing result near isolated zero, which generalized \cite{yan2025construction}.

\begin{theorem}
\label{thm:blowup_isolated_zero}
Suppose there exists a $\ZT$ harmonic function $(u,\Sl',\I')$ on
$(\RR^n,g')$ such that $\Sl'$ is compact, $g'$ is Euclidean outside a
compact set,
\[
  u(y) = P_p(y) + \mathcal{O}(|y|^{k-1})
  \quad\text{as } |y|\to\infty,
\]
and $\star_{g'}\,du$ is exact near infinity.  Then, after replacing a
small neighborhood of $p$ by a suitably scaled copy of this model,
one obtains a metric $g_M$ on $M$ and a $\ZT$ harmonic $1$-form
$(v_M,\Sl_M,\I_M)$ on $(M,g_M)$, whose singular set $\Sl_M$ can be identified with $\Sl \cup \Sl'$.
\end{theorem}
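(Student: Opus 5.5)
We will derive the theorem from the local replacement principle, Lemma~\ref{lem:changing_zero_sets}, applied with $Y=\{p\}$. Since $p$ is an isolated zero and $p\notin\Sl$, we choose $U_Y=B_{2\delta}(p)$, a small geodesic ball. Then $\p U_Y$ is a connected sphere, $M\setminus U_Y$ is connected, and $v$ is nonvanishing on $U_Y\setminus\{p\}$. On the collar $\p U_Y\times(-1,1)\subset B_{3\delta}\setminus B_{\delta}$, which is simply connected for $n\ge3$ (we assume $n\geq 3$), $v$ is exact and $\star_g v$ is exact. Indeed, $\I$ is trivial on a ball, $v=d(P_p+\mathcal O(|x|^{k+1}))=df$ there, and $H^{n-1}$ of the ball is zero. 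Lemma~\ref{lem:positive_curve_on_gluing_region} then gives $r_0$ and $\mathfrak d$. Since $r_0$ and $\mathfrak d$ are fixed once $\delta$ is fixed, the comparison in Lemma~\ref{lem:changing_zero_sets}(iii) has to be achieved by scaling the model, not by shrinking the collar.

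\textbf{Scaled model.} For $\lambda>0$ small, define on $B_{4\delta}(p)$, via geodesic coordinates $x=\lambda y$,
\[
v_2=\lambda^{k}\,d\bigl(u(x/\lambda)\bigr),\qquad g_2=\lambda^2(\text{pushforward of }g').
\]
Harmonicity is invariant under this homothety (the Hodge star on $1$-forms scales by a constant), so $v_2$ is $\ZT$ harmonic for $g_2$ on $M_2$. Its singular set is $\lambda\Sl'\Subset B_{r_0}(p)$ once $\lambda$ is small, and $\I_2$ is trivial outside this set, hence $\I_2\cong\I$ on $N_Y$. Exactness of $\star_{g'}du$ near infinity gives hypothesis (ii). Since $g_2$ is Euclidean in the coordinates $x$ on the collar, this metric differs from $g$ there, but that is harmless: Lemma~\ref{lem:changing_zero_sets} only asks for $g_2$ on $M_2$. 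On $N_Y$ we set $f_2=\lambda^k u(x/\lambda)$. By homogeneity of $P_p$,
\[
f_2(x)=P_p(x)+\lambda^{k}\,\mathcal O(|x/\lambda|^{k-1})=P_p(x)+\mathcal O(\lambda|x|^{k-1}),
\]
and likewise $v_2=dP_p+\mathcal O(\lambda|x|^{k-2})$. Here we assume the model's asymptotics hold with one derivative; harmonicity of $u-P_p$ near infinity should provide this through interior estimates.

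\textbf{Main obstacle: hypothesis (iii).} We must have $|f_2-f|<\frac{\mathfrak d}{10}|v|$ and $|v_2-v|<\frac14|v|$ on $N_Y$, which is contained in $B_{4\delta}\setminus B_{r_0}$. Because $dP_p\neq0$ away from $0$, homogeneity gives $|dP_p(x)|\ge c|x|^{k-1}$. Hence $|v|\ge \tfrac c2|x|^{k-1}$ for $\delta$ small, uniformly on $B_{4\delta}$. The differences split into two pieces. The first is the $\lambda$-error from the model, $\mathcal O(\lambda|x|^{k-1})$ and $\mathcal O(\lambda|x|^{k-2})$. The second is the intrinsic error $f-P_p=\mathcal O(|x|^{k+1})$, $v-dP_p=\mathcal O(|x|^{k})$. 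The intrinsic error is the delicate term. Relative to $|v|$, the derivative error is $\mathcal O(|x|)\le C\delta$, which is fine. The potential error, however, must be compared with $\mathfrak d|v|\sim\delta\cdot|x|^{k-1}$, while it is of size $|x|^{k+1}\le C\delta^2|x|^{k-1}$, which is also fine once $\delta$ is small. So I would fix $\delta$ first and read off $r_0$ and $\mathfrak d\sim\delta$, noting that $r_0$ depends only on $\delta$ and the loops. Then $\lambda\to0$ kills the model error on $|x|\ge r_0$. On that region it is bounded by $C\lambda r_0^{-1}|v|$ in the derivative and by $C\lambda|x|^{k-1}\le C(\lambda/\mathfrak d)\,\mathfrak d|v|$ in the potential. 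The one point I would double-check is that $\mathfrak d$ does not secretly shrink with $\delta$ faster than the intrinsic potential error, and whether $f$ and $f_2$ need normalizing constants. Both are normalized so that their common value at $p$ is zero ($P_p(0)=0$), so no constant mismatch arises.

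With (i)--(iii) verified, Lemma~\ref{lem:changing_zero_sets} produces $g_M=g_3$ and $v_M=v_3$ with $\Sl_M=(\Sl\setminus\{p\})\cup\lambda\Sl'=\Sl\sqcup\lambda\Sl'$, as claimed.
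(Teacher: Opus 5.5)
Your proposal is correct and is essentially the paper's argument: apply Lemma~\ref{lem:changing_zero_sets} with $Y=\{p\}$, insert the model rescaled by $x=\lambda y$ (the paper's $\lambda=\epsilon s$), and verify condition (iii) by controlling the intrinsic error $\mathcal O(|x|^{k+1})$ through the smallness of the ball and the model error $\mathcal O(\lambda|x|^{k-1})$ through $\lambda\to0$. Your explicit order of choices (fix $\delta$, which fixes $r_0$ and $\mathfrak d\sim\delta$, then shrink $\lambda$) and your interior-estimate justification of the one-derivative asymptotics make precise two points that the paper leaves implicit.
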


Now we start the proof of Theorem \ref{thm:blowup_isolated_zero}. Let $B_{\epsilon}$ be the $\epsilon$-ball centered at $p$ contained in the geodesic neighborhood. In this subsection, we apply Lemma \ref{lem:changing_zero_sets} to modify $(v,\Sl,\I)$ near $p$ via a $\ZT$ harmonic
function on $\RR^n$ asymptotic to $P_p$, thereby establishing a blow-up result.

In this case, $Y$ in Section \ref{sec:changing_zero_sets} is the single point set $\{p\}$. Correspondingly, let $U_Y=B_{\epsilon}$, and let $\p U_Y\times(-1,1)= B_{2\epsilon}\setminus \overline{B}_{\epsilon/2}$. $M_1=M\setminus B_{3\epsilon/2}$, $M_2=B_{3\epsilon/4}$ and $M_2'=B_{2\epsilon}$. Set $\mathfrak{d}=\mathrm{dist}_g(\p U_Y\times\{-1\},\p U_Y\times\{1\})=\frac{3}{2}\epsilon$.

Apply Lemma \ref{lem:positive_curve_on_gluing_region} to this case, each point in the region $B_{2\epsilon}\setminus \overline{B}_{\epsilon/2}$ is passed by a $v$-positive loop. Moreover, there is an open ball $B_{r_0}$ of radius $r_0>0$ centered at $p$, so that every such $v$-positive loop stays away from $B_{r_0}$. 

Suppose we have the following local model, a $\ZT$ harmonic function $(u,\Sl',\I')$ on $(\RR^n,g')$, satisfying:
\begin{enumerate}
    \item $\Sl'$ is compact and $g'$ is flat outside a compact set;
    \item for $y\in\RR^n$ with $|y|$ sufficiently large, we have $u(y)=P_p(y)+ \mathcal{O}(|y|^{k-1})$;
    \item $\star_{g'}du$ is exact near infinity.
\end{enumerate}

These data provide the $(v_2,\Sl_2,\I_2)$ in Lemma \ref{lem:changing_zero_sets} as we explain now.

Consider an embedding $\phi_R:B_{2\epsilon}\to \RR^n:x\mapsto Rx/\epsilon$. Then for $R>0$ sufficiently large we have \begin{align}d\phi_R^\ast u(x)=\left(\frac{R}{\epsilon}\right)^k d\big(P_p(x)+\mathcal{O}(\frac{\epsilon}{R}|x|^{k-1})\big),\label{eq:polynomial_v2}\end{align}
for $|x|>r_0$. Set $s = 1/R$. Define the metric $g_{2,s} := s^2 \phi_R^\ast g'$, 
and set \[(v_{2,s},\Sl_{2,s},\I_{2,s})=\big((\epsilon s)^k d\phi_R^\ast u,\, \phi_R^{-1}(\Sl'),\, \phi_R^\ast \I'\big)\] 
on $B_{2\epsilon}$. 

\begin{lemma}\label{lem:verify_shrinking_blowup}
    For sufficiently small $\epsilon,s>0$, $(v_{2,s},\Sl_{2,s},\I_{2,s})$ satisfies conditions in Lemma \ref{lem:changing_zero_sets}.
\end{lemma}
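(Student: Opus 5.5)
The plan is to check the hypotheses of Lemma~\ref{lem:changing_zero_sets} one at a time for $(v_{2,s},\Sl_{2,s},\I_{2,s})$ with metric $g_{2,s}$ on $M_2=B_{3\epsilon/4}$. All estimates take place on the annulus $N_Y=B_{2\epsilon}\setminus B_{r_0}$.

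\textbf{Structural conditions.} Harmonicity is scale-invariant. Indeed, $g_{2,s}=s^2\phi_R^\ast g'$ is homothetic to $\phi_R^\ast g'$, and multiplying $d\phi_R^\ast u$ by the constant $(\epsilon s)^k$ does not change it. So $v_{2,s}$ is $\ZT$ harmonic for $g_{2,s}$ on $M_2$. Next, $\Sl'$ is compact, say contained in $B_{R_0}(0)$. Then $\Sl_{2,s}=\phi_R^{-1}(\Sl')\subset B_{\epsilon R_0 s}(p)$, which lies in $B_{r_0}(p)$ once $s<r_0/(\epsilon R_0)$. For small $s$, the set $\phi_R(N_Y)$ lies in the region $|y|>R r_0/\epsilon$. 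This region is outside the compact set where $g'$ is non-Euclidean and outside the support of $\Sl'$. There $\I'$ is trivial, because $u$ is asymptotic to the single-valued $P_p$ and so has no monodromy near infinity; $\I$ is also trivial on $N_Y$, since $p\notin\Sl$. Hence $\I_{2,s}\cong\I$ on $N_Y$. Condition (i) then holds with $f_2=(\epsilon s)^k\phi_R^\ast u$. Condition (ii) follows by pulling back the exactness of $\star_{g'}du$ near infinity: the collar $\p M_2$ maps far out for small $s$, and rescaling preserves exactness.

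\textbf{Closeness (iii).} On $N_Y$ we have $v=df$ with $f=P_p(x)+h(x)$, where $h=\mathcal O(|x|^{k+1})$ is smooth. Using $u(y)=P_p(y)+\mathcal O(|y|^{k-1})$ together with homogeneity of $P_p$, we get
\[
f_2(x)=(\epsilon s)^k u(x/(\epsilon s))=P_p(x)+\mathcal O\bigl((\epsilon s)^2|x|^{k-1}\bigr).
\]
Therefore
\[
f_2-f=\mathcal O(|x|^{k+1})+\mathcal O(s^2\epsilon^2|x|^{k-1}).
\]
For the derivative bound we need the error in the expansion of $u$ to be differentiable, i.e.\ $du=dP_p+\mathcal O(|y|^{k-2})$. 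This holds because $u-P_p$ is harmonic for the Euclidean metric near infinity, so interior estimates on balls of radius $\sim|y|/2$ upgrade the $C^0$ bound to a $C^1$ bound. The same reasoning gives
\[
v_2-v=\mathcal O(|x|^{k})+\mathcal O(s^2\epsilon^2|x|^{k-2}).
\]
On the other side, $dP_p\neq0$ away from $0$ and $P_p$ is homogeneous of degree $k$, so $|dP_p(x)|\ge c|x|^{k-1}$. Consequently, for $\epsilon$ small, $|v|\ge \tfrac c2|x|^{k-1}$ on $B_{2\epsilon}$.

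\textbf{Main obstacle: the order of choices.} The constant $r_0$ from Lemma~\ref{lem:positive_curve_on_gluing_region} depends on $\epsilon$, and $\mathfrak d=\tfrac32\epsilon$. I would fix things in this order. First choose $\epsilon$ small, then get $r_0(\epsilon)$, then choose $s$. The annulus also involves the region $r_0\le|x|\le 2\epsilon$, so I expect $r_0\sim\epsilon$ up to a constant; it is enough to use $r_0\le|x|\le2\epsilon$ directly. The $f$-condition requires
\[
|x|^{k+1}+s^2\epsilon^2|x|^{k-1}\ll \epsilon\,|x|^{k-1}.
\]
The first term satisfies $|x|^{k+1}\le 4\epsilon^2|x|^{k-1}$, which is $\ll\epsilon|x|^{k-1}$ once $\epsilon$ is small. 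The second term is $\ll\epsilon|x|^{k-1}$ once $s$ is small. The $v$-condition requires
\[
|x|^k+s^2\epsilon^2|x|^{k-2}\ll|x|^{k-1}.
\]
Here $|x|^k\ll|x|^{k-1}$ because $|x|\le2\epsilon$ with $\epsilon$ small, and $s^2\epsilon^2|x|^{k-2}\ll|x|^{k-1}$ because $|x|\ge r_0$ and $s^2\epsilon^2/r_0\to0$ as $s\to0$ with $\epsilon$ fixed. Once all conditions are checked, Lemma~\ref{lem:changing_zero_sets} applies.
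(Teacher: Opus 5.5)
Your proposal is correct and follows essentially the same route as the paper. Conditions (i)--(ii) come from the construction and scaling, and (iii) comes from the asymptotics of $u$ together with $|v|\ge c|x|^{k-1}$. Your interior-estimate justification of $du=dP_p+\mathcal O(|y|^{k-2})$ and your explicit order of choices $\epsilon\to r_0(\epsilon)\to s$ fill in points the paper passes over: it tacitly treats $\epsilon/|x|$ as bounded on $N_Y$ when writing $C\epsilon s/|x|\le C's$. One small arithmetic slip: since $(\epsilon s)^k(\epsilon s)^{-(k-1)}=\epsilon s$, the scaling errors are $\mathcal O(\epsilon s|x|^{k-1})$ for $f_2-P_p$ and $\mathcal O(\epsilon s|x|^{k-2})$ for $v_2-dP_p$, not $(\epsilon s)^2$. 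With your ordering the estimates still close, because $\epsilon s/r_0\to0$ as $s\to0$.
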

\begin{proof}
    Conditions (i) and (ii) are by construction. It suffices to verify condition (iii). On $N_Y=B_{2\epsilon}\setminus B_{r_0}$ we have 
    $f_2=(\epsilon s)^k u\circ \phi_R=P_p(x)+\mathcal{O}(\epsilon s|x|^{k-1})$ and $f= P_p(x)+\mathcal{O}(|x|^{k+1})$. Thus 
        ${|f_2-f|}/{|v|}=C\epsilon s+\mathcal{O}(|x|^2)\le C'\epsilon(s+\epsilon)$,
    where $C,C'>0$ are constants. We can choose both $\epsilon$ and $s$ sufficiently small so that $C'\epsilon(s+\epsilon)<\frac{\mathfrak{d}}{10}=\frac{3}{20}\epsilon$.

    Similarly we have $\frac{|v_2-v|}{|v|}=C\epsilon s\frac{1}{|x|}+\mathcal{O}(|x|)\le C'(s+\epsilon).$
    For sufficiently small $\epsilon,s>0$ we have $|v_2-v|<\frac{1}{4}|v|$.
\end{proof}
Theorem \ref{thm:blowup_isolated_zero} follows straight forward from Lemma~\ref{lem:verify_shrinking_blowup} and 
Lemma~\ref{lem:changing_zero_sets}.

Now we breifly introduce the zonal harmonic polynomial on $\mathbb{R}^3$, which is homogeneous with extra symmetric, precisely disucess in Appendix \ref{section:k-nondeg_on_Rn} and \ref{sec:perturb_zonal_harmonics}. Let \(P_k\) denote the \(k\)-th Legendre polynomial and set $$Z_k(x,y,z)=r^kP_k(z/r),\qquad r=(x^2+y^2+z^2)^{1/2}.$$This is the degree \(k\) zonal harmonic polynomial on \(\mathbb{R}^3\). It is homogeneous and harmonic, invariant under rotations around the \(z\)-axis, and has an isolated critical point at the origin.

Theorem~\ref{thm:blowup_isolated_zero} requires an isolated zero $p$
whose leading homogeneous harmonic polynomial $P_p$ has a unique isolated critical point.
To produce such zeros, we now construct a procedure that adds a
degenerate zero at any point where $v$ is nonvanishing.

Our construction follows Calabi's strategy~\cite{intrinsic-Calabi},
where a pair of Morse zeros was added to a harmonic $1$-form.
In our case, we replace the local model $v=dx$ by a zonal harmonic perturbed by a higher
order term, producing an isolated degenerate zero whose leading term is
exactly the zonal harmonic.

\begin{theorem}\label{thm:adding_degenerate_zero}
		Let $(v,\Sl,\I)$ be a $\ZT$ harmonic $1$-form on a closed $3$-manifold
  $(M,g)$, and let $p\in M\setminus\Sl$ with $v(p)\neq 0$.

   For any $k\ge 3$, one can modify $v$ near $p$
   to obtain a $\ZT$ closed $1$-form $(v',\Sl,\I)$, harmonic with respect to
   a new metric $g'$, such that in sufficiently small geodesic coordinates centered at $p$, $v'(p)=0$, $v'=dZ_k$ on a neighborhood of $p$,
   where $Z_k(x,y,z)=r^k P_k(z/r)$ is the $k$-th zonal harmonic polynomial
   on $\RR^3$.  Moreover, the modification of $v$ may be supported in an arbitrarily
small open ball centered at $p$.
\end{theorem}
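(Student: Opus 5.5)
The plan is to realize the statement as an instance of Theorem~\ref{thm:gluing}, with the modification confined to a small geodesic ball $B$ around $p$. Since $v(p)\ne0$ and $p\notin\Sl$, after shrinking $B$ we may trivialize $\I$ on $B$ and write $v=dh$ there with $dh\ne0$. After a diffeomorphism of $B$ we may take $h=x_3$ on the model ball. Write $x_3=z$ below, matching the notation of $Z_k(x,y,z)$. The main task is to construct a smooth function $G$ on the model ball $B_1\subset\RR^3$ with the following properties:
\begin{enumerate}
\item $G=Z_k$ on a small ball $B_{\rho}$ around $0$, and $G=z$ near $\p B_1$;
\item all critical points of $G$ in $B_1\setminus\{0\}$ are Morse of index $1$ or $2$;
\item every regular level set of $G$ in $B_1$ has all components meeting $\p B_1$.
\end{enumerate}
Extra zeros cannot be avoided. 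On the sphere $\p B_1$ the gradient of $G$ equals $e_3$, so the sum of indices of $\nabla G$ in $B_1$ is $0$. The gradient of $Z_k$, however, has nonzero index at $0$ (for $k=2$, for example, it is $\operatorname{sign}\det \mathrm{Hess}=+1$). Hence compensating Morse zeros must be created, just as Calabi adds a pair of Morse zeros.

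For the construction I would take $G_0=z+\delta\,\chi(r)Z_k$ and then rescale. Here $\chi\equiv1$ on $B_{1/2}$, $\chi\equiv0$ near $\p B_1$, and $\delta$ is large. Near $0$ we have $G_0=z+\delta Z_k$. This agrees with $\delta Z_k$ up to the linear term $z$, which is dominant only on $|x|\lesssim\delta^{-1/(k-1)}$. To arrange that $G=Z_k$ \emph{exactly} near $0$, I would replace $z$ by $(1-\psi(r/\rho))z$ with $\rho\ll\delta^{-1/(k-1)}$. This does not work as it stands: on $\operatorname{supp}d\psi$ the linear term dominates $\delta|\nabla Z_k|\sim\delta\rho^{k-1}$, so cutting it off creates zeros. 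One could instead take $\rho\gg\delta^{-1/(k-1)}$, since then $\delta|\nabla Z_k|\ge c\,\delta\rho^{k-1}$ there, as $\nabla Z_k\ne0$ off the origin, and this dominates $|d((1-\psi)z)|\lesssim1$. With that choice no zeros are created by the inner cutoff. The cost is that the zeros of $\nabla(z+\delta\chi Z_k)$ now lie in two places: in the outer transition region where $\nabla(\chi Z_k)=-\delta^{-1}e_3$, and on a sphere of radius $\sim\delta^{-1/(k-1)}$ where $\delta\nabla Z_k=-e_3$. Some of these lie inside the region where the linear term is removed, which must be handled separately. These zero sets are compact and avoid $0$ and $\p B_1$. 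Since they are avoided, a small generic perturbation supported away from $0$ and $\p B_1$ makes them Morse, and I would use this perturbation freedom (the "higher order term") to achieve genericity.

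The step I expect to be the main obstacle is excluding index $0$ and $3$ zeros together with property (iii). A local extremum would trap leaves, and small closed level sets would destroy transitivity. I would try the Morse--Smale cancellation argument first: because $G=z$ near the boundary, each local minimum can be cancelled against an index-$1$ point joined to it by a single gradient trajectory, and dually for maxima. The cancellation is supported away from $0$ and $\p B_1$. Once no extrema remain, a compact regular level component inside $B_1$ would bound a region of $B_1$ in which $G$ attains an interior extremum. Such an extremum must be a critical point, and the only remaining candidates are the saddles and $p$. The point $p$ is excluded because $Z_k$ has no local extremum, being harmonic. The saddles are not extrema. This gives (iii).

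Given $G$, define $v'=dG$ on $B$ and $v'=v$ elsewhere, and apply Theorem~\ref{thm:gluing}. The harmonic pieces are as follows:
\begin{enumerate}
\item $M\setminus B'$ with the metric $g$, where $B'\Subset B$ is a slightly smaller ball containing all modified points. On the collar $\star_g v$ is exact, because the collar is a ball minus a smaller ball and the flux of the $g$-harmonic form $v$ through the sphere vanishes.
\item A small Euclidean ball at $p$, on which $dZ_k$ is harmonic. Here $\star dZ_k$ has zero flux through spheres, hence is exact on the collar.
\item Small balls around each Morse zero of index $1$ or $2$, made harmonic by the Remark in \S3. These are $\star$-exact on their collars for the same flux reason.
\end{enumerate}
The transition piece is the remainder of $B'$. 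On it, $v'$ is nonvanishing with $\Sl=\varnothing$. For transitivity (condition (iii) of Theorem~\ref{thm:gluing}), take any regular point $x$ of $B'$. By property (iii) its leaf reaches the region where $G=z$, that is, where $v'=v$. By Theorem~\ref{thm:harmonic_implies_transitivity}, points there lie on $v$-positive loops. These loops can be pushed off $B$ as in the connected-sum proof, and Lemma~\ref{lem:modify_positive_loop_in_one_leaf} then produces a $v'$-positive loop through $x$. Theorem~\ref{thm:gluing} then gives the metric $g'$ making $v'$ harmonic. The modification is supported in $B$, which can be taken arbitrarily small. Finally $v'=dZ_k$ near $p$, which can be rescaled to small geodesic coordinates by homogeneity.
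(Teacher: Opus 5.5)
\textbf{There is a genuine gap.} It sits exactly at the step you flagged, and it is not a side issue. You never produce a $G$ with properties (ii)--(iii), and your construction provably has local extrema.

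On the outer shell where $\chi$ is cut off, your function is $z+\delta\chi Z_k$. So $G/\delta=\chi Z_k+z/\delta$ is a small perturbation of $\chi Z_k$ when $\delta$ is large. Since $Z_k(0,0,z)=z^k$ and $P_k(\cos\phi)$ has a strict maximum $1$ at $\phi=0$, $\chi Z_k$ has a strict local maximum at $r_0e_3$. Here $r_0$ is a strict maximum point of $\chi(r)r^k$ (generic $\chi$). There is another extremum near $-r_0e_3$. Strict local extrema survive small perturbations, including your generic one.

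Everything therefore rests on the claim that each extremum can be cancelled against an index-$1$ (resp. index-$2$) point joined to it by a single trajectory. You give no argument that such a partner exists. The elimination theorems that supply one (Milnor's) are for Morse functions on cobordisms whose boundary pieces are level sets. Here $G=z$ on $\partial B_1$ is not a level function. More seriously, $G$ has a degenerate critical point at $0$ that must remain exactly $Z_k$. For $k\ge 3$, $\{Z_k<0\}$ near $0$ is a cone over several disjoint bands of $\{P_k<0\}\subset S^2$. So the sublevel component of a minimum may first join the rest of the ball \emph{through the origin}, and then no index-$1$ partner exists. In addition, the cancelling modification along a connecting trajectory must avoid the ball where $G=Z_k$ is prescribed. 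Without (ii) you do not get (iii), and hence not hypothesis (iii) of Theorem~\ref{thm:gluing}.

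\textbf{How the paper avoids this.} It never needs cancellation, because it uses an explicit model (Theorem~\ref{thm:local_model_degenerate_zero}): $f_k=\mu(r)Z_k+\lambda(r)x$. Here $\lambda$ is increasing and equals $Q\delta^{k-m}r^{m-1}$ near $0$, so on $B_\delta$ one has $f_k=Z_k+ar^{m-1}x$ with $m>k$ odd.
\begin{itemize}
\item The non-axisymmetric term breaks the rotational symmetry. Your $G$ is axisymmetric, so its off-axis zeros come in whole circles.
\item The critical points can be computed. They correspond to the roots of $E_{k,m}(t)=ktP_k+m(1-t^2)P_k'$ in $(-1,1)$. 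There are exactly $k-1$ such roots, all simple, by Rolle applied to $(1-t^2)^{-k/(2m)}P_k$ plus a degree count. The Hessian computation gives Morse index $1$ or $2$.
\item The interpolation creates no critical points, because $Z_k$ is cut off where the linear term dominates, so $\partial_x f_k>0$ on $\delta\le r\le 3\delta$. Your choice of large $\delta$ does the opposite: it cuts off $Z_k$ where $Z_k$ dominates, and that is what produces the extrema.
\item Finally, $ar^{m-1}x$ is removed near $0$ without new zeros, because $|dZ_k|\ge c_0r^{k-1}$.
\end{itemize}
The rest of your argument matches the paper: the step from no extrema to leaves reaching $\partial B$, the harmonic pieces, and the loop argument.

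\textbf{A minor correction.} $\nabla Z_k$ has index $0$ at the origin for odd $k$. For $k=3$, $\nabla Z_3=(0,0,\eta)$ has the two preimages $(0,0,\pm\sqrt{\eta/3})$, with Hessian determinants of opposite signs. So the index argument for unavoidable extra zeros applies only for even $k$.
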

\begin{proof}
We construct \((v',\Sl,\I)\) by pasting the local model \(f_k\) from Theorem~\ref{thm:local_model_degenerate_zero} near \(p\), and then verify the three hypotheses of Theorem~\ref{thm:gluing}.

\medskip\noindent\emph{Step 1: Construction of $v'$.}
Since $v(p)\neq 0$ and $p\notin\Sl$, there exists a coordinate chart $U$
centred at $p$ with coordinates $(x,y,z)$ such that $v|_U=dx$ (after possibly
flipping the sign of the $x$-coordinate).  After shrinking $U$, we identify it
with a ball $B_{4\delta}(0)\subset\RR^3$.

Apply Theorem~\ref{thm:local_model_degenerate_zero} with the given $k\ge 3$.
Appendix~\ref{sec:perturb_zonal_harmonics} produces a smooth function
$f_k:\RR^3\to\RR$ satisfying:
\begin{enumerate}[label=(\roman*)]
  \item $f_k\equiv x$ on $\RR^3\setminus B_{3\delta}(0)$;
  \item all critical points of $f_k$ lie in $B_\delta(0)$;
  \item near the origin,
        $f_k(x,y,z)=Z_k(x,y,z)+a r^{m-1}x$,
        where $a>0$ is a constant depending on $\delta$, $r=\sqrt{x^2+y^2+z^2}$, and $Z_k(x,y,z)$
        is the $k$-th zonal harmonic polynomial on $\RR^3$;
  \item every nonzero critical point of $f_k$ is nondegenerate, with Morse index
        $1$ or $2$, and there are exactly $k-1$ such points;
        denote them by $q_1,\dots,q_{k-1}$.
\end{enumerate}
Set $h:=f_k-Z_k$.  By (iii), $h(x,y,z)=a r^{m-1}x$ is a homogeneous polynomial
of degree~$m$ (using that $m$ is odd).

The polynomial $Z_k$ has an isolated critical point at the origin.  By
homogeneity,
\begin{equation}\label{eq:lojasiewicz-Zk}
  |dZ_k(x)|\ge c_0 r^{k-1}\quad\text{for all }x\in\RR^3,
\end{equation}
where $c_0:=\min_{S^2}|d Z_k|>0$.
Moreover $|h(x)|\le a r^m$ and $|dh(x)|\le C_1 r^{m-1}$ for all $r\le 1$,
for some constant $C_1>0$.

Choose a sufficiently small $\varepsilon>0$ such that $\varepsilon<\frac\delta2$ and $0<\varepsilon<\min_{1\le j\le k-1}|q_j|$.

Fix a smooth cutoff $\chi:[0,\infty)\to[0,1]$ with $\chi(t)=1$ for
$t\le\frac12$, $\chi(t)=0$ for $t\ge 1$, and $|\chi'(t)|\le 4$.  Set
$\chi_\varepsilon(r):=\chi(r/\varepsilon)$ and define
\begin{equation}\label{eq:tilde-fk}
  \tilde f_k(x):=Z_k(x)+\bigl(1-\chi_\varepsilon(r)\bigr)h(x).
\end{equation}
Then $\tilde f_k$ is smooth and satisfies:
\begin{itemize}
  \item $\tilde f_k=Z_k$ on $B_{\varepsilon/2}(0)$ and $\tilde f_k=f_k$ on $\RR^3\setminus B_\varepsilon(0)$;
  \item on the annulus $A_\varepsilon:=B_\varepsilon(0)\setminus
        \overline{B_{\varepsilon/2}(0)}$,
        \begin{equation}\label{eq:annulus-grad}
          d\tilde f_k=dZ_k+(1-\chi_\varepsilon)dh-\chi_\varepsilon' h\,dr.
        \end{equation}
\end{itemize}
On $A_\varepsilon$ we have $r\ge\varepsilon/2$, so by \eqref{eq:lojasiewicz-Zk}
$|dZ_k|\ge c_0(\varepsilon/2)^{k-1}$.  The remaining terms in
\eqref{eq:annulus-grad} are bounded by
$|(1-\chi_\varepsilon)dh|+| \chi_\varepsilon' h|
  \le C_1\varepsilon^{m-1}+\frac{4}{\varepsilon}\cdot a\varepsilon^m$. We can choose $\varepsilon$ sufficiently small so that $(C_1+4a)\varepsilon^{m-k}<\frac{c_0}{2^{k-1}}$. 

Then we obtain on $A_\varepsilon$ that
$|d\tilde f_k|\ge c_0(\varepsilon/2)^{k-1}-(C_1+4a)\varepsilon^{m-1}
  >0$.
Hence $\tilde f_k$ has no critical points on $A_\varepsilon$. Since
$\tilde f_k=Z_k$ inside $B_{\varepsilon/2}(0)$, the origin is
the only critical point of $\tilde f_k$ there, and the remaining critical points of $\tilde f_k$ are
exactly $q_1,\dots,q_{k-1}$ in $B_\delta(0)\setminus B_\varepsilon(0)$.

Now define a $\ZT$ closed $1$-form $(v',\Sl,\I)$ on $M$ by
\[
v'=
\begin{cases}
  d\tilde f_k, & \text{on } B_{\varepsilon}(p), \\[2pt]
  df_k,        & \text{on } B_{3\delta}(p)\setminus B_\varepsilon(p), \\[2pt]
  v,           & \text{on } M\setminus B_{3\delta}(p).
\end{cases}
\]
Because $\tilde f_k=f_k$ on $B_{3\delta}\setminus B_\varepsilon(p)$
and $f_k\equiv x$ on $\RR^3\setminus B_{3\delta}(0)$ while $v=dx$ on $U$,
the three expressions match on overlaps; thus $v'$ is a smooth $\I$-valued
closed $1$-form on $M$ with the same flat bundle $\I$ and singular set $\Sl$.
Its ordinary zero set is
$\Zl_{v'}=\Zl_v\cup\mathrm{Crit}(\tilde f_k)=\Zl_v\cup\{p,q_1,\dots,q_{k-1}\}$.

\medskip\noindent\emph{Step 2: Local intrinsic harmonicity and local $\star$-exactness.}
On $B_{\varepsilon/2}(p)$ we have $v'=dZ_k$, which is harmonic with
respect to the Euclidean metric $g_0$. Moreover $\star_{g_0}dZ_k$ is a
closed $2$-form on a $3$-ball, hence exact.  Set
$M_0:=\overline{B_{\varepsilon/2}(p)}$, equipped with
$(v',\varnothing,\I|_{M_0})$ and $g_0$.

Each $q_j$ is a Morse zero of index $1$ or $2$.  A closed
$1$-form with Morse zeros of index strictly between $0$ and $n$
is always locally intrinsically harmonic and locally $\star$-exact
(see the discussion after Definition~\ref{def:intrinsic_harmonic}).  Choose disjoint neighborhoods of
$q_j$ with metrics $g_j$ for which $v'$ is harmonic and $\star$-exact,
and denote their closures by $M_1,\dots,M_{k-1}$.

Now let $M_k:=\overline{M\setminus B_{3\delta}(p)}$ carry the original
triple $(v,\Sl,\I)$ and metric $g$, on which $v$ is already
harmonic with respect to $g$.  Set
$M_{k+1}:=\overline{B_{3\delta}(p)\setminus\bigcup_{j=0}^{k-1}M_j}$,
on which $v'$ is nowhere vanishing and $\Sl=\varnothing$.

We verify that the collection $\{M_i\}_{0\le i\le k+1}$ satisfies
hypotheses~(i)--(iii) of Theorem~\ref{thm:gluing}.
For $0\le j\le k$, each $M_j$ carries a metric making $v'$ harmonic.
Condition~(i) holds because each $M_j$ ($0\le j\le k-1$) contains
exactly one component of $\Zl_{v'}$ in its interior, away from its
collar. $M_k$ contains $\Sl\cup\Zl_v$, which lies outside
$B_{3\delta}(p)$ and thus away from any collar of $\partial M_k$
placed inside $B_{3\delta}(p)$.  

For condition~(ii), on each
$M_j$ ($0\le j\le k-1$) the form $\star_{g_j}v'$ is exact on the whole
piece and hence on its collar; on $M_k$ we have $v'=dx$ in the chart
$U$, and $\star_g dx$ is exact on a collar of $\partial B_{3\delta}(p)$.
Condition~(iii) is verified in Step~3.

\medskip\noindent\emph{Step 3: Transitivity.}

Now we verify condition (iii) of Theorem~\ref{thm:gluing}. It suffices to verify transitivity of $v'$. Let $x\in M$ with $v'(x)\neq 0$. We construct a $v'$-positive loop through $x$.

\smallskip\noindent\textit{Case 1: $x\in M\setminus B_{3\delta}(p)$.}
  Here $v'(x)=v(x)\neq 0$. By Theorem~\ref{thm:harmonic_implies_transitivity}, there exists a $v$-positive loop
  $\gamma$ through $x$. Using Lemma~\ref{lem:modify_positive_loop_in_one_leaf} to push $\gamma$ along slices in $U$, we can modify $\gamma$ to be a $v$-positive loop that avoids $B_{3\delta}(p)$, hence also $v'$-positive.

\smallskip\noindent\textit{Case 2: $x\in B_{3\delta}(p)$.}
On $B_{3\delta}(p)$, the form $v'$ is exact. Indeed, $v'=d\tilde{f}_k$.  The critical points of
$\tilde{f}_k$ consist of $p$ and the Morse zeros
$q_1,\dots,q_{k-1}$, each of Morse index $1$ or $2$.  In particular, $\tilde{f}_k$
has no local maxima or minima.

Let $\ell_x$ be the leaf of $\ker v'$ through $x$.
We claim that $\ell_x\cap\partial B_{3\delta}(p)\neq\varnothing$.

Suppose $\ell_x\cap\partial B_{3\delta}(p)=\varnothing$. Then $\bar{\ell}_x\subset B_{3\delta}(p)$ is compact, and $\bar{\ell}_x\setminus\ell_x$ is contained in $\{p,q_1,\dots,q_{k-1}\}$. Near each critical point of $\tilde{f}_k$, $\ell_x$ is locally a finite union of punctured disks. Fill each punctured point with a distinct new point to obtain a smooth closed oriented surface $\Sigma$ (possibly disconnected). The original closure $\bar{\ell}_x$ is obtained from $\Sigma$ by identifying, for each critical point, the finitely many filled points that correspond to the sheets meeting there.

Let $V\subset \Sigma$ be the set of filled points and $V'\subset\bar{\ell}_x$ their image under the identification. The long exact sequences of the pairs $(\Sigma,V)$ and $(\bar{\ell}_x,V')$ give $H^2(\bar{\ell}_x;\RR)\cong H^2(\Sigma;\RR)\neq0$. By the generalized Jordan curve theorem, $B_{3\delta}(p)\setminus\bar{\ell}_x$ has more than one connected component. Let $\Omega$ be a component with $\partial\Omega\subset\bar{\ell}_x$. Since $\tilde{f}_k$ is constant on $\ell_x$, it is constant on $\partial\Omega$. By compactness it attains an extremum in $\Omega$. This leads to a contradiction, as $\tilde{f}_k$ has no local extremum.

Now we conclude that $\ell_x\cap\partial B_{3\delta}(p)\neq\varnothing$. Since $v'=dx$ on $U\setminus B_{3\delta}(p)$, in the chart $U$, leaves of $\ker v'$ are level sets of the coordinate $x$. Thus $\ell_x$ exits $U$ and continues into
$M\setminus U$ as a regular leaf of $\ker v'$. Let $y\in\ell_x\setminus U$.  By
Case~1, there exists a $v'$-positive loop through $y$.  Pulling this loop
back along the leaf from $y$ to $x$ via
Lemma~\ref{lem:modify_positive_loop_in_one_leaf} yields a $v'$-positive loop
through $x$. This concludes Step 3.

\vspace{1ex}
Now we can apply Theorem~\ref{thm:gluing} to the collection $\{M_i\}_{0\le i\le k+1}$ and $(v',\Sl,\I)$ and produce a Riemannian metric $g'$
on $M$ such that $v'$ is harmonic with respect to $g'$.
On $B_{\varepsilon/2}(p)$ we have $v'=dZ_k$ as required.
\end{proof}

Combining Theorems~\ref{thm:blowup_isolated_zero}
and~\ref{thm:adding_degenerate_zero} with the local models constructed
in Appendix~\ref{section:k-nondeg_on_Rn}, we obtain $\ZT$ harmonic
$1$-forms on closed $3$-manifolds with a prescribed
$\vec{k}$-nondegenerate singular component.

\begin{proposition}
\label{prop:construct-k-nondegenerate}
Let $(M^3,g)$ be a closed Riemannian manifold, and let $k_C\geq 1$.
Suppose that $(M,g)$ admits a nonzero $\ZT$ harmonic $1$-form
$(v,\Sl,\I)$. Then there exist a ball $B\Subset M\setminus\Sl$,
a Riemannian metric $g'$ on $M$, and a $\ZT$ harmonic $1$-form
$(v',\Sl',\I')$ on $(M,g')$ such that
$v'=v$ on $M\setminus B$,
and $\Sl'=\Sl\sqcup C$,
where $C\subset B$ is a smoothly embedded circle. Moreover, $v'$ is
$k_C$-nondegenerate along $C$.
\end{proposition}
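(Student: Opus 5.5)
The plan is a two-step Calabi surgery: first create an isolated degenerate zero with a zonal leading term, then blow it up using a Euclidean model from Appendix~\ref{section:k-nondeg_on_Rn}.

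\emph{Choosing the point and creating the zero.} Since $v$ is nonzero, its ordinary zero set $\Zl_v$ has Hausdorff codimension at least two and $\Sl$ is closed of codimension at least two. Hence there is a point $p\in M\setminus(\Sl\cup\Zl_v)$, and we fix a small ball $B$ around $p$ with $\overline B\cap(\Sl\cup\Zl_v)=\varnothing$. Set $k=2k_C+1$; this is at least $3$, so Theorem~\ref{thm:adding_degenerate_zero} applies. It produces a metric $g_1$ and a form $(v_1,\Sl,\I)$ that is $\ZT$ harmonic for $g_1$, agrees with $v$ outside a ball $B'\Subset B$, and equals $dZ_k$ near $p$ in geodesic coordinates. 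In particular $p$ is an isolated zero with $P_p=Z_k$ and no error term. Since $Z_k$ is homogeneous with an isolated critical point, $dZ_k\ne0$ away from $0$.

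\emph{Blowing up the zero.} Next I take from Appendix~\ref{section:k-nondeg_on_Rn} a $\ZT$ harmonic model $(du,\Sl',\I')$ on $(\RR^3,g')$ with the following properties:
\begin{itemize}
\item $\Sl'$ is an unknotted circle $C_0$, and $u$ is $k_C$-nondegenerate along $C_0$;
\item $g'$ is Euclidean outside a compact set;
\item $u=Z_k+\mathcal O(|y|^{k-1})$ at infinity;
\item $\star_{g'}du$ is exact near infinity.
\end{itemize}
The degree bookkeeping is the point to check. A rotationally symmetric branched model around a circle, with local behaviour $\Re(z^{k_C+1/2})$ in the normal plane, should have leading term at infinity of degree $2k_C+1$, which is why I choose $k=2k_C+1$. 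I would verify this against the appendix and adjust $k$ if its indexing differs.

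For exactness at infinity: $\star_{g'}du$ is closed on the complement of a large ball, which deformation retracts to $S^2$. Its period over $S^2$ equals the flux of $du$ through a large sphere. This flux vanishes because the leading term $Z_k$ is harmonic of degree at least $1$, and the lower-order terms integrate to zero over spheres of growing radius by the decay estimate. Hence $\star_{g'}du$ is exact there.

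With these hypotheses, Theorem~\ref{thm:blowup_isolated_zero} applies to $(v_1,\Sl,\I)$ at $p$. It gives a metric $g'$ on $M$ and a harmonic form $v'$ with $\Sl'=\Sl\sqcup C$, where $C$ is a rescaled copy of $C_0$ lying inside a ball around $p$. The modification is supported in an arbitrarily small ball, so we may take it inside $B$.

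\emph{Main obstacle.} The step I expect to be hardest is the precise matching of the model's asymptotics with the hypotheses of Theorem~\ref{thm:blowup_isolated_zero}. That theorem requires the error $\mathcal O(|y|^{k-1})$, so a term of order $|y|^{k-1}$ must not spoil the estimates on $N_Y$; this is handled by the scaling in Lemma~\ref{lem:verify_shrinking_blowup}. It also requires $\I'$ to be trivial near infinity, which holds because the monodromy around $C_0$ is trivial on loops far away.

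There is a further gap in what the two theorems guarantee about the final form. Theorem~\ref{thm:blowup_isolated_zero} only says $v'$ is harmonic; we still need $v'=v$ outside $B$ and $k_C$-nondegeneracy along $C$. These hold because Lemma~\ref{lem:changing_zero_sets} keeps the form unchanged outside the gluing region. Also, rescaling the model by the constant factor $(\epsilon s)^k$ and pulling back the metric by a homothety preserves the leading coefficient's nonvanishing, up to a constant multiple.
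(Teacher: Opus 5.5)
Your route is the same as the paper's: apply Theorem~\ref{thm:adding_degenerate_zero} at a regular point with $k=2k_C+1$, then apply Theorem~\ref{thm:blowup_isolated_zero} with an $SO(2)$-invariant appendix model. The indexing needs no adjustment, since Proposition~\ref{prop:k_degeneratecoexact} gives $\lfloor (k-1)/2\rfloor=k_C$-nondegeneracy along the circle.

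The step that fails as written is your proof that $\star_{g'}du$ is exact near infinity. The flux $\int_{S_R}\partial_r u\,dA$ is independent of $R$. Since $Z_k$ has zero flux, what remains detects exactly the coefficient $b$ of a Green's-function term $b|y|^{-1}$ in the spherical-harmonic expansion of the harmonic function $u-Z_k$ near infinity. Such a term contributes $-4\pi b$ at every radius. The bound $u-Z_k=\mathcal O(|y|^{k-1})$ allows such a term: your ``lower-order'' terms may even grow, and a decaying $|y|^{-1}$ term still has nonzero flux. So asymptotics to $Z_k$ alone do not give exactness. The paper flags exactly this: the models $dh_k$ of Proposition~\ref{prop:k_degenerate} are in general not co-exact at infinity, because the component $f_0=T_0^a(s)P_0^a(t)$ carries the term $-\frac{1}{n-2}|x|^{2-n}$ (that is, $-|x|^{-1}$ when $n=3$).

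The missing input is the structure of the specific model. For $k=2k_C+1$ odd, $h_k$ is a combination of the $f_l=T_l^a(s)P_l^a(t)$ with $l$ odd only, so it has no $f_0$ component. Each such $f_l$ is a multiple of the harmonic polynomial $R_l^a(s)P_l^a(t)$, which has zero flux, plus a harmonic remainder of size $\mathcal O(|x|^{-l-1})$ with $l\ge1$, which has no $|x|^{-1}$ term. Hence the flux vanishes. Alternatively, on the branch $\{s>0\}$ each $f_l$ with $l$ odd is odd under $x_3\mapsto -x_3$, so the flux vanishes by symmetry. This is the content of Proposition~\ref{prop:k_degeneratecoexact}, which is what the paper invokes.

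If you replace your flux argument with this citation (or the parity argument), the rest of your proposal goes through as in the paper. That includes the choice of $p\notin\Sl\cup\Zl_v$, the triviality of $\I'$ on the simply connected complement of a large ball, the support of the modification inside $B$, and the preservation of $k_C$-nondegeneracy under the homothety.
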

\begin{proof}
Let $p\in M\setminus\Sl$ be a regular point of $v$, i.e.,
$v(p)\neq 0$. Choose a sufficiently small ball $B\subset M\setminus\Sl$ centered at $p$ such that $v|_B$ is nonvanishing. Set $k=2k_C+1$.

Applying Theorem~\ref{thm:adding_degenerate_zero} at $p$ with degree
$k$, we obtain a Riemannian metric $g_1$ on $M$ and a $\ZT$ harmonic
$1$-form $(v_1,\Sl,\I)$ on $(M,g_1)$ such that $v_1=v$
on $M\setminus B$, while in sufficiently small geodesic coordinates
centered at $p$ one has $v_1=dZ_k$, where $Z_k$ is the degree-$k$
zonal harmonic polynomial on $\RR^3$. In particular, $p$ is an
isolated zero of $v_1$.

By Proposition~\ref{prop:k_degeneratecoexact}, there exists, up to
multiplication by a nonzero constant, an $SO(2)$-invariant $\ZT$
harmonic function $(\tilde{h}_k,S^1,\tilde\I)$ on $\RR^3$,
where $S^1$ is the unit circle, such that $\tilde{h}_k$ is
asymptotic to $Z_k$ at infinity and $\star d\tilde{h}_k$ is exact
outside a sufficiently large ball. Moreover, $\tilde{h}_k$ is
$\lfloor (k-1)/2\rfloor=k_C$-nondegenerate along $S^1$.

Thus $\tilde h_k$ satisfies the hypotheses of
Theorem~\ref{thm:blowup_isolated_zero} for the isolated zero $p$ of
$v_1$. Applying that theorem, we obtain a Riemannian metric $g'$ on
$M$ and a $\ZT$ harmonic $1$-form $(v',\Sl',\I')$ on $(M,g')$ such
that $\Sl'=\Sl\sqcup C$,
where $C\subset B$ is the image of $S^1$. The construction is supported in $B$, so
$v'=v$ on $M\setminus B$. Since the local model near $C$ is obtained
from $\tilde h_k$ by scaling and pullback, $v'$ is
$k_C$-nondegenerate along $C$.
\end{proof}

\begin{proof}[Proof of Corollary \ref{cor:prescribed-knondegenerate-unknots}]
Since $b_1(M)>0$, after choosing any Riemannian metric on $M$ there
exists a nonzero ordinary harmonic $1$-form $v$. Choose regular
points $p_1,\ldots,p_m$ of $v$ and pairwise disjoint balls
$B_i\ni p_i$ on which $v$ is nonvanishing. Applying
Proposition~\ref{prop:construct-k-nondegenerate} successively in the
balls $B_i$, with $k_C=k_i$ at the $i$-th step, produces a metric
$g'$ and a smooth $\vec k$-nondegenerate $\ZT$ harmonic $1$-form
$(v',\Sl,\I')$ such that
$\Sl=C_1\sqcup\cdots\sqcup C_m$,
where each $C_i\subset B_i$ is an unknot and $v'$ is
$k_i$-nondegenerate along $C_i$.
\end{proof}

\subsection{\texorpdfstring{
Splitting of $\vec{k}$-nondegenerate $\ZT$ harmonic $1$-forms
}{
Splitting of k-nondegenerate $\mathbb Z/2$ harmonic 1-forms
}}
\label{subsection:desingularization_k-nondeg}

Recall from Definition~\ref{def:smooth_k_nondeg} that a smooth
$\ZT$ harmonic $1$-form $(v,\Sl,\I)$ is $\vec{k}$-nondegenerate if,
for every connected component $S_i\subset\Sl$, its local expansion
\eqref{eq:asymptotic_expansion} takes the form $v
=
d\Re\bigl(A_i(t)z^{k_i+\frac12}\bigr)
+\mathcal{O}\bigl(|z|^{k_i+\frac12}\bigr)$, with $A_i(t)$ nonvanishing.
In this subsection, we show that each such higher-order branching
component can be split into $2k_i-1$ nearby nondegenerate components, provided the \emph{winding class} (defined below) of each $A_i$ is zero.

The argument has two steps.  We first remove the remainder term in the
above expansion while leaving the singular set unchanged.  We then
replace the resulting homogeneous model by a product model with
$2k_i-1$ simple branch components, using the local replacement
Lemma~\ref{lem:changing_zero_sets}.

To carry out the fiberwise construction globally, we require a
trivialization of the normal bundle of each singular component.
As observed in \cite[Section 2.3]{HeSalm2026Metric}, the existence of a $\vec{k}$-nondegenerate form implies that $(2k_i+1)c_1\bigl(N(S_i)\bigr)=0$.
Throughout this subsection, we make the simplifying assumption that
$c_1(N(S_i))=0$ for every $i$, so that each normal bundle $N(S_i)$ is
trivial.  This assumption is automatic when $\dim M=3$ or $4$.

We now define the \emph{winding class} associated with the
leading coefficients.  Fix a component $S_i$, write $m_i:=2k_i+1$,
and use the chosen trivialization of $N(S_i)$.  Let
$\{U_\alpha\}_{\alpha}$ be a good cover of $S_i$. Choose local functions
$a_\alpha$ with $a_\alpha^{m_i}=A_i$ on $U_\alpha$.

On every connected overlap $U_{\alpha}\cap U_{\beta}$ one has $a_\alpha=\omega_{\alpha\beta}a_\beta$, where
$\omega_{\alpha\beta}\in\{\exp({2\pi\sqrt{-1}j/m_i}):0\le j\le m_i-1 \}\cong\ZZ/m_i$.
The resulting cocycle defines a class $\mathfrak{m}(A_i)
\in H^1\bigl(S_i;\ZZ/{m_i}\bigr)$,
which we call the \emph{winding class} of the leading coefficient.  It vanishes
if and only if $A_i$ admits a global $m_i$-th root.  Equivalently,
$\mathfrak m(A_i)=0$ if and only if there is a globally defined normal
coordinate $w=a_i^2z$ with $a_i^{m_i}=A_i$.

\begin{lemma}\label{lem:kill_remainder_terms}
Let $(v,\Sl,\I)$ be a $\vec{k}$-nondegenerate $\ZT$ harmonic
$1$-form on $(M,g)$. Then $v$ can be modified, without changing
$\Sl$ or $\I$, to an intrinsically harmonic $\ZT$ closed $1$-form
$(v',\Sl,\I)$ on $M$ such that, in a smaller tubular neighborhood
$U_i'\Subset U_i$ and using the same local coordinates,
\[
v'(z,t)
=
d\Re\bigl(A_i(t)z^{k_i+\frac12}\bigr).
\]
In particular, the remainder term vanishes near every component
$S_i\subset\Sl$.
\end{lemma}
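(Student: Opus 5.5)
Near each component $S_i$, I will interpolate between $v$ and its leading model by a cutoff in the normal radius $r=|z|$ and apply Theorem~\ref{thm:gluing}.

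\emph{Local primitive.} On a tubular neighborhood $U_i\cong S_i\times D$ the flat bundle $\I$ has monodromy $-1$ around the meridian. As in the edge-tube argument of Proposition~\ref{prop:local_exact_graphic}, $H^*(U_i\setminus S_i;\I)=0$. Hence $v=dF_i$ there for a unique $\I$-valued function $F_i$. The model $F_i^0:=\Re(A_i(t)z^{k_i+1/2})$ is a well-defined section of the same bundle, because the trivialization of $N(S_i)$ makes $z$ global. By the expansion \eqref{eq:asymptotic_expansion} and nondegeneracy, I expect
\[
E_i:=F_i-F_i^0=\mathcal O(r^{k_i+3/2}),\qquad |dE_i|=\mathcal O(r^{k_i+1/2}).
\]
The derivative bound should follow from Donaldson's polyhomogeneous expansion, after fixing the additive ambiguity of $F_i$ by requiring $F_i\to0$ on $S_i$.

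\emph{Cut-off and nonvanishing.} Fix a cutoff $\chi_\delta(r)=\chi(r/\delta)$ with $\chi=1$ on $[0,1]$, $\chi=0$ on $[2,\infty)$ and $|\chi_\delta'|\le C/\delta$. Set
\[
v'=d\bigl(F_i^0+(1-\chi_\delta)E_i\bigr)
\]
on $U_i$, and $v'=v$ elsewhere. Then $v'=dF_i^0$ on $U_i'=\{r<\delta\}$, and $\Sl$, $\I$ are unchanged. On the annulus $\delta\le r\le 2\delta$ we have
\[
|dF_i^0|\ge c\,r^{k_i-1/2}\quad\text{since }A_i\ne0,
\]
while the error satisfies
\[
|d((1-\chi_\delta)E_i)|\le C\delta^{k_i+1/2}.
\]
So for small $\delta$, $v'$ is nonvanishing on the annulus, $|v'-v|\le\frac14|v|$ there, and $v'$ has no new zeros.

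\emph{Gluing.} The harmonic pieces are the following.
\begin{itemize}
\item $M\setminus\{r<3\delta\}$ with the metric $g$.
\item The closed tube $\{r\le\delta/2\}$ with a product-type metric for which $dF_i^0$ is harmonic.
\end{itemize}
The transition piece is the annulus, where $v'\ne0$.

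\emph{Main obstacle.} The hardest step is the metric on the inner tube. The model $d\Re(A_i(t)z^{k+1/2})$ is harmonic for the flat metric $dt^2+|dz|^2$ only if $A_i$ is constant; otherwise the $t$-derivatives spoil harmonicity. I would first try a metric $g_{\delta}=\lambda^{-2}dt^2+|dz|^2$ with $\lambda$ large, so that tangential terms are suppressed, and then correct with a Calabi-type construction. This mirrors Theorem~\ref{thm:characterize}, and requires local intrinsic harmonicity near $S_i$ itself.

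A cleaner route is to check directly that $dF^0_i\wedge\omega>0$ for a closed form $\omega$ built from $\star_{0}dF^0_i$ with respect to the transverse flat metric, plus a correction. Then I would run the Calabi--Volkov metric reconstruction near $S_i$, using that the leading transverse part dominates. Near $S_i$ the function is homogeneous in $z$, so positivity reduces to the homogeneous two-dimensional model, where the $t$-derivative terms are higher order in $r$.

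\emph{Remaining hypotheses.} Exactness of $\star$ on the collars follows from the vanishing of $H^{n-1}(U_i\setminus S_i;\I)$.

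For transitivity on the annulus, I would use Lemma~\ref{lem:positive_curve_on_gluing_region} with $Y=S_i$. It provides uniform positive loops with the angle condition \eqref{eq:angle_condition}. The bound $|v'-v|<\frac14|v|$ then keeps them positive, exactly as in the proof of Lemma~\ref{lem:changing_zero_sets}.

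Theorem~\ref{thm:gluing} then gives the metric.
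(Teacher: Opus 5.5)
You have the right architecture: cut off the remainder in an annulus, keep $g$ outside, put a model metric inside, check nonvanishing and $|v'-v|<\frac14|v|$ on the collar, and get transitivity from the uniformly angled positive loops of Lemma~\ref{lem:positive_curve_on_gluing_region}. This is exactly what the paper does, packaged as Lemma~\ref{lem:changing_zero_sets}.

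\textbf{The gap.} It is at the step you yourself flag. You never produce a metric on a neighborhood of $S_i$ for which $d\Re(A_i(t)z^{k_i+1/2})$ is \emph{exactly} harmonic, and Theorem~\ref{thm:gluing} requires exactly that on the harmonic piece containing $S_i$. Neither of your routes closes it.
\begin{itemize}
\item For $\lambda^{-2}g_{S_i}+|dz|^2$, the Laplacian of the model is $\lambda^{2}\Re\bigl((\Delta_{S_i}A_i)z^{k_i+1/2}\bigr)$. This is nonzero unless $\Delta_{S_i}A_i\equiv0$, and large $\lambda$ amplifies the tangential term rather than suppressing it. Approximate harmonicity is of no use here in any case.
\item The Calabi--Volkov reconstruction only builds a metric with $\star v'=\omega$ where $v'\neq0$. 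Near the zero and singular set it takes local intrinsic harmonicity as an \emph{input}; that is precisely the hypothesis of Theorem~\ref{thm:characterize} and Theorem~\ref{thm:gluing}.
\end{itemize}
So pointwise positivity $dF_i^0\wedge\omega>0$ off $S_i$ does not give a smooth metric extending across $S_i$, and both your ``Calabi-type correction'' and your ``cleaner route'' are circular at this point.

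\textbf{The missing idea.} Change the normal coordinate so that it absorbs $A_i$.
\begin{itemize}
\item Put $m=2k_i+1$, take a good cover $\{U_\alpha\}$ of $S_i$, choose local roots $a_\alpha$ with $a_\alpha^{m}=A_i$, and set $w_\alpha=a_\alpha^2z$.
\item With compatible branches, $A_i(t)z^{k_i+1/2}=w_\alpha^{k_i+1/2}$. In the coordinates $(t,w_\alpha)$ the model is therefore independent of $t$, and it is harmonic for the product metric $g_{S_i}+dw_\alpha\,d\bar w_\alpha$.
\item On overlaps $a_\alpha=\omega_{\alpha\beta}a_\beta$ with $\omega_{\alpha\beta}$ a constant $m$-th root of unity. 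Hence $w_\alpha=\omega_{\alpha\beta}^2w_\beta$ and $|dw_\alpha|^2=|dw_\beta|^2$.
\item So these product metrics patch to a global metric on the tube without any global root; no winding-class assumption is needed for this lemma.
\item For this metric, $\star v'=\pm d\bigl(\Im(A_i z^{k_i+1/2})\,\mathrm{vol}_{g_{S_i}}\bigr)$ near the collar, which also gives the required $\star$-exactness directly.
\end{itemize}
With this metric on the inner tube, the rest of your plan goes through as written.
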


\begin{proof}
Since the components of $\Sl$ have pairwise disjoint tubular
neighborhoods, it suffices to modify $v$ near one component $S_i$.
Write $k=k_i$. After shrinking $U_i$ and
normalizing $E_i(0,t)=0$, the local expansion gives
$v=d\Re\bigl(A_i(t)z^{k+\frac12}+E_i(z,t)\bigr)$,
where $|E_i|\le Cr^{k+\frac32}$ and
$|dE_i|\le Cr^{k+\frac12}$, with $r=|z|$. Since $A_i$ is nowhere
vanishing, we may also assume that
$|v|\ge cr^{k-\frac12}$ on $U_i\setminus S_i$.

Choose $\epsilon>0$ small and apply
Lemma~\ref{lem:changing_zero_sets} with $Y=S_i$ and
$U_Y=B_\epsilon(S_i)\Subset U_i$. Set
$M_1=M\setminus B_{3\epsilon/2}(Y)$,
$M_2=\overline{B_{3\epsilon/4}(S_i)}$, and
$M_2'=B_{2\epsilon}(S_i)$, so that
$\mathfrak d=3\epsilon/2$. By
Lemma~\ref{lem:positive_curve_on_gluing_region}, there is
$r_0\in(0,\epsilon/2)$ with the required positive loop property.
Set $N_Y=B_{2\epsilon}(S_i)\setminus B_{r_0}(S_i)$.

On $M_2'$, define $v_2=d\Re\bigl(A_i(t)z^{k+\frac12}\bigr)$ with $\Sl_2=S_i$ and
$\I_2=\I|_{M_2'\setminus S_i}$.
We next construct a metric for which $v_2$ is harmonic. Put
$m=2k+1$, choose a good cover $\{U_\alpha\}$ of $S_i$ and smooth functions $a_\alpha$ as above. Set $w_\alpha=a_\alpha^2z$. On each connected overlap $U_\alpha\cap U_\beta$ we have
$w_\alpha=\omega_{\alpha\beta}^2w_\beta$ and hence
$|dw_\alpha|^2=|dw_\beta|^2$. The local product metrics $g_{2,\alpha}=g_{S_i}+dw_\alpha\,d\overline{w}_\alpha$
therefore patch to a smooth metric $g_2$ on $M_2'$.

In each such chart, with compatible choices of the half-powers, $A_i(t)z^{k+\frac12}=w_\alpha^{k+\frac12}$.
Thus $v_2=d\Re(w_\alpha^{k+\frac12})$ locally and is harmonic with
respect to $g_2$. Notice that this construction does not require a
globally defined coordinate $w_\alpha$. Morevoer, the form $v_2$ is exact on $N_Y$, with global $\I$-valued potential
$\Re(A_i(t)z^{k+\frac12})$. Moreover, the local product calculation
gives
$\star_{g_2}v_2
=
\pm d\Bigl(
\Im\bigl(A_i(t)z^{k+\frac12}\bigr)
\,\mathrm{vol}_{g_{S_i}}
\Bigr)$
near $\partial M_2$. Hence the exactness conditions in
Lemma~\ref{lem:changing_zero_sets} are satisfied. Moreover,
$\I_2\cong\I$ on $N_Y$.

It remains to check the smallness conditions. On $N_Y$, since
$r\le2\epsilon$, the preceding estimates and the lower bound for
$|v|$ give $$|E_i|\le Cr^2|v|\le C\epsilon^2|v|\text{ and }
|v_2-v|\le|dE_i|\le Cr|v|\le C\epsilon|v|.$$ Thus, for $\epsilon$ sufficiently small,
$|\Re E_i|<\frac{\mathfrak d}{10}|v|$ and
$|v_2-v|<\frac14|v|$. Lemma~\ref{lem:changing_zero_sets} therefore
replaces $v$ near $S_i$ by
$d\Re(A_i(t)z^{k+\frac12})$ without changing either the singular set
or the flat line bundle. Repeating the construction in the pairwise disjoint tubular
neighborhoods of all components of $\Sl$ yields an intrinsically
harmonic $\ZT$ closed $1$-form $(v',\Sl,\I)$ such that
$v'
=
d\Re\bigl(A_i(t)z^{k_i+\frac12}\bigr)$
on some $U_i'\Subset U_i$ for every $i$.
\end{proof}

Assuming the winding class $\mathfrak{m}(A_i)=0$, the local coordinates $w_\alpha$ patch together to be a global transverse complex coordinate $w$ on $N(S_i)$. The preceding lemma then reduces the problem to the homogeneous local model
\[
d\Re\bigl(w^{k_i+\frac12}\bigr)
=
\Re\Bigl(\bigl(k_i+\tfrac12\bigr)
w^{k_i-\frac12}\,dw\Bigr)
\]
near each component $S_i$. We now split the higher-order branch point
in each normal fiber by replacing $w^{k_i-\frac12}$ with a product
having $2k_i-1$ distinct square-root factors.  This perturbation can be confined to an arbitrarily small neighborhood of $S_i$, and Lemma~\ref{lem:changing_zero_sets} then globalizes the resulting local splitting.

\begin{theorem}\label{thm:desingularize_k-nondegenerate}
Let $(v,\Sl,\I)$ be a smooth, $\vec{k}$-nondegenerate $\ZT$ harmonic $1$-form on $(M,g)$, and write $\Sl=\sqcup_{i=1}^m S_i$.
Assume that $\mathfrak{m}(A_i)=0$ for $1\le i\le m$, where $A_i$ is the leading coefficient along $S_i$.

Then there exist a Riemannian metric $g'$ on $M$ and a smooth, nondegenerate $\ZT$ harmonic $1$-form $(v',\Sl',\I')$ on $(M,g')$. Moreover, the splitting can be performed arbitrarily close to $\Sl$, and near each component $S_i$, the singular set $\Sl'$ consists of $2k_i-1$ pairwise disjoint copies of $S_i$.
\end{theorem}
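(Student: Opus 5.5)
The plan has two stages: first reduce to the homogeneous model near each $S_i$, then apply the local replacement principle of Lemma~\ref{lem:changing_zero_sets} with $Y=S_i$ and a split product model. Since the tubular neighborhoods of the components are disjoint and each step changes $v$ only near one component, the components can be handled one at a time.

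\textbf{Stage one.} Apply Lemma~\ref{lem:kill_remainder_terms} to obtain an intrinsically harmonic $(v',\Sl,\I)$ with $v'=d\Re(A_i(t)z^{k_i+\frac12})$ on some $U_i'$. Fix a metric $g_1$ for which $v'$ is harmonic; by Theorem~\ref{thm:harmonic_implies_transitivity} it is transitive. Because $\mathfrak m(A_i)=0$, choose a global root $a_i$ with $a_i^{2k_i+1}=A_i$, and set $w=a_i^2z$. This is a global complex normal coordinate on $U_i'$, which uses the assumed triviality of $N(S_i)$, and in it $v'=d\Re(w^{k_i+\frac12})$. Near $S_i$ the metric may be taken to be the product metric $g_{S_i}+dw\,d\bar w$, exactly as in the proof of Lemma~\ref{lem:kill_remainder_terms}.

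\textbf{Stage two: the split model.} Let $k=k_i$ and pick distinct small points $c_1,\dots,c_{2k-1}\in\CC$, scaled as $c_j=\delta b_j$ with the $b_j$ fixed and distinct. Define the fiberwise holomorphic multivalued function
\[
F_\delta(w)=\int_{w_*}^{w}\prod_{j=1}^{2k-1}(\zeta-c_j)^{1/2}\,d\zeta .
\]
Since $2k-1$ is odd, the monodromy around a large circle is $-1$, which matches that of $w^{k+\frac12}$. To guarantee that the primitive is single-valued as an $\I_2$-valued function, normalize by requiring $F_\delta$ to be odd under the sheet involution on the double cover branched over $\{c_j\}$. That double cover is a hyperelliptic curve of genus $k-1$. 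The periods of $\prod(\zeta-c_j)^{1/2}d\zeta$ must therefore vanish on the anti-invariant cycles, or at least be killed. This is where I expect the main obstacle. My first attempt is a symmetric choice: take the $b_j$ real and symmetric about $0$ (say the roots of a suitable real polynomial, arranged so that the polynomial has real coefficients and the differential is exact on the branched cover). If that fails, I would choose the $b_j$ by a period-vanishing argument, i.e.\ an implicit-function or degree argument on the $2k-1$ parameters, which is possible because the relevant period map has only $2(k-1)$ real constraints. With the periods killed, set $v_2=d\Re F_\delta$ on $S_i\times D$, with $\Sl_2=S_i\times\{c_j\}$ consisting of $2k-1$ parallel copies of $S_i$. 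In the local coordinate $w-c_j$ one has $F_\delta=\mathrm{const}+\beta_j(w-c_j)^{3/2}+\cdots$ with $\beta_j\neq0$, so after subtracting the constant by oddness each new component is $1$-nondegenerate. For the metric $g_2=g_{S_i}+dw\,d\bar w$, $v_2$ is harmonic because $F_\delta$ is holomorphic in $w$ and independent of $t$, and $\star_{g_2}v_2=\pm d(\Im F_\delta\,\mathrm{vol}_{g_{S_i}})$ is exact near $\p M_2$.

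\textbf{Stage three: smallness and conclusion.} Verify the hypotheses of Lemma~\ref{lem:changing_zero_sets} on $N_Y=B_{2\epsilon}\setminus B_{r_0}$. For $|w|\ge r_0\gg\delta$, expanding the product gives $F_\delta=\frac{2}{2k+1}w^{k+\frac12}(1+\mathcal O(\delta/|w|))$, after fixing the additive constant at infinity by the oddness normalization. Rescaling by the harmless constant $\frac{2k+1}{2}$ yields $|f_2-f|\le C\delta|w|^{k-\frac12}$ and $|v_2-v|\le C(\delta/r_0)|v|$. Taking $\delta\ll r_0$ makes both bounds satisfy condition (iii). Lemma~\ref{lem:changing_zero_sets} then gives a metric making the new form harmonic, with singular set $(\Sl\setminus S_i)\sqcup\bigsqcup_j S_i\times\{c_j\}$. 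Iterating over $i$, with all modifications supported in arbitrarily small neighborhoods, proves the theorem.
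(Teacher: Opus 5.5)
\textbf{There is a gap in Stage two.} You make the construction depend on a single-valued, sheet-odd primitive $F_\delta$ on the whole punctured disk. That requires every anti-invariant period of $\Re\bigl(\prod_j(\zeta-c_j)^{1/2}d\zeta\bigr)$ on the genus-$(k-1)$ hyperelliptic curve to vanish, and you do not establish this.

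Your first suggested fix fails for every $k\ge2$. If the $b_j$ are real, then $p(\zeta)=\prod_j(\zeta-b_j)$ alternates sign on the $2k-2$ bounded gaps, so it is positive on some gap $(b_j,b_{j+1})$. The lift of a loop around that gap has period $\pm2\int_{b_j}^{b_{j+1}}\sqrt{p(x)}\,dx$, a nonzero real number. So $\Re(y\,d\zeta)$ is not exact there. Since periods on adjacent gaps differ in phase by $\pi/2$, the same computation rules out every collinear configuration.

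Your second suggestion is only a parameter count. Whether there exist distinct branch points with all real periods vanishing and the prescribed behavior at infinity is a Boutroux-type existence problem, and a dimension count does not settle it.

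\textbf{The global primitive is never needed.} Lemma~\ref{lem:changing_zero_sets} asks only four things of $v_2$: that it be a $\ZT$ closed $1$-form on $M_2'$, harmonic on $M_2$, exact on the annular region $N_Y$, and $\star$-exact near $\partial M_2$. Theorem~\ref{thm:gluing} never uses a primitive on a harmonic piece. So, as the paper does, define $v_2:=\Re\bigl(\tfrac{2k+1}{2}\prod_j(w-c_j)^{1/2}dw\bigr)$ directly.
\begin{itemize}
\item This form is closed and harmonic for $g_{S_i}+dw\,d\bar w$, whatever the interior periods are.
\item Nondegeneracy along $S_i\times\{c_j\}$ is read off from the form itself, since $\prod_{l\ne j}(c_j-c_l)^{1/2}\ne0$.
\item On $N_Y$, where $|w|\ge r_0\gg\delta$, pull back by the meridional double cover $w=\zeta^2$. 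The form becomes the real part of $(2k+1)\zeta^{2k}\prod_j(1-c_j\zeta^{-2})^{1/2}d\zeta$. Its Laurent series contains only the even exponents $2k-2\ell$, so there is no $\zeta^{-1}d\zeta$ term and the normal-circle period vanishes.
\item The form has no $dt$-component, so the unique anti-invariant primitive $\hat F$ over each local trivialization along $S_i$ patches, with the sign changes of $\I$, to an $\I$-valued potential $f_2=\Re\hat F$ on $N_Y$.
\item Its imaginary part gives $\star_{g_2}v_2=\pm d\bigl(\Im\hat F\,\mathrm{vol}_{g_{S_i}}\bigr)$ near $\partial M_2$.
\end{itemize}
With this replacement, your Stages one and three go through as written, and the argument coincides with the paper's.
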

\begin{proof}
We treat the components of $\Sl$ successively.  Fix an untreated
component $S_i$ and start with the harmonic $\ZT$ closed $1$-form
obtained at the preceding stage, taking the original form at the first
stage.  By Lemma~\ref{lem:kill_remainder_terms}, retaining the local
product metric used in its proof, we may replace it, without changing
its singular set or flat line bundle, by an intrinsically harmonic form
$(\tilde v,\Sl,\I)$ with harmonic metric $\tilde g$ such that near
$S_i$,
$\tilde v=d\Re(w^{m_i/2})$ and
$\tilde g=g_{S_i}+dw\,d\bar w$.

Choose $r>0$ so that $B_{3r}(S_i)$ is contained in this product
neighborhood, and set $U_Y=B_r(S_i)$.  By
Lemma~\ref{lem:positive_curve_on_gluing_region}, after possibly
decreasing $r$, there is $r_0\in(0,r/2)$ such that the positive loops
through the gluing region may all be chosen to avoid $B_{r_0}(S_i)$.

Let $\xi_{i,j}~(1\le j\le m_i-2)$, be distinct complex numbers with
$|\xi_{i,j}|\le\varepsilon\ll r_0$.  Let $\I_{(i)}$ be the flat real
line bundle on
$B_{2r}(S_i)\setminus\bigcup_{j=1}^{m_i-2}\{w=\xi_{i,j}\}$ whose
transition functions in the $S_i$-directions agree with those of $\I$
and whose monodromy around each component $\{w=\xi_{i,j}\}$ is $-1$.
Define the $\I_{(i)}$-valued $\ZT$ closed $1$-form
\[
v_{(i)}
=
\Re\left(
\frac{m_i}{2}
\prod_{j=1}^{m_i-2}(w-\xi_{i,j})^{1/2}\,dw
\right).
\]
Its singular set is $S_i'=\bigsqcup_{j=1}^{m_i-2}S_i^j$, where
$S_i^j:=\{w=\xi_{i,j}\}$.  Since $\varepsilon<r_0$, this new singular
set is contained in $B_{r_0}(S_i)$.  Near $S_i^j$ we have
$\prod_{\ell=1}^{m_i-2}(w-\xi_{i,\ell})^{1/2}
=
C_{i,j}(w-\xi_{i,j})^{1/2}
+\mathcal O(|w-\xi_{i,j}|^{3/2})$, where $C_{i,j}\ne0$.  Thus
\[
v_{(i)}
=
d\Re\left(
\frac{m_i}{3}C_{i,j}(w-\xi_{i,j})^{3/2}
+\mathcal O(|w-\xi_{i,j}|^{5/2})
\right),
\]
and each $S_i^j$ is a smooth nondegenerate branching component.

We now check the hypotheses of
Lemma~\ref{lem:changing_zero_sets}.  Put
$M_2=\overline{B_{3r/4}(S_i)}$, $M_2'=B_{2r}(S_i)$, and
$N_Y=B_{2r}(S_i)\setminus B_{r_0}(S_i)$.  On $M_2'$ we use the product
metric $g_{(i)}=g_{S_i}+dw\,d\bar w$.  With respect to this metric,
$v_{(i)}$ is harmonic, since on the associated branched cover it is the
real part of a holomorphic $1$-form in the normal variable and is
independent of the $t$-variables.

The flat bundle $\I_{(i)}$ also agrees with $\I$ on $N_Y$.  Indeed, their
transition functions in the $S_i$-directions agree by construction,
while a small loop in a normal fiber enclosing all the new branch
points has monodromy $(-1)^{m_i-2}=-1$, because $m_i-2$ is odd.  This
is the same normal monodromy as that of the original bundle around
$S_i$.

It remains to verify exactness on $N_Y$.  Write $v_{(i)}=\Re\eta_i$,
where
$\eta_i=\frac{m_i}{2}
\prod_{j=1}^{m_i-2}(w-\xi_{i,j})^{1/2}\,dw$.  Let
$\pi:\hat N_Y\to N_Y$ be the meridional double cover given fiberwise by
$w=\zeta^2$.  In each local parallel trivialization in the
$S_i$-directions, $\pi^*\eta_i$ is a single-valued complex $1$-form
with no $dt$ component.  Moreover,
$\pi^*\eta_i
=
m_i\zeta^{m_i-1}
\left(
\prod_{j=1}^{m_i-2}
(1-\xi_{i,j}\zeta^{-2})
\right)^{1/2}\,d\zeta$.

Since $|\zeta|^2\ge r_0>\varepsilon$ on $\hat N_Y$, the product has a
convergent Laurent expansion involving only powers $\zeta^{-2\ell}$.
Consequently, the Laurent expansion of $\pi^*\eta_i$ contains only
terms of the form $\zeta^{m_i-1-2\ell}\,d\zeta$.  Since $m_i$ is odd,
all these exponents are even and hence none is equal to $-1$.  Thus the
normal-circle period vanishes, so locally over $S_i$ one has
$\pi^*\eta_i=d\hat F_i$.

Since the deck transformation acts by $-1$ on $\pi^*\eta_i$, we may
normalize $\hat F_i$ so that it is anti-invariant.  This normalization
is unique.  Hence, on overlaps of parallel trivializations, the
normalized primitives transform by the same signs as the corresponding
trivializations of $\I$.  They therefore patch, and
$f_{(i)}:=\Re\hat F_i$ descends to an $\I$-valued primitive of
$v_{(i)}$ on $N_Y$.

The same primitive gives the required exactness of the Hodge dual near
$\partial M_2$.  Namely, for the product metric,
$\star_{g_{(i)}}v_{(i)}
=
\pm d(\Im\hat F_i)\wedge\mathrm{vol}_{g_{S_i}}$ on the double
cover, and hence
$\star_{g_{(i)}}v_{(i)}
=
\pm d\bigl((\Im\hat F_i)\mathrm{vol}_{g_{S_i}}\bigr)$
after descending to $N_Y$.

Finally, we check the smallness assumptions.  Let
$\tilde f=\Re(w^{m_i/2})$, so that $\tilde v=d\tilde f$ on $N_Y$.
Choosing the branches on $N_Y$ compatibly with the branch defining
$w^{(m_i-2)/2}$, Taylor expansion gives
$\prod_{j=1}^{m_i-2}(w-\xi_{i,j})^{1/2}
\to w^{(m_i-2)/2}$ in $C^\infty(N_Y)$ as $\varepsilon\to0$.
Consequently, $v_{(i)}\to\tilde v$ in $C^\infty(N_Y)$.  With the above
anti-invariant normalization, the corresponding primitives also satisfy
$f_{(i)}\to\tilde f$ in $C^\infty(N_Y)$.

Since $\tilde v$ is nowhere zero on $N_Y$, taking $\varepsilon$
sufficiently small gives
$|f_{(i)}-\tilde f|<\frac{\mathfrak d}{10}|\tilde v|$ and 
$|v_{(i)}-\tilde v|<\frac14|\tilde v|$.
Thus all the hypotheses of Lemma~\ref{lem:changing_zero_sets} are
satisfied.

Since the chosen tubular neighborhoods are pairwise disjoint, we may
apply the above construction successively to the components of $\Sl$.
 After finitely many steps, we obtain
a metric $g'$ and a smooth nondegenerate $\ZT$ harmonic $1$-form
$(v',\Sl',\I')$, where each $S_i$ is replaced by
$m_i-2=2k_i-1$ nearby copies.
\end{proof}

\begin{remark}
    The above argument also applies to $\vec{k}$-nondegenerate $\ZT$ harmonic $1$-forms on $\mathbb{R}^{n}$, $n\geq 3$, whose branching sets are compact and which are co-exact outside a sufficiently large compact ball, for examples the $\ZT$ harmonic $1$-forms constructed in Appendix \ref{section:k-nondeg_on_Rn}. In particular, the perturbed metric may be chosen to be asymptotic to the Euclidean metric at infinity.
\end{remark}

\section{Desingularization of Graphic Singularities}
\label{section:desingularize_graphical}
In this section we construct $\ZT$ harmonic 1-forms with prescribed local models near the vertices of a graphic singular set. The main step is to replace, in a controlled way, the conical tangent model at each vertex by a scaled resolution model, while preserving the foliation transitivity needed for the Calabi surgery theorem. We then glue these local modifications together and verify that the resulting global form becomes harmonic after a suitable change of metric.
\subsection{Resolution Models and Statement of the Theorem}
\label{subsec:5_resolution_models}

Let $(v,\Sl,\I)$ be a \emph{strongly nondegenerate} graphic $\ZT$ harmonic $1$-form
on $(M,g)$.  For each vertex $x_i$, Section~\ref{subsec:graphic_Z2} associates
the \emph{strongly nondegenerate} tangent cone $(d(\rho^{\mu_i}f_i), C(\vp_i), \I(\vp_i))$,
where $f_i$ is a critical $\ZT$ eigensection on
$\I_{\vp_i}\to S^{n-1}\setminus\vp_i$ with eigenvalue $\lambda_i$ and
$\mu_i=\mu_n(\lambda_i)$.

\begin{definition}[Resolution model]\label{def:resolution_model}
	Let $\bigl(d(\rho^\mu f),\,C(\vp),\,\I(\vp)\bigr)$ be a strongly nondegenerate homogeneous graphic $\ZT$ harmonic $1$-form on
	$\RR^n$, $n\in\{3,4\}$, where $f$ is a critical $\ZT$ eigensection on $\I_\vp\to S^{n-1}\setminus\vp$.
	
	A \emph{resolution model} for $\bigl(d(\rho^\mu f),\,C(\vp),\,\I(\vp)\bigr)$ is a nondegenerate $\ZT$ harmonic potential $(u,\Sl,\I)$ on $(\RR^n,g)$
	satisfying the following conditions
	
	\begin{enumerate}
		\item 
		The metric $g$ equals the Euclidean metric outside a compact set.
		
		\item 
		The singular set $\Sl$ is a smooth codimension-$2$ submanifold of $\RR^n$.
		
		\item 
		Outside a sufficiently large ball $B_{R_0}$, the singular set $\Sl$ is a
		normal graph over $C(\vp)$:
		\begin{equation*}
			\Sl\setminus B_{R_0}
			=
			\{\,y+\mathfrak s(y):y\in C(\vp)\setminus B_{R_0}\,\},
		\end{equation*}
		where $\mathfrak{s}$ is a section of the normal bundle $N(C(\vp))$ defined outside $B_{R_0}$.
		In conical coordinates $y=\rho p$, the graph satisfies, for some
		$\alpha>0$, any $k\ge0$ and some constants $C_k>0$, that $|\widetilde\nabla^k\mathfrak s(\rho,p)|
		\le C_k\rho^{-\alpha-k}$. Here, $\widetilde{\nabla}$ and $|\cdot|$ are taken with respect to the
		conical metric on $C(\vp)$.
		
		Equivalently, there is a straightening diffeomorphism $\Phi:\RR^n\setminus B_{R_0}\longrightarrow
		\RR^n\setminus B_{R_0}$
		mapping $C(\vp)\setminus B_{R_0}$ to $\Sl\setminus B_{R_0}$, satisfying
		$|\widetilde\nabla^k(\Phi-\mathrm{id})|(\rho,\omega)
		\le C_k\rho^{-\alpha-k}$, for any $k\ge 0$ and $(\rho,\omega)\in \{\rho\}\times S^{n-1}$.

		\item Under the diffeomorphism $\Phi$, $\I$ is identified with the model bundle
		$\I(\vp)$.
		
		\item 
		Let $U:=\Phi^*u$.  Write $U=\rho^\mu f+E$. There exists $0<\tau<\mu$ and constants $C_{K,a,\beta}$ such that 
		\begin{equation*}
			|(\rho\partial_\rho)^a\nabla_\omega^\beta E|_K
			\le C_{K,a,\beta}\rho^{\mu-\tau},
			\qquad a+|\beta|\le 2,
		\end{equation*}
		for every $K\Subset S^{n-1}\setminus\vp$. 
		
		\item 
		Let $P$ be a connected component of $\vp$.  Choose local coordinates
		$(t,z)$ on $S^{n-1}$ near $P$, where $z$ is a transverse complex coordinate
		and $P=\{z=0\}$. Write
		\begin{equation*}
			f(t,z)
			=
			\Re\bigl(a_P(t)z^{3/2}\bigr)
			+
			\mathcal{O}(|z|^{5/2}),
		\end{equation*}
		with $a_P(t)$ nonvanishing.
		Let $\zeta=\rho z$ be the physical transverse coordinate near $C(P)$.  Then, near $C(P)$ and
		for $\rho$ sufficiently large,
		\begin{equation}\label{eq:resolution_model_expansion}
			U
			=
			\Re\left(
			\bigl(a_P(t)\rho^{\mu-\frac32}
			+b_P(\rho,t)\bigr)\zeta^{3/2}
			\right)
			+
			E_{P},
		\end{equation}
		where $|E_P|\le C\rho^{\mu-\frac52}|\zeta|^{5/2}$, $|dE_P|\le C(\rho^{\mu-\frac52}|\zeta|^{3/2}+\rho^{\mu-\frac72}|\zeta|^{5/2})$ for some constant $C$ and
		\begin{equation}\label{eq:resolution_error_estimate}
			|\widetilde\nabla^k b_P(\rho,t)|
			\le
			C_k\rho^{\mu-\frac32-\tau-k},
			\qquad k=0,1,2.
		\end{equation}
	\end{enumerate}
\end{definition}

\begin{remark}\label{rem:resolution_model_derived}
	The asymptotic assumptions \eqref{eq:resolution_model_expansion}--\eqref{eq:resolution_error_estimate} in condition
	(vi) of Definition~\ref{def:resolution_model}
	are essentially a corollary of other conditions. They can be derived from elliptic edge theory together with a
	finite growth assumption on $u$.
\end{remark}

Similar to the proof of Lemma~\ref{prop:local_exact_graphic}, we can show that:
\begin{lemma}\label{lem:star-exactness-resmod}
    Let $(u,\Sl,\I)$ be a resolution model for $(d(\rho^\mu f),C(\vp),\I(\vp))$. Then on $\RR^n\setminus B_R$, $\star_{g}du$ is exact, where $B_R$ is an open ball with sufficiently large radius $R$. 
\end{lemma}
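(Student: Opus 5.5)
The plan is to follow the topological argument in the proof of Proposition~\ref{prop:local_exact_graphic}, now applied to the exterior region $\Omega_R:=\RR^n\setminus(\overline{B_R}\cup\Sl)$ instead of a punctured vertex neighborhood. Since $\star_g du$ is a closed $\I$-valued $(n-1)$-form on $\Omega_R$, it suffices to show that $H^{n-1}(\Omega_R;\I)=0$ for $R\ge R_0$ large. Here $R$ is chosen so that $g$ is Euclidean on $\RR^n\setminus B_R$, and so that the straightening map $\Phi$ of condition (iii) is defined there.

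\emph{Step 1: reduction to a conical model.} The map $\Phi$ carries $C(\vp)\setminus B_{R}$ onto $\Sl\setminus B_R$, and by condition (iv) it identifies $\I$ with $\I(\vp)$. Hence $\Omega_R$, with its bundle $\I$, is diffeomorphic to $(R,\infty)\times(S^{n-1}\setminus\vp)$ with the pulled-back bundle $\I_\vp$. The decay $|\Phi-\mathrm{id}|\le C\rho^{-\alpha}$ is only needed to make sure $\Phi$ preserves the radial structure up to isotopy, so that this identification is a genuine diffeomorphism of pairs. Consequently
\[
H^{n-1}(\Omega_R;\I)\cong H^{n-1}(S^{n-1}\setminus\vp;\I_\vp)\cong H^{n-1}(K;\I_\vp),
\]
where $K=S^{n-1}\setminus \mathrm{Int}\,N(\vp)$ is the complement of a small open tubular neighborhood of $\vp$.

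\emph{Step 2: vanishing.} As in the proof of Proposition~\ref{prop:local_exact_graphic}, let $\hat K\to K$ be the double cover determined by $\I_\vp$. Then $\hat K$ is a compact orientable $(n-1)$-manifold with nonempty boundary; the boundary is nonempty because $\vp\neq\varnothing$, which holds since the cone is genuinely branched. Therefore $H^{n-1}(\hat K;\RR)=0$. The group $H^{n-1}(K;\I_\vp)$ is the $(-1)$-eigenspace of the deck involution acting on $H^{n-1}(\hat K;\RR)$, so it vanishes as well. It follows that $\star_g du=d\beta$ on $\Omega_R$ for some $\I$-valued $(n-2)$-form $\beta$.

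The main point needing care is Step 1: one must check that for $R$ large, $\Phi$ really restricts to a diffeomorphism of the exterior region $\Omega_R$ that respects the product structure. In particular, $\Phi$ should map spheres $\{\rho\}\times S^{n-1}$ to hypersurfaces transverse to the radial direction. This follows from the estimate $|\widetilde\nabla(\Phi-\mathrm{id})|\le C\rho^{-\alpha-1}$: it makes $\Phi$ a $C^1$-small perturbation of the identity in the conical metric, so one can isotope back to the product decomposition. If that is inconvenient, a fallback is to deformation retract $\Omega_R$ onto $\Phi(\{2R\}\times K)$, which is diffeomorphic to $K$.
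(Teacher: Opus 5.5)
Your argument is correct and is essentially what the paper intends. The paper gives no separate proof and only says the lemma follows as in Proposition~\ref{prop:local_exact_graphic}; your identification of the exterior region with the link complement $K$, followed by the double-cover argument showing $H^{n-1}(K;\I_\vp)=0$, is exactly the vertex-ball part of that proof. No edge tubes or Mayer--Vietoris step are needed here, because everything outside $B_R$ is conical.

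The isotopy concern in Step 1 is unnecessary. $\Phi$ is already a diffeomorphism of $\RR^n\setminus B_{R_0}$ carrying $C(\vp)$ onto $\Sl$, so you get exactness on $\RR^n\setminus(B_{R_0}\cup\Sl)$ directly. Exactness for every $R\ge R_0$ then follows by restricting the primitive.
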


\begin{example}\label{ex:zw-c-resolution-model}
Let $c>0$ and set
$q(\zeta,\eta):=\zeta\eta-c$ on $\RR^4=\CC^2$.  Let
$\Sl_c=q^{-1}(0)$, and let $\I_c\to\CC^2\setminus\Sl_c$ be the flat real
line bundle with monodromy $-1$ around a meridian of $\Sl_c$.  Then $u:=\Re(q^{3/2})$
is a nondegenerate $\ZT$ harmonic function with respect to the Euclidean metric.
Indeed, $q$ is holomorphic and
$dq=\eta\,d\zeta+\zeta\,d\eta$ is nowhere zero on $\Sl_c$, so $q$ is a
transverse complex coordinate along the smooth codimension-$2$
submanifold $\Sl_c$.

Write $(\zeta,\eta)=\rho(z,w)$ with $(z,w)\in S^3$, and set
$C(\vp)=\{\zeta\eta=0\}$ and $\vp=C(\vp)\cap S^3$, the Hopf link.
The function
$f(z,w):=\Re\bigl((zw)^{3/2}\bigr)$ is a strongly nondegenerate critical
$\ZT$ eigensection on $\I_\vp$, and
$\rho^3f=\Re\bigl((\zeta\eta)^{3/2}\bigr)$.  Thus $\mu=3$.

We verify that $(u,\Sl_c,\I_c)$ is a resolution model for
$\bigl(d(\rho^3f),C(\vp),\I(\vp)\bigr)$.  The two components
$C_\zeta=\{\zeta=0\}$ and $C_\eta=\{\eta=0\}$ of $C(\vp)$ are disjoint
outside the origin.  Over their ends, $\Sl_c$ is given respectively by
the normal graphs
$\mathfrak s_\zeta(\eta)=c/\eta$ and
$\mathfrak s_\eta(\zeta)=c/\zeta$.  Hence
$|\widetilde\nabla^k\mathfrak s_\zeta|,
|\widetilde\nabla^k\mathfrak s_\eta|
\le C_k\rho^{-1-k}$ for every $k\ge0$, so condition~(iii) holds with
$\alpha=1$.

For $R_0$ sufficiently large, choose a straightening diffeomorphism
$\Phi:\RR^4\setminus B_{R_0}\to\RR^4\setminus B_{R_0}$ which, in conic
neighborhoods of $C_\zeta$ and $C_\eta$, is respectively
$\Phi(\zeta,\eta)
=
\left(\zeta+\frac{c}{\eta},\eta\right)$ and 
$\Phi(\zeta,\eta)
=
\left(\zeta,\eta+\frac{c}{\zeta}\right)$.
Patching these maps with the identity away from the two ends gives
$|\widetilde\nabla^k(\Phi-\mathrm{id})|\le C_k\rho^{-1-k}$ for all
$k\ge0$.  Moreover, $\Phi$ identifies $\I_c$ with $\I(\vp)$.

Set $U:=\Phi^*u$.  On every $K\Subset S^3\setminus\vp$, the function
$zw$ is bounded away from zero, while
$q\circ\Phi=\zeta\eta+\mathcal O(1)$ with the corresponding derivative
estimates.  Thus $U=\rho^3f+E$, where
\[
\bigl|(\rho\partial_\rho)^a\nabla_\omega^\beta E\bigr|_K
\le C_{K,a,\beta}\rho,
\qquad a+|\beta|\le2.
\]
This is condition~(v) with $\tau=2$.

Finally, near either component $P$ of $\vp$, the function
$z_P:=zw$ is a transverse complex coordinate on $S^3$, and
$\zeta_P:=\rho z_P$ is the corresponding physical transverse
coordinate.  By the local definition of $\Phi$, one has
$q\circ\Phi=\zeta\eta$ there, and hence
$U=\Re\bigl(\rho^{3/2}\zeta_P^{3/2}\bigr)$.  Condition~(vi) therefore
holds with $a_P=1$, $b_P=0$, and $E_P=0$.

Consequently, $(u,\Sl_c,\I_c)$ is a resolution model with
$\mu=3$, $\alpha=1$, and $\tau=2$.
\end{example}

Now we state the main theorem of this section.

\begin{theorem}[Desingularization of graphic singularities]\label{thm:desingularize_graphic}
	Let $(M^n,g)$ be a closed oriented Riemannian manifold with $n\in\{3,4\}$.
	Let $(v,\Sl,\I)$ be a strongly nondegenerate graphic $\ZT$ harmonic
	$1$-form on $(M,g)$.
	
	Let $V=\{x_1,\dots,x_m\}$ be the vertex set of $\Sl$. Let $\bigl(d(\rho^{\mu_i}f_i),\,C(\vp_i),\,\I(\vp_i)\bigr)$ be the tangent cone of $(v,\Sl,\I)$ at vertex $x_i$. Suppose that for
	each tangent cone there exists a resolution model $(u_i,\Sl_i,\I_i)$ in the sense of
	Definition~\ref{def:resolution_model}.
	
	Then there exist a Riemannian metric $g'$ on $M$ and a $\ZT$ harmonic
	$1$-form $(v',\Sl',\I')$ on $(M,g')$ such that:
	\begin{enumerate}
		\item $\Sl'$ is a smooth codimension-$2$ submanifold of $M$;
		\item $(v',\Sl',\I')$ is nondegenerate;
		\item the construction is local near the original singular set: for every
		open neighborhood $\mathcal U$ of $\Sl$, the choices can be made so that
		$\Sl'\subset\mathcal U$ and, after identifying the line bundles $\I$ and $\I'$ on
		$M\setminus\mathcal U$, one has $v'=v$ on $M\setminus\mathcal U$.
	\end{enumerate}
\end{theorem}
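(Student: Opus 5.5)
The plan is to apply the local replacement principle of Lemma~\ref{lem:changing_zero_sets} once at each vertex $x_i$, taking $Y$ to be the whole connected component of $\Sl\cup\Zl_v$ containing $x_i$. The gluing collar, however, has to be a small \emph{vertex annulus} rather than the boundary of a neighborhood of $Y$. So rather than quoting that lemma verbatim, I would rerun its proof through Theorem~\ref{thm:gluing}. The harmonic pieces are: (a) the complement of small vertex balls $B_{2\epsilon}(x_i)$, carrying the original metric $g$ (the edges stay there and are already nondegenerate); (b) inside each ball, a rescaled copy of the resolution model $(u_i,\Sl_i,\I_i)$ with metric $s^2\phi_R^*g_i$, exactly as in the proof of Theorem~\ref{thm:blowup_isolated_zero}. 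The transition pieces are the annuli $A_i=B_{2\epsilon}(x_i)\setminus \overline{B_{\epsilon/2}(x_i)}$ with the singular set removed. Since $\Sl$ itself crosses $A_i$, I would further split $A_i$ into a thin edge tube $T_i$ around $\Sl\cap A_i$ and its complement. On $T_i$ the form must be harmonic, so $T_i$ gets its own harmonic piece, built below.

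Step 1 (matching the singular sets). Using the straightening map $\Gamma_i$ of Definition~\ref{def:graphic_singular_set} at the vertex and the map $\Phi$ of Definition~\ref{def:resolution_model}(iii) for the scaled model, both $\Sl\cap A_i$ and the scaled $\Sl_i\cap A_i$ are normal graphs over $C(\vp_i)$. The first deviates by $\mathcal O(\rho^{1+\alpha})$ and the second by $\mathcal O(s^{1+\alpha}\rho^{-\alpha})$. Interpolating the normal sections with a cutoff in $\rho$ gives a smooth codimension-two submanifold $\Sl'$. It equals $\Sl$ outside $B_{2\epsilon}$ and equals $\Sl_i$ inside $B_{\epsilon/2}$, and the bundles are identified via Definition~\ref{def:resolution_model}(iv). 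Composing with a diffeomorphism close to the identity, I would pull everything back so that $\Sl'$ coincides with the straightened cone on $A_i$.

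Step 2 (matching the potentials). By Proposition~\ref{prop:local_exact_graphic} and Lemma~\ref{lem:star-exactness-resmod}, both forms have $\I$-valued potentials on $A_i$, namely $U$ for $v$ and $\epsilon^{\mu_i}s^{\mu_i}U_i(\cdot/s)$ for the scaled model, and both have $\star$-exact duals near the collar boundaries. Define $v_3=d(\chi U+(1-\chi)U_{i,s})$. Away from the edge tube $T_i$, the expansions \eqref{eq:vertex_rough_expansion} and Definition~\ref{def:resolution_model}(v) give $|U-U_{i,s}|\lesssim (\epsilon^{\epsilon_i}+(s/\epsilon)^{\tau})\rho^{\mu_i}$. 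By strong nondegeneracy, $|v|\gtrsim\rho^{\mu_i-1}$ there. Hence the inequalities $|d\chi||U-U_{i,s}|<\tfrac{1}{20}|v|$ and $|v_3-v|<\tfrac14|v|$ hold for small $\epsilon$ and $s/\epsilon$, and the angle argument of Lemma~\ref{lem:changing_zero_sets} preserves positivity of the loops from Lemma~\ref{lem:positive_curve_on_gluing_region}. Those loops exist because $v$ is transitive (Theorem~\ref{thm:harmonic_implies_transitivity}) and can be kept at uniform distance from the vertex.

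Step 3 (the edge tube, which I expect to be the main obstacle). Near $C(P)$ the form $|v|$ vanishes like $|\zeta|^{1/2}$, so the relative smallness estimates break down and the loops of Lemma~\ref{lem:positive_curve_on_gluing_region} do not reach points close to the edge. I would handle this as in Lemma~\ref{lem:kill_remainder_terms}. Using the joint expansions \eqref{eq:vertex_edge_expansion} and \eqref{eq:resolution_model_expansion}, write both potentials as $\Re(c(\rho,t)\zeta^{3/2})+\text{error}$, where the leading coefficients $a_P\rho^{\mu-3/2}+b$ agree to relative order $\epsilon^{\epsilon_i}+(s/\epsilon)^{\tau}$. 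First kill the errors $E_P$ on a thinner tube, using the product-type metric $dt^2+|dw|^2$ with $w=c^{2/3}\zeta$, whose branch is well defined since $c\neq0$ is close to $a_P\rho^{\mu-3/2}$. On that tube the interpolated form is then exactly $d\Re(w^{3/2})$ with $w$ an interpolated coordinate, hence harmonic and $\star$-exact, and it serves as a harmonic piece for Theorem~\ref{thm:gluing}. The remaining region of $A_i$ is a transition piece with $|v|\gtrsim|\zeta|^{1/2}\rho^{\mu-3/2}$ bounded below, so the perturbation estimates of Step 2 still apply to it after shrinking the tube suitably relative to $\epsilon$. The delicate part is choosing the tube radius together with $\epsilon$ and $s$ so that the error bounds $|E_P|,|dE_P|$, the cutoff gradients in both $\rho$ and $|\zeta|$, and the lower bound on $|v|$ remain compatible uniformly up to the edges.

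Once this is done, Theorem~\ref{thm:gluing} produces $g'$. The resulting $\Sl'$ is smooth and nondegenerate: along the edges this is inherited from $v$, inside the balls from the model, and on the tube it holds because the form is exactly $d\Re(w^{3/2})$. Locality, item (iii), follows from taking $\epsilon$ small.
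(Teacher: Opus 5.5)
There is a genuine gap, and it lies in how you cut $M$ into pieces, not in the estimates of Step~3. Theorem~\ref{thm:gluing} requires every singular point to lie in the interior of a harmonic piece, away from its boundary collar (hypothesis (i)). Transition pieces must contain no points of $\Sl'\cup\Zl_{v'}$ at all.

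In your decomposition a single connected edge of $\Sl'$ runs through three harmonic pieces carrying three different metrics:
\begin{itemize}
\item the exterior piece, with $g$;
\item the tube $T_i\subset A_i$, with $dt^2+|dw|^2$;
\item the core $B_{\epsilon/2}$, with the scaled model metric.
\end{itemize}
The edge crosses $\partial B_{2\epsilon}$ and $\partial B_{\epsilon/2}$, so these pieces abut along the singular set itself. Concretely, at the $\rho$-ends of $T_i$ you must pass from the error-free form $d\Re(w^{3/2})$ to the form $d\Re(c\zeta^{3/2}+E_P)$. The region where this interpolation happens contains a segment of $\Sl'$. It cannot be a transition piece, since those have empty singular set. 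It is not a harmonic piece either: no available metric makes the interpolated form harmonic there. The same problem occurs for your core piece, because the scaled resolution singular set exits through $\partial B_{\epsilon/2}$. No choice of tube radius, $\epsilon$, or $s$ repairs this.

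The missing idea is to normalize along entire components of the new singular set, not only inside the annulus. Your Steps~1--2 essentially produce a smooth $\Sl_2$ whose $\zeta^{3/2}$-coefficient $B$ is nowhere zero. The paper then kills the remainder, as in Lemma~\ref{lem:kill_remainder_terms}, on a thin tube $U_S'$ around each whole component $S$ of $\Sl_2$ that meets an incident edge. This tube runs through the exterior, the radial collars and the resolution cores. The product-type metric built from local cube roots of $B$ makes the form harmonic on all of $U_S'$. These tubes are removed from the exterior and core pieces, so every component of the singular set lies in exactly one harmonic piece.

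This step creates a new transition region, the collar $U_S\setminus U_S'$ along all of $S$, and that is where the real work lies. Part of this collar sits inside the resolution core. There the form is a rescaled model far from $v$, and the collar width is chosen last. So your Step~2 device of perturbing fixed $v$-positive loops does not reach these points.

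The paper handles this in two moves. First, it uses Moser's trick to conjugate the form near $S$ to $d\Re(B\zeta^{3/2})$. It then shows, using the level sets of $\Re(w^{3/2})$ along a path in $S$, that every leaf through the collar reaches a compact set $K_S$ in the exterior that is fixed in advance (Lemma~\ref{lem:leaf_escaping}). Second, it uses Lemma~\ref{lem:modify_positive_loop_in_one_leaf} to transport to the collar the loops chosen through $K_S$ before $R_i$ and $\delta_{\mathrm{nor}}$. Your proposal needs both the global normalization and this leaf-escaping transitivity argument.
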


\subsection{Local and Scaled Estimates}
\label{subsec:5_local_prep}

Fix a vertex $x_i$.  Let $B_i\subset M$ be a sufficiently small geodesic ball around $x_i$.  By Proposition~\ref{prop:local_exact_graphic}, after shrinking $B_i$ there is an $\I$-valued harmonic function $h_i$ on $B_i\setminus\Sl$ with $v=dh_i$.  We pull $h_i$ back by the straightening map $\Gamma_i$ from Definition~\ref{def:graphic_singular_set} and use polar coordinates $(\rho,\omega)\in(0,\varepsilon)\times S_i^{n-1}$.  Thus, in the straightened coordinates,
\begin{equation}\label{eq:local_prep_expansion}
	H_i(\rho,\omega):=\Gamma_i^*h_i(\rho,\omega)
	=\rho^{\mu_i}f_i(\omega)+E_i^{\mathrm{old}}(\rho,\omega).
\end{equation}
Set $F_i=\rho^{\mu_i}f_i$.  The estimates \eqref{eq:relative_estimate_error_vertex} imply, on every compact set $K\Subset S_i^{n-1}\setminus\vp_i$,
\begin{equation}\label{eq:old_error_relative_away}
	|dE_i^{\mathrm{old}}|+\rho^{-1}|E_i^{\mathrm{old}}|=o(|dF_i|)
	\qquad\text{as }\rho\to0.
\end{equation}
Here the norms are computed in the conic metric in the straightened coordinates.  Since the tangent cone is strongly nondegenerate, $dF_i$ has no ordinary zero on the punctured cone away from $C(\vp_i)$.

Let $P\subset\vp_i$ be a connected component corresponding to an edge incident to $x_i$.  Choose local coordinates $(t,z)$ on $S_i^{n-1}$ near $P$, where $z$ is the complex normal coordinate and $P=\{z=0\}$.  The physical transverse coordinate in the ambient manifold is
$\zeta=\rho z$.  

By the edge expansion \eqref{eq:vertex_edge_expansion},\begin{equation*}
	H_i
	=
	\Re\left(
	\bigl(a_{P}(t)\rho^{\mu_i-\frac32}
	+b_{P}^{\mathrm{old}}(\rho,t)\bigr)\zeta^{3/2}
	\right)
	+
	\mathcal{O}\bigl(\rho^{\mu_i-\frac52}|\zeta|^{5/2}\bigr),\end{equation*}
where $b_{P}^{\mathrm{old}}$ is the corresponding contribution of $E_i^{\mathrm{old}}$ to the $\zeta^{3/2}$-coefficient. From \eqref{eq:vertex_edge_error_estimate}, it satisfies
$|\widetilde\nabla^k b_{P}^{\mathrm{old}}(\rho,t)|
\le
C_k\rho^{\mu_i-\frac32+\epsilon_i-\epsilon-k}$,
for $k=0,1,2$ and any $0<\epsilon<\epsilon_i$.

We also record how a resolution model behaves under scaling. Let
$(u_i^{\mathrm{res}},\Sl_i^{\mathrm{res}},\I_i^{\mathrm{res}})$ be the
resolution model at $x_i$, with decay rate $\tau_i>0$ as in
Definition~\ref{def:resolution_model} (v)--(vi). Let $g_{i}^{\mathrm{res}}$ be the corresponding Riemannian metric on $\RR^n$. Fix $R_i\gg1$ and
$0<\delta_i\ll1$, and define
$T_{R_i,\delta_i}(x)=R_ix/\delta_i$. Since $g^{\mathrm{res}}_i$ is
Euclidean outside a compact set, so is $g^{\mathrm{sc}}_i
:=
(\delta_i/R_i)^2
T_{R_i,\delta_i}^\ast g^{\mathrm{res}}_i$.
Set $\Sl_i^{\mathrm{sc}}
:=T_{R_i,\delta_i}^{-1}(\Sl_i^{\mathrm{res}})$ and
$\I_i^{\mathrm{sc}}:=T_{R_i,\delta_i}^*\I_i^{\mathrm{res}}$. Then
$u_i^{\mathrm{sc}}
:=
(\delta_i/R_i)^{\mu_i}
u_i^{\mathrm{res}}\circ T_{R_i,\delta_i}$ is harmonic with respect to $g^{\mathrm{sc}}_i$, and
$(u_i^{\mathrm{sc}},\Sl_i^{\mathrm{sc}},\I_i^{\mathrm{sc}})$ is again
a resolution model of
$(d(\rho^{\mu_i}f_i),C(\vp_i),\I(\vp_i))$.

The corresponding straightening map is
$\Phi_i^{\mathrm{sc}}
:=T_{R_i,\delta_i}^{-1}\circ\Phi_i\circ T_{R_i,\delta_i}$. Set
$U_i^{\mathrm{sc}}
:=(\Phi_i^{\mathrm{sc}})^*u_i^{\mathrm{sc}}$. Writing
$U_i:=\Phi_i^*u_i^{\mathrm{res}}
=\rho^{\mu_i}f_i+E_i$, we obtain
$U_i^{\mathrm{sc}}=\rho^{\mu_i}f_i+E_i^{\mathrm{sc}}$, where
$E_i^{\mathrm{sc}}(\rho,\omega)
=
(\delta_i/R_i)^{\mu_i}
E_i\left(R_i\rho/\delta_i,\omega\right)$.
For every compact set
$K\Subset S_i^{n-1}\setminus\vp_i$,
Definition~\ref{def:resolution_model}(v) gives
\begin{equation}\label{eq:E_i_sc_estimates}
	\sup_{\omega\in K}
	\left|
	(\rho\partial_\rho)^a\nabla_\omega^\beta
	E_i^{\mathrm{sc}}(\rho,\omega)
	\right|
	\le
	C_{K,a,\beta}
	\left(\frac{\delta_i}{R_i}\right)^{\tau_i}
	\rho^{\mu_i-\tau_i},
	\qquad a+|\beta|\le2.
\end{equation}

Let $P$ be a connected component of $\vp_i$. In the corresponding
edge coordinates, Definition~\ref{def:resolution_model}(vi) gives
\[
U_i^{\mathrm{sc}}
=
\Re\left(
\bigl(
a_P(t)\rho^{\mu_i-\frac32}
+b_P^{\mathrm{sc}}(\rho,t)
\bigr)\zeta^{3/2}
\right)
+E_P^{\mathrm{sc}},
\]
where $b_P^{\mathrm{sc}}(\rho,t)
=
(\delta_i/R_i)^{\mu_i-\frac32}
b_P (R_i\rho/\delta_i,t)$
and
\[
E_P^{\mathrm{sc}}(\rho,t,\zeta)
=
\left(\frac{\delta_i}{R_i}\right)^{\mu_i}
E_P\left(
\frac{R_i\rho}{\delta_i},
t,
\frac{R_i\zeta}{\delta_i}
\right).
\]
Consequently,
$|E_P^{\mathrm{sc}}|
\le C\rho^{\mu_i-\frac52}|\zeta|^{5/2}$ and
$|dE_P^{\mathrm{sc}}|
\le
C\left(
\rho^{\mu_i-\frac52}|\zeta|^{3/2}
+
\rho^{\mu_i-\frac72}|\zeta|^{5/2}
\right)$,
while
\begin{equation}\label{eq:b_scP_estimates}
	|\widetilde\nabla^k b_P^{\mathrm{sc}}(\rho,t)|
	\le
	C_k
	\left(\frac{\delta_i}{R_i}\right)^{\tau_i}
	\rho^{\mu_i-\frac32-\tau_i-k},
	\qquad k=0,1,2.
\end{equation}

\subsection{Replacement near the Graphic Vertices}
\label{subsec:5_replace}

We now replace the conical vertex model near each $x_i$ by a scaled model.  The order of choices is fixed throughout this subsection. We first choose $0<\delta_i\ll1$, and then choose $R_i\gg1$.

	Let $(u_i^{\mathrm{res}},\Sl_i^{\mathrm{res}},\I_i^{\mathrm{res}})$ be the resolution model at $x_i$, and let $\Phi_i$ be the straightening map from Definition~\ref{def:resolution_model}.  For every sufficiently large $R_i$, we can always interpolate $\Phi_i$ and $\mathrm{id}$ to be a diffeomorphism $\Psi_{i,R_i}:\RR^n\to\RR^n$ such that
	\begin{enumerate}
		\item $\Psi_{i,R_i}=\mathrm{id}$ on $B_{R_i/2}$;
		\item $\Psi_{i,R_i}=\Phi_i$ on $\RR^n\setminus B_{3R_i/4}$;
		\item on $B_{3R_i/4}\setminus B_{R_i/2}$ one has $|\widetilde\nabla^k(\Psi_{i,R_i}-\mathrm{id})|\le C_kR_i^{-\alpha_i-k}$ for $k=0,1,2$.
	\end{enumerate}
	In particular, $\Psi_{i,R_i}^{-1}(\Sl_i^{\mathrm{res}})$ agrees with $\Sl_i^{\mathrm{res}}$ on $B_{R_i/2}$ and with $C(\vp_i)$ outside $B_{3R_i/4}$.

Let $\chi_i$ be a radial cut-off with $\chi_i=1$ for $\rho\ge2\delta_i$, $\chi_i=0$ for $\rho\le\delta_i$, and $|d\chi_i|\le C\delta_i^{-1}$. The first step is to remove the error term near $x_i$.  In the straightened coordinates near $x_i$ define
\begin{equation}\label{eq:def_v1}
	v_1=d(F_i+\chi_iE_i^{\mathrm{old}}),
\end{equation}
and set $v_1=v$ outside the chosen vertex balls $B_i$. Thus $v_1=v$ for $\rho\ge2\delta_i$ and $v_1=dF_i$ for $\rho\le\delta_i$. The singular set and line bundle are not changed.

Next define $\widetilde\Phi_i^{\mathrm{sc}}:=T_{R_i,\delta_i}^{-1}\circ\Psi_{i,R_i}\circ T_{R_i,\delta_i}$ and $\widetilde U_i^{\mathrm{sc}}=\left(\widetilde\Phi_i^{\mathrm{sc}}\right)^\ast u_i^{\mathrm{sc}}$.
Then $\widetilde U_i^{\mathrm{sc}}=u_i^{\mathrm{sc}}$ for $\rho\le\delta_i/2$, while $\widetilde U_i^{\mathrm{sc}}=U_i^{\mathrm{sc}}$ for $\rho\ge3\delta_i/4$.  On this straightened end we write $E_i^{\mathrm{sc}}:=U_i^{\mathrm{sc}}-F_i$.

Let $\beta_i$ be a radial cut-off with $\beta_i=1$ for $\rho\le 3\delta_i/4$, $\beta_i=0$ for $\rho\ge \delta_i$, and $|d\beta_i|\le C\delta_i^{-1}$.  Define the local replacement potential by
\begin{equation}\label{eq:def_local_replacement_potential}
	U^{\mathrm{new}}_i=
	\begin{cases}
		\widetilde U_i^{\mathrm{sc}}, & 0\le\rho\le 3\delta_i/4,\\
		F_i+\beta_iE_i^{\mathrm{sc}}, & 3\delta_i/4\le\rho\le\delta_i.
	\end{cases}
\end{equation}
This is smooth because the two formulas agree near $\rho=3\delta_i/4$. We now define $(v_2,\Sl_2,\I_2)$ inside the vertex balls. In the straightened spherical coordinates $(\rho,\omega)$ near $x_i$, set
\begin{equation}\label{eq:def_v2}
	v_2=
	\begin{cases}
		dU^{\mathrm{new}}_i, & 0\le\rho\le\delta_i,\\
		v_1, & \rho\ge\delta_i.
	\end{cases}
\end{equation}
Finally, set $v_2=v$ outside the union of the vertex balls. 
The singular set $\Sl_2$ and the line bundle $\I_2$ are defined by replacing the original data $(\Sl,\I)$ near each vertex $x_i$ with the scaled resolution data.

More precisely, in the straightened coordinates near $x_i$, we use $\left(\widetilde{\Phi}_i^{\mathrm{sc}}\right)^\ast(\Sl_i^{\mathrm{sc}},\I_i^{\mathrm{sc}})$ on the core region $\{\rho<3\delta_i/4\}$. On the region $\{\rho\ge 3\delta_i/4\}$ we use the cone data $(C(\vp_i),\I(\vp_i))$, which agrees with the original data $(\Sl,\I)$ after pulling back by the inverse of the straightening map. Thus the new singular set $\Sl_2$ differs from $\Sl$ only in the core regions $\{\rho< 3\delta_i/4\}$ around the vertices.

\begin{proposition}\label{prop:vertex_replacement}
	After choosing $\delta_i>0$ sufficiently small and then $R_i$ sufficiently large for every vertex, the triple $(v_2,\Sl_2,\I_2)$ is a globally defined $\ZT$ closed $1$-form, where $\Sl_2$ is a smooth codimension-$2$ submanifold. Moreover, near every point of $\Sl_2$ there are coordinates $(t,\zeta)$, with $\zeta$ the normal complex coordinate, such that
	\begin{equation}\label{eq:v2_expansion}
		v_2=d\Re\bigl(B(t)\zeta^{3/2}+E_2(\zeta,t)\bigr),
	\end{equation}
	where $B(t)$ is nonvanishing and $E_2=\mathcal{O}(|\zeta|^{5/2})$. 
\end{proposition}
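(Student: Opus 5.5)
The plan is to check the statement region by region in the straightened coordinates near each vertex. Away from the vertex balls nothing changes. On each ball the radial variable $\rho$ splits into four zones: the core $\{\rho\le\delta_i/2\}$, the interpolation annulus $\{\delta_i/2\le\rho\le3\delta_i/4\}$, the cut-off annulus $\{3\delta_i/4\le\rho\le\delta_i\}$, and the outer annulus $\{\delta_i\le\rho\le2\delta_i\}$ where $v_1$ interpolates between $dF_i$ and $v$.

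\textbf{Global well-definedness.} Closedness is immediate, since $v_2$ is locally $d$ of an $\I_2$-valued potential. The formulas agree on overlaps: $\beta_i=1$ near $\rho=3\delta_i/4$, $\chi_i=1$ for $\rho\ge2\delta_i$, and $U^{\mathrm{new}}_i=F_i$ near $\rho=\delta_i$.

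\textbf{Singular set and line bundle.} By properties (i)--(iii) of $\Psi_{i,R_i}$, after rescaling by $T_{R_i,\delta_i}$ the set $(\widetilde\Phi_i^{\mathrm{sc}})^{-1}(\Sl_i^{\mathrm{sc}})$ coincides with the scaled smooth surface $\Sl_i^{\mathrm{sc}}$ for $\rho\le\delta_i/2$ and with $C(\vp_i)$ for $\rho\ge3\delta_i/4$. In between it is the image of the cone under a diffeomorphism. Hence $\Sl_2$ is a smooth codimension-$2$ submanifold: the scaled resolution surface in the core, joined along the edges of the cone to the original edges, and these are smooth once pulled back by $\Gamma_i$. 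The line bundles glue by condition (iv) of Definition~\ref{def:resolution_model}, which identifies $\I_i^{\mathrm{res}}$ with $\I(\vp_i)$, and by the identification of $\I(\vp_i)$ with $\I$ through $\Gamma_i$. Finite $L^2$ norms of $v_2$ and $\nabla v_2$ follow from the local expansion proved next.

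\textbf{Expansion near $\Sl_2$.} I would split by location along $\Sl_2$.
\begin{enumerate}
\item Outside the vertex balls, and on the part of the core where $\widetilde U_i^{\mathrm{sc}}=u_i^{\mathrm{sc}}$: the expansion \eqref{eq:v2_expansion} is the given nondegeneracy of $v$ along edges, respectively of the resolution model, which is a nondegenerate $\ZT$ harmonic potential.
\item On the edges of the cone where $v_2=dF_i$, i.e.\ near $\rho=\delta_i$ and where $\beta_i=0$ or $\chi_i=0$: by Definition~\ref{def:nondegenerate_tangent_cone},
\[
F_i=\Re\bigl(a_P(t)\rho^{\mu_i-\frac32}\zeta^{3/2}\bigr)+\mathcal O\bigl(\rho^{\mu_i-\frac52}|\zeta|^{5/2}\bigr),
\]
with $a_P$ nonvanishing and $\rho\sim\delta_i$ bounded below.
\item In the cut-off annuli, write the potential as $F_i+\chi_iE_i^{\mathrm{old}}$ or $F_i+\beta_iE_i^{\mathrm{sc}}$. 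Use the edge expansions \eqref{eq:vertex_edge_expansion} and the scaled ones. The $\zeta^{3/2}$-coefficient becomes
\[
a_P\rho^{\mu_i-\frac32}+\chi_i b_P^{\mathrm{old}}\qquad\text{or}\qquad a_P\rho^{\mu_i-\frac32}+\beta_i b_P^{\mathrm{sc}}.
\]
The cut-offs depend only on $\rho$, which is a tangential variable along the edge, so multiplying by them preserves the form $\Re(B(t)\zeta^{3/2})+\mathcal O(|\zeta|^{5/2})$. Nonvanishing of $B$ follows from \eqref{eq:vertex_edge_error_estimate}, which gives $|b_P^{\mathrm{old}}|\lesssim\delta_i^{\epsilon_i-\epsilon}\rho^{\mu_i-\frac32}$ for $\delta_i$ small. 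Similarly, \eqref{eq:b_scP_estimates} gives $|b_P^{\mathrm{sc}}|\lesssim(\delta_i/R_i)^{\tau_i}\rho^{\mu_i-\frac32-\tau_i}\sim R_i^{-\tau_i}\rho^{\mu_i-\frac32}$ for $\rho\sim\delta_i$, which is small once $R_i$ is large. This is why $\delta_i$ is fixed first and $R_i$ second.
\item In the interpolation annulus $\delta_i/2\le\rho\le3\delta_i/4$, the potential $\widetilde U_i^{\mathrm{sc}}$ is the pullback of the harmonic potential $u_i^{\mathrm{sc}}$ by a diffeomorphism that is $C^2$-close to the identity, with error $R_i^{-\alpha_i}$. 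Pullback preserves the $\zeta^{3/2}$ structure with a nonvanishing coefficient, after choosing normal coordinates to the pulled-back singular set.
\end{enumerate}

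\textbf{Main obstacle.} I expect the main difficulty to be making the cut-off annulus $3\delta_i/4\le\rho\le\delta_i$ uniform. The error $E_P$ must stay $\mathcal O(|\zeta|^{5/2})$ with controlled constants after multiplication by $\beta_i$, so the $\rho$-derivative of $\beta_i$ must not destroy the half-power structure. My approach is to observe that $d\beta_i\wedge$ terms only contribute $\delta_i^{-1}|E_P^{\mathrm{sc}}|=\mathcal O(|\zeta|^{5/2})$ and $\delta_i^{-1}|b_P^{\mathrm{sc}}||\zeta|^{3/2}$ along the tangential $d\rho$ direction. These are absorbed into $d\Re(B(t)\zeta^{3/2}+E_2)$, with $t$ now including $\rho$, because $\beta_i b_P^{\mathrm{sc}}$ is simply part of a $\rho$-dependent coefficient.
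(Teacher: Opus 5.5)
Your proposal is correct and follows essentially the same route as the paper. It runs the same region-by-region check: the resolution core including the straightening-interpolation annulus, the inner cut-off annulus controlled by \eqref{eq:b_scP_estimates} once $R_i$ is large, the outer cut-off annulus controlled by \eqref{eq:vertex_edge_error_estimate} once $\delta_i$ is small, and the exterior. In each region the key point is nonvanishing of the $\zeta^{3/2}$-coefficient $a_P\rho^{\mu_i-\frac32}+(\text{cut-off})\cdot b_P$, and you use the same order of fixing $\delta_i$ before $R_i$.

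You also spell out the gluing of potentials, the smoothness of $\Sl_2$, and the line-bundle identification, which the paper leaves implicit. One minor correction there: $U^{\mathrm{new}}_i$ agrees with $F_i$ only at $\rho=\delta_i$, not on a neighborhood of it. The matching is still smooth because all derivatives of $\beta_i$ and $\chi_i$ vanish at $\rho=\delta_i$.
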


\begin{proof}
	We verify smoothness and nondegeneracy in four regions.
	
	\smallskip
	\noindent\textit{Exterior region.}
	For $\rho\ge2\delta_i$ one has $v_2=v$. The conclusion follows from the edge nondegeneracy of the original strongly nondegenerate graphic form.
	
	\smallskip
	\noindent\textit{Outer annulus $\{\delta_i\le\rho\le2\delta_i\}$.}
	Here $v_2=d(F_i+\chi_iE_i^{\mathrm{old}})$. Let $P\subset\vp_i$ be an incident link component and use the coordinates $(t,z)$ on the link, with physical transverse coordinate $\zeta=\rho z$. The $\zeta^{3/2}$-coefficient of the potential is
	$ a_P(t)\rho^{\mu_i-3/2}+\chi_i b_P^{\mathrm{old}}(\rho,t)$.
	By \eqref{eq:vertex_edge_error_estimate}, $b_P^{\mathrm{old}}(\rho,t)/\rho^{\mu_i-3/2}=\mathcal{O}(\rho^{\epsilon_i-\epsilon})$ for any sufficiently small $\epsilon>0$, as $\rho\to0$.  
	
	Since $a_P(t)$ is nowhere zero, choosing $\delta_i$ sufficiently small makes the old-error contribution $|\chi_i b_P^{\mathrm{old}}|$ smaller than half of $|a_P(t)|\rho^{\mu_i-3/2}$ on the annulus $\{\delta_i\le\rho\le2\delta_i\}$. Thus the $\zeta^{3/2}$-coefficient remains nonzero.
	
	\smallskip
	\noindent\textit{Inner annulus $\{3\delta_i/4\le\rho\le\delta_i\}$.}
	Here $v_2=d\bigl(F_i+\beta_iE_i^{\mathrm{sc}}\bigr)$,
	and the singular set and line bundle are the tangent cone data
	$(C(\vp_i),\I(\vp_i))$. For the same incident link component $P$, the
	$\zeta^{3/2}$-coefficient is
	$a_P(t)\rho^{\mu_i-3/2}+\beta_i b_P^{\mathrm{sc}}(\rho,t)$.
	By \eqref{eq:b_scP_estimates}, on $3\delta_i/4\le\rho\le\delta_i$ one has
	$|b_P^{\mathrm{sc}}(\rho,t)|
	\le C (\delta_i/R_i)^{\tau_i}\rho^{\mu_i-3/2-\tau_i}$.
	
	After $\delta_i$ is fixed, choosing $R_i$ sufficiently large gives
	$|\beta_i b_P^{\mathrm{sc}}(\rho,t)|
	\le \frac12 |a_P(t)|\rho^{\mu_i-3/2}$.
	Thus the leading $\zeta^{3/2}$-coefficient is again nonzero.
	
	\smallskip
	\noindent\textit{Resolution core $\{0\le\rho\le3\delta_i/4\}$.}
	In this region, $v_2=d\widetilde U_i^{\mathrm{sc}}$.
	For $\rho\le\delta_i/2$, this is the scaled resolution model
	$(u_i^{\mathrm{sc}},\Sl_i^{\mathrm{sc}},\I_i^{\mathrm{sc}})$. For
	$\delta_i/2\le\rho\le3\delta_i/4$, it is obtained from the same scaled model
	by the scaled straightening diffeomorphism $\Psi_{i,R_i}^{\mathrm{sc}}$.
	Therefore the nonvanishing of the
	$\zeta^{3/2}$-coefficient is preserved by scaling and by diffeomorphism.
	
	Combining the four cases, the local expansion \eqref{eq:v2_expansion} follows from the nonvanishing of the leading $\zeta^{3/2}$-coefficient in each local model.
\end{proof}

As a result, $\Sl_2$ has trivial normal bundle in $M$.

\begin{remark}
	In Proposition~\ref{prop:vertex_replacement}, the normal coordinate
	$\zeta$ is not meant to be an arbitrary tubular coordinate. The construction
	of $(v_2,\Sl_2,\I_2)$ implicitly fixes an adapted \emph{normal structure} (see \cite[Section 3]{donaldsondeformation2019}) along
	$\Sl_2$. On the exterior region it is the one induced by the original
	metric $g$, on the resolution core it is the one induced by the
	metric of the resolution model, and on the outer and inner annuli these structures are identified
	by the matching maps used in the vertex replacement. 
\end{remark}

\subsection{Normalization along the Smooth Singular Set}
\label{subsec:5_normalize}

Let $V=\{x_1,\dots,x_m\}$ be the vertex set of $\Sl$, and let
$\mathscr{E}_V$ be the set of connected components $e$ of $\Sl\setminus V$
whose closure in $\Sl$ meets $V$.  

After the vertex replacement, $\Sl_2=\Sl$
outside open balls $B_{2\delta_i}(x_i)~(x_i\in V)$. Define $\Sl_2^{\mathrm{nor}}\subset\Sl_2$ to be
the union of those connected components $S$ of $\Sl_2$ for which
$S\cap(e\setminus\bigcup_i B_{2\delta_i}(x_i))\ne\varnothing$ for some
$e\in \mathscr{E}_V$. 
Thus $\Sl_2^{\mathrm{nor}}$ is precisely the part of
$\Sl_2$ corresponding to the old edges incident to vertices, with the relevant
scaled resolution pieces inserted.  If several incident edges are joined by a
resolution model, they simply determine the same component of
$\Sl_2^{\mathrm{nor}}$.  

Choose a $0<\delta_{\mathrm{nor}}\ll1$. For each
$S\in\pi_0(\Sl_2^{\mathrm{nor}})$, let $r_S$ be the distance to $S$ with respect to the original metric $g$. Define tubular neighborhoods $U_S'=\{r_S\le \delta_{\mathrm{nor}}/2\}$ and $U_S=\{r_S<2\delta_{\mathrm{nor}}\}$.
This radius $\delta_{\mathrm{nor}}$ is chosen after all $\delta_i$ and
$R_i$ have been fixed, and can be taken arbitrarily small.

Let $\eta_S$ be a
cut-off on $U_S$ with $\eta_S=0$ on $U_S'$ and $\eta_S=1$ near
$\partial U_S$. On $U_S$ define
\begin{equation}\label{eq:def_vprime}
	v'=d\Re\bigl(B(t)\zeta^{3/2}+\eta_S E_2(\zeta,t)\bigr),
\end{equation}
and set $v'=v_2$ outside $\bigcup_{S\in\pi_0(\Sl_2^{\mathrm{nor}})}U_S$.  Thus
$(v',\Sl_2,\I_2)$ is a globally defined $\ZT$ closed $1$-form.  On $U_S'$ one
has $v'=d\Re(B(t)\zeta^{3/2})$. This is the local remainder-killing modification of
Lemma~\ref{lem:kill_remainder_terms}.  


On the normalization collar
$\mathcal C_S^{\mathrm{nor}}:=U_S\setminus U_S'$, one has
$v'-v_2=d\Re\bigl((\eta_S-1)E_2\bigr)$.
Moreover, if $r=r_S$, then with respect to $g$,
\begin{equation}\label{eq:normalization_error_small}
	|dE_2|+r^{-1}|E_2|=\mathcal{O}(r^{3/2}),\qquad
	|d\Re(B(t)\zeta^{3/2})|\ge c r^{1/2}
\end{equation}
after shrinking $U_S$. Hence the normalization perturbation is lower order
than the leading model on $\mathcal C_S^{\mathrm{nor}}$.

The total modification of $(v,\Sl,\I)$ can be made arbitrarily close to the
original singular set.  Given an open neighborhood $\mathcal U$ of $\Sl$, first
choose the vertex balls and the parameters $\delta_i$ so small that
$\Sl_2\subset\mathcal U$, and then choose all $U_S\Subset\mathcal U$.  Since
$v_2=v$ outside the vertex balls and $v'=v_2$ outside
$\bigcup_{S\in\pi_0(\Sl_2^{\mathrm{nor}})}U_S$, the final form satisfies $v'=v$ on
$M\setminus\mathcal U$ after identifying the restrictions of line bundles.


\subsection{The Transition Region}
\label{subsec:5_transition_region}

For each vertex $x_i$, define the radial transition collar
\begin{equation}\label{eq:radial_transition_collar}
	\mathcal C_i^{\mathrm{rad}}=\{3\delta_i/4<\rho_i<2\delta_i\}.
\end{equation}
It contains both the inner collar $\{3\delta_i/4<\rho_i<\delta_i\}$
and the outer collar $\{\delta_i<\rho_i<2\delta_i\}$.
For each component $S\in\pi_0(\Sl_2^{\mathrm{nor}})$, recall that
$U_S'=\{r_S\le\delta_{\mathrm{nor}}/2\}$ and
$U_S=\{r_S<2\delta_{\mathrm{nor}}\}$.  We write $\mathcal C_S^{\mathrm{nor}}
	=U_S\setminus U_S'
	=\{\delta_{\mathrm{nor}}/2<r_S<2\delta_{\mathrm{nor}}\}$.

The transition region for the final form $v'$ is
\[\mathcal N_T=
	\Big(
	\big(\cup_i \mathcal C_i^{\mathrm{rad}}\big)
	\cup
	\big(\cup_{S\in\pi_0(\Sl_2^{\mathrm{nor}})} U_S\big)
	\Big)
	\setminus
	\left(
	\cup_{S\in\pi_0(\Sl_2^{\mathrm{nor}})} U_S'
	\right).\]
Write $\mathcal N_T^{\mathrm{rad}}
	=
	\left(\cup_i \mathcal C_i^{\mathrm{rad}}\right)
	\setminus
	\left(\cup_{S\in\pi_0(\Sl_2^{\mathrm{nor}})}U_S'\right)$ and
	$\mathcal N_T^{\mathrm{nor}}
	=
	\cup_{S\in\pi_0(\Sl_2^{\mathrm{nor}})}\mathcal C_S^{\mathrm{nor}}$,
then $\mathcal N_T=\mathcal N_T^{\mathrm{rad}}\cup
\mathcal N_T^{\mathrm{nor}}$.  

We either regard the resulting $\mathcal{N}_T$ as manifold with corners or round the
corners inside arbitrarily small neighborhoods.  All estimates below hold on
slightly larger collars, so this rounding does not change nonvanishing,
$\star$-exactness, or the transitivity condition.

 $M\setminus\mathrm{int}(\mathcal N_T)$ has the following
three types of connected components.

First, the \emph{exterior component} $P^{\mathrm{ext}}=\{\rho\ge 2\delta_i,r_S\ge 2\delta_{\mathrm{nor}}\}$.  On this component, $(v',\Sl_2,\I_2)\big|_{P^{\mathrm{ext}}}=(v,\Sl,\I)\big|_{P^{\mathrm{ext}}}$
under the natural identification of line bundles, and the metric is the
original metric $g$.  

Second, near each vertex $x_i$, the \emph{resolution component} $P_i^{\mathrm{res}}:=\{\rho<3\delta_i/4\}\setminus \cup_{S\in \pi_0(\Sl_2^{\rm nor})} U_S$. This is the replacement core left after
removing the normalization collars. On such a piece $P_i^{\mathrm{res}}$ one has $v'\big|_{P_i^{\mathrm{res}}}=\left(\widetilde{\Phi}^{\mathrm{sc}}_i\right)^\ast du_i^{\mathrm{sc}}$.  The singular set and line bundle are the
corresponding restrictions of $(\Sl_2,\I_2)$, and the metric is the scaled
pullback resolution metric $\left(\widetilde{\Phi}_i^{\mathrm{sc}}\right)^\ast g^{\mathrm{sc}}_i$. 

Third, for each $S\in\pi_0(\Sl_2^{\mathrm{nor}})$, we take the
\emph{normalized tube} $U_S'$, equipped with the metric constructed in
the proof of Lemma~\ref{lem:kill_remainder_terms}, applied to the leading
coefficient $B$.  Since $v'=d\Re(B(t)\zeta^{3/2})$
on $U_S'$, the form $v'$ is harmonic with respect to this metric.


These components of $M\setminus\mathrm{int}(\mathcal N_T)$ form the collection of harmonic pieces that will be used in the application of Theorem~\ref{thm:gluing}. We now show that components of $\mathcal{N}_T$ form the collection of transition pieces. First, we prove the following:

\begin{lemma}\label{lem:transition_nonvanishing}
	After choosing all $\delta_i>0$ sufficiently small and then all
	$R_i$ sufficiently large, one can choose
	$\delta_{\mathrm{nor}}>0$ sufficiently small so that
	$(\Sl_2\cup\Zl_{v'})\cap\mathcal N_T=\varnothing$.
	In particular, $v'$ is nowhere vanishing on $\mathcal N_T$.
\end{lemma}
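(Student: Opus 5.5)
The transition region splits as $\mathcal N_T=\mathcal N_T^{\mathrm{rad}}\cup\mathcal N_T^{\mathrm{nor}}$. I will show separately that $\Sl_2$ misses $\mathcal N_T$ and that $v'$ has no ordinary zeros there. The singular set is easy. On $\mathcal N_T^{\mathrm{nor}}$ we have $\delta_{\mathrm{nor}}/2<r_S<2\delta_{\mathrm{nor}}$, so it misses $\Sl_2^{\mathrm{nor}}$. It also misses the components of $\Sl_2$ not in $\Sl_2^{\mathrm{nor}}$: these are contained in the resolution cores or in the exterior edges away from the vertex balls, hence at positive distance from $\Sl_2^{\mathrm{nor}}$ once the $\delta_i,R_i$ are fixed, and we take $\delta_{\mathrm{nor}}$ smaller than that distance. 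On $\mathcal C_i^{\mathrm{rad}}$, $\Sl_2$ coincides with the straightened cone $C(\vp_i)$, and every component of it there belongs to an incident edge, hence to $\Sl_2^{\mathrm{nor}}$. It is therefore removed together with $U_S'$.

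\textbf{Radial collar away from the tubes.} Here $v'=v_2$ (after the normalization cutoff, see below) and $\rho\in(3\delta_i/4,2\delta_i)$. I split the link into a compact set $K\Subset S_i^{n-1}\setminus\vp_i$ and a small tubular neighbourhood $\{|z|<\kappa\}$ of $\vp_i$.

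On $K$, strong nondegeneracy gives $|dF_i|\ge c_K\rho^{\mu_i-1}$ by homogeneity. On the outer collar, \eqref{eq:old_error_relative_away} shows that $|d(\chi_iE_i^{\mathrm{old}})|\le |dE^{\mathrm{old}}_i|+C\delta_i^{-1}|E^{\mathrm{old}}_i|=o(\rho^{\mu_i-1})$, so it is dominated once $\delta_i$ is small. On the inner collar, \eqref{eq:E_i_sc_estimates} together with $|d\beta_i|\le C\delta_i^{-1}$ bounds the perturbation by $C(\delta_i/R_i)^{\tau_i}\rho^{\mu_i-\tau_i-1}\sim C R_i^{-\tau_i}\delta_i^{\mu_i-1}$, which is dominated once $R_i$ is large with $\delta_i$ fixed. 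On the slice $3\delta_i/4<\rho<\delta_i$ the potential equals $U^{\mathrm{sc}}_i$ or its cutoff, and the same estimate applies.

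Near $\vp_i$ with $|z|<\kappa$, I use the edge expansions instead. The potential has the form
\[
\Re\bigl((a_P\rho^{\mu_i-3/2}+b)\zeta^{3/2}\bigr)+E,
\]
where $b=\chi_i b_P^{\mathrm{old}}$ or $\beta_i b_P^{\mathrm{sc}}$, and $E$ collects the $\mathcal O(\rho^{\mu_i-5/2}|\zeta|^{5/2})$ terms together with their cutoff derivatives. As in the proof of Proposition~\ref{prop:vertex_replacement}, $|b|\le\frac12|a_P|\rho^{\mu_i-3/2}$. The differential of the leading term has norm at least $c\rho^{\mu_i-3/2}|\zeta|^{1/2}$; the $\rho$-derivatives and $t$-derivatives of $b$ contribute relative factors $\rho^{-1}|\zeta|$ times small constants, controlled by the $k=1$ bounds. $|dE|\le C\rho^{\mu_i-5/2}|\zeta|^{3/2}(1+|\zeta|/\rho)$, which relative to the leading term is $\le C|z|\le C\kappa$. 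Choosing $\kappa$ small, uniformly in $\delta_i,R_i$, makes $v_2\ne0$ for $0<|z|<\kappa$. The constants in the edge expansions are uniform, which is exactly why $\kappa$ can be fixed first, then $K$, then $\delta_i$ and $R_i$.

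\textbf{Normalization collar.} On $\mathcal C_S^{\mathrm{nor}}$, I apply \eqref{eq:normalization_error_small}: $|v'-d\Re(B\zeta^{3/2})|\le|dE_2|+|d\eta_S||E_2|=\mathcal O(r^{3/2})$, while the leading term is $\ge cr^{1/2}$. So $v'\ne0$ for $\delta_{\mathrm{nor}}$ small. This needs the constants in \eqref{eq:normalization_error_small} to be uniform along all of $S$, including the pieces passing through the collars and cores. That holds because $S$ is compact and $B$ is nonvanishing by Proposition~\ref{prop:vertex_replacement}. The overlap of the radial collar with the normalization collars is covered by this same estimate.

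\textbf{Main obstacle.} I expect the hardest step to be the uniformity near $\vp_i$ in the radial collar: showing that $dE$ and the $\rho$-derivative of the cutoff acting on $b$ stay relatively small right up to the cone, where $|dF_i|$ degenerates like $|\zeta|^{1/2}$. I would handle it with the explicit weighted error bounds above, working in the conic metric, where all estimates are scale-invariant.
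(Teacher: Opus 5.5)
Your proposal is correct and follows essentially the paper's argument. The radial collar is split into a compact angular part $K$, handled by strong nondegeneracy together with \eqref{eq:old_error_relative_away} on the outer collar and \eqref{eq:E_i_sc_estimates} on the inner collar, and a neighbourhood of $\vp_i$ fixed independently of $\delta_i,R_i$, handled by the edge expansions; the normalization collars are handled by \eqref{eq:normalization_error_small}; and $\Sl_2\cap\mathcal C_i^{\mathrm{rad}}$ is removed together with the tubes $U_S'$.

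You are in fact more explicit than the paper in two places. Near $\vp_i$ the paper only cites the edge estimates of Proposition~\ref{prop:vertex_replacement}. The paper also never notes that $\delta_{\mathrm{nor}}$ must be smaller than the distance between distinct components of $\Sl_2$. Your positive-distance argument covers this, provided you apply it also to components $S'\neq S$ of $\Sl_2^{\mathrm{nor}}$, and not only to components outside $\Sl_2^{\mathrm{nor}}$.
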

\begin{proof}
For each vertex $x_i$, choose a sufficiently small neighborhood
$\mathcal U_i$ of $\vp_i$ in $S_i^{n-1}$, and set
$K_i:=S_i^{n-1}\setminus\mathcal U_i\Subset
S_i^{n-1}\setminus\vp_i$.  In the straightened coordinates, let
$\mathcal V_i:=\{3\delta_i/4<\rho_i<2\delta_i,\ \omega\in\mathcal U_i\}$.
The edge estimates in the proof of
Proposition~\ref{prop:vertex_replacement} show that $\mathcal U_i$ can be
chosen independently of $\delta_i$ and $R_i$ so that, after choosing
$\delta_i$ sufficiently small and then $R_i$ sufficiently large, $v_2$ is
nonvanishing on $\mathcal V_i\setminus\Sl_2$.

We next show that $v_2$ is nonvanishing on
$\mathcal C_i^{\mathrm{rad}}\setminus\mathcal V_i$, which is identified with
$(3\delta_i/4,2\delta_i)\times K_i$ in the straightened coordinates
$(\rho_i,\omega)$.  Write $F_i=\rho_i^{\mu_i}f_i$.  Since the tangent cone is
strongly nondegenerate, there is a constant $c_i>0$ such that
$|dF_i|\ge c_i\rho_i^{\mu_i-1}$ on
$(3\delta_i/4,2\delta_i)\times K_i$.  On the outer collar
$\{\delta_i\le\rho_i\le2\delta_i\}$, the local potential is
$F_i+\chi_iE_i^{\mathrm{old}}$.  Since $|d\chi_i|\le C\rho_i^{-1}$ there,
estimate~\eqref{eq:old_error_relative_away} gives
$|d(\chi_iE_i^{\mathrm{old}})|\le \frac12|dF_i|$ after choosing
$\delta_i$ sufficiently small.  Hence $d(F_i+\chi_iE_i^{\mathrm{old}})$ has
no zero on the outer collar.

On the inner collar $\{3\delta_i/4\le\rho_i\le\delta_i\}$, the local potential
is $F_i+\beta_iE_i^{\mathrm{sc}}$.  Since $\rho_i\simeq\delta_i$ on this
collar, estimate~\eqref{eq:E_i_sc_estimates} gives
$|dE_i^{\mathrm{sc}}|+\rho_i^{-1}|E_i^{\mathrm{sc}}|
\le C R_i^{-\tau_i}|dF_i|$ on $(3\delta_i/4,\delta_i)\times K_i$.  Together
with $|d\beta_i|\le C\delta_i^{-1}$, this implies
$|d(\beta_iE_i^{\mathrm{sc}})|\le\frac12|dF_i|$ once $R_i$ is sufficiently
large.  Thus $v_2$ has no zero on the inner collar either.  It follows that
$v_2$ is nonvanishing on $\mathcal C_i^{\mathrm{rad}}\setminus\Sl_2$ for every
$i$.  Since $v'=v_2$ on
$\mathcal N_T^{\mathrm{rad}}\setminus\mathcal N_T^{\mathrm{nor}}$, the form
$v'$ is nonvanishing on this region.

It remains to consider $\mathcal N_T^{\mathrm{nor}}$.  Let
$S\in\pi_0(\Sl_2^{\mathrm{nor}})$ and write $r=r_S$.  Choose $\eta_S$ so that
$|d\eta_S|\le C\delta_{\mathrm{nor}}^{-1}$.  On
$\mathcal C_S^{\mathrm{nor}}$, where
$\delta_{\mathrm{nor}}/2<r<2\delta_{\mathrm{nor}}$, formulas
\eqref{eq:def_vprime} and \eqref{eq:normalization_error_small} give
\[
|d(\eta_SE_2)|
\le |dE_2|+|d\eta_S|\,|E_2|
\le Cr^{3/2}+C\delta_{\mathrm{nor}}^{-1}r^{5/2}
\le Cr^{3/2}.
\]
On the other hand, the leading term satisfies
$|d\Re(B(t)\zeta^{3/2})|\ge cr^{1/2}$.  After decreasing
$\delta_{\mathrm{nor}}$, the error term is smaller than one half of the
leading term.  Hence $v'$ is nonvanishing on every
$\mathcal C_S^{\mathrm{nor}}$, and therefore on
$\mathcal N_T^{\mathrm{nor}}$.

Finally, every component of $\Sl_2$ meeting a radial transition collar belongs
to $\Sl_2^{\mathrm{nor}}$ and is contained in the corresponding inner tube
$U_S'$.  Since all such inner tubes are removed in the definition of
$\mathcal N_T$, one has $\Sl_2\cap\mathcal N_T=\varnothing$.  Combining this
with the nonvanishing established above gives
$(\Sl_2\cup\Zl_{v'})\cap\mathcal N_T=\varnothing$.
\end{proof}

Choose $S\in\pi_0(\Sl_2^{\mathrm{nor}})$. Choose $r_1>0$ sufficiently small,
and let $\widetilde{U}_S$ be the $r_1$-tubular neighborhood of $S$ with respect
to the original metric $g$. Then $\widetilde{U}_S$ is embedded and can be
identified with a disk subbundle of the normal bundle $N(S)$.

\begin{lemma}\label{lem:leaf_escaping}
	After decreasing $\delta_{\mathrm{nor}}$, the following holds. If $\ell$ is a
	regular leaf of $\ker v'$ such that
	$\ell\cap\mathcal{C}_S^{\mathrm{nor}}\ne\varnothing$, then $\ell$ meets the
	exterior $M\setminus\big((\cup_i B_i)\cup U_S\big)$.
\end{lemma}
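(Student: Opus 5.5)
The plan is to follow the leaf $\ell$ inside the tube $U_S$ using the explicit model $d\Re(B(t)\zeta^{3/2})$, which is an exact perturbation of the form on the collar, and to show that $\ell$ travels along $S$ until it leaves $U_S$. Near $S$ the potential is essentially $\Re(B(t)\zeta^{3/2})$. It has no critical points off $S$, and its level sets in each normal disk are three half-lines in the $\zeta$-plane (in the $\zeta^{1/2}$-cover). I would therefore argue that a regular leaf through a point of $\mathcal C_S^{\mathrm{nor}}$ contains a curve that, in each normal fiber, runs radially outward until it reaches the boundary $r_S=2\delta_{\mathrm{nor}}$ or an adjacent region. The point is that a leaf cannot be trapped near $S$, because $\Re(B\zeta^{3/2})$ restricted to a normal disk is a harmonic function with no interior extrema.

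The first step is to work fiberwise in the normal structure. Take a normal disk $D_t=\{t\}\times\{|\zeta|<2\delta_{\mathrm{nor}}\}$ in the region where $v'=d\Re(B\zeta^{3/2}+\eta_SE_2)$, which covers the whole of $U_S$ away from the vertex balls. The leaf $\ell$ meets $D_t$ in level curves of the restricted potential $h_t=\Re(B(t)\zeta^{3/2}+\eta_SE_2)$. I would use \eqref{eq:normalization_error_small} to show that the radial derivative of $h_t$ satisfies $|\partial_r h_t|\ge c r^{1/2}-Cr^{3/2}$ away from the three zero rays of $\partial_r\Re(B\zeta^{3/2})$. Near those rays I would instead use the angular derivative. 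In either case a level curve of $h_t$ through a point with $r\ge\delta_{\mathrm{nor}}/2$ either reaches $|\zeta|=2\delta_{\mathrm{nor}}$ or runs into $S$. It cannot reach $S$: $h_t\to0$ there, while on the collar $|h_t|\gtrsim r^{3/2}$ away from the zero rays, and the zero level curves are asymptotically the rays $\arg\zeta=\pm\pi/3,\pi$, along which $r$ is monotone. This gives an arc in $\ell$ that ends on $\partial U_S$ inside the fiber. If that point lies outside $\bigcup_iB_i$, we are done.

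The second step handles fibers $t$ that lie in a vertex region. I would move along the leaf in the $t$-direction first. Since $dh$ has no $dt$-component at leading order on the model part, the leaf is locally a product $\{\text{level curve}\}\times(\text{interval in }t)$ up to small error. So $\ell$ can be pushed along $S$ until $t$ reaches a portion of $S$ outside $\bigcup_iB_i$. This works because every $S\in\pi_0(\Sl_2^{\mathrm{nor}})$ meets some old edge segment $e\setminus\bigcup B_{2\delta_i}(x_i)$. Making this precise requires that the leaf's tangent stay transverse to the radial direction, uniformly along the curve, and this is where decreasing $\delta_{\mathrm{nor}}$ (after $\delta_i,R_i$ are fixed) is used: the $\mathcal O(r^{3/2})$ errors, including $t$-derivatives of $B$ and $E_2$ on the compact set $S$, become small relative to $r^{1/2}$.

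The main obstacle is the uniformity of this leaf-following argument in the $t$-direction, particularly through the resolution cores. There the normal structure comes from the scaled resolution metric and $B(t)$ varies on scale $\delta_i/R_i$. I would handle this by noting that $S$ is compact and $B$ is nonvanishing, so all constants in \eqref{eq:normalization_error_small} are uniform once $\delta_i,R_i$ are fixed. The leaf is then a graph over the $(t,|\zeta|)$-region of the model leaf $\Re(B(t)\zeta^{3/2})=\text{const}$, whose components each exit $\{|\zeta|=2\delta_{\mathrm{nor}}\}$ over every $t\in S$. Choosing the exit point over an edge point of $S$ far from the vertices then yields a point of $\ell$ in $M\setminus((\bigcup_iB_i)\cup U_S)$.
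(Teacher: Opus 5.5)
Your skeleton is the same as the paper's: fiberwise level curves, then transport of the leaf along $S$ to the fiber over a point $q\in S$ away from the vertex balls, then exit radially. Step~1 is fine. Step~2, however, rests on a smallness claim that is false.

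The $t$-dependence of the model is not an error that improves as $\delta_{\mathrm{nor}}\to0$. We have $\partial_tF_0=\Re(B'(t)\zeta^{3/2})=\mathcal O(r^{3/2})$ while $|d_\zeta F_0|\sim r^{1/2}$. So the leaf makes a small angle with the $t$-direction, but the fiber coordinate along it drifts at rate $\mathcal O(r)$ per unit $t$-length. That is the same order as $r$ itself, and by homogeneity this relative drift does not depend on the scale.

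Concretely, along a path in $S$ choose $a$ with $a^3=B$ and set $w=a^2\zeta$. In these coordinates the model leaf is exactly the product of the path with the fixed curve $\{\Re(w^{3/2})=c\}$. A point of the leaf therefore moves with $w\equiv w_0$, so $\zeta=w_0/a^2$ rotates by $-2\arg a$ and $|\zeta|$ is rescaled by $(|B(t_0)|/|B(t)|)^{2/3}$. This factor is bounded once $\delta_i,R_i$ are fixed, but in general it is far from $1$, since $B$ varies along $S$ at unit scale (and on scale $\delta_i/R_i$ through the resolution cores).

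This has three consequences:
\begin{itemize}
\item The leaf is a product only after this $t$-dependent twist, not ``up to small error'' in $(t,\zeta)$.
\item Your assertion that the model leaf components meet $\{|\zeta|=2\delta_{\mathrm{nor}}\}$ over every $t\in S$ fails wherever $|B(t)|<|B(t_0)|$.
\item The component of $\ell\cap U_S$ through $x$ can leave $U_S$ while still inside a vertex ball $B_i$. From there you need control of $v'=v_2=d(F_0+\Re E_2)$ outside $U_S$, which an argument confined to $U_S$ and the collar estimates cannot give.
\end{itemize}

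The missing ingredient is to work on a tube $\widetilde U_S$ of fixed size, independent of $\delta_{\mathrm{nor}}$, on which $v'=d(F_0+H)$ with $|H|=\mathcal O(r^{5/2})$ and $|dH|=\mathcal O(r^{3/2})$. On that tube you must control the transported leaf over the whole path, whose length is of order one.

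The paper does this with Moser's trick. The field $X_s=-\chi H\alpha_s^\sharp/|\alpha_s|^2$ satisfies $|X_s|=\mathcal O(r^2)$, and its time-one map $\varphi$ conjugates $v'$ to $dF_0$ on a fixed neighborhood containing $\varphi^{-1}(U_S)$. The model leaf through $\varphi^{-1}(x)$ is then lifted exactly as $(\gamma(s),w_0)$ to the normal disk $D_q$ over $q$ and exits through $\partial D_q$. Finally $\varphi(\partial D_q)$ lies in a fixed annulus $K_S$ disjoint from $U_S\cup\bigcup_iB_i$.

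Your direct leaf-following can also be completed, but only with a Gronwall bound for the horizontal lift of the path. One has $|d\log|\zeta|/dt|\le C$, with $C$ depending on $\sup|B'|/\inf|B|$ and the $H$-estimates. Over a path of length $L$ this keeps $|\zeta|$ within a factor $e^{CL}$ of its initial value, hence inside $\widetilde U_S$ after shrinking $\delta_{\mathrm{nor}}$. The drift is bounded, not small, and the enlarged tube is essential.

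Exiting over $q$ at a fixed radius also supplies what the later positive-loop argument uses: the exit point lies in a compact set independent of $\delta_{\mathrm{nor}}$.
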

\begin{proof}
Work in the tubular neighborhood $\widetilde U_S$, with $r_1$ chosen sufficiently small. Choose an auxiliary tubular neighborhood $U_S^{(1)}$ such that $U_S'\Subset U_S\Subset U_S^{(1)}\Subset\widetilde U_S$.

Set $F_0:=\Re(B(t)\zeta^{3/2})$. On $\widetilde U_S\setminus S$, write $v'=d(F_0+H)$, where $H$ is an $\I_2$-section given by $\Re(\eta_S E_2)$ on $U_S$ and by $\Re E_2$ outside $U_S$. Thus $H=0$ on $U_S'$, and $|H|=\mathcal O(r^{5/2})$, $|dH|=\mathcal O(r^{3/2})$.

Choose a cut-off function $\chi$ on $\widetilde U_S$ such that $\chi\equiv1$ on $U_S^{(1)}$ and $\chi\equiv0$ near $\partial\widetilde U_S$, and set $\alpha_s=d(F_0+s\chi H)$ for $0\le s\le1$. As in the proofs of Lemmas~\ref{lem:kill_remainder_terms} and \ref{lem:transition_nonvanishing}, after decreasing $r_1$ together with $\delta_{\mathrm{nor}}$, the forms $\alpha_s$ are nowhere zero on $\widetilde U_S\setminus S$.

We apply Moser's trick on $\widetilde U_S\setminus S$. We seek an isotopy
$\varphi_s$ generated by a time-dependent vector field $X_s$ such that
$\varphi_s^*\alpha_s=\alpha_0$. Differentiating with respect to $s$, it
suffices to solve
\begin{equation}\label{eq:Moser_eq}
\mathcal L_{X_s}\alpha_s+d(\chi H)=0.
\end{equation}
Here and below, the equation may be read in any local flat trivialization of
$\I_2$. Since each $\alpha_s$ is closed, Cartan's formula gives
$\mathcal L_{X_s}\alpha_s=d(\alpha_s(X_s))$. Hence
\eqref{eq:Moser_eq} is implied by $\alpha_s(X_s)=-\chi H$.
After fixing an auxiliary metric, this equation is solved locally by
$X_s=-\chi H {\alpha_s^\sharp}/{|\alpha_s|^2}$.

The resulting vector field is globally defined, since both $H$ and
$\alpha_s^\sharp$ change sign under a change of flat trivialization. Moreover,
$\chi$ vanishes near $\partial\widetilde U_S$ and $H$ vanishes near $S$, so
$X_s$ is compactly supported in $\widetilde U_S\setminus S$.

Let $\varphi_s$ be the flow of $X_s$. Then
$\varphi_s^*\alpha_s=\alpha_0$, and the time-one map
$\varphi:=\varphi_1$ satisfies $\varphi^*\alpha_1=\alpha_0=dF_0$.
Since $\chi\equiv1$ on $U_S^{(1)}$, one has $\alpha_1=v'$ there. It follows
that $\varphi^*v'=dF_0$ on $\varphi^{-1}(U_S^{(1)})\setminus S$.

Since $|H|=\mathcal O(r^{5/2})$ and $|\alpha_s|\ge cr^{1/2}$, one has $|X_s|=\mathcal O(r^2)$, uniformly for all sufficiently small $\delta_{\mathrm{nor}}$. Since $B$ is nowhere zero on the compact component $S$, the adapted radius
$\rho_B:=|B(t)|^{2/3}|\zeta|$ is uniformly equivalent to the original
fiber radius. Using $|X_s|=\mathcal O(r^2)$, after decreasing $r_1$ choose
a fixed constant $\rho_0>0$ such that
$U_S^{(2)}:={\rho_B<\rho_0}\Subset\varphi^{-1}(U_S^{(1)})$ for all
sufficiently small $\delta_{\mathrm{nor}}$. After further decreasing
$\delta_{\mathrm{nor}}$, arrange that
$\varphi^{-1}(U_S)\Subset U_S^{(2)}$.

Choose $q\in S$ with positive distance from $\cup_i B_i$, and let $D_q:=\pi^{-1}(q)\cap\overline{U_S^{(2)}}$ be the corresponding normal disk, where $\pi:\widetilde U_S\to S$ denotes the tubular projection. We may assume that $\varphi(\partial D_q)\cap(\cup_i B_i)=\varnothing$.

After decreasing $\delta_{\mathrm{nor}}$ if necessary, choose a small compact annular neighborhood $K_S$ of $\varphi(\partial D_q)$ in $\widetilde U_S\setminus U_S$. This set may be kept fixed under any subsequent decrease of $\delta_{\mathrm{nor}}$. Indeed, the estimate $|X_s|=\mathcal O(r^2)$ implies that the Moser trajectories starting from $\partial D_q$ remain in a fixed annulus disjoint from $U_S$. On this annulus, $H=\Re E_2$, so $X_s$, and hence $\varphi|_{\partial D_q}$, is independent of $\delta_{\mathrm{nor}}$. 

Let $\ell$ be a regular leaf of $\ker v'$ such that $\ell\cap\mathcal C_S^{\mathrm{nor}}\ne\varnothing$, and let $L$ be a connected component of $\ell\cap\widetilde U_S$ intersects with $\mathcal{C}_S^{\mathrm{nor}}$. Choose $x\in L\cap\mathcal C_S^{\mathrm{nor}}$ and set $p=\varphi^{-1}(x)$. Note that $p\in \varphi(U_S)\subset U_S^{(2)}$. Choose a smooth path $\gamma:[0,1]\to S$ from $\pi(p)$ to $q$. Since
$[0,1]$ is contractible and $B$ is nowhere zero, choose a smooth cube root
$a$ of $B\circ\gamma$ and set $w=a(s)^2\zeta$ on the restricted tubular
neighborhood over $\gamma$. Then
$F_0=\Re(w^{3/2})$ and $\rho_B=|w|$.

Writing $p=(\gamma(0),w_0)$ in $U_S^{(2)}$, the path $s\mapsto(\gamma(s),w_0)$ is a lift of $\gamma$ under the projection $\pi$. It lies in the level set of $F_0$ through $p$, and ends in $D_q$. On $D_q$, the
regular level curve of $\Re(w^{3/2})$ through this endpoint meets
$\partial D_q$. Hence the leaf of $\ker dF_0$ through $p$ meets $\partial D_q$.

Since $U_S^{(2)}\Subset\varphi^{-1}(U_S^{(1)})$ and
$\varphi^*v'=dF_0$ there, the image of the path $(\gamma(s),w_0)$ under $\varphi$ lies in
$L$. Hence $L\cap\varphi(\partial D_q)\ne\varnothing$.
Since $\varphi(\partial D_q)\subset K_S$, it follows that
$L\cap K_S\ne\varnothing$. Recall that $K_S$ has no intersections with $U_S$ or $B_i$'s, hence $\ell$ meets the exterior.
\end{proof}

Choose $\delta_i$'s sufficiently small and keep them fixed. The
radial collars $\mathcal C_i^{\mathrm{rad}}$ are then determined by the
original data $(v,\Sl,\I)$ and the original straightening maps near the
vertices, and are therefore independent of the subsequent choices of
$R_i$ and $\delta_{\mathrm{nor}}$.

Retain the angular neighborhoods $\mathcal U_i$ chosen in the proof of
Lemma~\ref{lem:transition_nonvanishing}, shrinking them if necessary, and
let
$\mathcal V_i:=(3\delta_i/4,2\delta_i)\times\mathcal U_i$ as in the proof of Lemma~\ref{lem:transition_nonvanishing}.
Thus $\mathcal V_i$ depends on the fixed parameter $\delta_i$, but not on
the later choices of $R_i$ and $\delta_{\mathrm{nor}}$.  The set
$K_{\mathrm{rad}}
	:=
	\cup_i
	\overline{\mathcal C_i^{\mathrm{rad}}\setminus\mathcal V_i}$
is a compact subset of $M\setminus(\Sl\cup\Zl_v)$.
\begin{lemma}\label{lem:positive_loop_avoid_rad_outside}
	For $\delta_i$ sufficiently small, $R_i$ sufficiently large, and
	$\delta_{\mathrm{nor}}$ sufficiently small, every point
	$x\in K_{\mathrm{rad}}$ is passed by a $v'$-positive loop.
\end{lemma}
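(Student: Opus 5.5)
The plan is to reduce to the original form $v$, following the proof of Lemma~\ref{lem:positive_curve_on_gluing_region} and Lemma~\ref{lem:changing_zero_sets}. The essential point is that $K_{\mathrm{rad}}$ is fixed once the $\delta_i$ are fixed: it is a compact subset of $M\setminus(\Sl\cup\Zl_v)$ that does not depend on $R_i$ or $\delta_{\mathrm{nor}}$.

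\textbf{Step 1: uniform loops for $v$.} By Theorem~\ref{thm:harmonic_implies_transitivity}, $v$ is transitive. We cover $K_{\mathrm{rad}}$ by finitely many thin flat charts $U_j\Subset U_j'$ of $\ker v$ that are disjoint from $\Sl\cup\Zl_v$. The argument of Lemma~\ref{lem:positive_curve_on_gluing_region} then gives, for each $x\in K_{\mathrm{rad}}$, a $v$-positive loop $\gamma_x$ with the following properties.
\begin{enumerate}
\item It satisfies the angle condition \eqref{eq:angle_condition}.
\item It is built from finitely many fixed loops $\gamma_j$ modified inside $U_j'$.
\item Its distance to the compact set $\Sl$ is at least a constant $r_0>0$ independent of $x$.
\end{enumerate}
Each $\gamma_j$ stays a positive distance from $\Sl$ because it is compact and disjoint from $\Sl$. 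The union $\Gamma$ of all the $\gamma_x$ therefore lies in a fixed compact set $\mathcal K\subset M\setminus\Sl$. Moreover, $|v|\ge c>0$ on $\Gamma$ away from possible ordinary zeros; these are avoided because $\gamma_x$ is positive and compact, and the angle condition gives $\langle v,\dot\gamma_x\rangle\ge\tfrac12|v||\dot\gamma_x|$ with $|v|$ bounded below along the finitely many base loops.

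\textbf{Step 2: comparison of $v'$ with $v$ on $\mathcal K$.} The difference $v'-v$ is supported in three kinds of region:
\begin{enumerate}
\item the vertex balls $\{\rho_i<2\delta_i\}$;
\item the cores $\{\rho_i<3\delta_i/4\}$, where everything is replaced;
\item the normalization tubes $U_S$.
\end{enumerate}
Since the $\delta_i$ are fixed, Step~1 can be done for these fixed $\delta_i$.

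On the outer collars $v'=v_1=d(F_i+\chi_iE_i^{\mathrm{old}})$. This equals $v$ only for $\rho\ge2\delta_i$, so to avoid the discrepancy there I will choose the base loops $\gamma_j$ to avoid the cores $\{\rho_i\le 3\delta_i/4\}\cup\bigcup_S U_S$.

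On $\{3\delta_i/4\le\rho_i\le2\delta_i\}$, away from $\mathcal V_i$, the form $v_2$ differs from $v_1$ by $d(\beta_iE_i^{\mathrm{sc}})=\mathcal O(R_i^{-\tau_i})|dF_i|$. So I compare $\gamma_x$ with $v_1$ rather than $v$. Here $v_1$ is a fixed form, since it is independent of $R_i$ and $\delta_{\mathrm{nor}}$. It agrees with $v$ for $\rho\ge2\delta_i$, and inside the collar it is $v$ up to a relative error of $o(1)$ as $\delta_i\to0$, by \eqref{eq:old_error_relative_away}. Exactly as in Lemma~\ref{lem:changing_zero_sets}, the angle condition with constant $\tfrac12$ survives a relative perturbation of size $<\tfrac14$. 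Hence $\gamma_x$ stays $v_1$-positive for $\delta_i$ small, and then $v_2$-positive for $R_i$ large.

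The normalization tubes satisfy $U_S\subset\{r_S<2\delta_{\mathrm{nor}}\}$. Since $\Gamma$ has distance at least $r_0$ from $\Sl$ (and, for $R_i$ large, from $\Sl_2$, which converges to $\Sl$ away from the vertices), taking $\delta_{\mathrm{nor}}<r_0/4$ gives $\Gamma\cap U_S=\varnothing$. Therefore $v'=v_2$ along $\Gamma$.

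\textbf{Step 3: loops near the vertices.} The main obstacle is that a loop $\gamma_x$ may pass through $\{\rho_i<3\delta_i/4\}$, where $v'$ is the resolution model and is not close to $v$. Fix $\delta_i$ first. Then the $v$-positive loops through points of $K_{\mathrm{rad}}$ can be isotoped off the small ball $\{\rho_i\le 3\delta_i/4\}$ while keeping positivity. To do this I will apply Lemma~\ref{lem:modify_positive_loop_in_one_leaf}-type pushes along leaves, together with the observation that the regular leaves of $dF_i$ on the cone exit through the sphere $\rho_i=3\delta_i/4$. This uses strong nondegeneracy: $dF_i$ has no zeros off the cone, so radial-type positive arcs exist on the collar.

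Concretely, I would construct the base loops $\gamma_j$ from the recurrence argument of Theorem~\ref{thm:harmonic_implies_transitivity} applied to the measure-preserving flow restricted to $M\setminus\bigcup_i\{\rho_i\le\delta_i/2\}$. If that fails, I would first use transitivity of $v$ and then reroute the pieces of the loop inside the vertex ball along $\hat V$-trajectories of the homogeneous model $dF_i$, which never enter the core on the relevant time intervals. This rerouting is the step I expect to need the most care. Once all loops avoid the cores and the tubes $U_S$, Step~2 completes the proof.
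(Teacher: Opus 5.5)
Your Steps 1 and 2 match the paper's proof. The paper also builds uniform $v$-positive loops satisfying \eqref{eq:angle_condition}, all lying in a compact set $K_\ast\Subset M\setminus\Sl$ fixed before $R_i$ and $\delta_{\mathrm{nor}}$ are chosen. It then runs a relative $\tfrac14$-perturbation argument and takes $\delta_{\mathrm{nor}}$ so small that the normalization tubes miss $K_\ast$.

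\textbf{The gap is Step 3.} Your argument depends on it, because your Step 2 only covers loops that avoid $\{\rho_i\le 3\delta_i/4\}$, and neither proposed rerouting is established.
\begin{enumerate}
\item The recurrence option does not work as stated. Poincar\'e recurrence needs a measure-preserving flow on the space in question, and the flow of $\hat V$ does not restrict to a flow on $M\setminus\bigcup_i\{\rho_i\le\delta_i/2\}$: orbits enter the removed balls. A return to a chart also says nothing about the orbit segment in between.
\item Rerouting along trajectories of $dF_i$ fails too. Since $\partial_\rho F_i=\mu_i\rho^{\mu_i-1}f_i$, trajectories starting where $f_i<0$ move radially inward and can pass arbitrarily close to $x_i$.
\item Whether an $F_i$-increasing path from the entry point to the exit point exists inside the collar depends on the level-set structure of $f_i$ on the link, which you do not control.
\end{enumerate}

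\textbf{The missing observation is that the obstacle is not real.} Your loops stay in $K_\ast$, so they are a fixed positive distance from $x_i$ and from the edges. Hence $K_\ast\cap B_i\subset\{\rho_i\ge c,\ \omega\in K\}$ for some $c>0$ and $K\Subset S_i^{n-1}\setminus\vp_i$, both fixed before $R_i$ is chosen. The genuinely non-conical part of the scaled resolution model sits in $\{\rho_i\lesssim\delta_iR_0/R_i\}$, which misses this region once $R_i$ is large.

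On $K_\ast\cap\{\rho_i<3\delta_i/4\}$ one has
$\widetilde U_i^{\mathrm{sc}}=\bigl((\Phi_i^{\mathrm{sc}})^{-1}\circ\widetilde\Phi_i^{\mathrm{sc}}\bigr)^\ast(F_i+E_i^{\mathrm{sc}})$.
By \eqref{eq:E_i_sc_estimates} and the decay of $\Phi_i-\mathrm{id}$ and $\Psi_{i,R_i}-\mathrm{id}$, $v_2\to dF_i$ in $C^1$ on this compact set as $R_i\to\infty$. In turn, $dF_i$ is within a small relative error of $v$ once $\delta_i$ is small, by \eqref{eq:old_error_relative_away} together with the edge expansion. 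The edge expansion is what makes this smallness uniform up to the cone, which you need because $K_\ast$ is chosen after $\delta_i$.

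Thus $|v_2-v|\le\tfrac14|v|$ holds on all of $K_\ast$, cores included, for $R_i$ large, and your Step 1 loops are $v'$-positive without any rerouting. This is exactly the paper's argument. Replace your Step 3 by this estimate and the proposal is complete.
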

\begin{proof}
Applying the argument of
Lemma~\ref{lem:positive_curve_on_gluing_region} to the fixed compact set
$K_{\mathrm{rad}}$, we obtain, for every
$z\in K_{\mathrm{rad}}$, a $v$-positive loop $\gamma_z$ passing
through $z$.  The loops may be chosen so that
\[
	|\langle v(\gamma_z(t)),\dot\gamma_z(t)\rangle|
	\ge \frac12|v(\gamma_z(t))|\,|\dot\gamma_z(t)|
\]
for all $t$.  Moreover, their images are contained in a fixed compact set
$K_\ast\Subset M\setminus\Sl$.  Notice that $K_\ast$, as well as all the
loops $\gamma_z$, has been fixed before choosing the $R_i$ and
$\delta_{\mathrm{nor}}$.

We now increase the $R_i$.  On $K_\ast\setminus\bigcup_iB_i$ one has
$v_2=v$.  On each compact set $K_\ast\cap B_i$, the original estimate
\eqref{eq:old_error_relative_away}, the scaled estimate
\eqref{eq:E_i_sc_estimates}, and the estimates for the interpolated
straightening map $\Psi_{i,R_i}$ show that, after increasing $R_i$, the
replacement data are canonically identified with the original data and
$|v_2-v|\le\frac14|v|$ on $K_\ast$.
We may assume that $K_\ast\cap\Sl_2=\varnothing$.

Finally, decrease $\delta_{\mathrm{nor}}$ so that all normalization tubes
are disjoint from $K_\ast$.  Then $v'=v_2$ on $K_\ast$, and every loop
$\gamma_z$ constructed above is $v'$-positive.
\end{proof}

\begin{proposition}\label{prop:positive_loops_transition}
	For $\delta_i$ sufficiently small, $R_i$ sufficiently large and $\delta_{\mathrm{nor}}$ sufficiently small, every point of
	$\mathcal{N}_T$ is passed by a $v'$-positive loop.
\end{proposition}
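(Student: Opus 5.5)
We split $\mathcal N_T=\mathcal N_T^{\mathrm{rad}}\cup\mathcal N_T^{\mathrm{nor}}$ and treat three kinds of points. The basic tool throughout is Lemma~\ref{lem:modify_positive_loop_in_one_leaf}: if a regular leaf $\ell$ of $\ker v'$ through $x$ contains a point $y$ that already lies on a $v'$-positive loop, then some modification of that loop is a $v'$-positive loop through $x$. So it suffices to show that every leaf meeting $\mathcal N_T$ reaches a region where positive loops are already known to exist. Fix the order of choices: first $\delta_i$, then $R_i$, then $\delta_{\mathrm{nor}}$. Lemmas~\ref{lem:transition_nonvanishing}, \ref{lem:leaf_escaping} and \ref{lem:positive_loop_avoid_rad_outside} hold simultaneously for these choices, and $v'$ is nonvanishing on $\mathcal N_T$, so every point of $\mathcal N_T$ lies on a regular leaf.

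\emph{Case 1: $x\in \mathcal N_T^{\mathrm{rad}}\cap K_{\mathrm{rad}}$.} Here Lemma~\ref{lem:positive_loop_avoid_rad_outside} gives a $v'$-positive loop through $x$ directly.

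\emph{Case 2: $x\in\mathcal N_T^{\mathrm{nor}}$.} By Lemma~\ref{lem:leaf_escaping}, the leaf $\ell$ through $x$ meets the exterior $M\setminus((\cup_iB_i)\cup U_S)$; more precisely, it meets the fixed annulus $K_S$, which lies in $P^{\mathrm{ext}}$ away from $\Sl$. On that region $v'=v$. I will arrange that $K_S$ lies in a compact subset of $M\setminus(\Sl\cup\Zl_v)$ fixed before choosing $R_i$ and $\delta_{\mathrm{nor}}$. Then the argument of Lemma~\ref{lem:positive_curve_on_gluing_region}, applied to $\cup_S K_S$ together with $K_{\mathrm{rad}}$, gives $v$-positive loops through every point $y\in K_S$ satisfying the angle condition \eqref{eq:angle_condition} and lying in a fixed compact set $K_\ast\Subset M\setminus\Sl$. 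As in the proof of Lemma~\ref{lem:positive_loop_avoid_rad_outside}, taking $R_i$ large and $\delta_{\mathrm{nor}}$ small makes these loops $v'$-positive. Lemma~\ref{lem:modify_positive_loop_in_one_leaf} then moves such a loop from $y\in\ell\cap K_S$ to $x$.

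\emph{Case 3: $x\in\mathcal N_T^{\mathrm{rad}}\setminus K_{\mathrm{rad}}$.} Then $x\in\mathcal V_i$, i.e.\ in the angular neighborhood of an incident edge, but outside the normalization inner tubes $U_S'$. If $x\in U_S$, we are in Case 2. Otherwise $x$ lies at $g$-distance $\ge 2\delta_{\mathrm{nor}}$ from $\Sl_2$ inside $\mathcal V_i$, and $\mathcal V_i$ is an angular neighborhood of the cone of fixed angular width. I plan to handle this by following the leaf $\ell$ through $x$ angularly. Near $C(P)$ the potential is dominated by $\Re\bigl(a_P(t)\rho^{\mu_i-3/2}\zeta^{3/2}\bigr)$, by the edge estimates used in Proposition~\ref{prop:vertex_replacement} and \eqref{eq:b_scP_estimates}. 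In the normal plane, the level curves of $\Re(\zeta^{3/2})$ run from near the edge out to $|z|$ of order the angular width of $\mathcal U_i$. So either $\ell$ reaches $\partial\mathcal U_i\times(3\delta_i/4,2\delta_i)\subset K_{\mathrm{rad}}$, reducing to Case 1, or it stays close to the edge, where it must enter $\mathcal C_S^{\mathrm{nor}}$, reducing to Case 2. Alternatively, one can enlarge the Moser argument of Lemma~\ref{lem:leaf_escaping} so that it covers the whole angular collar.

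I expect Case 3 to be the main obstacle. The difficulty is to make the angular leaf-tracing quantitative, uniformly in $R_i$ and $\delta_{\mathrm{nor}}$: the leading model $\Re(a_P\rho^{\mu_i-3/2}\zeta^{3/2})$ must control the leaf on the whole region $\mathcal V_i\setminus U_S$. My first attempt will be a Moser isotopy on $\mathcal V_i\setminus\Sl_2$ conjugating $v'$ to $d\Re(a_P\rho^{\mu_i-3/2}\zeta^{3/2})$ up to a small radial term. For the model itself, leaves visibly exit either to $\partial\mathcal U_i$ or into the normalization collar.
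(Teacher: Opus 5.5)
Your Cases 1 and 2 are the paper's argument. Points of $K_{\mathrm{rad}}$ are handled by Lemma~\ref{lem:positive_loop_avoid_rad_outside}. For the normalization collars, the paper likewise uses Lemma~\ref{lem:leaf_escaping} to reach the fixed compact set $K_S$ and builds thin-chart loops with the angle condition through $\cup_S K_S$. It makes them $v'$-positive by first enlarging $R_i$ and then shrinking $\delta_{\mathrm{nor}}$, and moves them to $x$ with Lemma~\ref{lem:modify_positive_loop_in_one_leaf}.

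The gap is Case 3. You leave it as a plan, and the dichotomy you propose is not sound as stated. Nothing forces a leaf that fails to reach $\partial\mathcal U_i$ to enter $\mathcal C_S^{\mathrm{nor}}$. A leaf followed arbitrarily could leave $\mathcal V_i$ radially through $\rho_i=2\delta_i$ or $\rho_i=3\delta_i/4$. It could also stay at distance $\ge 2\delta_{\mathrm{nor}}$ from $\Sl_2$ while remaining near the edge. Neither of your reductions covers these possibilities.

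The missing step is the one the paper takes: commit to the Moser route you list only as an alternative. On a slightly larger $\mathcal V_i'\supset\mathcal V_i$, write $v'=d(F_0+H)$ with $F_0=\Re(B(t)\zeta^{3/2})$, where $B$ is the actual leading coefficient of $v_2$. This is cleaner than aiming for $\Re(a_P\rho^{\mu_i-3/2}\zeta^{3/2})$ up to a radial term. Since $v'$ already equals $dF_0$ on $U_S'$, $H$ vanishes near $\Sl_2$. Hence the field $-\chi H\alpha_s^\sharp/|\alpha_s|^2$ is supported away from $\Sl_2$ and from $\partial\mathcal V_i'$, and the time-one map pulls $v'$ back exactly to $dF_0$ over $\mathcal V_i\setminus\Sl_2$.

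Locally $F_0=\Re(w^{3/2})$ with $w=a(t)^2\zeta$. So leaves of the model are products of the $t$-directions with level curves of $\Re(w^{3/2})$, and every regular level curve of that function is unbounded; on the double cover it is $\Re(\xi^3)=c$. Follow such a curve inside one normal slice. This keeps the edge parameter, and hence essentially $\rho_i$, fixed, and carries the leaf out to the angular boundary of $\mathcal V_i$, that is, into $K_{\mathrm{rad}}$. Case 1 and Lemma~\ref{lem:modify_positive_loop_in_one_leaf} then finish.

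So the second branch of your dichotomy is unnecessary. The uniformity in $R_i$ and $\delta_{\mathrm{nor}}$ that worried you comes for free, because $K_{\mathrm{rad}}$ and its positive loops are fixed before those parameters are chosen.
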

\begin{proof}
After making the choices in Lemma~\ref{lem:transition_nonvanishing}, the
transition region contains no point of $\Sl_2$ and no zero of $v'$.  Hence
every point of $\mathcal N_T$ lies on a regular leaf of $\ker v'$.  For points
in $K_{\mathrm{rad}}$, the assertion was already proved in
Lemma~\ref{lem:positive_loop_avoid_rad_outside}; we therefore turn to the case
$x\in\mathcal V_i\setminus\Sl_2$.

Apply Moser's trick in a slightly larger tubular neighborhood
$\mathcal V_i'\supset\mathcal V_i$.  More precisely, after shrinking
$\mathcal V_i$ in Lemma~\ref{lem:positive_loop_avoid_rad_outside} if
necessary, there are coordinates $(t,\zeta)$ on $\mathcal V_i'$ such that
$\Sl_2\cap\mathcal V_i'=\{\zeta=0\}$ and
$v'=d(F_0+H)$ on $\mathcal V_i'\setminus\Sl_2$, where
$F_0=\Re(B(t)\zeta^{3/2})$ and $H$ vanishes in a neighborhood of
$\Sl_2$.  As in the proof of Lemma~\ref{lem:leaf_escaping}, choose a cut-off
function $\chi$ supported in $\mathcal V_i'$ with $\chi\equiv 1$ on
$\mathcal V_i$, and set $\alpha_s=dF_0+s\,d(\chi H)$.  Moser's trick then
gives an isotopy $\bar\varphi_s$ of $\mathcal V_i'$ which is the identity near
$\Sl_2$ and near $\partial\mathcal V_i'$, and whose time-one map satisfies
$\bar\varphi_1^*v'=dF_0$ on
$\bar\varphi_1^{-1}(\mathcal V_i\setminus\Sl_2)$.

Locally $F_0=\Re w_\alpha^{3/2}$, as discussed in
Subsection~\ref{subsection:desingularization_k-nondeg}.  All regular level
sets of this local model are unbounded, and therefore every regular leaf of
$\ker v'$ meeting $\mathcal V_i\setminus\Sl_2$ also meets $K_{\mathrm{rad}}$.
The conclusion for points in $\mathcal V_i\setminus\Sl_2$ now follows from
Lemmas~\ref{lem:positive_loop_avoid_rad_outside} and
\ref{lem:modify_positive_loop_in_one_leaf}.

It remains to treat the normalization collars
$\mathcal N_T^{\mathrm{nor}}$.  Fix
$S\in\pi_0(\Sl_2^{\mathrm{nor}})$ and let
$x\in\mathcal C_S^{\mathrm{nor}}$.  By Lemma~\ref{lem:leaf_escaping}, the
regular leaf $\ell_x$ of $\ker v'$ through $x$ meets the exterior region
$M\setminus\bigl((\cup_iB_i)\cup U_S\bigr)$.  More precisely, the proof of that
lemma shows that the intersection point may be chosen in a fixed compact set
$K_S\Subset M\setminus\bigl((\cup_iB_i)\cup\Sl\cup\Zl_v\bigr)$, defined there.

Since there are only finitely many components $S$, the union
$K_1:=\cup_S K_S$ is compact.  Moreover, $K_1$ is independent of the later
choices of larger $R_i$ and smaller $\delta_{\mathrm{nor}}$.  Applying the thin
flat chart argument of Lemma~\ref{lem:positive_curve_on_gluing_region} to
$K_1$, we obtain, for every $y\in K_1$, a $v$-positive loop $\gamma_y$ through
$y$.  These loops may be chosen with a uniform angle estimate, and all their
images lie in a fixed compact set
$K_\ast\Subset M\setminus(\Sl\cup\Zl_v)$.

By the same compact stability argument as in
Lemma~\ref{lem:positive_loop_avoid_rad_outside}, after increasing the $R_i$,
all the loops $\gamma_y$ remain positive for $v_2$.  After decreasing
$\delta_{\mathrm{nor}}$, the normalization modifications are disjoint from
$K_\ast$, and hence the same loops are $v'$-positive.  Choose
$y\in\ell_x\cap K_S$.  Since $x$ and $y$ lie on the same regular leaf of
$\ker v'$, Lemma~\ref{lem:modify_positive_loop_in_one_leaf} modifies the
$v'$-positive loop $\gamma_y$ to a $v'$-positive loop passing through $x$.
\end{proof}

\begin{proposition}\label{prop:gluing_hypotheses_graphic}
For the choices made above, the decomposition determined by
$\mathcal N_T$ satisfies the hypotheses of Theorem~\ref{thm:gluing}.
\end{proposition}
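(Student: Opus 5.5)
We verify the hypotheses of Theorem~\ref{thm:gluing} directly, using the decomposition of $M$ into harmonic pieces and transition pieces. The harmonic pieces are the components of $M\setminus\mathrm{int}(\mathcal N_T)$: the exterior component $P^{\mathrm{ext}}$ with the original metric $g$, the resolution components $P_i^{\mathrm{res}}$ with the scaled pullback metrics $(\widetilde\Phi_i^{\mathrm{sc}})^\ast g_i^{\mathrm{sc}}$, and the normalized tubes $U_S'$ with the product-type metrics from the proof of Lemma~\ref{lem:kill_remainder_terms}. The transition pieces are the closures of the components of $\mathcal N_T$, after rounding corners. On each harmonic piece $v'$ is harmonic for the indicated metric, by the descriptions in Subsection~\ref{subsec:5_transition_region}. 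On each transition piece we have $\Sl_2\cap\mathcal N_T=\varnothing$ and $v'$ is nowhere vanishing, by Lemma~\ref{lem:transition_nonvanishing}.

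\emph{Condition (i).} This follows from the same lemma, provided the collars $N_i$ of the harmonic pieces are taken inside slightly enlarged versions of the collars $\mathcal C_i^{\mathrm{rad}}$ and $\mathcal C_S^{\mathrm{nor}}$. The estimates in Lemma~\ref{lem:transition_nonvanishing} hold on slightly larger collars, so $(\Sl_2\cup\Zl_{v'})\cap N_i=\varnothing$.

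\emph{Condition (iii).} This is exactly Proposition~\ref{prop:positive_loops_transition}, applied with the same parameter choices.

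\emph{Condition (ii), $\star$-exactness on collars.} This is the main point, and I would check it piece by piece.

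\begin{enumerate}
\item For $U_S'$, the collar is a punctured disk bundle over $S$ with $v'=d\Re(B\zeta^{3/2})$. The proof of Lemma~\ref{lem:kill_remainder_terms} gives the explicit primitive $\pm\Im(B\zeta^{3/2})\mathrm{vol}_{g_S}$, which is a well-defined $\I_2$-valued form since the double-cover argument fixes the anti-invariant normalization.

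\item For $P_i^{\mathrm{res}}$, the collar near $\rho=3\delta_i/4$ corresponds under scaling and $\widetilde\Phi_i^{\mathrm{sc}}$ to a region near infinity of the resolution model, with the normal tubes removed. Lemma~\ref{lem:star-exactness-resmod} gives exactness of $\star_g du_i^{\mathrm{res}}$ on $\RR^n\setminus B_R$. Exactness is preserved under pullback by diffeomorphisms and under scaling, and restricting to the subset of the collar also preserves it.

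\item For $P^{\mathrm{ext}}$, the collar splits into two parts. Near the vertices, it lies in $B_i\setminus\Sl$ at $\rho\approx2\delta_i$, and Proposition~\ref{prop:local_exact_graphic} gives $\star_gv=d\beta$ on a neighborhood $U_\Sl^\circ$ containing it. Near the edges, at $r_S\approx2\delta_{\mathrm{nor}}$, the collar also lies in $U_\Sl^\circ$, so the same proposition applies.
\end{enumerate}

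A subtlety is that the exterior collar meets both the radial and the normal collars. One should therefore check that a single primitive $\beta$ serves on the whole collar. This holds because Proposition~\ref{prop:local_exact_graphic} produces a global primitive on $U_\Sl^\circ$, and all collars of $P^{\mathrm{ext}}$ lie in $U_\Sl^\circ$ once $\delta_i$ and $\delta_{\mathrm{nor}}$ are small.

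Finally, the gluing-compatibility requirement of Definition~\ref{def:Gluing_compatible_pair} holds, because the pieces are restrictions of the single global form $(v',\Sl_2,\I_2)$. With conditions (i)--(iii) verified, Theorem~\ref{thm:gluing} applies.

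The step I expect to require the most care is the exactness on the resolution collar. There, the collar of $P_i^{\mathrm{res}}$ meets the removed tubes $U_S$, so it is not a full punctured neighborhood of infinity. One must check that restricting the primitive from Lemma~\ref{lem:star-exactness-resmod} to this piece is legitimate, which it is, since restriction of an exact form stays exact.
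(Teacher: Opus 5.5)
\textbf{Gap: hypothesis (ii) on the resolution pieces.} Your decomposition, your treatment of hypotheses (i) and (iii), and your handling of the exterior piece and of the normalized tubes $U_S'$ all match the paper's proof. The problem is hypothesis (ii) for $P_i^{\mathrm{res}}$, because you misidentify its boundary collar.

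\textbf{Where the argument fails.} You treat the collar as the radial shell near $\rho=3\delta_i/4$ with the tubes cut out. But $P_i^{\mathrm{res}}=\{\rho<3\delta_i/4\}\setminus\bigcup_S U_S$, and each $S\in\pi_0(\Sl_2^{\mathrm{nor}})$ is an entire component of $\Sl_2$, including the inserted scaled resolution pieces. So $U_S$ runs through the whole core. Hence $\partial P_i^{\mathrm{res}}$ also contains $\partial U_S\cap\{\rho<3\delta_i/4\}$, all the way down to scale $\delta_i/R_i$. In particular it surrounds the rescaled compact, non-conical part of $\Sl_i^{\mathrm{res}}$. After undoing the scaling, this part of the collar lies inside $B_R$, where Lemma~\ref{lem:star-exactness-resmod} provides no primitive. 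Your remark that restricting an exact form keeps it exact therefore does not reach it. Note that you did handle the analogous tube portion of $\partial P^{\mathrm{ext}}$ correctly, via $U_\Sl^\circ$; the resolution piece needs the same care.

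\textbf{How the paper closes it.} Along $\partial U_S$, the collar is a punctured disk bundle over a piece of the smooth codimension-two set $\Sl_2$. By the fiberwise half-rotation argument in the proof of Proposition~\ref{prop:local_exact_graphic}, all of its $\I_2$-twisted cohomology vanishes. So $\star v'$, taken with respect to the scaled resolution metric, is exact there. The overlaps of this tube collar with the region covered by Lemma~\ref{lem:star-exactness-resmod} are again punctured disk bundles, so $H^{n-2}(\,\cdot\,;\I_2)$ vanishes on them. Mayer--Vietoris then glues the two primitives into a single primitive on the whole rounded collar of $\partial P_i^{\mathrm{res}}$.

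\textbf{An alternative fix.} The collar deformation retracts onto the closed oriented $(n-1)$-manifold $\partial P_i^{\mathrm{res}}$. Every component of it contains meridians of $\Sl_2$, around which $\I_2$ has monodromy $-1$. Twisted Poincar\'e duality then gives $H^{n-1}(\partial P_i^{\mathrm{res}};\I_2)\cong H_0(\partial P_i^{\mathrm{res}};\I_2)=0$. Either way, this step has to be added before condition (ii) is verified.
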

\begin{proof}
The harmonic pieces are the connected components of
$M\setminus\mathrm{int}(\mathcal N_T)$, equipped with the harmonic
metrics constructed in Subsection~\ref{subsec:5_transition_region}; the
transition pieces are the connected components of $\mathcal N_T$.  By
Lemma~\ref{lem:transition_nonvanishing}, the transition region
$\mathcal N_T$, and hence all gluing collars, contains neither a point of
$\Sl_2$ nor a zero of $v'$.  This verifies the first hypothesis of
Theorem~\ref{thm:gluing}.

We next verify the $\star$-exactness hypothesis for the harmonic pieces, using
the three types listed before Lemma~\ref{lem:transition_nonvanishing}.  On the
exterior component $P^{\mathrm{ext}}$, one has $v'=v$ and the harmonic metric
is the original metric $g$, so the required $\star$-exactness follows directly
from Proposition~\ref{prop:local_exact_graphic}.  On each normalized tube
$U_S'$, with the metric described before
Lemma~\ref{lem:transition_nonvanishing}, the $\star$-exactness of $v'$ follows
from the construction in the proof of
Lemma~\ref{lem:kill_remainder_terms}.

It remains to consider a resolution piece $P_i^{\mathrm{res}}$.  On its radial
end, the required $\star$-exactness follows from
Lemma~\ref{lem:star-exactness-resmod}, and this property is preserved under
scaling and pullback by the matching diffeomorphism.  Along the normalization
tubes, one has exactly the same smooth codimension-two local pieces and
overlaps as in the proof of Proposition~\ref{prop:local_exact_graphic}.
Therefore the Mayer--Vietoris argument used there applies verbatim to a collar
of the entire rounded boundary of $P_i^{\mathrm{res}}$, and shows that the
Hodge dual of $v'$ with respect to the scaled resolution metric is exact on
this collar.

Thus $v'$ is $\star$-exact on every boundary collar of every harmonic piece, so
the second hypothesis of Theorem~\ref{thm:gluing} holds.  Finally,
Proposition~\ref{prop:positive_loops_transition} shows that every point of
every transition piece is passed through by a $v'$-positive loop.  This is
precisely the third hypothesis of Theorem~\ref{thm:gluing}.
\end{proof}

\begin{proof}[Proof of Theorem~\ref{thm:desingularize_graphic}]
Given an open neighborhood $\mathcal U$ of $\Sl$, choose all vertex balls,
transition regions, and normalization tubes inside $\mathcal U$, with the
parameters taken in the order specified above.  The construction then
produces a globally defined closed $\ZT$ $1$-form
$(v',\Sl_2,\I_2)$ such that $v'=v$ on $M\setminus\mathcal U$.

By Proposition~\ref{prop:vertex_replacement}, the definition of a resolution
model, and the normalization construction, the singular set $\Sl_2$ is smooth
and $v'$ is nondegenerate along $\Sl_2$.  Finally,
Proposition~\ref{prop:gluing_hypotheses_graphic} and
Theorem~\ref{thm:gluing} give a Riemannian metric $g'$ for which
$(v',\Sl_2,\I_2)$ is harmonic.  Taking $\Sl'=\Sl_2$ and $\I'=\I_2$ proves
the theorem.
\end{proof}

\appendix

\section{$k$-nondegenerate $\mathbb{Z}/{2}$ harmonic functions on $\mathbb{R}^{n}$}\label{section:k-nondeg_on_Rn}
In this appendix we construct the Euclidean $\ZT$ harmonic models used in Section~\ref{sec:polynomial}, namely \(SO(n-1)\)-invariant examples branching along the unit sphere and asymptotic to zonal harmonic polynomials at infinity.

    \begin{proposition}\label{prop:k_degenerate}
       Fix a decomposition of $\mathbb{R}^{n}$ into $\mathbb{R}^{n-1}\oplus \mathbb{R}$, let $\mathcal{S}$ be a unit sphere in $\mathbb{R}^{n-1}$, then for each $k\in \mathbb{N}_{\geq 2}$, there exists a $SO(n-1)$-invariant $\lfloor \frac{k}{2}\rfloor$-nondegenerate $\ZT$ harmonic function $h_{k}$ that is asymptotic to a degree $k$ polynomial $p_{k}$ at infinty. Moreover, the top degree part of $p_{k}$ is a degree $k$ zonal harmonic on $\mathbb{R}^{n}$, a harmonic function that is invariant under the $SO(n-1)$ action. 
    \end{proposition}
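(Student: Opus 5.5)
Since the statement is $SO(n-1)$-invariant, I will reduce it to a two-dimensional problem on the meridian half-plane and build $h_k$ from holomorphic data there.

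Write $x=(x',x_n)\in\RR^{n-1}\oplus\RR$, and set $s=|x'|\ge0$ and $y=x_n$. An $SO(n-1)$-invariant function $h(s,y)$ is harmonic exactly when
\[
\partial_s^2h+\frac{n-2}{s}\partial_sh+\partial_y^2h=0 .
\]
This is a Weinstein (generalized axially symmetric potential) equation on the half-plane $\{s>0\}$. The singular set $\mathcal S=\{s=1,y=0\}$ becomes a single point $q=(1,0)$ of that half-plane. The flat line bundle is the one with monodromy $-1$ around $q$, which corresponds to a meridian of $\mathcal S$.

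\textbf{Step 1: build $h_k$ in the Laplacian case.} I will first treat the operator $\partial_s^2+\partial_y^2$ and then correct for the $\frac{n-2}{s}\partial_s$ term. With $\zeta=s+iy$, the natural model is
\[
h^{(0)}=\Re\bigl((\zeta-1)^{m+\frac12}Q(\zeta)\bigr),\qquad m=\lfloor k/2\rfloor,
\]
where $Q$ is a polynomial of degree $k-m-\frac12$, rounded appropriately. For odd $k$ I will take the branch factor $((\zeta-1)(\zeta+1))^{m+1/2}$, which is even in $s$, times a polynomial. Evenness in $s$ is needed so that $h$ is smooth across the axis $s=0$, and it is also what makes the leading term zonal.

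For the true operator I will use the classical Weinstein correspondence. Axially symmetric harmonic functions in $\RR^n$ arise as Poisson-type integrals
\[
h(s,y)=c_n\int_0^\pi \Phi\bigl(y+is\cos\theta\bigr)\sin^{n-3}\theta\,d\theta
\]
of a function $\Phi$ holomorphic in one variable. For $n=3$ this is Whittaker's formula, and in that case $\Phi(w)=w^k$ produces exactly $r^kP_k(z/r)$.

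I will therefore choose $\Phi(w)=(w^2+1)^{m+\frac12}\,\widetilde Q(w)$, with $\deg\Phi=k$. The branch points $w=\pm i$ correspond under the integral to the circle $s=1,y=0$. Near $\mathcal S$ the integral should produce a $2$-valued function branching there with leading exponent $m+\frac12$. The expansion of $\Phi$ at infinity, $w^k+\dots$, transfers to $h=Z_k+(\text{lower degree})$, so $p_k$ is a polynomial plus decaying terms. This is how the leading zonal harmonic is identified.

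\textbf{Step 2: verify $\lfloor k/2\rfloor$-nondegeneracy.} I will compute the local expansion near $(1,0)$ directly from the integral. The singular contribution localizes at $\theta=\pi/2$, i.e. near $w=\pm i$, and stationary-phase style asymptotics give
\[
h=\Re\bigl(A\,(\zeta-1)^{m+\frac12}\bigr)+\mathcal O(|\zeta-1|^{m+\frac32})
\]
with an explicit nonzero constant $A$. Here I must check the exponent shift carefully. The averaging integral gains a factor $\frac{n-3}{2}$ in the exponent, because the singularity is integrated against $d\theta$ near a fold. To get exponent $m+\frac12$ in the end, I expect to have to adjust the power in $\Phi$, or equivalently to use fractional (Erdélyi–Kober) integration when $n$ is even. $SO(n-1)$-invariance makes $A$ constant along $\mathcal S$, so the coefficient is nowhere vanishing. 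Finiteness of $\int|\nabla h|^2$ near $\mathcal S$ follows since $m\ge1$.

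\textbf{Step 3: the main obstacle.} The hardest part is making the exponent bookkeeping in the Weinstein transform rigorous in all dimensions, including the parity issues of $n$. If that route fails, my fallback is a separation-of-variables construction. I would use toroidal coordinates adapted to the circle $\mathcal S$, in which axisymmetric harmonic functions separate into associated Legendre functions of half-integer degree, and pick the finite combination with the correct leading behavior at infinity. Alternatively, I would solve a Dirichlet-type problem on the branched double cover: take $h=Z_k\cdot(\text{cutoff})+w$, where $w$ decays, found variationally in the $SO(n-1)$-invariant anti-invariant class. I would then read off the leading coefficient from the Donaldson expansion \eqref{eq:asymptotic_expansion}. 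The nondegeneracy $A\neq0$ would be shown by an energy or uniqueness argument: if $A=0$, the branching would be of higher order $m+1$, and the dimension count of admissible polynomial asymptotics would force $h$ to be single valued, which is a contradiction.
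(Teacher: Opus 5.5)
\textbf{There is a genuine gap, and it sits in the step you flagged as delicate.}

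In the integral $h(s,y)=c_n\int_0^\pi\Phi(y+is\cos\theta)\sin^{n-3}\theta\,d\theta$, the singularity at $\mathcal S$ does not localize at $\theta=\pi/2$; there $w=y$ is real. It comes from the endpoints $\theta=0,\pi$, where $y\pm is$ hits $\pm i$, and these are folds of $\cos\theta$. Write $\epsilon=y+i(s-1)$, so that $w-i\approx\epsilon-\tfrac{i}{2}s\theta^2$. With the weight $\theta^{n-3}$, the endpoint contribution of $(w-i)^{\alpha}$ then scales like $\epsilon^{\alpha+\frac{n-2}{2}}$. The shift is therefore $\frac{n-2}{2}$, not $\frac{n-3}{2}$.

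For odd $n$, in particular $n=3$ (the case the paper needs), a half-integer factor $(w^2+1)^{m+\frac12}$ in $\Phi$ gives integer exponents with logarithms. Such a singularity has additive monodromy rather than $-1$, so $h$ is not a $\ZT$ function at all. A concrete check for $n=3$: $\frac1\pi\int_0^\pi\bigl((y+is\cos\theta)^2+1\bigr)^{-1/2}d\theta$ has axis values $(1+y^2)^{-1/2}$. By uniqueness from axis data, it is the potential of the uniform ring $\mathcal S$, which is single valued with a $\log$ singularity.

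Conversely, genuine $\ZT$ functions branched on $\mathcal S$ have logarithmically branched axis data when $n=3$. The paper's $f_0=T^0_0(s)=\arctan s$ restricts to $\arctan x_n$ on the axis. In general the axis values of $\sum b_l f_l$ have the form $A(w)\arctan w+B(w)$ with $A,B$ polynomials. Square-root branch points are the right ansatz only for even $n$; for example, when $n=4$ one has $T^{1/2}_0(s)=s(1+s^2)^{-1/2}$. So your parity bookkeeping is reversed: it is odd $n$, not even $n$, that needs the modified ansatz.

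\textbf{The degree-versus-order claim is not established.} Even with the ansatz repaired, the real content of the statement is missing: that growth of degree exactly $k$ allows vanishing order exactly $\lfloor k/2\rfloor$ along $\mathcal S$ with nonzero leading coefficient.
\begin{itemize}
\item Your count ($\deg\Phi=k$ with a factor of degree $2m+1$) fails for even $k$ and $m=k/2$, where the factor alone has degree $k+1$.
\item $\Phi$ cannot be $w^k+\cdots$ on the real axis. The axis passes through the disk bounded by $\mathcal S$, so $\Phi(y)\sim\pm\,\mathrm{sign}(y)\,y^k$, as your own factor $(w^2+1)^{m+\frac12}$ shows.
\item The nonzero constant $A$ is asserted, not computed.
\item The fallbacks (toroidal harmonics, a variational problem, a ``dimension count'') are sketches, not arguments.
\end{itemize}

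\textbf{How the paper handles this.} It works in oblate spheroidal coordinates, where the branched double cover is built in through $z=\tfrac12(s+it)^2$. It expands $f_l=T^a_l(s)P^a_l(t)$ at $s=0$ and chooses $b_l$, for $l\le k$ with $l\equiv k \pmod 2$, so that the coefficient of $s$ (for $k$ even) or of $s^0$ (for $k$ odd) is exactly $t^k$. This is possible because those $P^a_l$ span the polynomials of degree $\le k$ with the parity of $k$. Since $st^k$ (respectively $t^k$) occurs with nonzero coefficient in $\Re((2z)^{(k+1)/2})$ (respectively $\Im((2z)^{k/2})$), this gives $\lfloor k/2\rfloor$-nondegeneracy. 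The zonal leading term comes from $f_l=\pm L^a(+\infty)R^a_l(s)P^a_l(t)+\mathcal O(|x|^{-k-1-2a})$.

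If you keep the Whittaker route with a $\log$-type $\Phi$ for odd $n$, choosing $A$ and $B$ reduces to this same linear algebra.
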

    We follow a similar idea as in \cite{yan2025construction} using separation in an orthogonal coordinate system in $\mathbb{R}^{n}$. Consider the oblate-spherical coordinate on $\mathbb{R}^{n}$ $ (x_{1},\cdots,x_{n-1})=\Theta\sqrt{(1+s^{2})(1-t^{2})}$, and $x_{n}=st$, where $(s,t,\Theta)\in \mathbb{R}\times [-1,1]\times S^{n-2}$. This coordinate defines an orthogonal coordinate on the double branched cover of $\mathbb{R}^{n}$, in which $(s,t,\Theta)$ and $(-s,-t,\Theta)$ map to a same point in $\mathbb{R}^{n}$. In this coordinate, the Laplacian operator takes the form $$\frac{1}{s^{2}+t^{2}}\bigg(\mathscr{L}_{s}+\mathscr{D}_{t}\bigg)+\frac{1}{(1+s^{2})(1-t^{2})}\Delta_{S^{n-2}},$$ where $$\mathscr{L}_{s}=(1+s^{2})\p_{s}^{2}+(n-1)s\p_{s},\;\mathscr{D}_{t}=(1-t^{2})\p_{t}^{2}-(n-1))t\p_{t}.$$ Consider separation of variables of the Laplace equation $ u_{k,d}(s,t,\Theta)=F_{k,d}(s)G_{k,d}(t)\Phi_{d}(\Theta), d\in \mathbb{Z}, k\in \mathbb{N}$. Here, $\Phi_{d}(\Theta)$ is an eigenfunction for $\Delta_{S^{n-2}}$ with eigenvalue $d(d+n-3)$ and $F_{k,d}(s), G_{k,d}(t)$ are solutions to
    {\small{\begin{equation}\label{app:eqn:separation}
            \big(\mathscr{D}_{t}+\frac{d(d+n-3)}{1-t^{2}}+k(k+n-2)\big)G_{k,d}(t)=0,\;\big(\mathscr{L}_{s}-\frac{d(d+n-3))}{1+s^{2}}-k(k+n-2)\big)F_{k,d}(s)=0.
        \end{equation}}\vspace{-0.2cm}}
        
    In this section, we will focus only on the solutions when $d=0$, which is equivalent to the solution being invariant under the $SO(n-1)$ action. When $d=0$, the solutions to the first equations are known as ultra-spherical functions, a generalization of the Legendre functions. More precisely, there are two linearly independent solutions to the equation. The first one is called the ultra-spherical polynomial and can be given explicitly by the Rodriguesâ formula $$P_{k}^{a}(t)=\frac{(-2)^{-k}\Gamma(a+1)\Gamma(k+2a+1)}{\Gamma(2a+1)\Gamma(k+1)\Gamma(k+a+1)(1-t^{2})^{a}}\frac{d^{k}}{d t^{k}}(1-t^{2})^{k+a}.$$ Here, $a=\frac{n-3}{2}$ is a constant depending on dimension. When $n=3$, the above formula gives Legendre polynomials, which for simplicity, we denote as $P_{k}(t)$. The second one is called ultra-spherical function of the second kind, and it is related to the ultra-spherical polynomial using the reduction of second order differential equations $Q_{k}^{a}(t)=P_{k}^{a}(t)\int_{0}^{t} \frac{du}{(1-u^{2})^{a+1}(P^{a}_{k}(u))^{2}}$.
    Comparing the power series expansion for $Q_{k}^{a}(t)$ at $+ i \infty$, one may conclude, when $t \in \mathbb{C}$ and $\Im{t}>0$, $$Q_{k}^{a}(t)-iP_{k}^{a}(t)\int_{0}^{+\infty}\frac{d u}{(1+u^{2})^{a+1}}=c_{k}\int_{-1}^{1}(1-u^{2})^{k+a}(t-u)^{-(k+2a+1)},$$ where $c_{k}=(-1)^{-a}\frac{2^{-(2a+1+k)}\Gamma(k+2a+1)}{\Gamma(a+1)\Gamma{(k+a+1)}}$. On the other hand, apply Wick's rotation $R_{k}^{a}(s)=(-i)^{k}P_{k}^{a}(is)$, $T_{k}^{a}(s)=(-i)^{k+1}Q_{k}^{a}(is)$ we obtain two linearly independent solutions to the second equation in \ref{app:eqn:separation}. We now state several facts about those four kinds of functions, reader should refer to \cite{A_course}.

        \begin{enumerate}
            \item Polynomials $\{P_{k}^{a}(t)\}, k \in \mathbb{N}$ are an orthogonal basis of the Hilbert space $L^{2}([-1,1])$ with weighted measure $(1-t^{2})^{a}d t$.
            \item The functions $P_{k}^{a}(t), Q_{k}^{a}(t)$ satisfy the following recursion formula $ P_{k}^{a}(t)=\frac{1}{k}\big((2k+2a-1)tP_{k-1}(t)-(k+2a-1)P_{k-2}(t)\big)$, $k\geq 2$ and $P_{0}^{a}(t)=1, P_{1}^{a}(t)=(2a+1)t$. The functions $Q_{k}^{a}(t)$ satisfy the same recursion formula but instead $Q_{0}^{a}(t)=H^{a}(t)$ and $Q_{1}^{a}(t)=(2a+1)tH^{a}(t)-(1-t^{2})^{-a}$, where $H^{a}(t)=\int_{0}^{t}(1-u^{2})^{-(a+1)}d u$.
 
            Similarly, using Wick's rotation one can also derive the recursion formula for the functions $R_{k}^{a}(s), Q_{k}^{a}(s)$. Moreover, $T_{0}^{a}(s)=L^{a}(s), T_{1}^{a}=(2a+1)sL^{a}(s)+(1+s^{2})^{-a}$, where $L^{a}(s)=\int_{0}^{s}(1+u^{2})^{-(a+1)}d u$. We conclude from the recursion formula that $P_{k}^{a}(t), R_{k}^{a}(s)$ (respectively, $Q_{k}^{a}(t),T_{k}^{a}(s)$) are even (respectively, odd) functions when $k$ is even and are odd (respectively, even) functions when $k$ is odd.
            \item The product $R_{k}^{a}(s)P_{k}^{a}(t)$ defines a harmonic polynomial of degree $k$ in $\mathbb{R}^{n}$, while $Q_{k}^{a}(t)T_{k}^{a}(s)$ defines a harmonic function on $\mathbb{R}^{n}$ blowing up along the $x_{n}$-axis.
            \item The space $span_{\mathbb{R}}\{R_{0}^{a}(s)P_{0}^{a}(t), \cdots, R_{k}^{a}(s)P_{k}^{a}({t})\}$ is the space of zonal harmonics whose degree is no greater than $k$.
            \item The functions $T_{k}^{a}(s)$ are transcendental and satisfy $T_{k}^{a}(s)=\pm L^{a}(+\infty)R_{k}^{a}(s)+\mathcal{O}(|s|^{-k-1-2a})$, $s\to \pm \infty$.
            \item For each $k$, $ f_{k}:=T_{k}^{a}(s)P_{k}^{a}(t)$ defines a $SO(n-1)$-invariant $\mathbb{Z}/{2}$ harmonic function on $\mathbb{R}^{n}$ that branches along the unit sphere $\mathcal{S}$. Moreover, on a single-valued branch $\{s>0\}$ $f_{k}=L^{a}(+\infty)R_{k}^{a}(s)P_{k}^{a}(t)+\mathcal{O}(|x|^{-k-1-2a})$, $|x|\to +\infty$. Combine with the uniqueness result of the $\mathbb{Z}/{2}$ harmonic function with a fixed branching set and fixed asymptotic behavior at infinity, we conclude that any $SO(n-1)$-invariant $\mathbb{Z}/{2}$ harmonic function that branches along $\mathcal{S}$ and grows no faster than $|x|^{k}$ at infinity has to be a linear combination of $\{f_{0},\cdots, f_{k}\}$.
        \end{enumerate}
    \begin{proof}[Proof of Proposition \ref{prop:k_degenerate}]
        Let $r^{2}=x_{1}^{2}+\cdots+x_{n-1}^{2}$ be the radius function on $\mathbb{R}^{n-1}$ and $ z=\frac{1}{2}(s+it)^{2}=(r-1)+ix_{n}+\mathcal{O}((s^{2}+t^{2})^{2})$ be the complex coordinate near the branching set that is compatible with the normal structure. Notice that
            \begin{equation*}
                \begin{split}
                    & \Re((2z)^{k+1/2})=s^{2k+1}-k(2k+1)s^{2k-1}t^{2}+\cdots +(2k+1)(-1)^{k}st^{2k}.,\\
                    & \mathrm{Im}((2z)^{k+1/2})=(2k+1)s^{2k}t-\frac{(2k+1)2k(2k-1)}{3\cdot 2\cdot 1}s^{2k-2}t^{3}+\cdots+(-1)^{k}t^{2k+1}.
                \end{split}
            \end{equation*}
        Use the first fact for $P_{k}^{a}$ and $R_{k}^{a}$, we know that $f_{k}(s,t)=(-1)^{k}f_{k}(s,-t)=(-1)^{k+1}f_{k}(-s,t)$. Let $g_{k}$ be a real linear combination of the form $g_{k}=\sum_{l\leq k, \;k\equiv l(mod \; 2)} b_{l} f_{l}$. Analysis of the Laplacian operator under this double branched cover shows that there exists a family of nonzero $c_{k}\in \mathbb{R}$ such that near $\mathcal{S}$, $g_{k}=\Re({c_{k}z^{\frac{2m-1}{2}}})+\mathcal{O}(|z|^{\frac{2m+1}{2}})$ when $k$ is even and $g_{k}=\mathrm{Im}({c_{k}z^{\frac{2m'-1}{2}}})+\mathcal{O}(|z|^{\frac{2m'+1}{2}})$ when $k$ is odd. Here, $m,m'$ are some integer dependent on $b_{l}$.

        To prove the Proposition \ref{prop:k_degenerate}, it suffices to show that for each $k$ there is a unique combination such that as $|z|\to 0$ $h_{k}=st^{k}+\cdots+\mathcal{O}(|z|^{\frac{k+3}{2}})$ when $k$ is even and $h_{k}=t^{k} +\cdots+\mathcal{O}(|z|^{\frac{k+2}{2}})$ when $k$ is odd.
 
        The induction formulas imply that $T_{k}^{a}(s)\neq 0$ when $k$ is odd and $T_{k}^{a}(s)=0$, $ (T_{k}^{a})'(s)\neq 0$. We first prove the cases when $k$ are even, and the cases when $k$ are odd are analogous. Consider the Taylor's expansion of $T^{a}_{l}(s), l\leq k$ at $s=0$ $T^{a}_{l}(s)=c_{l,1}s+\cdots +c_{l,k+1}s^{k+1}+\mathcal{O}(s^{k+3})$.

        The $\mathbb{Z}/2$ harmonic solution $f_{l}=T_{l}^{a}(s)P_{l}^{a}(t), l\leq k$ admits the following Taylor's expansion near the singular set $\mathcal{S}$, $f_{l}=c_{l,1}P^{a}_{l}(t)s+\cdots +c_{l,k+1}P_{l}^{a}(t)s^{k+1}+\mathcal{O}(|z|^{\frac{k+3}{2}})$. According to the local expansion of linear combination of $f_{l}$, it suffices to compare the coefficients of the term $s$. It reduces to show that the polynomial $t^{k}$ is a linear combination of $P_{l}^{a}(t), l\leq k$. More precisely, $t^{k}=\sum_{l\leq k, \; k\equiv l\; (mod \;2)} b_{l}P_{l}^{a}(t)$. This follows directly from the fact that the ultra-spherical polynomials $\{P_{l}^{a}(t)\}_{l\leq k}$ is an orthogonal basis for the vector space $span_{\mathbb{R}}\{t^{l}\}_{l\leq k}$ with $L^{2}([-1,1],(1-t^{2})^{a}dt)$-norm.
    \end{proof}

    An important ingredient in blowing up isolated zeros in Theorem \ref{thm:adding_degenerate_zero} is that the gluing models must be $\star$-exact in the gluing region. In this gluing model, we use Euclidean metric to define the $\star$-operator. However, the $\lfloor \frac{k}{2}\rfloor$-nondegenerate $\mathbb{Z}/2$ harmonic $1$-form $d h_{k}$ constructed above is not co-exact outside a large compact ball. To see this, recall the asymptotic expansion for $f_{k}$ in Properties (vi). The differential of the polynomial term $L^{a}(+\infty)R_{k}^{a}(s)P_{k}^{a}(t)$ is co-exact, so the only possible obstruction comes from the remaining term, which is bounded by $\mathcal{O}(|x|^{-k-n+2})$. When $k>0$, this decay rate is faster than $|x|^{-n+2}$. Hence, by the separation of variables, the integral of the remainder over any large round sphere vanishes. It follows that the differential of the remaining term is co-exact.

    On the other hand, $ T_{0}^{a}(s)=L^{a}(+\infty)-\frac{1}{n-2}s^{-n+2}+\mathcal{O}(s^{-n})$ when $s\to +\infty$. We obtain an asymptotic expansion of $f_{0}$ on the branch $\{s>0\}$ $f_{0}=L^{a}(+\infty)-\frac{1}{n-2}|x|^{-n+2}+\mathcal{O}(|x|^{-n})$, $|x|\to \infty$. The non co-exact term in $f_{0}$ is contributed solely by $-\frac{1}{n-2}|x|^{-n+2}$. To conclude, if a linear combination does not include the $f_{0}$ term, then it is coexact.
    \begin{proposition}\label{prop:k_degeneratecoexact}
        For each $k\in \mathbb{N}_{\geq 3}$, there exists a $SO(n-1)$-invariant  $\lfloor \frac{k-1}{2}\rfloor$-nondegenerate $\mathbb{Z}/2$ harmonic function $\tilde{h}_{k}$ that is asymptotic to a degree $k$ zonal harmonic at infinity and the corresponding form is co-exact outside a large ball. 
    \end{proposition}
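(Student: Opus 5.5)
We construct $\tilde h_k$ as a real linear combination $\sum_{l}b_lf_l$ of the separated solutions $f_l=T_l^a(s)P_l^a(t)$, using only indices $l\le k$ with $l\equiv k \pmod 2$ and with $l\ge 1$, so that $f_0$ never appears. By the discussion just before the statement, such a combination has Hodge dual that is exact outside a large ball. Indeed, the polynomial parts $L^a(+\infty)R_l^aP_l^a$ are co-exact, and the remainders decay faster than $|x|^{-n+2}$, so their flux through large spheres vanishes. It then remains to arrange two things: the top-degree asymptotics at infinity must be a nonzero multiple of the degree-$k$ zonal harmonic $R_k^aP_k^a$, and the local order along $\mathcal S$ must be exactly $\lfloor (k-1)/2\rfloor$.

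\emph{Case $k$ odd.} Parity forces only odd $l$, so $f_0$ cannot enter. We take $\tilde h_k:=h_k$ from Proposition~\ref{prop:k_degenerate}. It is $\lfloor k/2\rfloor=\lfloor (k-1)/2\rfloor$-nondegenerate and asymptotic to a degree-$k$ zonal harmonic, so the odd case is done.

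\emph{Case $k$ even.} The available coefficients are $b_2,b_4,\dots,b_k$, which is one fewer than in Proposition~\ref{prop:k_degenerate}. We therefore aim for order $m=(k-2)/2=\lfloor(k-1)/2\rfloor$. We follow the proof of Proposition~\ref{prop:k_degenerate} and expand near $\mathcal S$ as
\[
\sum_l b_lf_l=s\,Q(t)+\mathcal O(s^3), \qquad Q(t)=\sum_l b_lc_{l,1}P_l^a(t).
\]
Here $c_{l,1}=(T_l^a)'(0)\neq0$ for even $l$, by the recursion. Matching against the $s$-coefficient $(2m+1)(-1)^m t^{2m}$ of $\Re((2z)^{m+1/2})$ reduces the problem to choosing $Q\in\mathrm{span}\{P_2^a,\dots,P_k^a\}$ of the form
\[
Q(t)=\alpha t^{k-2}+\beta t^{k}, \qquad \alpha\neq 0.
\]
Since the $P_l^a$ are an orthogonal basis, this span is exactly the set of even polynomials of degree at most $k$ that are orthogonal to $P_0^a=1$. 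So we need only
\[
\alpha\int_{-1}^1t^{k-2}(1-t^2)^a\,dt+\beta\int_{-1}^1t^{k}(1-t^2)^a\,dt=0 .
\]
Both moments are positive, so we may take $\alpha=1$ and $\beta<0$. Because $\beta\ne0$, the coefficient of $P_k^a$ is nonzero, and hence $b_k\neq0$. By fact (vi), $\tilde h_k$ is then asymptotic at infinity to $b_kL^a(+\infty)R_k^aP_k^a$ plus lower-degree zonal harmonics, whose leading term is a nonzero multiple of the degree-$k$ zonal harmonic.

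The main obstacle is justifying that the vanishing order of $Q$ at $t=0$ controls the leading $z$-exponent along $\mathcal S$, with nonzero leading coefficient. The same analysis of the Laplacian on the double branched cover that gives $g_k=\Re(c_kz^{(2m-1)/2})+\dots$ in Proposition~\ref{prop:k_degenerate} should settle this. Concretely, the $SO(n-1)$-invariant $\ZT$ harmonic expansion near $\mathcal S$ is determined by its lowest homogeneous term in $(s,t)$. The lowest term contains $s\,t^{k-2}$ with nonzero coefficient, and all other terms either have higher total degree or are forced by harmonicity from this one. This identifies the leading term as a nonzero multiple of $\Re(z^{m+1/2})$ with $m=(k-2)/2$, which gives $\lfloor(k-1)/2\rfloor$-nondegeneracy. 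By $SO(n-1)$-invariance the leading coefficient is constant along $\mathcal S$, hence nowhere vanishing.
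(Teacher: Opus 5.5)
Your proposal is correct and follows essentially the paper's route: for odd $k$ you take $\tilde h_k=h_k$, and for even $k$ your choice $Q=\alpha t^{k-2}+\beta t^{k}$ with vanishing weighted integral is, up to scaling, the paper's $\tilde h_k=h_k-c_kh_{k-2}$, with $c_k$ chosen so that the $f_0$-coefficient drops out. You also check two points the paper leaves implicit: that $b_k\neq0$, and (in sketch) that harmonicity forces the lowest homogeneous term in $(s,t)$ to be a real multiple of $\Re\bigl((s+it)^{k-1}\bigr)$.
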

    \begin{proof}
     When $k$ is odd, the linear combination of $h_{k}$ does not contain $f_{0}$ and therefore it is already co-exact outside a large ball. When $k$ is even, one may define $\tilde{h}_{k}=h_{k}-c_{k}h_{k-2}$. Here, $c_{k}$ is a chosen constant such that let $\tilde{h}_{k}=(c' t^{k}+c''t^{k-2})s+\cdots, \text{ near }\mathcal{S}$, the coefficient $c' t^{k}+c''t^{k-2}$ having vanishing integral on $[-1,1]$ with measure $(1-t^{2})^{a}d t$. By construction, the linear combination of $\tilde{h}_{k}$ does not contain $f_{0}$.
    \end{proof}

    \begin{proposition}
        Let $h_{k}, \tilde{h}_{k}$ be the $\mathbb{Z}/2$ harmonic function constructed in Proposition \ref{prop:k_degenerate} and \ref{prop:k_degeneratecoexact}, then the related $1$-forms are nonvanishing outside a sufficiently large ball in $\mathbb{R}^{n}$.
    \end{proposition}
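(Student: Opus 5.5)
The plan is to compare $dh_k$ (resp.\ $d\tilde h_k$) with the differential of its leading polynomial at infinity, and to use that the leading term is a zonal harmonic whose differential has no zeros away from the origin. By construction, $h_k=\sum_{l\le k,\ l\equiv k\ (2)} b_l f_l$ with $b_k\neq 0$. Here $\tilde h_k=h_k-c_kh_{k-2}$ is again such a combination, with the same top index $k$ and the same top coefficient. Using property (vi), on the branch $\{s>0\}$ we have
\[
f_l=L^a(+\infty)R^a_l(s)P^a_l(t)+\mathcal O(|x|^{-l-1-2a}),
\]
and the same estimate holds for the derivatives with one power of $|x|$ lost, since the remainder comes from $T^a_l-L^a(+\infty)R^a_l$, which has a convergent expansion in $s^{-1}$. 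Hence on $\{|x|\ge R\}$ one has
\[
h_k=p_k+E,\qquad p_k=\sum_l b_l L^a(+\infty) R^a_lP^a_l,\qquad |dE|=\mathcal O(|x|^{-2-2a}).
\]
By fact (iv), $p_k$ is a zonal harmonic polynomial with top homogeneous part $Z:=b_kL^a(+\infty)R^a_kP^a_k$ of degree $k$. Writing $p_k=Z+q$ with $\deg q\le k-2$ (parity), we get $|dq|=\mathcal O(|x|^{k-3})$.

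The key step is the lower bound $|dZ(x)|\ge c_0|x|^{k-1}$ for some $c_0>0$. By homogeneity this reduces to showing that $dZ$ has no zeros on the unit sphere. Euler's identity gives $x\cdot\nabla Z=kZ$, so a zero on $S^{n-1}$ would be a point where $Z=0$ and the tangential gradient vanishes. Since $Z$ is zonal, $Z=|x|^k\,C^a_k(\cos\theta)$ up to a constant, where $\theta$ is the angle from the $x_n$-axis and $C^a_k$ is the Gegenbauer polynomial. The tangential gradient is $-\sin\theta\,(C^a_k)'(\cos\theta)\,\partial_\theta$.

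We treat the two possible locations of a zero separately.
\begin{enumerate}
\item At an interior angle $\sin\theta\neq 0$, a zero would force $C^a_k(\cos\theta)=(C^a_k)'(\cos\theta)=0$. This is impossible, because the zeros of orthogonal polynomials in $(-1,1)$ are simple.
\item At the poles $\sin\theta=0$, $Z$ does not vanish, since $C^a_k(\pm1)\neq 0$.
\end{enumerate}
Hence $dZ\neq0$ on $S^{n-1}$, and $c_0:=\min_{S^{n-1}}|dZ|>0$.

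Combining the estimates,
\[
|dh_k|\ge c_0|x|^{k-1}-C|x|^{k-3}-C|x|^{-2-2a},
\]
which is positive for $|x|$ large. The same argument applies to $\tilde h_k$, and also to the other branch $\{s<0\}$, which differs only by a sign.

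The main obstacle is justifying that the error term in (vi) holds with derivatives. I would get this from the explicit form
\[
T^a_l(s)=\pm L^a(+\infty)R^a_l(s)+R^a_l(s)\int_{|s|}^\infty\frac{du}{(1+u^2)^{a+1}R^a_l(u)^2}\cdot(\text{const}),
\]
which is the Wick-rotated reduction-of-order formula. For large $|s|$ the correction term has a convergent expansion in $s^{-1}$, so it can be differentiated termwise. It remains to convert $(s,t)$-derivatives into Euclidean ones. Since $s\sim|x|$ and the coordinate Jacobian is bounded with bounded inverse for large $|x|$, each derivative costs one factor of $|x|^{-1}$.
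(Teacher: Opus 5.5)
Your argument is correct, but it is organized differently from the paper's.

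\textbf{The paper's route.} The paper compares $h_k,\tilde h_k$ with a multiple of $R^a_k(s)P^a_k(t)$, up to an error of size $\mathcal O(|x|^{k-2})$. It then uses the $k$ distinct zeros of $P^a_k$ to factor $R^a_k(s)P^a_k(t)$ in Euclidean coordinates into the quadrics $(p_{k,l}^2-1)(p_{k,l}^2+x_n^2)+p_{k,l}^2r^2$, with an extra factor $x_n$ when $k$ is odd. Finally it calls the nonvanishing of $d(R_kP_k)$ for $|x|\gg1$ a routine computation. It does not make explicit either a quantitative lower bound or derivative control of the error.

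\textbf{Your route.} You pass to the top homogeneous part $Z$ and get $|dZ|\ge c_0|x|^{k-1}$ from three facts: Euler's identity, simplicity of the zeros of the Gegenbauer polynomial, and its nonvanishing at $\pm1$. This is the same mechanism the paper uses for $Z_k$ in Appendix~\ref{sec:perturb_zonal_harmonics}. That bound is exactly the quantitative estimate needed to absorb both the lower-order polynomial terms and the transcendental remainder. You also justify the derivative bound on the remainder through the reduction-of-order formula, which the paper leaves implicit.

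\textbf{What each buys.} The paper's factorization gives an explicit picture of the nodal set of the leading term: the quadrics $t^2=p_{k,l}^2$, plus $\{x_n=0\}$ for odd $k$. Your route gives the estimate that the perturbation step actually requires.

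\textbf{Two small slips.}
\begin{enumerate}
\item $Z$ should be the degree-$k$ homogeneous part of $b_kL^a(+\infty)R^a_kP^a_k$, not that polynomial itself, which is not homogeneous. For $n=3$, $k=2$ it equals $\tfrac34(3x_3^2-|x|^2)+\tfrac12$. With $Z$ read as the top part, your parity remark correctly gives $\deg q\le k-2$.
\item The $(s,t)$-Jacobian is not bounded with bounded inverse, since $|\partial_t|=\sqrt{(s^2+t^2)/(1-t^2)}$. What you need follows instead from the orthogonal metric coefficients $g^{ss}=\frac{1+s^2}{s^2+t^2}$ and $g^{tt}=\frac{1-t^2}{s^2+t^2}$. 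These give $|d(e(s)P(t))|\lesssim |e'(s)|+s^{-1}|e(s)|$ for large $s$, uniformly in $t\in[-1,1]$, including on the axis.
\end{enumerate}
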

    \begin{proof}
        The $\mathbb{Z}/2$ harmonic function $h_{k},\tilde{h}_{k}$ is asymptotic to a constant multiple of $R_{k}^{a}(s)P_{k}^{a}(t)$ up a term bounded by $\mathcal{O}(|x|^{k-2})$. On the other hand, $P_{k}^{a}(t)$ has $k$ distinct zeros in the interval $(-1,1)$, since $\{P_{k}^{a}(t)\}$ are orthogonal polynomials. We deduce that in Euclidean coordinate, there exists distinct positive constants $p_{k,l}$
            \begin{equation*}
                 R_{k}^{a}(s)P_{k}^{a}(t)=
                 \begin{cases}
                    c_{k}\prod^{\lfloor\frac{k}{2}\rfloor}_{l=1}\big((p_{k,l}^{2}-1)(p_{k,l}^{2}+x_{n}^{2})+p_{k,l}^{2}r^{2}\big), & k \text{ is even},\\  
                    c_{k}x_{n}\prod^{\lfloor\frac{k}{2}\rfloor}_{l=1}\big((p_{k,l}^{2}-1)(p_{k,l}^{2}+x_{n}^{2})+p_{k,l}^{2}r^{2}\big), & k \text{ is odd}.
                 \end{cases}
            \end{equation*}

        It is a routine computation to check $d (R_{k}(s)P_{k}(t))\neq 0$ when $|x|\gg 1$.
    \end{proof}

    \begin{remark}\label{rem:winding}
       It should be remarked that every $k$-nondegenerate $\ZT$ harmonic $1$-form considered in this appendix has vanishing winding class. Using $z=\frac{1}{2}(s+it)^{2}$ as the normal coordinate, if the function arises from an even combination of $P^{a}_{k}(t)R^{a}_{k}(s)$, then the leading coefficient $A$ may be chosen to be real-valued. If the function arises from an odd combination, then $A$ may be chosen to be purely imaginary. Finally, if the function arises as the sum of an even and an odd combination of different orders, then its leading behavior reduces to one of the previous two cases. If the even and odd parts have the same leading order, then the image of $A$ is contained in a fixed sector of angle at most $\pi/4$. In each case, one can choose a global $(2k+1)$-st root of $A$, and hence the winding class vanishes.

       On the other hand, not every $k$-nondegenerate $\ZT$ harmonic $1$-form has vanishing winding class. When $n=3$, suitable linear combinations of the differentials of the $\ZT$ harmonic functions 
        \[
        e^{i \theta}\sqrt{(1+s^{2})(1-t^{2})}\frac{d R_{k}(s)}{ds}\frac{d P_{k}(t)}{d t},\; k=1,2,3,4.
        \]
        give rise to two nondegenerate $\ZT$ harmonic $1$-forms whose winding class are nontrivial element $1$ and $2$ in $H^{1}(S^{1},\mathbb{Z}/3)$.
    \end{remark}

\section{Perturbation and interpolation of zonal harmonics}\label{sec:perturb_zonal_harmonics}

In this appendix, we construct smooth functions $f_k:\RR^3\to\RR$ which agree with the coordinate function
$x$ outside a compact region, have a unique degenerate critical point at the origin
whose leading term is a homogeneous zonal harmonic $Z_k$ of degree $k\geq3$,
and have exactly $k-1$ additional nondegenerate critical points.  All nondegenerate
critical points have Morse index $1$ or $2$.


Recall that $P_k$ denotes the $k$-th Legendre polynomial, normalized by $P_k(1)=1$ (following the same convention as in Appendix \ref{section:k-nondeg_on_Rn}). It has $k$ distinct roots in $(-1,1)$ and satisfies the differential equation
\[
(1-t^2)P_k''-2tP_k'+k(k+1)P_k=0.
\]
Note that $r$ denotes the full Euclidean radius, rather than the cylindrical
radius $(x_{1}^2+\cdots +x_{n-1}^2)^{1/2}$ used in Appendix~\ref{section:k-nondeg_on_Rn}. Set 
$Z_k(x,y,z)=r^kP_k(z/r)$. By construction, $Z_k$ is a harmonic and homogeneous polynomial on $\RR^3$ with degree $k$. The only critical point of $Z_k$ is the origin, since the Legendre polynomials $P_{k}$ only have simple zeros.
We now investigate the following higher-order perturbation of the zonal harmonic $Z_k~(k\ge 3)$:
\begin{align*}
    G_k(x,y,z)=Z_k(x,y,z)+ar^{m-1}x,\label{eq:higher_perturb}
\end{align*}
where $a>0$ is a constant to be determined and $m$ is an odd integer larger than $k$.


\subsection{Critical point equations}
The function $G_k$ has an isolated and degenerate critical point at origin, since $ar^{m-1}x=\mathcal{O}(r^m)$ and $m>k$.
In this subsection, we derive the critical point equations $d G_k=0$ away from the origin. We introduce $v=x/r, w=y/r, t=z/r$ and set $q=a r^{m-k}>0$, then $G_k=r^k(P_k(t)+qv)$. In addition, we define an auxiliary function
\[
 E_{k,m}(t):=ktP_k(t)+m(1-t^2)P_k'(t).
\]

\begin{lemma}\label{lemma:critcor}
	The nonzero critical points of $G_k$ are in \emph{one-to-one
correspondence} with the roots of $E_{k,m}$ in $(-1,1)$.
\end{lemma}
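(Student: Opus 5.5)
The plan is to compute $dG_k$ in spherical coordinates and decouple it into a radial equation and an angular equation on $S^2$. We use the notation $v=x/r$, $w=y/r$, $t=z/r$, $q=ar^{m-k}$, so that $G_k=r^k\bigl(P_k(t)+qv\bigr)$. Let $n=(v,w,t)$ be the unit radial vector.

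\emph{Step 1: the radial equation.} Differentiating in $r$ (recall that $q$ depends on $r$) gives
\[
\partial_rG_k=r^{k-1}\bigl(kP_k(t)+mqv\bigr).
\]
So the first condition is $kP_k(t)+mqv=0$.

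\emph{Step 2: the angular equation.} The tangential gradient on the unit sphere is $\nabla t=e_z-tn$ and $\nabla v=e_x-vn$. Hence the angular part of $dG_k$ vanishes iff
\[
P_k'(t)\,e_z+q\,e_x=\lambda n,\qquad \lambda:=tP_k'(t)+qv.
\]
Since $q>0$, the right side is nonzero, so $n$ lies in the $xz$-plane. This forces $w=0$ and $v^2+t^2=1$. Comparing components gives $q=\lambda v$ and $P_k'(t)=\lambda t$, which together are equivalent to $qt=P_k'(t)v$; the identity $\lambda=tP_k'+qv$ is then automatic. In particular $v\neq0$, so $t=\pm1$ is excluded.

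\emph{Step 3: elimination.} If $t\ne0$, substitute $q=P_k'(t)v/t$ into the radial equation and multiply by $t$. Using $v^2=1-t^2$ this gives exactly
\[
E_{k,m}(t)=ktP_k(t)+m(1-t^2)P_k'(t)=0 .
\]
If $t=0$, then $v=\pm1$, and the angular equation forces $P_k'(0)=0$, i.e.\ $E_{k,m}(0)=0$. So every nonzero critical point produces a root of $E_{k,m}$ in $(-1,1)$.

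\emph{Step 4: the converse and uniqueness.} Given a root $t_0\in(-1,1)$, we reconstruct the point as follows.

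First, $P_k(t_0)$ and $P_k'(t_0)$ cannot both vanish, because $P_k$ has only simple zeros. If $t_0\neq0$, then $E_{k,m}(t_0)=0$ together with $P_k'(t_0)=0$ would force $P_k(t_0)=0$. Hence $P_k'(t_0)\neq0$, and $q=P_k'(t_0)v/t_0$ is nonzero with sign depending on $v$. If $t_0=0$, then $P_k(0)\ne0$, and $q=-kP_k(0)/(mv)$ is again nonzero with sign depending on $v$.

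The constraint $q>0$ therefore selects exactly one of the two choices $v=\pm\sqrt{1-t_0^2}$. We take $w=0$, and then $q$ determines $r=(q/a)^{1/(m-k)}$ uniquely. Reversing the computations of Steps 1--3 shows this point is critical. This gives the bijection.

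The only delicate points are the degenerate cases: $t=0$, $t=\pm1$, and $P_k'=0$. The main obstacle is making sure the sign condition $q>0$ picks out exactly one point for each root, neither zero nor two. Both facts rest on $P_k$ having simple zeros.
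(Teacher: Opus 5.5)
Your proof is correct and follows essentially the same route as the paper: show $w=0$, eliminate $q$ to obtain $E_{k,m}(t)=0$, and treat $t=0$ separately. The differences are cosmetic but helpful. You split $dG_k$ into radial and tangential parts instead of using $\partial_x,\partial_y,\partial_z$, and you spell out the converse and uniqueness (the sign condition $q>0$ selecting the sign of $v$), which the paper leaves as a ``routine check.''
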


\begin{proof}
		We first show that the critical points of $G_{k}$ lie in $\{y=0,x\neq 0\}$. Taking the derivatives of $G_{k}$ with respect to $x$ and $y$ gives
\[
v\bigl(kP_k-tP_k'\bigr)
  +q\bigl(1+(m-1)v^2\bigr)=0,\; w\bigl(kP_k-tP_k'+(m-1)qv\bigr)=0,
\]
respectively. Substituting the second equation into the first one forces $q=0$, which contradicts to the assumption $r>0$. Hence at every nonzero critical points $w=0$ and we conclude that $t^2+v^2=1$ at any nonzero critical point of $G_k$. Moreover, $x\ne 0$, otherwise, we would have $v=0$, which implies $q=0$ by the first equation, which is a contradiction. Thus every critical point of $G_k$ satisfies $z/r=t\in (-1,1)$.

Taking the derivative of $z$, we have $ r^{k-1}\partial_z G_k=ktP_k+(1-t^2)P_k'+(m-1)qvt=0$. Assuming $t\neq 0$, we get $q= -(ktP_k+(1-t^2)P_k')/(m-1)vt$. Substituting to the equation $\p_{x}G=0$, we get the \emph{reduced critical point equation}
\begin{equation}\label{eq:E-main}
  E_{k,m}(t)=0.
\end{equation}
It is a routine check that this establish an $1$-$1$ correspondence of the critical points of $G_{k}$ and zeros of $E_{k,m}$ when $t\neq 0$.

Lastly, when $k$ is odd $P_{k}'(0)\neq 0$, $t=0$ is not a solution to (\ref{eq:E-main}) and $\p_{z}G_{k}=0$. On the other hand, when $k$ is even, $t=0$ is a solution to (\ref{eq:E-main}) and there is a unique nonzero solution to the critical equations satisfying $q=|kP_{k}(0)|/m$ and $(v,w,t)=(-\mathrm{sign}(P_{k}(0)),0,0)$.

\end{proof}

In addition, we have the following lemma for the zeros of $E_{k,m}$.
\begin{lemma}\label{lem:simpleness}
    The equation $E_{k,m}(t)=0$ has exactly $k-1$ simple roots in $(-1,1)$. More precisely, let $-1<\alpha_1<\alpha_2<\cdots<\alpha_k<1$ be the roots of $P_k(t)=0$. Then for $1\le j<k$, there exists a unique simple root $t_j$ of $E_{k,m}(t)=0$, in $(\alpha_j,\alpha_{j+1})$.
\end{lemma}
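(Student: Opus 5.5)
We must show $E_{k,m}(t)=ktP_k(t)+m(1-t^2)P_k'(t)$ has exactly $k-1$ simple roots in $(-1,1)$, one in each gap $(\alpha_j,\alpha_{j+1})$. The plan is a sign-count argument at the roots of $P_k$ and of $P_k'$, followed by a degree count and a simplicity argument.

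\textbf{Existence of a root in each gap.} At a root $\alpha_j$ of $P_k$ we have
\[
E_{k,m}(\alpha_j)=m(1-\alpha_j^2)P_k'(\alpha_j).
\]
Since the roots of $P_k$ are simple and interlace with those of $P_k'$, the values $P_k'(\alpha_j)$ alternate in sign in $j$. Hence $E_{k,m}(\alpha_j)$ and $E_{k,m}(\alpha_{j+1})$ have opposite signs. The intermediate value theorem then gives at least one root in each open gap $(\alpha_j,\alpha_{j+1})$, so at least $k-1$ roots in total.

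\textbf{Counting roots.} Next, $E_{k,m}$ is a polynomial of degree at most $k+1$. Its leading coefficient is $(k-mk)\,\mathrm{lc}(P_k)\neq0$ because $m>1$, so the degree is exactly $k+1$. Parity helps: $P_k$ has parity $(-1)^k$ and $P_k'$ the opposite parity, so $E_{k,m}$ has parity $(-1)^{k+1}$. To exclude extra roots in $(-1,1)$, I would look at the two outer intervals $(-1,\alpha_1)$ and $(\alpha_k,1)$ and show $E_{k,m}$ has no zero there.

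On $(\alpha_k,1)$ both $P_k>0$ and $P_k'>0$ (using $P_k(1)=1$ and $P_k$ increasing past its last root), so $E_{k,m}>0$ as long as $t>0$. Since $\alpha_k>0$ for $k\ge2$, this covers the whole interval. The interval $(-1,\alpha_1)$ follows from the parity of $E_{k,m}$.

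The remaining two roots, coming from the degree $k+1$, should then lie outside $[-1,1]$. To check this, note $E_{k,m}(\pm1)=\pm kP_k(\pm1)\neq0$. The sign of $E_{k,m}(1)$ agrees with that of the leading coefficient times $(+\infty)$ only if there is an even number of roots beyond $1$. Here the leading coefficient has the opposite sign of $\mathrm{lc}(P_k)>0$ while $E_{k,m}(1)>0$, so $E_{k,m}\to-\infty$ as $t\to+\infty$ and there is exactly one real root in $(1,\infty)$. By parity there is exactly one in $(-\infty,-1)$.

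This accounts for all $k+1$ roots: the $k-1$ forced roots in $(-1,1)$, at least one in each gap, plus one in each outer ray. Therefore each gap contains exactly one root, counted with multiplicity. A root of multiplicity $\ge2$ in a gap would push the count to at least $k+2$ while $\deg E_{k,m}=k+1$, so every root is simple.

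\textbf{Main obstacle.} The step I expect to need the most care is the argument that exactly one root lies in $(1,\infty)$ rather than three. With odd multiplicity total, the sign-change argument only gives an odd number there; combined with the count of at least $k-1$ interior roots plus the parity-mirrored root, the degree bound $k+1$ forces exactly one. So the counting closes, but it needs to be written carefully.
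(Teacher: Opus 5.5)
Your argument is correct and follows essentially the same route as the paper. Both proofs find at least one root in each gap and at least one root on each of $(1,\infty)$ and $(-\infty,-1)$, the latter using $E_{k,m}(1)=k>0$ and the negative leading coefficient $k(1-m)\,\mathrm{lc}(P_k)$. The degree $k+1$ then forces exactly one simple root per gap.

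The only difference is how the interior roots are produced. The paper applies Rolle's theorem to $Q_{k,m}=(1-t^2)^{-k/(2m)}P_k$, whose derivative is $\frac1m(1-t^2)^{-\frac{k}{2m}-1}E_{k,m}$. You instead use the sign alternation of $E_{k,m}(\alpha_j)=m(1-\alpha_j^2)P_k'(\alpha_j)$. Your separate check of $(-1,\alpha_1)$ and $(\alpha_k,1)$ is valid but unnecessary, since the degree count already rules out any further roots.
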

\begin{proof}
    Consider the function $Q_{k,m}(t) := (1-t^2)^{-k/(2m)} P_k(t)$. A direct calculation shows that
\[
Q'_{k,m}(t) = \frac{1}{m} (1-t^2)^{-\frac{k}{2m}-1} E_{k,m}(t).
\]
Thus, the roots of \eqref{eq:E-main} in $(-1,1)$ 
are in one-to-one correspondence with the critical points of $Q_{k,m}(t)$ in $(-1,1)$.
Since $Q_{k,m}(\alpha_j) = Q_{k,m}(\alpha_{j+1}) = 0$, Rolle's Theorem implies that there exists at least one critical point 
of $Q_{k,m}$ in each interval $(\alpha_j, \alpha_{j+1})$.
Consequently, \eqref{eq:E-main} has at least $k-1$ roots in $(-1,1)$.

On the other hand, $E_{k,m}(t)$ is a polynomial of degree $k+1$. Note that $E_{k,m}(1) = k > 0$ and $E_{k,m}(t) \to -\infty$ as $t \to +\infty$. Thus, there exists at least one root of \eqref{eq:E-main} in $(1, +\infty)$.
Similarly, there is at least one root in $(-\infty, -1)$. Therefore, \eqref{eq:E-main} has exactly $k+1$ real roots.
Moreover, since the total number of roots equals the degree, all roots are simple.
In particular, there is exactly one simple root in each interval $(\alpha_j, \alpha_{j+1})$.
\end{proof}


\subsection{Nondegeneracy of critical points}
In this subsection, we verify that every nonzero critical point of $G_k$ is nondegenerate. Moreover, we calculate their Morse indices.

Recall that all nonzero critical points lie in the $xz$-plane, and we label them as $p_{j}$. It is convenient to use the coordinates $(s,t,y) = (\log r, t, y)$ in the following discussion, where $r = \sqrt{x^2+y^2+z^2}$ and $t = z/r$. 
Near $p_j$, use local coordinates $(s,t,y)$ with $s=\log r$. On the $xz$-plane one has
$G_k=e^{ks}P_k(t)+ae^{ms}v(t)$ with $v(t)=\pm\sqrt{1-t^2}$. Since the function $G_{k}$ is even with respect to variable $y$, the mixed partial derivatives $\partial^2_{ys} G_k$ 
and $\partial^2_{yt} G_k$ vanish on the $xz$-plane. Thus the Hessian of $G_k$ at a nonzero critical point $p_j$ has the form 
\begin{align*}
\mathrm{Hess}_{(s,t,y)}G_k(p_j)=\left(\begin{array}{cc}
   \mathrm{Hess}_{(s,t)}G_k(p_j)  & 0 \\
     0 & \partial^2_y G_k(p_j) 
\end{array}\right).   
\end{align*}
For the $(s,t)$-block $\mathrm{Hess}_{(s,t)}G_k$, we compute:
\begin{align*}
    \mathrm{Hess}_{(s,t)}G_k=e^{ks}\left(\begin{array}{cc}
        k(k-m)P_k & kP_k'+mqv' \\ 
        kP_k'+mqv' & P_k''+qv''
    \end{array}\right).
\end{align*}

We now discuss the $(s,t)$-block at each nonzero critical point $p_j=r_j\cdot (v(t_j),0,t_j)$ of $G_k$. Use the relation $v(t)=\pm\sqrt{1-t^{2}}$, we may write $v'=-t/v$ and $v''=-1/v^{3}$. We now claim that at each critical point
\begin{equation}\label{eq:q-formula}
    q v = -\frac{k}{m} P_k(t).
\end{equation}
When $t=0$, this follows directly from $\p_{x}G_{k}=0$, while $t\neq 0$, this follows from $\p_{z}G_{k}=0$ and $E_{k,m}=0$. Using (\ref{eq:q-formula}), we obtain
\[qv'=qv\frac{v'}{v^2}=qv\frac{-t_j}{1-t_j^2}=\frac{kt_jP_k(t_j)}{m(1-t_j^2)}=-P_k'(t_j).\]
Furthermore, since $v''(t)=-1/v^3$, it follows from definition of Legendre polynomials, \eqref{eq:q-formula} and \eqref{eq:E-main}, that
\begin{align*}P_k''(t_j)+qv''(t_j)=&\frac{1}{v^2}\big(2t_jP_k'(t_j)-k(k+1)P_k(t_j)+{kP_k(t_j)}/(mv^2)\big)\\
=&\frac{kP_k(t_j)}{mv^4}\big((m(k+1)-2)t_j^2-m(k+1)+1\big).
\end{align*}
Consequently, the determinant of the $(s,t)$-block is given by
\begin{equation*}
    \det\mathrm{Hess}_{(s,t)}G_k(p_j)=(m-k)\left(r_j^{k}\frac{kP_k(t_j)}{mv^2}\right)^2\left(m^2(k+1)-m-\big(m^2(k+1)-m-k\big)t_j^2\right).
\end{equation*}
The polynomial $m^2(k+1)-m-\big(m^2(k+1)-m-k\big)t^2$ depends linearly on $t^2$. When $t^2=0$ and $t^2=1$, it takes values $m^2(k+1)-m>0$ and $k>0$ respectively. Thus for $0\le t_j^2<1$, $\det\mathrm{Hess}_{(s,t)}G_k(p_j)>0$. As a result, the $(s,t)$-block is positive definite if $P_k(t_j)<0$ and is negative definite if $P_k(t_j)>0$.

On the other hand, direct computation shows \begin{align*}
    \partial^2_y G_k(p_j)=r_j^{k-2}(kP_k-tP_k'+(m-1)qv)=kr_j^{k-2}P_k(t_j)/(m(1-t_j^2))\ne 0.
\end{align*}

We summarize the results in the following lemma:
\begin{lemma}\label{lem:morse_index}
Every nonzero critical point of $G_k$ is nondegenerate. Let $p_j = r_j(v_j, 0, t_j)$ be a nonzero critical point of $G_k$. 
If $P_k(t_j) > 0$, then the eigenvalues of $\mathrm{Hess}_{(s,t,y)} G_k(p_j)$ have signs $(-,-,+)$, 
so $p_j$ has Morse index $2$. If $P_k(t_j) < 0$, then the eigenvalues have signs $(+,+,-)$, 
so $p_j$ has Morse index $1$. 

Consequently:
\begin{itemize}
    \item If $k$ is odd, there are exactly $\frac{k-1}{2}$ nondegenerate critical points of Morse index $1$, 
          and $\frac{k-1}{2}$ of Morse index $2$.
    \item If $k$ is even, there are $\frac{k}{2}$ nondegenerate critical points of Morse index $1$, 
          and $\frac{k}{2} - 1$ of Morse index $2$.
\end{itemize}
\end{lemma}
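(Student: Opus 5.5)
The signs of the Hessian at each $p_j$ have already been computed, so the plan is to combine them with a count of the roots $t_j$ by the sign of $P_k(t_j)$.

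\emph{Step 1 (nondegeneracy and index).} Recall that $\mathrm{Hess}_{(s,t,y)}G_k(p_j)$ is block diagonal, and that $\det\mathrm{Hess}_{(s,t)}G_k(p_j)>0$ because $m>k$ and $0\le t_j^2<1$. Since $q>0$ and $qv=-\tfrac{k}{m}P_k(t_j)$, we have $P_k(t_j)\neq0$. The $(1,1)$ entry of the $(s,t)$-block is $e^{ks}k(k-m)P_k(t_j)$. With positive determinant, this entry fixes the sign of the whole block: the block is negative definite if $P_k(t_j)>0$ and positive definite if $P_k(t_j)<0$. The $y$-entry is $kr_j^{k-2}P_k(t_j)/(m(1-t_j^2))$, which is nonzero and has the same sign as $P_k(t_j)$. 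This gives signatures $(-,-,+)$ and $(+,+,-)$ respectively, so every nonzero critical point is nondegenerate with Morse index $2$ or $1$. The coordinate change $(x,y,z)\mapsto(s,t,y)$ is a local diffeomorphism near $p_j$, since $r_j>0$ and $v\neq0$. Because $p_j$ is a critical point, the index computed in these coordinates is therefore intrinsic.

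\emph{Step 2 (counting).} By Lemma~\ref{lemma:critcor} and Lemma~\ref{lem:simpleness}, the nonzero critical points correspond bijectively to the $k-1$ roots $t_j\in(\alpha_j,\alpha_{j+1})$, one in each gap between consecutive roots of $P_k$. The roots $\alpha_j$ are simple, so $P_k$ has constant sign on each gap and the signs alternate from one gap to the next. On the last gap $(\alpha_{k-1},\alpha_k)$, $P_k$ is negative, because $P_k(1)=1>0$, $P_k$ is positive on $(\alpha_k,1)$, and it changes sign at $\alpha_k$. So the sign of $P_k$ on the gap $(\alpha_j,\alpha_{j+1})$ is $(-1)^{k-j}$.

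For $k$ odd, the $k-1$ gaps split evenly: $\tfrac{k-1}{2}$ gaps where $P_k<0$ (index $1$) and $\tfrac{k-1}{2}$ where $P_k>0$ (index $2$). For $k$ even, the first and last gaps both have $P_k<0$, since $(-1)^{k-1}=-1$ and $(-1)^{1}=-1$. There are $k-1$ gaps (an odd number) with alternating signs, so there are $\tfrac{k}{2}$ negative gaps (index $1$) and $\tfrac{k}{2}-1$ positive gaps (index $2$).

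\emph{Main obstacle: the even case.} The correspondence in Lemma~\ref{lemma:critcor} is one-to-one and includes the $t=0$ solution, so the count of $k-1$ critical points stands as is. The step to check is the sign at that point. The $t=0$ critical point lies in the middle gap, since $0$ is a root of $E_{k,m}$ and $P_k(0)\neq0$ for $k$ even. Its $x$-coordinate satisfies $v=-\mathrm{sign}(P_k(0))$, so it is still governed by the identity $qv=-\tfrac{k}{m}P_k(t_j)$. As that identity is all Step~1 uses, the sign argument applies to it as well.
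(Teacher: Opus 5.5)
Your proposal is correct and matches the paper's argument: the index comes from the sign of the $(1,1)$ entry $e^{ks}k(k-m)P_k(t_j)$, the positive determinant of the $(s,t)$-block, and the sign of $\partial_y^2G_k(p_j)=kr_j^{k-2}P_k(t_j)/(m(1-t_j^2))$. Your explicit count using the alternating sign $(-1)^{k-j}$ of $P_k$ on the gaps $(\alpha_j,\alpha_{j+1})$, and your check that $(s,t,y)$ are genuine local coordinates near $p_j$, fill in two steps the paper leaves implicit.
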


\subsection{Interpolation with the coordinate function $x$}
We now construct the desired functions $f_k: \RR^3 \to \RR$ that coincide with $G_k$ on an open ball 
and with the coordinate function $x$ outside a larger ball.

Let $M$ be the constant satisfying $|\partial_xZ_k|\le M r^{k-1}$. And let $Q>2^{k-1}M$ be a constant sufficiently large. Next choose $0<\delta\ll1$ so that $3\delta<1$ and $Q\delta^{k-1}<1$. Choose a smooth radial function $\lambda:[0,\infty)\to\mathbb{R}$ satisfying:
\[
  \lambda(r)=\begin{cases}
      Q\delta^{k-m}r^{m-1}&,0\le r\le\delta\\
      1 &,r\ge 2\delta.
  \end{cases}
\]
We may assume that $\lambda'(r)\ge0$ for all $r> 0$.
We also define a cut-off function $\mu$ such that $\mu(r)=1$ for $0\le r\le2\delta$ and $\mu(r)=0$ for $r\ge3\delta$. We can assume $|\mu'(r)|\le 6/\delta$ on $(2\delta,3\delta)$. Define
\begin{equation}\label{eq:fk-def}
  f_k(x,y,z)=\mu(r)Z_k(x,y,z)+\lambda(r)x.
\end{equation}

For $0\le r\le\delta$, one has $f_k=Z_k+Q\delta^{k-m}r^{m-1}x$. Thus $f_k$ coincides with the $G_k$ considered previously, with the parameter choice $a = Q \delta^{k-m}$. 
With this choice of $a$, nonzero critical points of $G_k$ are therefore $p_j = \delta\left({q_j}/{Q}\right)^{1/(m-k)}(v_j,0,t_j),$ where $q_j$ is determined by \eqref{eq:q-formula} 
and is therefore independent of $a$. Consequently, for sufficiently large $Q$, 
all these critical points lie strictly inside the ball $B_\delta(0)$.

Now we show that when $r\ge\delta$, $f_k$ has no critical points. For $\delta\le r\le2\delta$, one has $\mu=1$, so $f_k=Z_k+\lambda(r)x$. Then $\partial_x f_k=\partial_xZ_k+\lambda(r)+\lambda'(r){x^2}/{r}$. Since $\lambda'\ge0$, the last term is nonnegative. Moreover, by assumption $|\partial_x Z_k|\le M r^{k-1}\le M(2\delta)^{k-1}$, while $\lambda(r)\ge\lambda(\delta)=Q\delta^{k-1}>M(2\delta)^{k-1}$.
Therefore $\partial_xf_k>0$ for $\delta\le r\le2\delta$, so no critical point occurs in this annulus.

Meanwhile, for $2\delta\le r\le3\delta$, one has \(\lambda=1\), so $f_k=x+\mu(r)Z_k$. Thus $\partial_x f_k= 1+\mu\partial_x Z_k+\mu'Z_kx/r$.
Note that $|Z_k|=\mathcal{O}(r^k)$, $|\nabla Z_k|=\mathcal{O}(r^{k-1})$ and $|\mu'|\le 6/\delta$, we have $|\partial_x f_k-1|=\mathcal{O}(\delta^{k-1})$. Taking $\delta>0$ sufficiently small then we have $\partial_x f_k>0$ for $2\delta\le r\le3\delta$.

Finally, for $r\ge3\delta$, we have $f_k\equiv x$. In summary, we have the following result:
\begin{theorem}\label{thm:local_model_degenerate_zero}
Let $k\ge3$ and let $m>k$ be odd. For all sufficiently small
$\delta>0$, there exists a smooth function
$f_k:\RR^3\to\RR$ such that:
\begin{enumerate}
    \item $f_k=x$ on $\RR^3\setminus B_{3\delta}(0)$;
    \item on $B_\delta(0)$, $f_k=Z_k+a r^{m-1}x$ for some $a>0$;
    \item $\mathrm{Crit}(f_k)=\{0,p_1,\ldots,p_{k-1}\} \subset B_\delta(0)$;
\end{enumerate}
More precisely, the origin is degenerate, while every $p_j$ is nondegenerate and has Morse index $2$ when $P_k(t_j)>0$ and index $1$ when $P_k(t_j)<0$.
\end{theorem}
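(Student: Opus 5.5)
The plan is to assemble the theorem from the explicit construction \eqref{eq:fk-def} and the three lemmas of this appendix. Fix $k\ge3$ and an odd $m>k$. Choose $M$ with $|\partial_xZ_k|\le Mr^{k-1}$, then $Q>2^{k-1}M$, and then $\delta$ small. Define $f_k=\mu(r)Z_k+\lambda(r)x$ with the radial functions $\lambda,\mu$ above. This $f_k$ is smooth, because $r^{m-1}$ with $m$ odd is a polynomial and $\lambda,\mu$ are constant near the origin and far away. Property (i) is immediate, since $\mu=0$ and $\lambda=1$ for $r\ge3\delta$. Property (ii) holds with $a=Q\delta^{k-m}$, because $\mu=1$ and $\lambda=Q\delta^{k-m}r^{m-1}$ on $B_\delta$.

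For (iii), I split into the ball and three annuli.

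\emph{On $B_\delta$.} Here $f_k=G_k$. By Lemma~\ref{lemma:critcor} and Lemma~\ref{lem:simpleness}, the nonzero critical points of $G_k$ correspond bijectively to the $k-1$ simple roots $t_j$ of $E_{k,m}$ in $(-1,1)$. Relation \eqref{eq:q-formula} determines $q_j=-kP_k(t_j)/(mv_j)$, and this value is independent of $a$. I will need $q_j>0$, which fixes the sign of $v_j$. The radius is then $r_j=(q_j/a)^{1/(m-k)}=\delta(q_j/Q)^{1/(m-k)}$, so taking $Q>\max_jq_j$ puts every $p_j$ inside $B_\delta$. The origin is a degenerate critical point, since $G_k=O(r^k)$ with $k\ge3$.

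\emph{On $\delta\le r\le2\delta$.} Here $\partial_xf_k=\partial_xZ_k+\lambda+\lambda'x^2/r$. Since $\lambda'\ge0$, this is at least $Q\delta^{k-1}-M(2\delta)^{k-1}>0$.

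\emph{On $2\delta\le r\le3\delta$.} Here $\partial_xf_k=1+O(\delta^{k-1})$, using $|\mu'|\le6/\delta$, so it is positive for small $\delta$.

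\emph{For $r\ge3\delta$.} Here $f_k=x$, so there are no critical points.

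Together these give $\mathrm{Crit}(f_k)=\{0,p_1,\dots,p_{k-1}\}$.

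The nondegeneracy and Morse index statement is Lemma~\ref{lem:morse_index}. I use the coordinates $(s,t,y)$, which are a valid chart near each $p_j$ since $r_j>0$ and $|t_j|<1$. By the evenness of $G_k$ in $y$, the Hessian splits into the $(s,t)$-block and $\partial_y^2G_k$. The $(s,t)$-block has positive determinant, with sign opposite to $P_k(t_j)$. The entry $\partial_y^2G_k$ has the sign of $P_k(t_j)$. If $P_k(t_j)>0$ this gives signature $(-,-,+)$, so index $2$; if $P_k(t_j)<0$ it gives $(+,+,-)$, so index $1$. The count by parity follows from the interlacing in Lemma~\ref{lem:simpleness}, because $P_k$ alternates sign between consecutive roots.

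I expect the main obstacle to be the sign consistency in the $B_\delta$ step. One has to confirm that each root $t_j$ gives a unique admissible choice of $v_j=\pm\sqrt{1-t_j^2}$ with $q_j>0$, including the exceptional root $t=0$ when $k$ is even, where the $x$-equation must be used in place of the $z$-equation. One also has to check that the determinant and $\partial_y^2$ formulas were derived uniformly in this case. My first step would be to recheck \eqref{eq:q-formula} directly from $\partial_xG_k=0$ and $\partial_zG_k=0$ at a critical point, and only then read off the signs.
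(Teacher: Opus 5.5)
Your proposal is correct and follows the paper's argument essentially step for step: the same function $f_k=\mu(r)Z_k+\lambda(r)x$ with $a=Q\delta^{k-m}$, the same placement of the $p_j$ at radius $\delta(q_j/Q)^{1/(m-k)}<\delta$ for large $Q$, the same positivity of $\partial_xf_k$ on the annuli $\delta\le r\le2\delta$ and $2\delta\le r\le3\delta$, and Lemma~\ref{lem:morse_index} for nondegeneracy and indices. The sign issue you flag is harmless. At a critical point the radial derivative vanishes, i.e.\ $v$ times the $x$-equation plus $t$ times the $z$-equation, which gives $kP_k(t)+mqv=0$ uniformly, including $t=0$; since $P_k(t_j)\neq0$, this fixes the sign of $v_j$ with $q_j>0$.
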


	\bibliographystyle{alpha}
	\bibliography{references}

\end{document}